\newcommand{\ind}{\mathbf{1}}
\DeclareMathAlphabet{\mathpzc}{OT1}{pzc}{m}{it}
\let\oldtocsection=\tocsection
\let\oldtocsubsection=\tocsubsection
\let\oldtocsubsubsection=\tocsubsubsection
\renewcommand{\tocsection}[2]{\hspace{0em}\oldtocsection{#1}{#2}}
\renewcommand{\tocsubsection}[2]{\hspace{3em}\oldtocsubsection{#1}{#2}}
\renewcommand{\tocsubsubsection}[2]{\hspace{6em}\oldtocsubsubsection{#1}{#2}}
\newtheorem{thm}{Theorem}[section]
\newtheorem{cor}[thm]{Corollary}
\newtheorem{prop}[thm]{Proposition}
\newtheorem{lem}[thm]{Lemma}
\newtheorem{lemma}[thm]{Lemma}
\theoremstyle{definition}
\newtheorem{defn}[thm]{Definition}
\newtheorem{rk}[thm]{Remark}
\newtheorem{ass}[thm]{Assumption}
\numberwithin{equation}{section}
\newcommand{\ak}{[-\eta/(2k),\eta/(2k)]}
\newcommand{\qdif}{\boldsymbol q^{(k)}_\beta}
\newcommand{\m}{\mathfrak m_N}
\newcommand{\x}{\mathbf x}
\newcommand{\dr}{\mathrm d}
\newcommand{\y}{\mathbf y}
\newcommand{\bfa}{\mathbf a}
\newcommand{\Q}{\mathscr Q^\beta_k}
\newcommand{\pdif}{\boldsymbol{p}_{\mathbf{dif}}}
\newcommand{\Pdif}{P_{\mathbf{dif}}}
\newcommand{\Fd}{\mathrm{F_{decay}}}
\newcommand{\z}{\boldsymbol{\zeta}}
\newcommand{\Pb}{ \mathbf P_\mathbf x^{(\beta,k)}}
\newcommand{\Pbn}{ \mathbf P_\mathbf x^{(\beta_N,k)}}
\newcommand{\Eb}{ \mathbf E_\mathbf x^{(\beta,k)} }
\newcommand{\Ebn}{ \mathbf E_\mathbf x^{(\beta_N,k)}}
\newcommand{\R}{\mathbb{R}}
\newcommand{\V}{\mathcal{L}}
\renewcommand{\hat}{\widehat}
\renewcommand{\bar}{\overline}
\renewcommand{\Pr}{\mathbb{P}}
\newcommand{\Ex}{\mathbb{E}}
\newcommand{\Con}{\mathrm{C_{len}}}
\newcommand{\mE}{\mathbf{E}}
\newcommand{\e}{\varepsilon}
\newcommand{\sdn}[1]{{\color{red}\ttfamily\upshape\small[#1]\color{black}}}
\title[KPZ scaling limits in directed models of random walks in random media]{Hierarchy of KPZ limits arising from directed random walk models in random media}
\author{Shalin Parekh}
\begin{document}

\begin{abstract}
    We consider a generalized model of random walk in dynamical random environment, and we show that the multiplicative-noise stochastic heat equation (SHE) describes the fluctuations of the quenched density at a certain precise spatial location in the tail called the \textit{critical scale}. 
    The distribution of transition kernels is fixed rather than changing under the diffusive rescaling of space-time, i.e., there is no critical tuning of the model parameters when scaling to the stochastic PDE limit. 
    The proof is done by pushing the methods developed in \cite{DDP23,DDP23+} to their maximum, substantially weakening the assumptions and obtaining fairly sharp conditions under which one expects to see the SHE arise in a wide variety of random walk models in random media. In particular we are able to get rid of conditions such as nearest-neighbor interaction as well as spatial independence of quenched transition kernels. Moreover, we observe an entire hierarchy of moderate deviation exponents at which the SHE can be found, confirming a physics prediction of \cite{hass24+} and mirroring a result from \cite{HQ15} in the context of this model. 
\end{abstract}

\maketitle

		\tableofcontents

 \newpage

 \section{Introduction}

 The (1+1) dimensional Kardar-Parisi-Zhang (KPZ) equation \cite{kpz} is a stochastic partial differential equation (SPDE) given by 
\begin{align}
    \label{def:kpz} \tag{KPZ}
\partial_tH=\tfrac12\partial_{xx}H(t,x)+\tfrac12(\partial_xH(t,x))^2+\gamma \cdot \xi(t,x), \;\;\;\;\;\;\;\;\;\;\;t\ge 0,x\in \mathbb R,
\end{align}
where $\gamma>0$ and $\xi(t,x)$ is a standard Gaussian space-time white noise on $\mathbb R_+\times \mathbb R$. As an SPDE, \eqref{def:kpz} is ill-posed due to the presence of the non-linear term $(\partial_xH)^2$. One way to make sense of the equation is to consider $\mathcal{U}:= e^H$ which formally solves the stochastic heat equation (SHE) with multiplicative noise:
\begin{equation}\label{she0} \tag{SHE}\partial_t \mathcal{U}(t,x) = \tfrac12 \partial_{xx} \mathcal{U}(t,x) + \gamma \cdot \mathcal{U}(t,x) \xi(t,x), \qquad t\ge 0, x\in \mathbb R.\end{equation}
The SHE is known to be well-posed and has a well-developed solution theory based on the It\^o integral and chaos expansions \cite{Wal86, bc95, Qua11, Cor12}. 
In this paper, we will consider the solution to \eqref{she0} started with Dirac delta initial data $\mathcal{U}(0,x) = \delta_0(x)$. For this initial data, \cite{flo} established that $\mathcal{U}(t,x) > 0$
for all $t > 0$ and $x \in \R$ almost surely (see also \cite{mue91}). Thus $H=\log \mathcal{U}$ is well-defined and is called the Hopf-Cole solution of the KPZ equation. This is the notion of solution that we will work with in this paper, and it coincides with other existing notions of solutions using regularity structures \cite{Hai13, Hai14}, energy solutions \cite{Gub14, GJ14, GP18}, paracontrolled products \cite{GIP15, GP17, PR19}, and the Polchinski flow \cite{Kup, Duch, CF24}.

The KPZ equation has been studied intensively over the past three decades in both the mathematics and physics literature. It first gained attention in the physics community in \cite{kpz}, and since then a precise mathematical understanding of the equation and its well-posedness theory has gradually evolved. 
In the mathematical physics literature, the relevance of the equation is that it appears as the fluctuation of a number of physically complex systems such as interacting particle systems \cite{BG97} and directed polymers \cite{akq}. We refer to \cite{FS10, Qua11, Cor12, QS15, CW17, CS20} for some surveys of the mathematical studies of the KPZ equation, and some of the models in which it arises.

The present work will be concerned with how the KPZ equation arises in a family of probabilstic models called \textit{stochastic flows of kernels}. Roughly speaking, a discrete-time stochastic flow of kernels is a family of \textit{random} probability measures indexed by each site of space-time ($\mathbb N\times \mathbb Z$ or $\mathbb N\times \mathbb R$ depending on the desired model), where spatial correlations between the randomized transitions are allowed but temporal correlations are not. The notion of stochastic flows initially gained importance for their relation to geometry and PDEs with random coefficients, see the monograph \cite{kun94b}.  A resurgence of interest and a broadening of the scope of stochastic flow theory was introduced by work of Tsirelson-Vershik on continuous products of probability spaces \cite{TV98, tsir}, and further work of Le Jan-Raimond on their relation to projective families of Markov chains \cite{ljr02, jan, lejan}. 

In a recent work \cite{DDP23+}, we were able to show that \eqref{she0} arises as a continuum limit in the following discrete stochastic flow model. Consider a family of IID $[0,1]$-valued random variables $\omega_{r,y}$ with $r \in \mathbb{Z}_{\ge 0}$ and $y\in \mathbb Z$, drawn from a common law $\nu$. A random walk $(R(r))_{r\ge 0}$ in the environment $\omega$ starts from $(r,y)=(0,0)$ and at each time step goes from $(r,y)\to (r+1,y+1)$ with probability $\omega_{r,y}$ and from $(r,y)\to (r+1,y-1)$ with probability $1-\omega_{r,y}$. Consider the (quenched) random transition probabilities 
\begin{align*}
   P^\omega(r,y) := P^\omega(R(r)=y).
\end{align*}
A series of physics papers \cite{ldt2,ldt,bld} conjectured that the fluctuations of the random field $P^\omega(r,y)$ should be described by the KPZ equation at some point within the tail of that probability measure at spatial distance of order $r^{3/4}$, when $r$ is very large. We thus define for $t\ge 0$ and $x\in \mathbb R$ and $N\in \mathbb N$ the random field
\begin{align*}
  \mathscr U_N(t,x):= e^{N^{1/4}x +(N^{1/2}-N\log\cosh(N^{-1/4}))t}  \cdot P^{\omega}(Nt, N^{3/4}t + N^{1/2}x).
\end{align*}
The main results of \cite{DDP23+} proved the conjecture of \cite{ldt2,ldt,bld}, by showing that the random field $\mathscr U_N(t,x)$ converges in law (in some topology of tempered distributions) to the solution of \eqref{she0} with $\gamma^2= \frac{8\sigma^2}{1-4\sigma^2}$, where $\sigma^2 = \mathrm{Var}(\omega_{t,x})$. It turns out that the spatial distance of order $N^{3/4}$ is the unique location within the tail of $P^\omega(Nt,\cdot)$ where one expects to see \eqref{she0} appear: at smaller exponents one observes a Gaussian limit and at larger exponents one sees Tracy-Widom limits (see Section \ref{crossover} below). This established a ``KPZ crossover" at spatial location $N^{3/4}$ when time is of order $N$, with a somewhat unexpected value for the noise coefficient in the limiting SPDE. Thus we say that the \textit{crossover exponent} for this model is $3/4$.

A number of follow-up questions may then be asked, such as whether the methods of proving the convergence to \eqref{she0} could be generalized to more complex models in which there is non-nearest-neighbor random walk in a random environment, or correlations between the transition kernels at each lattice site. Numerical work of J. Hass and coauthors in the physics community \cite{hass23,hass23b, hindy, hass24+} observed that that there are non-nearest-neighbor models where one should observe KPZ fluctuations at spatial location $N^{7/8}$ rather than $N^{3/4}.$ More precisely, at each lattice site $(r,y) \in \mathbb Z_{\ge 0}\times \mathbb Z$ sample IID Uniform[0,1] random variables. Then impose that the random walker goes from $(r,y) \to (r+1,y+1)$ with probability $(1-U_{r,y})/2$, from $(r,y)\to (r+1,y)$ with probability $U_{r,y}$ and from $(r,y) \to (r+1,y-1)$ with probability $(1-U_{r,y})/2$. The physicists observed that imposing this ``first-order symmetry" of the transition kernels changes the crossover exponent from $3/4$ to $7/8$, but the KPZ equation is still expected to arise at the new exponent.

A completely different type of model posed to us by I. Corwin is the following. Let $(\omega_{r,x})_{r\in \mathbb N, x\in \mathbb Z}$ be an IID collection of random variables. Let $b: \mathbb Z\to [0,1]$ be any deterministic function, nonzero on at least two sites, such that $\sum_{x\in \mathbb Z} b(x)=1$.
Define the kernels $$K_r(x,y):= \frac{b(y-x) e^{\omega_{r,y}}}{\sum_{y'\in \mathbb Z} b(y'-x) e^{\omega_{r,y'}}}. $$
Consider the ``random landscape model" of random walk in a random environment on $\mathbb Z_{\ge 0}\times \mathbb Z$ in which one goes from site $(r,x) \to (r+1,y)$ with probability $K_r(x,y)$. This model is substantially different from the random walk models considered above, since the transitions of nearby particles can be strongly correlated via depending on the same weights. A natural question is to investigate whether or not one also finds the KPZ equation at location of order $N^{3/4}$ away from the expected displacement, if one probes the quenched transition density of the random walker at time of order $N$.

All of these questions then led to the desire to prove a ``super-universality theorem" or ``generalized invariance principle" for the equation \eqref{she0} arising in stochastic flows of kernels, i.e., a minimal set of conditions for determining when and where \eqref{she0} is expected to arise in any family of stochastic kernels, and moreover to test the robustness of the methods developed in \cite{DDP23,DDP23+}. This is exactly the subject of the present work. Our main result will be Theorem \ref{main2} below, where we find a collection of reasonably sharp conditions under which one expects to see \eqref{she0} arise in a family of stochastic kernels. Moreover this theorem will give the precise location where \eqref{she0} can be found (analogous to $N^{3/4}$ above), as well as identify the noise coefficient of the limiting equation (analogous to $\frac{8\sigma^2}{1-4\sigma^2}$ above). 

\subsection{Generalized model and main results}

\begin{defn}Let $I=\mathbb R$ or $I=c \mathbb Z$ where $c>0$ is fixed henceforth. Let $\mathcal B(I)$ denote the Borel $\sigma$-algebra on $I$. 
A \textit{Markov kernel} on $I$ is a continuous function from $I\to \mathcal P(I)$, where $\mathcal P(I)$ is the space of probability measures on $(I,\mathcal B(I)),$ equipped with the topology of weak convergence of probability measures. 

Any Markov kernel $K$ on $I$ is written as $K(x,A)$, which is identified with the continuous function from $I\to \mathcal P(I)$ given by $x\mapsto K(x,\cdot)$. We denote by $\mathcal M(I)$ the space of all Markov kernels on $I$.

We equip $\mathcal M(I)$ with the weakest topology under which the maps $T_{f,x}:\mathcal M(I)\to \mathbb R$ given by $$K\stackrel{T_{f,x}}{\mapsto}  \int_I f(y)\;K(x,\dr y)$$ 
are continuous, where one varies over all $x\in I$ and all bounded continuous functions $f:I\to \mathbb R$. This topology endows $\mathcal M(I)$ with a Borel $\sigma$-algebra which allows us to talk about random variables taking values in the space of Markov kernels. For $a\in I$ define the translation operator $\tau_a:\mathcal M(I)\to \mathcal M(I)$ by $(\tau_a K)(x,A):= K(x+a,A+a)$ which is a Borel-measurable map on this space.
\end{defn}

\begin{ass}[Assumptions for the main result] \label{a1}Assume we have a family $\{K_n\}_{n\ge 1}$ of random variables in the space $\mathcal M(I)$, defined on some probability space $(\boldsymbol \Omega, \mathcal F^\omega, \mathbb P)$ such that the following hold true.
\begin{enumerate}
    

    \item \label{a11}(Stochastic flow increments)  $K_1,K_2,K_3,...$ are independent and identically distributed under $\mathbb P$.

    \item \label{a12}(Spatial translational invariance)  $K_1$ has the same law as $\tau_a K_1$ for all $a\in I,$ under $\mathbb P$.

    \item \label{a22}(Exponential moments for the annealed law) Letting $\mu(A) := \mathbb E[ K_1(0,A)],$ we have that the moment generating function exists, i.e., $\int_I e^{\eta|x|}\mu(\dr x)<\infty$ for some $\eta>0$. Letting $m_k:= \int_I x^k \mu(\dr x)$ denote the sequence of moments of $\mu$, we also impose that the variance of $\mu$ is 1, i.e., $m_2-m_1^2=1$. 

    \item \label{a23}(The first $p-1$ moments are deterministic) Define the $k$-point correlation kernel $$\boldsymbol p^{(k)}\big((x_1,...,x_k),(\dr y_1,...,\dr y_k)\big):= \mathbb E[ K_1(x_1,\dr y_1)\cdots K_1(x_k,\dr y_k)].$$ There exists some $p\in \mathbb N$ such that $\int_I y^kK_1(0,\dr y)=m_k$ $\mathbb P$-almost surely for $1\leq k \leq p-1$, or equivalently
    $$\int_{I^2} (x-x_0)^k(y-y_0)^k \; \boldsymbol p^{(2)}((x_0,y_0),(\dr x,\dr y))=m_k^2,\;\;\;\;\;\; x_0,y_0\in I, \;\;\;1\leq k \leq p-1.$$
    Furthermore $p-1$ is the largest such value, i.e., $\int_I y^{p}K_1(0,\dr y)$ is non-deterministic. 

    
    \item \label{a24} (Strong decay of correlations up to $(4p)^{th}$ joint moments) Consider a decreasing function $\mathrm{F_{decay}}:\Bbb [0,\infty)\to [0,\infty)$ such that $x\mapsto x\Fd(x)$ is also decreasing for large $x$, and $\int_0^\infty x \Fd(x)\dr x<\infty$. Then assume that the following spatial decay of correlations holds for the moments of the Markov kernel $K_1$: for all $k\in \{2p,...,4p\}$ and all $x_1,...,x_k\in I$ we have $$\max_{\substack{0\le r_1,...,r_k\le 4p-1\\2p\leq r_1+...+r_k\leq 4p}}\bigg| \int_{I^k} \prod_{j=1}^k (y_j-x_j)^{r_j} \boldsymbol p^{(k)} \big( \mathbf x, \dr\mathbf y\big)- \int_{I^k}  \prod_{j=1}^k y_j^{r_j}\mu^{\otimes k}(\dr\mathbf y)\bigg| \leq  \Fd \big(\min_{1\le i<j\le k}|x_i-x_j|\big)$$ where $\mathbf x:= (x_1,...,x_k)$ and $\mathbf y=(y_1,...,y_k).$
    \item (Non-degeneracy) \label{a16} Define the Markov kernel $\pdif(x,A):=\int_I \ind_{\{y_1-y_2\in A\}} \boldsymbol p^{(2)} \big((x,0),(\dr y_1,\dr y_2)\big)$ indexed by $x\in I$ and $A\in \mathcal B(I)$. 
    We impose that 
    
    \begin{itemize} 
    \item (Regularity) The family of transition laws $x\mapsto \pdif(x,\bullet)$ are continuous in total variation norm.
    \item (Irreducibility) For any two points $x,y \in I$ and any $\e>0$ there exists $m\in \mathbb N$ such that $\pdif^m\big(x,(y-\e,y+\e)\big)>0,$ where $\pdif^m$ is the $m^{th}$ power of this Markov kernel.
    \end{itemize}
    \end{enumerate}
\end{ass}
Condition \eqref{a23} says that the model exhibits ``symmetry up to order $p.$" This integer $p$ will play an extremely important role in all of the theorems and calculations throughout the rest of the paper. 

To clarify an important detail about condition \eqref{a11}, we are \textbf{not} assuming that $K_i(x,\cdot),K_i(x',\cdot)$ are independent for distinct $x,x'.$ Rather we are assuming that the family $\{K_1(x,\cdot)\}_{x\in I}$ is independent of $\{K_2(x,\cdot)\}_{x\in I}$ and has the same distribution as that family, and so on. But this says nothing about how $K_i(x,\cdot),K_i(x',\cdot)$ might be correlated for different $x,x'$. There are many interesting examples where there are indeed spatial correlations. This question of correlation of distinct spatial locations is covered by condition \eqref{a24}.

Indeed, condition \eqref{a24} is the most important part of the assumption, whereas the other parts are more definitional. Notice that if $r_1+...+r_k<2p$ then the left side in Item \eqref{a24} vanishes identically thanks to Item \eqref{a23}, thus the maximum is really over all $0\leq r_1+...+r_k \leq 4p$. We remark that the \textit{finite-range models}, in which $\Fd$ is compactly supported, already constitute many interesting examples. Additionally, note that all bounds in \eqref{a24} are automatically satisfied if there exists $\Con>0$ so that for all $x_1,...,x_{4p}\in I$
    , one has the total variation bound
    \begin{align}\big\| \mu^{\otimes k}\big(\bullet-(x_1,...,x_k)\big) \;\;-\;\; \boldsymbol p^{(k)}\big( (x_1,...,x_k),\;\bullet\; \big)\big\|_{TV} \leq \Con e^{-\frac1\Con \min_{1\le i<j\le k}|x_i-x_j|},\;\;\;\;\;\; 1\le k \le 4p.  \label{tvb}
    \end{align}
Here the first measure denotes the translate of $\mu^{\otimes k}$ by the vector $(x_1,...,x_k)$. Indeed if \eqref{tvb} holds, then one may verify using a coupling argument that the inequality of Item \eqref{a24} holds with $\Fd(x)$ being some multiple of $e^{-\frac{x}{2\Con}}$. The constant $\Con$ can be thought of as the correlation length of the microscopic model of stochastic kernels. In most specific models of interest, it is not difficult to show \eqref{tvb} for all $k\in \mathbb N$ (see Section 6). Nonetheless, we have stated the more general condition \eqref{a24} above for interest, as it only requires control on a finite number of observables (which is much weaker than total variation bounds).

Condition \eqref{a16} is just irreducibility of the Markov kernel $\pdif$ in the usual sense when $I=c \mathbb Z$. In particular there is no loss of generality as long as the chain is merely recurrent, because one can always replace $I$ by the communicating class of $0$ throughout all of the assumptions and results. 
For $I=\mathbb R$, \eqref{a16} is a technical condition that ensures that the Markov kernel $\pdif$ is sufficiently regularizing. In models of interest, both conditions in Item \eqref{a16} will be quite easy to check. 
Thanks to Assumption \eqref{a16}, we will prove the following, see Theorem \ref{pinv1} and Proposition \ref{pinv2}. 
\begin{prop}\label{inv8}Let $(X_k)_{k\ge 0}$ denote the Markov chain on $I$ associated to the Markov kernel $\pdif$ from Assumption \ref{a1} Item \eqref{a16}. There exists an invariant measure $\pi^{\mathrm{inv}}$ for $\pdif$, unique up to scalar multiple. This measure $\pi^{\mathrm{inv}}$ necessarily has full support. For all measurable functions $f:I\to \mathbb R$ such that $|f(x)| \leq F(|x|)$ for some decreasing $F:[0,\infty)\to[0,\infty)$ such that $\sum_{k=0}^\infty F(k)<\infty$, one has that $\int_I |f|d\pi^{\mathrm{inv}}<\infty$. Furthermore for all $f,g\in L^1(\pi^{\mathrm{inv}})$ such that $\int_I g\;d\pi^{\mathrm{inv}}\ne 0$, we have that $$\lim_{r\to \infty} \frac{\sum_{k=0}^r f(X_k)}{\sum_{k=0}^r g(X_k)} = \frac{\int_I f\;d\pi^{\mathrm{inv}}}{\int_I g\;d\pi^{\mathrm{inv}}}$$ almost surely starting from any point $X_0\in I$.\end{prop}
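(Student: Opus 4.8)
The plan is to identify $(X_k)_{k\ge0}$ as the difference of two one-point motions driven by a common environment and then run the standard theory of Harris recurrent Markov chains. If $X_k=x$ then $X_{k+1}$ is distributed as $Y_1-Y_2$ with $(Y_1,Y_2)\sim\boldsymbol p^{(2)}((x,0),\cdot)$; by the translational invariance of Item \eqref{a12}, $Y_1-x$ and $Y_2$ are each marginally $\mu$-distributed, so $\mathbb E[X_{k+1}\mid X_k=x]=x$ \emph{exactly} and $(X_k)$ is a genuine martingale with mean-zero increments. By Cauchy--Schwarz its increment has second moment at most $4$ uniformly in $x$, and since $\mu$ has an exponential moment (Item \eqref{a22}) so does the increment, uniformly in $x$. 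First I would establish Harris recurrence. The total-variation continuity of $x\mapsto\pdif(x,\bullet)$ makes $\pdif$ itself a continuous component, so the chain is a $T$-chain; combined with the irreducibility of Item \eqref{a16}, this forces every bounded subset of $I$ to be petite. For recurrence I would use a Foster--Lyapunov argument with $V(x)=\log(1+x^2)$: a second-order Taylor expansion, the mean-zero property (which kills the first-order term), the uniform second-moment bound and the uniform exponential moment (controlling the Taylor remainder and rare large jumps) give $\pdif V\le V$ outside a large bounded set $C$; since $C$ is petite, the chain returns to $C$ almost surely and is Harris recurrent. Here I use that by Item \eqref{a24} the one-step increment variance is bounded above by $4$ everywhere and tends to $\mathrm{Var}(\mu\ast\check\mu)=2$ as $|x|\to\infty$, hence stays bounded away from $0$ outside $C$, which is what makes the drift of $V$ strictly negative there.

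Granting Harris recurrence, the existence of a $\sigma$-finite invariant measure $\pi^{\mathrm{inv}}$, unique up to a positive scalar, is a standard consequence (Athreya--Ney, Meyn--Tweedie, Revuz). Full support on $I$ follows from the invariance identity $\pi^{\mathrm{inv}}(A)=\int_I\pdif^m(x,A)\,\pi^{\mathrm{inv}}(\mathrm dx)$ together with Item \eqref{a16}: if some open ball had zero $\pi^{\mathrm{inv}}$-measure, then $\pdif^m(x,\cdot)$ would avoid it for $\pi^{\mathrm{inv}}$-a.e.\ $x$ and every $m$, contradicting irreducibility. The final display of the statement is the Hopf (Chacon--Ornstein) ratio ergodic theorem specialized to Harris recurrent chains, which holds $\mathbb P_{X_0}$-almost surely for \emph{every} starting point; the references state it for nonnegative $f,g\in L^1(\pi^{\mathrm{inv}})$ with $\pi^{\mathrm{inv}}(g)>0$, and the signed case follows by splitting $f=f^+-f^-$, $g=g^+-g^-$ and dividing numerator and denominator through by $\sum_{k\le r}\mathbf 1_C(X_k)$ for a bounded petite set $C$ (which has $0<\pi^{\mathrm{inv}}(C)<\infty$, petite sets carrying finite invariant mass). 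Thus everything reduces to the integrability statement.

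To prove $\int_I|f|\,d\pi^{\mathrm{inv}}<\infty$ whenever $|f(x)|\le F(|x|)$ with $F$ decreasing and $\sum_k F(k)<\infty$, I would establish the uniform bound $\sup_{n}\pi^{\mathrm{inv}}(J_n)<\infty$, where $J_n:=[n,n+1)$ if $I=\R$ and $J_n:=\{cn\}$ if $I=c\mathbb Z$; the claim then follows from $\int_I F(|\cdot|)\,d\pi^{\mathrm{inv}}\le\big(\sup_n\pi^{\mathrm{inv}}(J_n)\big)\sum_{n}F(\max(|n|-1,0))<\infty$. For the uniform bound I would use the occupation-time (Kac) representation: taking $C$ a bounded petite set containing the region where the drift condition fails and passing to the associated pseudo-atom $\alpha$, one has $\pi^{\mathrm{inv}}(A)\propto\mathbb E_\alpha\big[\sum_{k=0}^{\tau_\alpha-1}\mathbf 1_{X_k\in A}\big]$, with $\tau_\alpha$ the return time. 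For $J_n$ far from $C$, the strong Markov property splits this as $\mathbb P_\alpha(\text{reach }J_n\text{ before }\tau_\alpha)$ times the expected number of visits to $J_n$ before $\tau_\alpha$ starting from $J_n$, and the latter is at most $M_1/p_n$, where $M_1$ uniformly bounds the expected time spent in one unit interval per excursion (finite and uniform because, far from the origin, Item \eqref{a24} makes the chain leave any unit interval with uniformly positive probability each step) and $p_n$ lower-bounds the probability, from $J_n$, of hitting $C$ before returning to $J_n$. Classical one-dimensional fluctuation estimates for the martingale $(X_k)$ — using the uniform exponential moment for the upper bound and the uniform non-degeneracy far out (again Item \eqref{a24}) plus the irreducibility of Item \eqref{a16} for the lower bound — give $\mathbb P_\alpha(\text{reach }J_n\text{ before }\tau_\alpha)\le C/|n|$ and $p_n\ge c/|n|$. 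The two powers of $|n|$ cancel, so $\pi^{\mathrm{inv}}(J_n)\le C M_1/c$ uniformly for $|n|$ large, while for the finitely many remaining $n$ one uses $\pi^{\mathrm{inv}}(J_n)\le\pi^{\mathrm{inv}}(C')<\infty$ for a bounded petite set $C'\supseteq\bigcup_{|n|\le n_0}J_n$.

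The main obstacle is this last step, and precisely the two matching hitting estimates $\mathbb P_\alpha(\text{reach }J_n\text{ before }\tau_\alpha)\asymp|n|^{-1}$ and $p_n\asymp|n|^{-1}$, which must be uniform in $n$ although the chain is \emph{not} translation invariant. The upper bound is the cleaner half: it comes from optional stopping applied to the martingale $(X_k)$ at the exit time of $[-2|n|,2|n|]$, the exponential moment of the increments supplying the uniform integrability needed in the limit. The lower bound is the delicate half: it requires that, far from the origin, the chain genuinely makes order-one progress toward the origin with uniformly positive probability per step, which is exactly the content of the variance lower bound and the approximate symmetry of the increment law furnished by Item \eqref{a24} (the one-step law being close, in its first few moments, to the symmetric law $\mu\ast\check\mu$), together with the irreducibility of Item \eqref{a16} in the bounded region. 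Once these ellipticity-type inputs are in hand the hitting bounds are routine, and the cancellation of the two powers of $|n|$ — the abstract shadow of the fact that Lebesgue (respectively counting) measure is invariant for an honest mean-zero random walk — yields the uniform bound on $\pi^{\mathrm{inv}}(J_n)$.
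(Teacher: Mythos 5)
Your first half — Harris recurrence via the strong Feller/$T$-chain property, open-set irreducibility, and a Foster--Lyapunov function, then existence/uniqueness of $\pi^{\mathrm{inv}}$, full support, and the Hopf/Chacon--Ornstein ratio limit theorem — is essentially the paper's own argument (Theorem \ref{pinv1} together with Proposition \ref{sce}); the only cosmetic difference is your choice $V(x)=\log(1+x^2)$ versus the paper's $V(x)=\sqrt{|x|+1}$, and both work given the exact martingale property of $X$, the uniform exponential moments, and the variance lower bound far from the origin supplied by Item \eqref{a24} (cf.\ Lemma \ref{lemma1}). Where you genuinely diverge is the integrability statement, i.e.\ the content of Proposition \ref{pinv2}. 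You go for a direct potential-theoretic bound $\sup_n \pi^{\mathrm{inv}}(J_n)<\infty$ via a Kac/occupation-measure identity and two matching hitting estimates of order $|n|^{-1}$ (probability of reaching $J_n$ from the centre before regeneration, and escape probability from $J_n$ to the centre per visit). The paper instead proves that the rescaled additive functional $N^{-1/2}\sum_{r\le Nt} v_b(X_r)$, with $v_b=\Pdif u_b-u_b$ and $u_b(y)=|y-b|$, converges to Brownian local time (a Tanaka-type martingale argument), deduces from the ratio limit theorem that $v_b\in L^1(\pi^{\mathrm{inv}})$ with $\int v_b\,d\pi^{\mathrm{inv}}$ independent of $b$, and then uses the uniform pointwise bound $v_b\ge\delta\,\ind_{(b-\delta,b+\delta)}$ to dominate $F$ by $\delta^{-1}\sum_m F(\delta|m|)v_{\delta m}$; integrating gives exactly your uniform interval-mass bound. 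The payoff of the paper's route is that the lack of translation invariance never has to be confronted at the level of hitting probabilities: the ``translation'' is carried by the family $v_b$, and the local-time limit plus the ratio theorem do the work that your gambler's-ruin estimates would do.

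The one place your sketch is thin is precisely those two $|n|^{-1}$ estimates, which you call classical but which for this chain require real bookkeeping: (i) the chain can jump over the compact set $C$ (so ``hit $C$ before reaching $J_n$'' needs an excursion decomposition with an exponentially small jump-over correction, using $C$ large relative to the exponential-moment scale); (ii) the one-step law need not charge $(-\infty,-1]$, so from a point of $J_n$ the chain may linger in $J_n$ or exit it only by small steps, and your per-visit escape probability $p_n$ must be defined through a multi-step block event (several consecutive $\epsilon$-downward moves, probability $c^{m}$ uniformly, followed by the optional-stopping estimate) rather than a single step, to avoid the overshoot constant killing the gambler's-ruin lower bound when starting within $O(1)$ of level $n$; (iii) the occupation identity is cleaner taken over returns to $C$ (or one must account for visits to $C$ that fail to regenerate at the split-chain atom). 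None of these is fatal — the inputs you cite (mean-zero increments, uniform exponential moments, variance bounded below outside a compact set, irreducibility) do suffice — but as written they are assertions, and they constitute the entire difficulty that the paper's $v_b$/local-time argument was designed to bypass.
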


The measure $\pi^{\mathrm{inv}}$ will always be infinite under Assumption \ref{a1}, typically resembling Lebesgue or counting measure on $I$ but with extra mass near the origin. With these preliminaries in place, we can state our main result. For $r\in \mathbb N$ one may define the probability measure \begin{equation}P^\omega(r,\cdot)= K_1\cdots K_r(0,\cdot),\label{kn}\end{equation} where the product of kernels is defined by $K_1K_2(x,A) = \int_I K_2(y,A)K_1(x,\dr y)$ and so on by associativity. Intuitively, $P^\omega(r,\cdot)$ is the \textit{quenched} transition probability at time $r$ of a random walker starting from the origin, such that the walker at position $x$ at time $n-1$ uses the kernel $K_n(x,\cdot)$ to determine what the position will be at time $n$, for every $n\ge 0$ and $x\in I$. With these definitions and notations in place, we now define the drift constants \begin{equation}\label{dn}d_N:= N\int_I xe^{xN^{-\frac1{4p}}-\log M(N^{-\frac1{4p}})}\mu(\dr x),\;\;\;\;\;\;\;\;\text{ where }\;\;\;\;\;\;\;\; M(\lambda) := \int_I e^{\lambda x}\mu(\dr x).
\end{equation} 
Then define the family of deterministic space-time functions $$D_{N,t,x}:= \exp\big(N^{\frac{2p-1}{4p}}x +\big[N^{-\frac{1}{4p}}d_N - N\log M(N^{-\frac1{4p}})\big]t\big).$$
\begin{defn}Let $P^\omega(r,\cdot)$ be as in \eqref{kn} and let $d_N$ be as in \eqref{dn}. Let $D_{N,t,x}$ be the deterministic sequence of space-time functions defined just above. For $t\in N^{-1}\mathbb Z$ we define the \textbf{rescaled density field }\begin{equation}\label{hn}\mathfrak H^N(t,\phi):= \int_I D_{N,t,N^{-1/2}(x-d_Nt)} \phi(N^{-1/2}(x-d_Nt))P^\omega(Nt,\dr x),\;\;\;\;\;\; \phi\in C_c^\infty(\mathbb R).\end{equation} 
\end{defn}Thus for each $t\in N^{-1}\mathbb Z_{\ge 0}$ we note that $\mathfrak H^N(t,\cdot)$ is a nonnegative Borel measure on $\mathbb R$, which is informally equal to the function given by $x\mapsto N^{1/2}D_{N,t,x} P^\omega(Nt,d_Nt +N^{1/2}x)$. The definition of $\mathfrak H^N(t,\cdot)$ is extended to $t\in \mathbb R_+$ by linearly interpolating, i.e., taking an appropriate convex combination of the two measures at the two nearest points of $N^{-1}\mathbb Z_{\ge 0}$. 

\begin{figure}[t]
    \centering
    \includegraphics[scale = 1]{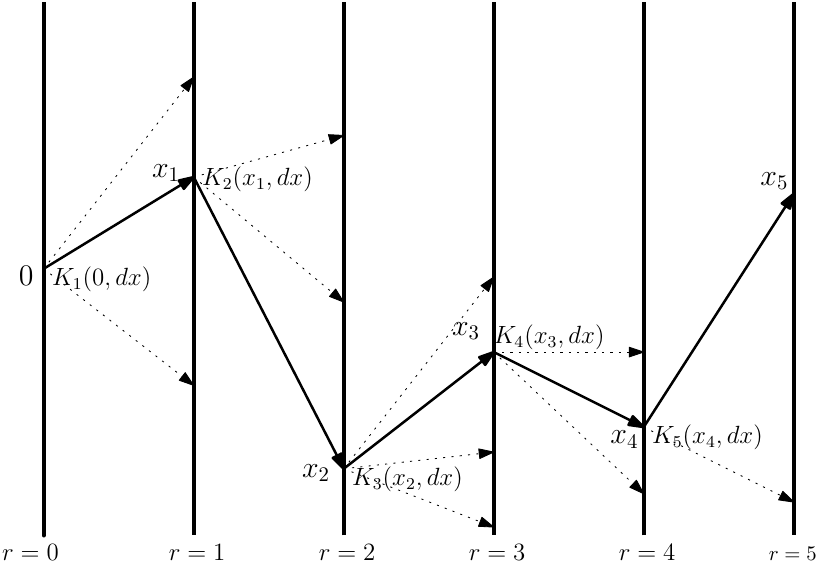}
    \caption{ An illustration of our generalized model of random walk in random environment. Throughout the paper, $r \in \mathbb Z_{\ge 0}$ will be used to denote the microscopic time variable. The random walker starts at position $x=0$ at time $r=0$, then samples $x_1$ from the \textit{random} probability measure $K_1(0,\dr x)$. After landing at position $x_1$ at time $r=1$, the random walker then samples $x_2$ from the probability measure $K_2(x_1,\dr x)$, and continues in this fashion. Each vertical line represents a copy of the underlying lattice $I= \mathbb Z$ or $I=\mathbb R$, and the Markov kernel $K_i$ should be thought of as the \textit{entire collection} of probability measures $\{K_i(x,\cdot)\}_{x\in I}$ which lives on the vertical line $r=i-1$. The random environment is then the whole collection of Markov kernels $\{K_i\}_{i=1}^\infty$ which in this paper are assumed to be independent of one another for distinct values of $i$, translationally invariant in law, and sampled from the same distribution on Markov kernels on $I$. The dotted arrows in the above picture represent any one of many possible jumps that were not actually executed by the random walker in this random environment, while the solid arrows represent the jumps that were executed.  }
    \label{sticky}
\end{figure}

\begin{thm}[Main result]\label{main2}
Let $\{K_n\}_{n\ge 1}$ denote an environment satisfying Assumption \ref{a1}, fix $T>0$, and let $\mathfrak H^N$ be as in \eqref{hn}. There is an explicit Banach space $X$ of distributions on $\mathbb R$, which is continuously embedded in $\mathcal S'(\mathbb R)$, such that the collection $\{\mathfrak H^N\}_{N \ge 1}$ is tight with respect to the topology of $C([0,T], X)$. Furthermore any limit point as $N\to \infty$ lies in $C((0,T],C(\mathbb R))$ and coincides with the law of the It\^o solution of the SPDE \begin{equation}\label{she}\partial_t \mathcal U(t,x) = \frac12\partial_x^2 \mathcal U(t,x) + \gamma_{\mathrm{ext}} \cdot \mathcal U(t,x)\xi(t,x),
\end{equation}started with initial data $\mathcal U(0,x)=\delta_0(x)$. Here $\xi$ is a standard space-time white noise on $\mathbb R_+\times \mathbb R,$ and $$\gamma_{\mathrm{ext}}^2:= \frac{ \frac{(-1)^{p+1}}{(2p)!}\int_{I} \bigg[ \int_{I^2}(x-y)^{2p}\mu(\dr x)\mu(\dr y)-\int_I (a-z)^{2p} \pdif(z,\mathrm da)\bigg]
\pi^{\mathrm{inv}}(\mathrm dz)}{\int_{I} \bigg[- |z|+\int_{I} |a| \; \pdif(z,\mathrm da)\bigg]\pi^{\mathrm{inv}}(\mathrm dz)}.$$
\end{thm}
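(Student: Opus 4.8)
The plan is to push the discrete-SPDE machinery of \cite{DDP23,DDP23+} to this generality: realize $\mathfrak H^N$ as the solution of a discrete stochastic heat equation, prove uniform moment bounds and tightness in the weighted space $X$, and identify every subsequential limit through the martingale problem for \eqref{she}, with the ergodic theory of the difference chain (Proposition \ref{inv8}) supplying the exact value of $\gamma_{\mathrm{ext}}$. One should note at the outset that the moments of the It\^o solution of \eqref{she} grow like $e^{cn^3t}$ and hence do not determine its law, so convergence of moments alone cannot close the argument; one genuinely needs tightness together with a martingale characterization.

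\emph{Discrete mild formulation and a priori bounds.} The weight $D_{N,t,x}$ and the recentering by $d_Nt$ amount to a Doob $h$-transform of each $K_n$ by the exponential $y\mapsto e^{\beta_N y}/M(\beta_N)$ at the moderate-deviation scale $\beta_N:=N^{-1/4p}$ (indeed $N^{\frac{2p-1}{4p}}\cdot N^{-1/2}=N^{-1/4p}$, and $d_N=N(\log M)'(\beta_N)$ is $N$ times the mean of the tilted step). Write $Q^N$ for the resulting random Markov kernel and $\boldsymbol q^{(k)}_N$ for its $k$-point correlation kernel $\E[Q^N(x_1,\cdot)\cdots Q^N(x_k,\cdot)]$. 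I would then derive a Duhamel identity $\mathfrak H^N(t)=\mathcal Q^N_{0\to t}\delta_0+\sum_{s<t}\mathcal Q^N_{s\to t}\,\Delta M^N_s$, where $\mathcal Q^N$ is the \emph{annealed} (hence deterministic) discrete heat flow of the tilted walk and each $\Delta M^N_s$ is a martingale increment in the filtration of $K_1,\dots,K_{Ns+1}$; since the one-step annealed law of the tilted walk is the exponential tilt of $\mu$, whose variance is normalized to $1$ by Assumption \ref{a1}\eqref{a22}, the flow $\mathcal Q^N_{s\to t}$ is, under diffusive rescaling, a candidate approximation of the Gaussian semigroup $P_{t-s}$. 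To bound $\E\big[\prod_{i=1}^m\mathfrak H^N(t,\phi_i)\big]$ uniformly in $N$ I would use the $m$-point motion of the tilted environment: the essential input is Assumption \ref{a1}\eqref{a24}, by which $\boldsymbol q^{(2)}_N$ differs from the product kernel only by $O(\Fd(r))$ when a pair of walkers is at separation $r$ (the tilt being an $o(1)$-perturbation), and $\int x\Fd(x)\,dx<\infty$ makes the pairwise interaction summable in the separation. As the separation of two tilted walkers is an $o(1)$-perturbation of the chain $\pdif$, whose occupation statistics on spatial scale $N^{1/2}$ over $Nt$ steps are governed by the ratio ergodic theorem of Proposition \ref{inv8}, the total pairwise interaction accrued is $O(1)$; a chaos/Gronwall argument then gives $\E[\mathfrak H^N(t,\phi)^m]\le C_{m,\phi}$ together with H\"older bounds in $t$, spatial regularity estimates, and the short-time smoothing of the $\delta_0$ datum, which feed a Kolmogorov criterion to yield tightness of $\{\mathfrak H^N\}$ in $C([0,T],X)$ (with $X$ a suitable weighted negative-regularity distribution space in which the Gaussian semigroup acts boundedly and the singular datum is admissible) and limit points supported on $C((0,T],C(\mathbb R))$.

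\emph{Identification of the limit and of $\gamma_{\mathrm{ext}}$.} Along a convergent subsequence I would pass to the limit in the Duhamel identity: $\mathcal Q^N_{s\to t}\to P_{t-s}$ follows from a local central limit theorem for the tilted walk in the window $\beta_N=N^{-1/4p}$ under the exponential-moment hypothesis \eqref{a22}, and it remains to identify the predictable bracket of the limiting martingale measure as $\gamma_{\mathrm{ext}}^2\int_0^t\langle\mathcal U(s)^2,\phi^2\rangle\,ds$. Discretely this bracket is a sum, over microscopic space-time sites, of the one-step \emph{quenched} variance of $Q^N$ and of its spatial covariances at nearby sites; Taylor-expanding $e^{\beta_N y}$ and using that the moment-fields of order $<p$ are deterministic (Assumption \ref{a1}\eqref{a23}), the leading part of these quenched (co)variances is $\beta_N^{2p}$ times exactly the integrand $\int_{I^2}(x-y)^{2p}\mu^{\otimes2}-\int_I(a-z)^{2p}\pdif(z,\dr a)$, summed over the separation $z$ of the pair, i.e.\ the ``variance deficit'' of the difference chain relative to two independent tilted walkers. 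Converting this microscopic sum over pair-collision configurations into the collision local time of two limiting Brownian motions is where Proposition \ref{inv8} is essential: the accumulated deficit-weighted collision count, divided by the appropriate normalization — the defect $|z|-\int_I|a|\pdif(z,\dr a)$ of the natural scale function $z\mapsto|z|$ of the difference chain, which is the conversion constant between discrete occupation measure and Brownian local time — converges almost surely to the ratio of $\pi^{\mathrm{inv}}$-integrals defining $\gamma_{\mathrm{ext}}^2$. One then shows separately that $m\ge3$-body collisions and the off-diagonal cross-variations of $\Delta M^N$ are negligible, controls the martingale CLT by a Lyapunov bound, matches the initial data (a concentration estimate together with the total-variation regularity of $\pdif$ from Assumption \ref{a1}\eqref{a16}), and invokes uniqueness of the It\^o solution of \eqref{she}, which pins down the unique limit point.

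\emph{The hardest part.} The crux — and the place where the hypotheses of \cite{DDP23+} genuinely have to be weakened — is the pair-collision analysis of the previous two steps in the presence of long-range spatial correlations of the environment. Without nearest-neighbor structure or spatial independence there is no exact regeneration for the walkers, so both the a priori bounds and the extraction of the sharp constant must run entirely through the (possibly non-reversible, infinite-mass) difference chain $\pdif$: this requires the ratio ergodic theorem of Proposition \ref{inv8}, the summability furnished by $\Fd$ with $\int x\Fd(x)\,dx<\infty$, and delicate error control ensuring that $m\ge3$-body collisions and the $\Fd$-tails stay uniformly negligible while the leading term still converges to the precise ratio of $\pi^{\mathrm{inv}}$-integrals. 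A secondary technical obstacle is proving the local CLT and Green's-function bounds for the tilted chain $Q^N$ in the moderate-deviation window under only the exponential-moment assumption \eqref{a22}, rather than under stronger regularity of $\mu$.
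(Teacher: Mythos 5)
Your proposal is correct and follows essentially the same route as the paper: the exponential tilt at scale $N^{-1/(4p)}$ of the $k$-point motion (the paper's measures $\mathbf P^{(\beta,k)}_\x$), a discrete Duhamel/martingale decomposition of $\mathfrak H^N$ driven by the environment martingales, moment and tightness bounds controlled through the mixing condition \eqref{a24} and the difference chain, identification of the quadratic variation as $\gamma_{\mathrm{ext}}^2$ times the time-integrated squared field via collision local time with the constant extracted from the ratio ergodic theorem for $\pi^{\mathrm{inv}}$ (with $|z|-\int|a|\,\pdif(z,\dr a)$ as the occupation-to-local-time normalization), and finally the martingale-problem characterization of \eqref{she} with the Dirac datum pinned by moment bounds. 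The only notable presentational difference is that the paper organizes the estimates around the observation that the tilted $k$-point motion is itself a time-homogeneous Markov chain (Proposition \ref{mkov}) and around the key self-consistency estimate for the quadratic variation field (Proposition \ref{4.1}), both of which your plan implicitly contains.
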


The explicit Banach space $X$ is given by a weighted H\"older space with negative exponent and polynomial weight (neither will be optimized). See Section 5 for more details.

The identification of the coefficient $\gamma_{\mathrm{ext}}^2$ for this general context is one of the main technical challenges and novelties of this work. As remarked earlier, identifying this constant was a challenge even for very simple examples of models driven by IID weights. We remark that (by using that the first $p-1$ moments are deterministic, and the binomial formula to expand the $(2p)^{th}$ power) another way to write the integrand in the numerator is \begin{align*}\frac{(-1)^{p+1}}{(2p)!}\bigg[ &\int_{I^2}(x-y)^{2p}\mu(\dr x)\mu(\dr y)-\int_I (a-z)^{2p} \pdif(z,\mathrm da)\bigg]\\&= - (p!)^{-2}  \bigg[m_{p}^2-\int_{I^2} (x-x_0)^p(y-y_0)^p \; \boldsymbol p^{(2)}((x_0,y_0),(\dr x,\dr y))\bigg] \\ &= (p!)^{-2}  \mathrm{Cov}\bigg( \int_I (x-x_0)^p K_1(x_0,\dr x), \int_I (y-y_0)^p K_1(y_0,\dr y)\bigg) ,\;\;\;\;\;\; \text{ where } x_0-y_0=z.
\end{align*}
In the second line, we use the fact that all terms except the middle terms are zero by the deterministic moments in Assumption \ref{a1} Item \eqref{a23}. We will give for each $p\in \mathbb N$ a nontrivial example of a model where Theorem \ref{main2} is applicable. In \cite{DDP23+} we had coined the term ``environmental variance coefficient" for $\gamma_{\mathrm{ext}}$, which in that paper was $\frac{8\sigma^2}{1-4\sigma^2}$ for the specific choice of model discussed earlier. In the expression for $\gamma_{\mathrm{ext}}^2$, one may show from Assumption \ref{a1} that the integrand in both the numerator and denominator are exponentially decaying functions of the variable $z$, thus by Proposition \ref{inv8} they are both finite quantities. Jensen's inequality and the full support of $\pi^{\mathrm{inv}}$ imply that the denominator is nonvanishing, thus $\gamma_{\mathrm{ext}}^2$ is always a finite value. It will be an artifact of the proof of the above theorem that the numerator of the expression for $\gamma_{\mathrm{ext}}^2$ cannot be a negative value (hence there is no contradiction) see e.g. Proposition \ref{mcts}(4) below. With this said, we are not sure if $\gamma_{\mathrm{ext}}^2$ can be zero under the above assumptions, though one may directly show $\gamma_{\mathrm{ext}}^2>0$ for many models of interest. 

\subsection{Context and discussion about the result} \label{crossover} Besides the fact that the limit is given by the multiplicative-noise stochastic heat equation \eqref{she}, there are two other aspects of the above result that are worth noting.
\begin{enumerate}[leftmargin=18pt]
    \item As \eqref{hn} suggests, these KPZ fluctuations are obtained by probing the tail of the probability distribution $P^\omega(r,\dr y)$ near spatial location $y= d_N t+N^{1/2}x$  at time $r = Nt$, where $N$ is very large. It is not so surprising that the spatial variable $x$ needs a factor $N^{1/2}$ at time of order $N$, since this scaling respects the parabolic structure of the limiting equation. However the factor $d_N$ in front of $t$ is more mysterious and eluded a rigorous understanding until recently. This drift term of course begs the question of whether or not there are other locations in the tail of $P^\omega(Nt,\cdot)$ where nontrivial fluctuation behavior can be observed. 
    
    It is now known that $d_Nt $ is the \textit{unique} spatial location at which one expects the KPZ equation to arise. If one were to probe the tail of $P^\omega(Nt,\cdot)$ at a spatial location of order that is strictly smaller than $d_Nt$, one will instead obtain a Gaussian field as the limit, as evidenced by works \cite{timo2, yu, timo}. These Gaussian fluctuations can be proved for the present model using similar techniques to the ones developed in the present work, see \cite{HP25}.
    
    On the other hand, when $d_Nt$ is replaced by a larger quantity (i.e., probing even deeper into the tail of  $P^\omega(Nt,\cdot)$) it was shown in \cite{bc,mark} that under certain exactly solvable stochastic kernels $K_i$, one obtains the Tracy-Widom GUE distribution as the one-point fluctuations. The Tracy-Widom GUE distribution is the one-point marginal of a space-time process called the \textit{directed landscape} \cite{MQR,dov}, a recently constructed universal scaling limit of models in the KPZ universality class. We expect that for all models of stochastic kernels as above and all locations in the tail of $P^\omega(Nt,\cdot)$ that are strictly larger than $d_N t$, we have the directed landscape as the scaling limit. However, this is still a difficult open problem and far beyond the scope of the present work. 

    Consequently, the spatial location $d_N$ is where one observes the ``crossover" from Gaussian to non-Gaussian fluctuation behavior, and the Hopf-cole solution of the KPZ equation describes this crossover.
    
    \item There is a nontrivial \textit{environmental variance coefficient} $\gamma_{\mathrm{ext}}$ appearing in front of the noise, which depends on the weights only through their two point motions $\boldsymbol p^{(2)}$. 
    {We shall see later in our proofs that this coefficient arises through interaction of the two particles of the two-point motion generated by the stochastic kernels. Theorem \ref{main2} thus implies that under the scaling \eqref{hn}, the information about the kernels that survives in the limit is only up to the two point motion. Even then, only certain specific observables of the two-point motion are detected by the limiting equation \eqref{she}. But any finer information about higher-order particle interactions will vanish in the limit. This question is related to physics literature of extreme diffusion theory \cite{ldt, ldt2, bld, hass23,ldb,hass23b,lda, hindy, hass24+}. } 
\end{enumerate}

Works such as \cite{dom, hindy, DDP23+} suggest that for particular models the Markov chain $\pdif$ is reversible with respect to $\pi^{\mathrm{inv}}$, and that the coefficient $\gamma_{\mathrm{ext}}^2$ can be simplified further into nicer forms. For example, for the model of \cite{DDP23+} explained earlier, the Markov chain $\pdif$ just evolves as simple symmetric random walk on $\mathbb Z$ except that the origin is more attractive, hence $\pi^{\mathrm{inv}}$ is counting measure on $\mathbb Z$ with some extra mass at the origin which can be calculated explicitly by solving the detailed balance equations. In the present work we do not explore the general question of trying to massage the expression for $\gamma_{\mathrm{ext}}^2$ into alternative forms.

One of the novel aspects of our result is that we have not assumed very strong spatial mixing conditions on the kernel $K_1$, only the much weaker condition \eqref{a24} in Assumption \ref{a1}, which states that fast spatial mixing holds \textit{only} up to the first $4p$ joint moments. In particular, we allow the model to have ``infinite range of interaction." 
As we will see in Section 6, Theorem \ref{main2} includes as special cases our main results from \cite{DDP23+,DDP23}, with the caveat that we could prove additional interesting results in those papers thanks to the specific structure (see Remark \ref{qtf} below). 
As noted above, these generalized stochastic kernel models may see a nontrivial interaction among nearby particles, which would include models such as the transport SPDE from \cite{dom}. The main purpose of the present work is to illustrate the scope and generality of our methods from \cite{DDP23,DDP23+}, showing that they can still be applied in the above setting. It would be difficult to implement e.g. a polynomial chaos in this setting because we do not assume spatial independence, only fast decay of correlations up to (4$p)^{th}$-order joint moments of $K_1(x,\cdot)$, as in Assumption \ref{a1} Item \eqref{a24}. But even more crucially, it is unclear what the chaos variables would be under the minimal nature of Assumption \ref{a1}. 

Next, we give some further remarks about how to understand the result, and further problems about the quenched density that still remain open.

\begin{rk}\label{1.5} (Drift term and crossover exponents) Let us comment on the drift term $d_N$ from \eqref{dn}. By Taylor expansion at the origin of the function $\lambda \mapsto \int_I xe^{x\lambda-\log M(\lambda)}\mu(\dr x)$ we see that to leading order, $$d_N=m_1N + N^{\frac{4p-1}{4p}} + \tfrac12(m_3 -3m_1m_2+2m_1^3)N^{\frac{4p-2}{4p}}+ \tfrac16 (m_4-4m_1m_3 +6m_1m_2 -3m_1^4 - 3) N^{\frac{4p-3}{4p}} 
+ O(N^{\frac{4p-4}{4p}}).$$ The terms in the expansion are just the cumulants of $\mu$. In Theorem \ref{main2} it is actually allowable to replace $d_N$ by $\tilde d_N$, where the latter is given by truncating the above expansion once it reaches scales smaller than $N^{1/2}$, as that is the spatial fluctuation scale in Theorem \ref{main2}. If $p=1$ and $m_1=0$, then $\tilde d_N=N^{3/4}+\frac12 m_3 N^{1/2},$ because these are the only terms of order $N^{1/2}$ and higher, and subleading order terms may thus be disregarded. 
However if $p=2$ and $m_1=0$ the leading order term is $N^{7/8}$ but \textit{there are still additional relevant terms} of order $N^{3/4}$ and then $N^{5/8}$ which cannot be disregarded as these surpass the spatial scaling of order $N^{1/2}.$ Likewise, for larger values of $p$, there will be increasingly many subleading terms that contribute to the correct drift term. For instance when $p=20$ there may be as many as 39 terms of subleading order beyond the leading order term $N^{79/80}$ that are still relevant in the recentering, and the coefficients of these terms would depend in some complicated way on the first 41 moments of $\mu$. This is why we have simply chosen to write the simpler non-expanded form of $d_N,$ as opposed to previous special cases \cite{DDP23, DDP23+}. This idea of going out to larger scales to see the KPZ equation arising is very much reminiscent of \cite[Theorem 1.1]{HQ15}, where the authors observed a similar hierarchy of non-equivalent scalings leading to the KPZ equation, albeit in a very different context than stochastic flow models. It is also reminiscent of \cite{HS}, where the authors similarly observe cumulant-based techniques for certain models of non-Gaussian polymers to obtain the KPZ equation as the scaling limit.

In this sense, Theorem \ref{main2} shows that $\frac{4p-1}{4p}$ is the \textbf{crossover exponent} of this model, that is, the unique exponent where one observes the crossover from Gaussian to non-Gaussian behavior (since $d_N-m_1N$ is given by $N^{\frac{4p-1}{4p}}$ to leading order). The question of calculating these crossover exponents is physically interesting, and Theorem \ref{main2} thus provides a fairly complete picture of finding the crossover exponent in one-dimensional random walks in random media, as well as the question of calculating the environmental variance coefficient in full generality. Interestingly the possible crossover exponents form a discrete set $\{3/4, 7/8, 11/12, 15/16,...\}$. While discussing the subject of the drift term and the crossover exponents, let us now acknowledge two concurrent physics works that are strongly related to this work, and provided much of the motivation for it. 

While we were initially only aware of the case $p=1$, it was observed by J. Hass \cite{hass24+} that for certain types of models satisfying ``symmetry up to order $p$" one expects different locations than $N^{3/4}$ at which the KPZ equation appears in the tail, for instance $N^{7/8}$ or $N^{11/12}.$  That paper observes that the fluctuations would approach the deterministic heat equation if, for instance, $p=2$ but one probes into location $N^{3/4}$ within the tail of the quenched density $P^\omega (Nt,\cdot)$ rather than $N^{7/8}$ as in Theorem \ref{main2}. That paper furthermore establishes the conjecture of super-universality of KPZ arising for different values of $p$, based on a more physical argument (in the spirit of earlier work of \cite{ldt, bld} on the 3/4 case). For the 7/8 case, \cite{hass24+} is also able to numerically validate his predictions and show they hold to reasonable size of $N$.  This paper of Hass provided the inspiration for us to consider the $p>1$ case in the present work.

Another physics paper \cite{hindy} argues based on convergence of second moments to those of the SHE for the convergence to the KPZ equation just in the 3/4 exponent case $(p=1)$. That paper also includes a very nice interpretation for the coefficient $\gamma_{\mathrm{ext}}$, translates the convergence result into the language of extreme diffusion theory, and numerically validates the asymptotic predictions down to rather small values of $N$. 
\end{rk}

\begin{rk}[Optimality questions]\label{opt}
    In Condition \eqref{a22} the normalization condition $m_2-m_1^2=1$ is not crucial, one can always change the lattice spacing to make this condition true (e.g. replace $I=\mathbb Z$ by $I=a\mathbb Z$ for the appropriate value of $a>0$, and rescale the kernels accordingly). For the limiting equation, this rescaling just corresponds to changing the diffusion coefficient $\frac12$ to some other value in \eqref{she}. 
    
    On the other hand, we are not sure if the exponential moments condition on $\mu$ can be relaxed without changing the outcome of Theorem \ref{main2}, for example in polymers just six moments is enough \cite{akq, deyzygouras}. However in our model we rely on the moment generating function more crucially and we are unsure if there are more optimal conditions. Certainly at least $4p$ moments is necessary since this many moments appear in all of the cumulant expansions and the statment of Theorem \ref{main2}, but we have no conjecture about optimality.

    Another question is that since the law of the limiting SPDE is supported on $C((0,T],C(\Bbb R))$ if it is possible to upgrade the convergence in Theorem \ref{main2} to this topology. We showed in \cite{DDP23+} that this is impossible. This is intimately related to the failure of the chaos expansion technique for this model and the failure of the noise coefficient of the limiting SPDE to match the noise coefficient of the prelimiting model (both of which were also explained there). See the end of Subsection 6.3 for more discussion on this. Thus, we conclude that weak convergence in a topology of tempered distributions is the best that one may hope for in Theorem \ref{main2}.

    Another interesting question is if the decay of $\Fd$ in Assumption \ref{a1} Item \eqref{a24} can be replaced by a weaker decay such as just being decreasing and integrable, e.g., $\int_0^\infty \Fd(x)\dr x<\infty$. Although we have not tried to optimize this assumption, we believe that the latter condition is the truly optimal one. The only place we use the stronger condition is the proof of Proposition \ref{errbound}, where it is needed to control one of the error terms coming from the martingale equation. 

    An even more interesting question would be to introduce some weak \textit{temporal} correlation between the stochastic kernels $K_i$, rather than assuming they are independent as we do here. This would require a completely different proof than the one given here, as the $k$-point motion would lose the Markov property which is crucial in all of our calculations. This may be explored in future work. 
\end{rk}

\begin{rk}[Quenched tail field] \label{qtf} While we have defined the rescaled field \eqref{hn} as a distribution-valued field (i.e., only well-defined when integrated against spatial test functions), one might ask about function-valued counterparts. This leads us to define the \textit{quenched tail field}. For $t\in N^{-1}\mathbb Z_{\ge 0}$ and $x\in \mathbb R$ we define the quenched tail field by $$F_N(t,x):= N^{\frac{2p-1}{4p}} D_{N,t,x} P^\omega(Nt, [d_Nt +N^{1/2}x,\infty)),$$
with $P^\omega$ as in \eqref{kn}. Then we may prove the following result.
  
Fix $m\in \mathbb{N}$, and $\phi_1,\ldots,\phi_m\in C_c^{\infty}(\R)$. Consider sequences $t_{N,1},\ldots,t_{N,m} \in N^{-1}\mathbb Z_{\ge 0}$ such that $t_{N,i} \to t_i>0$ for all $1\le i \le m$ as $N\to\infty.$ Then
\begin{align*}
	\bigg( \int_\mathbb R \phi_i(x) F_N(t_{N,i},x) \dr x\bigg)_{i=1}^m \stackrel{d}{\to} \bigg( \int_\mathbb R \phi_i(x) \mathcal U_{t_i}(x) \dr x\bigg)_{i=1}^m.
\end{align*}
Here $(t,x)\mapsto \mathcal U_t(x)$ is the solution of \eqref{she}.

The proof of this follows using the same exact arguments used in Sections 7 of \cite{DDP23} and \cite{DDP23+}, first proving that $\mathbb E[F_N(t,x)]  \leq 4(\pi t)^{-1/2}$ and then proceeding exactly as we did in those papers using the result of Theorem \ref{main2} and integrating by parts.

While proving convergence with test functions is interesting, one might ask about multi-point convergence in law of $F_N$ at some finite collection of individual points $\{(t_{N,i},x_{N,i})\}_{i=1}^m$ converging as $N\to \infty$ to $\{(t_i,x_i)\}_{i=1}^m$. This remains an open problem, as we do not have the estimates necessary to prove this result in the present generalized context. In \cite{DDP23,DDP23+} we were actually able to prove this multi-point convergence from the above test function result by using additional structure of those specific models. This is because we were able to exploit the exact solvability of the 2-point motion of those models, which is very powerful tool when used in conjunction with the more analytic ideas here. This illustrates the payoff of the generality of our result. We can obtain Theorem \ref{main2} but the additional interesting results proved in \cite{DDP23,DDP23+}  (e.g. extremal particle limit theorems and creation of independent noise) remain out of reach at this level of generality.
\end{rk}

\subsection{Main ideas of the proof}

Previous results about the fluctuations of $P^\omega$ were proved either using cluster expansion techniques or by using the environment seen from the particle \cite{BMP1,BMP2, timo2, yu}. The method of the present work will be different, instead capitalizing on the discrete SPDE technique. More specifically, we show that the prelimiting model \eqref{hn} satisfies a discretized version of the \textit{martingale problem} for \eqref{she}. This discretized martingale problem is in turn based on Girsanov's formula. This Girsanov technique was first discovered independently in the papers \cite{dom, DDP23}, but in the present work we have numerous additional difficulties coming from the general assumptions.

We now explain the discrete Girsanov formula. We illustrate this idea by computing the first moment of $\mathfrak H^N$ as defined in \eqref{hn} and showing that it converges to the first moment of the SHE.  This computation contains the core idea that will be used in many of the later proofs. Recall the measure $\mu$ from Assumption \ref{a1} Item \eqref{a22} and its moment generating function $M(\lambda)$ from \eqref{dn}. For a random walk $(R_r)_{r\ge 0}$ on $I$ starting from the origin with increment distribution $\mu$, the process $$e^{\lambda R_r - r\log M(\lambda)} = \frac{e^{\lambda R_r}}{\mathbb E[e^{\lambda R_r}]}$$ is a martingale in the variable $r\ge 0$, simply because it is a product of $r$ IID strictly positive mean-one random variables $e^{\lambda \xi_i}/\mathbb E[e^{\lambda \xi_i}]$ where the $\xi_i$ are the increments of the process $(R_r)_{r\ge 0}$. Given this fact, let us now fix some Schwartz function $\phi$ and compute the first moment of the pairing $(\mathfrak H^N(t,\cdot),\phi)_{L^2(\mathbb R)}=: \mathfrak H^N(t,\phi).$ Note that for $t\in N^{-1}\mathbb Z_{\ge 0}$
\begin{align}\notag \mathfrak H^N(t,\phi) &=  E^\omega\big[ D_{N,t,N^{-1/2}(R_{Nt}- d_N t)}\phi\big( N^{-1/2}(R_{Nt}- d_N t)\big)\big],
\end{align}where $E^\omega$ denotes a quenched expectation operator given the realization of the environment kernels $\omega= (K_r)_{r=0}^\infty$. Thus after taking the \textit{annealed} expectation (that is, averaging over all possible environments $\omega$) we have $$\mathbb E[\mathfrak H^N(t,\phi)] = \mathbf E\big[D_{N,t,N^{-1/2}(R_{Nt}- d_N t)}\phi\big( N^{-1/2}(R_{Nt}-d_N t)\big)\big],$$ where now the expectation on the right is with respect to a random walk path on $I$ with increment distribution $\mu$.
Notice that $$D_{N,t,N^{-1/2}(R_{Nt}- d_N t)} = e^{N^{-\frac1{4p}}R_{Nt} - Nt\log M(N^{-\frac1{4p}})}.$$
We already know that $e^{N^{-\frac1{4p}}R_{Nt} - Nt\log M(N^{-\frac1{4p}})}$ is a martingale in the $Nt$ variable and thus has mean 1, so it can be interpreted as a change of measure where the annealed law of the increments of $R_r$ has changed from the usual law $\mu$ to the new law given by $\mu^{N^{-\frac1{4p}}}(\dr x) = e^{N^{-\frac1{4p}} x - \log M(N^{-\frac1{4p}})} \mu(\dr x)$. This new law has mean equal to $N^{-1}d_N$ with $d_N$ defined by \eqref{dn}. Consequently, under the new law the process $N^{-1/2}(R_{Nt}-d_Nt)$ is centered, and by Donsker's principle will converge in law to a standard Brownian motion. Denoting $\widetilde{\mathbf E}$ as {the tilted expectation on the path space of the random walk}, we have that
$$\mathbb E[\mathfrak H^N(t,\phi)] = \widetilde{\mathbf E}[ \phi(N^{-1/2}(R_{Nt}- d_Nt))]\stackrel{N\to\infty}{\longrightarrow} \mathbf E_{\mathrm{BM}}[\phi(B_t)]= \int_{\mathbb R} p_t(x)\phi(x)\dr x$$ where $\mathbf E_{\mathrm{BM}}$ denotes expectation with respect to a standard Brownian motion $B$. Here $p_t(x) = (2\pi t)^{-1/2}e^{-x^2/2t}$ is the standard heat kernel. The right-hand side equals $\mathbb E \big[ \int_\mathbb R \mathcal U_t(x)\phi(x)\dr x \big],$
where $(t,x)\mapsto \mathcal U_t(x)$ solves \eqref{she} with initial condition $\delta_0$. This shows that the first moment of $\mathfrak H^N(t,\cdot)$ converges to the first moment of the SHE as desired. While the calculation for higher moments is more complicated and involves analyzing the so-called $k$-point motion, the core idea of using a ``discrete Girsanov transform" is rather similar, and these higher moments will be computed in Section 3. 

This $k$-point motion will play an extremely important role in the analysis, that is, the Markov chain on $I^k$ with transition kernel $\boldsymbol p^{(k)}$ from Assumption \ref{a1}. One of the key observations of the paper is that even after applying the Girsanov transform to this Markov chain, the tilted process $\qdif$ is \textit{still a time-homogeneous Markov chain}, see Corollary \ref{mkov}. This was not observed in our previous works \cite{DDP23,DDP23+}, which instead used a workaround method. Thus, much of the bulk of the paper consists of fairly standard estimates for these tilted Markov chains, extracting information and estimates from the associated Dynkin martingales.

Once the moments are computed using this method, in Section \ref{hopf} we derive a discrete martingale-driven SPDE for the field in \eqref{hn}. The martingale observables arising from this SPDE can also be analyzed using the aforementioned Girsanov-type transforms. In particular, this Girsanov trick yields moments and regularity estimates for our observables which eventually leads to tightness estimates for our field. Sufficiently strong estimates using this method will eventually lead us to the proof of Theorem \ref{main2}, by showing that any limit point must satisfy the martingale problem for \eqref{she}. 
Remarkably, this model has the property that the error terms appearing in the discrete martingale equation behave very nicely in relation to the original object itself, which is rare among KPZ-related models where martingale characterizations have been used, see e.g. \cite{BG97, dembo, GJ14, yang23} where an extremely careful analysis was needed to show vanishing of error terms for exclusion-type models.

Let us briefly comment on similarities and differences of this paper with previous works \cite{DDP23,DDP23+} in this area. Both of those papers were concerned with a particular special case of a model satisfying Assumption \ref{a1} with $p=1$. In the case $p=1$, there is a trick to control the tilted processes arising from the Girsanov transform, in which one can write the tilted measure in terms of an auxiliary process that is very easy to control, modulo a Radon-Nikodym derivative that stays tight in $C[0,T]$ as $N\to \infty$. This trick simplifies many calculations. For $p>2$ this trick fails completely as the Radon-Nikodym derivatives with respect to any candidate of an auxiliary process will simply fail to be tight. Consequently we cannot apply this trick, and instead we need to directly study the tilted processes arising from the $k$-point motion, using the fact that they are Markovian as mentioned above. This will be done in Sections 2 and 3 and is one of the main technical novelties in the paper. These sections are also the places where the identification of this coefficient $\gamma_{\mathrm{ext}}^2$ will be done. Once this is done, the remainder of the proof of convergence will follow the approach of \cite{DDP23,DDP23+}, although finding a discrete martingale-driven SPDE for the general model is of course more involved than for any particular model. Many of the arguments need to be adapted to the case of general $p$-values which sometimes leads to additional calculations necessary in the proofs. As we already explained in Remark \ref{qtf}, a further difference with those papers is that the specific models studied there had nice properties that often led to additional interesting results that we do not pursue here. One key difference of the present model with those of \cite{DDP23,DDP23+} is the lack of exact solvability of the two-point motion, which is enjoyed by the specific models of those works.
\\
\\
\textbf{Outline:} In Section 2 we will define the $k$-point motion, a Markov chain on $I^k$ associated to the kernels $(K_r)_{r\ge 0} $ that is instrumental in proving Theorem \ref{main2}. In Section 3 we will continue to study the $k$-point motion and in particular prove convergence theorems about it under diffusive scaling. In Section 4 we will begin studying the field \eqref{hn} and in particular show that it satisfies a discrete version of \eqref{she}. Section 5 will then use the results of Section 4 to complete the proof of Theorem \ref{main2}. In Section 6 we will introduce a number of models, both discrete and continuous, and verify Assumption \ref{a1} for these models. 
\\
\\
\textbf{Notations:} We use several different notions for probability and expectation in this paper. We will use $\mathbb E$ for \textit{annealed} expectations with respect to the environment kernels $K_i$ of Assumption \ref{a1}. We will use $E^\omega_{(k)}$ for quenched expectation of $k$ independent random walkers, each of which uses the kernels $K_i$ to transition to the next step. Thus these are random measures that are a function of the kernels $K_i$ We will use a boldface $\mathbf E$ for expectations with respect to stochastic processes on the path space $(I^k)^{\mathbb Z_{\ge 0}}$. Typically these measures will arise from annealed expectations over the quenched path measures, e.g. $\mathbf E_{{RW}^{(k)}} = \mathbb EE^\omega_{(k)}$ in the notation of Section 2 below. These will be deterministic measures.
\\
\\
\textbf{Acknowledgements:} We thank the two anonymous referees for an extremely detailed reading of the paper. We thank Ivan Corwin for suggesting the problem of proving very general invariance principles for these random walk models in random environments, and for interesting discussions about the landscape model in Section 6.2. We thank Hindy Drillick and Sayan Das for many interesting conversations over the years related to the ideas presented herein. We thank Jacob Hass for suggesting that nontrivial behavior could be found with higher-order symmetry parameters $p>1$. We thank Yu Gu and B\'alint Vir\'ag for interesting discussions surrounding the question of chaos and pointwise fluctuations, left for future work. We thank Jeremy Quastel for discussions about the hierarchy of non-equivalent instances of the KPZ equation arising in \cite[Theorem 1.1]{HQ15}.

\section{The $k$-point motion and its Girsanov tilts} 

In this section, we introduce and study the $k$-point motion of the stochastic flow introduced in Assumption \ref{a1}, and we prove various properties about it under certain exponential Girsanov transforms that are relevant for the proof of Theorem \ref{main2}.

\begin{defn}[$k$-point motion] 
Fix $k\in \Bbb N$. We define the path measure $\mathbf P_{\mathrm{RW}^{(k)}}$ on the canonical space $(I^k)^{\mathbb Z_{\ge 0}}$ to be the \textit{annealed} law of $k$ independent walks (all started from the origin) sampled from  kernels $K_n$ satisfying Assumption \ref{a1}. 
\end{defn}

We remark that the canonical process $\mathbf R= (\mathbf R_r)_{r\ge 0}$ is a Markov chain on $(I^k)^{\mathbb Z_{\ge 0}}$ under $\mathbf P_{\mathrm{RW}^{(k)}}$, whose one-step transition probabilities are precisely $\boldsymbol p^{(k)}(\mathbf x,A) := \int_A \boldsymbol p^{(k)}(\mathbf x,\mathrm d\mathbf y)$ where the latter was defined in Assumption \ref{a1} Item \eqref{a23}. This Markov chain is called the \textbf{$k$-point motion}. The most basic property of the $k$-point motion is that it is \textit{projective}: the temporal evolution of any deterministic subset consisting of $\ell<k$ coordinates behaves as the $\ell$-point motion, which is immediate from the definition. In particular, each coordinate $R^j$ of the $k$-point motion is marginally distributed as a random walk on $I$ (i.e., has independent increments) with increment distribution $\mu$ as defined in Assumption \ref{a1}\eqref{a22}. 
However, the marginal law of a pair $(   R^i,   R^j)$ of distinct coordinates will not be as simple, for instance, it may not have independent increments. Nonetheless, one can say the following.

\begin{lem}\label{diff0}
    For any $1\le i,j\le k$ with $i\neq j$, the process $R^i-R^j$ under $\mathbf P_{\mathrm{RW}^{(k)}}$ is a Markov chain with transition kernel $\pdif$ as defined in Assumption \ref{a1} Item \eqref{a16}.
\end{lem}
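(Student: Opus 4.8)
I would deduce the lemma from the standard principle that a deterministic function of a Markov chain is again Markov whenever the pushed-forward one-step transition probabilities depend on the current state only through that function. Set $D_r := R^i_r - R^j_r$ and let $\mathcal F_r := \sigma(\mathbf R_0,\dots,\mathbf R_r)$ denote the filtration of the full $k$-point motion. The key reduction is the identity
\begin{equation}\label{eq:diff0key}
\boldsymbol p^{(k)}\big(\x,\{\y\in I^k:\,y_i-y_j\in A\}\big)=\pdif(x_i-x_j,A)\qquad\text{for every }\x\in I^k\text{ and Borel }A\subseteq I.
\end{equation}
Granting \eqref{eq:diff0key}, the Markov property of $\mathbf R$ under $\mathbf P_{RW^{(k)}}$ gives $\mathbf P_{RW^{(k)}}(D_{r+1}\in A\mid\mathcal F_r)=\boldsymbol p^{(k)}(\mathbf R_r,\{y_i-y_j\in A\})=\pdif(D_r,A)$; since the right side is measurable with respect to the smaller $\sigma$-algebra $\mathcal G_r:=\sigma(D_0,\dots,D_r)$, the tower property upgrades this to $\mathbf P_{RW^{(k)}}(D_{r+1}\in A\mid\mathcal G_r)=\pdif(D_r,A)$, which is precisely the claim. (Here $\pdif(x,I)=\boldsymbol p^{(2)}((x,0),I^2)=1$, so $\pdif$ is indeed a Markov kernel, and $x\mapsto\pdif(x,A)$ is measurable by Assumption~\ref{a1} Item~\eqref{a16}.)

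\textbf{Proof of \eqref{eq:diff0key}.} I would argue in two steps. First, consistency of the family $\{\boldsymbol p^{(k)}\}$: integrating out all coordinates except the $i$-th and $j$-th in $\boldsymbol p^{(k)}(\x,\dr\y)=\E[\prod_{\ell=1}^k K_1(x_\ell,\dr y_\ell)]$ and using $K_1(x_\ell,I)=1$ shows that the left side of \eqref{eq:diff0key} equals $\boldsymbol p^{(2)}((x_i,x_j),\{(y_1,y_2):y_1-y_2\in A\})$ --- this is the fact, already remarked in the text, that $(R^i,R^j)$ is distributed as $\mathbf P_{RW^{(2)}}$. Second, spatial translational invariance: applying the shift $\tau_a$ inside the expectation defining $\boldsymbol p^{(2)}$ (Assumption~\ref{a1} Item~\eqref{a12}) yields $\boldsymbol p^{(2)}((x_1+a,x_2+a),(A_1+a)\times(A_2+a))=\boldsymbol p^{(2)}((x_1,x_2),A_1\times A_2)$ for all $a\in I$, and since the event $\{y_1-y_2\in A\}$ is invariant under the diagonal translation, taking $a=-x_j$ collapses $\boldsymbol p^{(2)}((x_i,x_j),\{y_1-y_2\in A\})$ to $\boldsymbol p^{(2)}((x_i-x_j,0),\{y_1-y_2\in A\})$, which is $\pdif(x_i-x_j,A)$ by the definition of $\pdif$ in Assumption~\ref{a1} Item~\eqref{a16}. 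This establishes \eqref{eq:diff0key}; note that the argument is insensitive to the initial law of $\mathbf R_0$, so the conclusion holds for the $k$-point motion started from any configuration.

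\textbf{Main obstacle.} I do not expect a genuine difficulty here: the lemma is a bookkeeping consequence of translational invariance combined with the lumping identity \eqref{eq:diff0key}. The one point that deserves care is the distinction between $\mathcal F_r$ and $\mathcal G_r$ in the Plan --- the Markov property of $(D_r)_{r\ge0}$ must be verified relative to its own filtration $\mathcal G_r$, and the tower-property step succeeds precisely because \eqref{eq:diff0key} exhibits the conditional law of $D_{r+1}$ as a function of $D_r$ alone. It is exactly the spatial translational invariance that produces this collapse; without it, the conditional law of $D_{r+1}$ given $\mathcal F_r$ would in general depend on the full vector $\mathbf R_r$ and $(D_r)$ would fail to be Markov.
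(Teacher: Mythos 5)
Your proof is correct and fills in exactly what the paper leaves implicit when it writes only "the proof is straightforward from the definitions": the lumping identity \eqref{eq:diff0key}, obtained by marginalizing $\boldsymbol p^{(k)}$ down to coordinates $i,j$ and then exploiting the diagonal translation invariance from Assumption~\ref{a1} Item~\eqref{a12}, is the whole content, and your tower-property step to pass from $\mathcal F_r$ to $\mathcal G_r$ is the right way to conclude the Markov property. No gap.
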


The proof is straightforward from the definitions. 

\begin{defn}\label{cumu}
    Let $m,k\in \mathbb N$ and $\mathbf j=(j_1,...,j_m) \in \mathbb \{1,...,k\}^m$. For $\mathbf x, \mathbf y\in I^k$ define the joint cumulants $$\kappa^{(m;k)}(\mathbf j;\mathbf x) := \frac{\partial^m}{\partial\beta_{j_1} \cdots \partial \beta_{j_m}}  \log \int_{I^k} \exp \bigg( \sum_{i=1}^k \beta_i (y_i-x_i) \bigg) \boldsymbol p^{(k)} (\mathbf x, \mathrm d \mathbf y)\;\;\bigg|_{(\beta_1,...,\beta_k)=(0,...,0)},$$ where the vertical bar denotes evaluation of the function at the origin.
\end{defn}

When taking the above partial derivatives, we remark for clarity that the indices $j_i$ may be repeated as many times as desired.
\begin{lem}\label{prev}
    Suppose that $m,k\geq 2$ and $\mathbf j=(j_1,...,j_m) \in \mathbb \{1,...,k\}^m$ such that all $j_i$ are not the same index, i.e., $(j_1,...,j_m)$ is not of the form $(j,j,...,j)$ for some $j=1,...,k$. Let $p\in \mathbb N$ be as in Assumption \ref{a1} Item \eqref{a23}. 
    \begin{enumerate}\item If $m< 2p$ then $\kappa^{(m;k)}(\mathbf j;\mathbf x)=0$
    \item If $m= 2p$ then $\kappa^{(m;k)}(\mathbf j;\mathbf x)=0$, unless the set of indices $\{j_1,...,j_m\}=\{a,b\}$ has cardinality exactly two where exactly $p$ of the indices $j_i$ are equal to $a$ (and thus the other $p$ indices are equal to $b$).
    \item If $2p\leq m\leq 4p$, then there exists $C>0$ such that $$|\kappa^{(m;k)}(\mathbf j;\mathbf x)| \leq C \cdot \Fd \big(\min_{1\leq i <j \leq k} |x_i-x_j|\big). $$
    \end{enumerate}
\end{lem}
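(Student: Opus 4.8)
Here is the approach I would take.

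The plan is to reduce all three parts to a single expansion of the annealed joint cumulant. Let $\mathbf Y\sim\boldsymbol p^{(k)}(\mathbf x,\cdot)$ and set $Z_a:=Y_a-x_a$; by Definition \ref{cumu}, $\kappa^{(m;k)}(\mathbf j;\mathbf x)$ is exactly the joint cumulant of $Z_{j_1},\dots,Z_{j_m}$, so the moment--cumulant formula gives
\[
\kappa^{(m;k)}(\mathbf j;\mathbf x)=\sum_{\pi\vdash\{1,\dots,m\}}(-1)^{|\pi|-1}(|\pi|-1)!\prod_{B\in\pi}\mathbb E\Big[\prod_{i\in B}Z_{j_i}\Big].
\]
Conditionally on the environment $K_1$ the coordinates of the one-step $k$-point motion are independent, so, writing $\varrho_\ell(x):=\int_I(y-x)^\ell K_1(x,\mathrm dy)$ for the $\ell$-th quenched increment moment from $x$ (note $\varrho_0\equiv1$) and $n^B_a:=\#\{i\in B:j_i=a\}$, one has $\mathbb E[\prod_{i\in B}Z_{j_i}]=\mathbb E[\prod_{a:n^B_a\ge1}\varrho_{n^B_a}(x_a)]$, and by interchanging $\mathbb E$ with the integrals this equals the $\boldsymbol p^{(\cdot)}$-moment $\int\prod_{a:n^B_a\ge1}(y_a-x_a)^{n^B_a}\,\boldsymbol p^{(\cdot)}\big((x_a)_a,\mathrm d\mathbf y\big)$ at the set of sites $\{x_a:n^B_a\ge1\}$ --- crucially with exactly one ``particle'' per site.

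Next I would subtract off a fully decoupled version. Replacing each $\mathbb E[\prod_{i\in B}Z_{j_i}]$ by $\prod_a m_{n^B_a}$ turns the right-hand side into the joint cumulant of i.i.d.\ $\mu$-distributed variables indexed by $\mathbf j$, which is zero since $\mathbf j$ is not constant (the variables then split into at least two mutually independent groups, grouped by index value). Subtracting this identically-zero quantity and telescoping each bracket block by block --- all remaining factors being bounded uniformly in $\mathbf x$ by Jensen and H\"older together with the exponential moment bound of Assumption \ref{a1} Item \eqref{a22} --- reduces everything to controlling, for a single block $B$, the difference $\mathbb E[\prod_a\varrho_{n^B_a}(x_a)]-\prod_a m_{n^B_a}$. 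Since $\varrho_\ell\equiv m_\ell$ for $\ell\le p-1$ by Assumption \ref{a1} Items \eqref{a12},\eqref{a23}, while $\mathbb E[\varrho_\ell(x)]=m_\ell$ always, this difference factors as $\big(\prod_{a:1\le n^B_a\le p-1}m_{n^B_a}\big)\big[\mathbb E[\prod_{a\in\hat B}\varrho_{n^B_a}(x_a)]-\prod_{a\in\hat B}m_{n^B_a}\big]$, where $\hat B:=\{a:n^B_a\ge p\}$; in particular $|\hat B|\le|B|/p\le m/p\le4$.

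The three claims now follow from bookkeeping on $|\hat B|$. The bracket vanishes identically whenever $|\hat B|\le1$, and $|\hat B|\ge2$ forces $|B|\ge2p$. Hence in case (1), $m<2p$ makes every bracket vanish and $\kappa^{(m;k)}(\mathbf j;\mathbf x)=0$; in case (2), $m=2p$ forces $B=\{1,\dots,m\}$ with $\hat B$ consisting of exactly two indices occurring exactly $p$ times each, which is precisely the excluded configuration, so in every other case $\kappa^{(m;k)}(\mathbf j;\mathbf x)=0$. In case (3), when $2p\le m\le4p$ and $|\hat B|\ge2$, the difference $\mathbb E[\prod_{a\in\hat B}\varrho_{n^B_a}(x_a)]-\prod_{a\in\hat B}m_{n^B_a}$ is literally the quantity estimated in Assumption \ref{a1} Item \eqref{a24}: the exponents satisfy $2p\le\sum_{a\in\hat B}n^B_a\le4p$ and each is at most $3p\le4p-1$. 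The only mismatch is that $|\hat B|$ may be below $2p$; one repairs this by adjoining extra coordinates with exponent $0$, placed at sites at distance at least $\min_{i<j}|x_i-x_j|$ from all of the $x_a$ and from one another --- possible since $I$ is unbounded, and harmless since integrating a constant against a Markov kernel does nothing --- which brings the number of sites into $\{2p,\dots,4p\}$ without shrinking the minimal pairwise distance. Assumption \ref{a1} Item \eqref{a24} then bounds the difference by $\Fd(\min_{1\le i<j\le k}|x_i-x_j|)$, and summing the finitely many bounded terms yields (3) with a constant depending only on $m$, $p$, and the moments of $\mu$.

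The only genuinely delicate point --- and the main obstacle --- is the compatibility with Assumption \ref{a1} Item \eqref{a24}, which controls $\boldsymbol p^{(k)}$-moments only at $k\in\{2p,\dots,4p\}$ pairwise-distinct points with bounded exponents: passing to $\hat B$ (exponents $\ge p$) is exactly what guarantees one particle per site, so that the object being estimated is honestly a $\boldsymbol p^{(|\hat B|)}$-moment at distinct sites rather than a product of moments sharing a site, and the zero-exponent padding supplies any missing sites. Everything else is routine manipulation of cumulants together with the uniform moment bounds.
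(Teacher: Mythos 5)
Your proposal is correct, and the underlying mechanism is the same as the paper's: expand the joint cumulant via the moment--cumulant formula, observe that the cumulant would vanish identically if the coordinates of the one-step $k$-point motion were independent, and then control the deviation of the actual moments from the decoupled ones using the deterministicity of the first $p-1$ quenched moments (for Items (1) and (2)) and the mixing bound of Assumption \ref{a1} Item \eqref{a24} (for Item (3)). What you do differently is make two steps explicit that the paper elides. First, you introduce the quenched factorization $\mathbb E[\prod_{i\in B}Z_{j_i}]=\mathbb E[\prod_a\varrho_{n^B_a}(x_a)]$ and isolate the subset $\hat B=\{a:n^B_a\ge p\}$, which pins down exactly which block contributions survive after subtracting the decoupled version: only those with $|\hat B|\ge 2$, hence $|B|\ge 2p$. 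This turns Items (1) and (2) into transparent bookkeeping. Second, you notice that the moment appearing in the surviving bracket is a $|\hat B|$-particle moment with $|\hat B|\in\{2,3,4\}$, which is not in the range $k\in\{2p,\dots,4p\}$ where Assumption \ref{a1} Item \eqref{a24} is stated, and you repair this with a zero-exponent padding by sites placed at distance at least $\min_{1\le i<j\le k}|x_i-x_j|$ apart. This is a genuine detail the paper's terse proof passes over; it is harmless because Markov kernels integrate constants to $1$ and $\Fd$ is decreasing, so the padded minimum pairwise distance can only improve (or preserve) the bound, but it does need to be said. Your version is therefore a more complete write-up of the same argument rather than a different route.
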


\begin{proof}

    For any multi-index $\mathbf j$, note that $\kappa^{(m;k)}(\mathbf j,\mathbf x)$ is actually just a finite linear combination of products of quantities of the form $\int_{I^k} \prod_{j=1}^k (y_j-x_j)^{r_j} \boldsymbol p^{(k)} \big( \mathbf x, d\mathbf y\big), $ where $0\leq r_1+...+r_k\leq m$. 
    
    Moreover, it is a very special linear combination\footnote{Explicitly one has $\kappa^{(m;k)}(\mathbf j;\x) = \sum_{\pi} (|\pi|-1)! (-1)^{|\pi|-1} \prod_{B\in \pi} \int_{I^k} \prod_{i\in B} (y_{j_i}-x_{j_i}) \boldsymbol p^{(k)} \big( \mathbf x, d\mathbf y\big),$ where the sum runs over all partitions $\pi$ of $\{1,...,m\}$, though we do not need this explicit representation.} with the following property: if the random variables $y_i-x_i$ $(1\le i \le k)$ were independent under $\boldsymbol p^{(k)}(\mathbf x,d\mathbf y)$ then in fact this linear combination would vanish identically, simply by the definition of the cumulants and the condition that all indices are not the same (i.e., joint cumulants of independent variables are always zero).
    
    But for $m<2p$ the moments $\int_{I^k} \prod_{j=1}^k (y_j-x_j)^{r_j} \boldsymbol p^{(k)} \big( \mathbf x, d\mathbf y\big), $ agree precisely with those of the independent ones given by $\int_{I^k} \prod_{j=1}^k u_j^{r_j}\mu^{\otimes k} (\mathrm d\mathbf u),$ by the deterministic moments in Item \eqref{a23} of Assumption \ref{a1}. Therefore we immediately conclude Item (1). The proof of Item (2) is similar, noting that the moments agree precisely with those of the independent family unless the given condition on the indices is satisfied.

    For $2p\leq m\leq 4p$, the $m^{th}$ order joint moments of $y_i-x_i$ under $\boldsymbol p^{(k)}(\mathbf x,d\mathbf y)$ will not necessarily agree precisely with the independent version under $\mu^{\otimes k}$, hence they may not vanish outright. However the moment bound in Item \eqref{a24} of Assumption \ref{a1} guarantees that all of the moments are close to those of the independent ones as measured by the function $\Fd$, therefore we can immediately conclude Item (3).
\end{proof}

Note that if $(X_r)_{r\ge 0}$ is any sequence of random variables with finite exponential moments, defined on some probability space $(\Omega,\mathcal F,\mathbf P)$ and adapted to a filtration $(\mathcal F_r)_{r\ge 0}$ on this space, then the process \begin{equation}\mathcal E_r(X):=\label{aft}\exp\bigg( X_r- \sum_{s=1}^{r} \log \mathbf E[e^{X_{s}-X_{s-1}}|\mathcal F_{s-1}]\bigg)\end{equation} is a martingale in $r\ge 0$, with respect to the same filtration. Henceforth, our probability space will always be $\Omega =(I^k)^{\mathbb Z_{\ge 0}},$ equipped with its canonical filtration.  For $\beta \in [-\eta/k,\eta/k]$ with $\eta$ defined in Assumption \ref{a1} Item \eqref{a22}, if we apply \eqref{aft} to the process $X:=\beta(R^1+\cdots+R^k)$, we see that 
 \begin{align}\label{m_n}\mathpzc M_r^{\beta}(\mathbf R)&:=\exp\bigg( \beta\sum_{j=1}^k R^j_r - \sum_{\ell =0}^{r-1} f_{\ell}^{\beta,k}\bigg)
\end{align}
is a $\mathbf P_{\mathrm{RW}^{(k)}}$-martingale for the $k$-point motion where $f_\ell^{\beta,k} := f^{\beta,k}(R_\ell^1,\ldots,R_\ell^k)$, and the latter function $f^{\beta,k}$ is given by
\begin{align*}f^{\beta,k}(x^1,\ldots, x^k) &:= \log\mathbf E_{\mathrm{RW}^{(k)}}^{(x^1,\ldots,x^k)}[e^{\beta\sum_{j=1}^k(R^j_1-x^j)}] = \log \int_{I^k}e^{\beta \sum_{i=1}^k (y_i-x_i)} \boldsymbol p^{(k)} (\mathbf x,d\mathbf y).
\end{align*}
Here $\mathbf E_{\mathrm{RW}^{(k)}}^{(x^1,\ldots, x^k)}$ denotes expectation of the Markov chain when started from $(x^1,\ldots,x^k).$

We then use the martingale \eqref{m_n} to define the following measure.

\begin{defn}\label{q}
    For $\beta \in [-\eta/k,\eta/k$] with $\eta$ defined in Assumption \ref{a1} Item \eqref{a22}, define the tilted path measure $\mathbf P^{(\beta,k)}$ on the canonical space $(I^k)^{\mathbb Z_{\ge 0}}$ to be given by $$\frac{\mathrm d\mathbf P^{(\beta,k)}}{\mathrm d\mathbf P_{\mathrm{RW}^{(k)}}}\bigg|_{\mathcal F_r}(\mathbf R) =  \mathpzc M_r^{\beta}(\mathbf R),$$ where $\mathcal F_r$ is the $\sigma$-algebra on $(I^k)^{\mathbb Z_{\ge 0}}$ generated by the first $r$ coordinate maps, and $\mathpzc M_r^\beta(\mathbf R)$ is the martingale given by \eqref{m_n}. We will denote by $\mathbf E^{(\beta,k)}$ the expectation operator associated to $\mathbf P^{(\beta,k)}.$
\end{defn}

An obvious but important fact is that $\mathbf P^{(0,k)}=\mathbf P_{\mathrm{RW}^{(k)}}$ which will implicitly be used many times below.

\begin{lem}\label{1.3}Let $\mathfrak H^N(t,\cdot)$ be as defined in \eqref{hn} in the introduction. 
    For $t\in N^{-1}\mathbb Z$, the annealed expectation $ \mathbb E[\mathfrak H^N(t,\phi)^k]$ is given by
    \begin{align*}
        \mathbf E^{(N^{-\frac1{4p}},k)}\bigg [ \exp \bigg( \sum_{s=1}^{Nt} \bigg\{ \log \mathbf E_{\mathrm{RW}^{(k)}} \big[ e^{N^{-\frac1{4p}} \sum_{j=1}^k (R^j_s -R^j_{s-1})} \big| \mathcal F_{s-1}\big] \; - k \log M(N^{-\frac1{4p}})\bigg\}\bigg)
        \prod_{j=1}^k \phi\big(N^{-1/2}(R^j_{Nt}- d_Nt)\big)\bigg]
    \end{align*}
    where we recall from \eqref{dn} that $M$ is the moment generating function of the measure $\mu$ from Assumption \ref{a1} Item \eqref{a22}.
\end{lem}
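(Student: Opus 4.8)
The plan is to unwind the definitions and then perform an exponential change of measure. First I would set $\beta := N^{-1/(4p)}$, which for $N$ large enough lies in the admissible range $[-\alpha/k,\alpha/k]$ so that the martingale \eqref{m_n} is well-defined, and recall from \eqref{kn} that $\mathfrak K(Nt,\cdot)$ is the quenched law of a single walker driven by the kernels $K_1,\dots,K_{Nt}$. Writing $\Psi_N(x):= D_{N,t,N^{-1/2}(x-d_Nt)}\,\phi\big(N^{-1/2}(x-d_Nt)\big)$, definition \eqref{hn} reads $\mathfrak H^N(t,\phi) = E^\omega[\Psi_N(R_{Nt})]$, where $E^\omega$ is the quenched single-walker expectation. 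Raising to the $k$-th power and using that the $k$ walkers are conditionally independent given the environment, one gets $\mathfrak H^N(t,\phi)^k = E^\omega_{(k)}\big[\prod_{j=1}^k \Psi_N(R^j_{Nt})\big]$. Taking the annealed expectation $\mathbb E$ and interchanging it with the finite $k$-fold quenched integral (Fubini, legitimate since $\phi$ has compact support, so the integrand is bounded on the relevant range of positions), the definition of $\mathbf P_{RW^{(k)}}$ gives
$$\mathbb E\big[\mathfrak H^N(t,\phi)^k\big] = \mathbf E_{RW^{(k)}}\Big[\prod_{j=1}^k \Psi_N\big(R^j_{Nt}\big)\Big].$$

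Next I would simplify the deterministic prefactor. Substituting the definition of $D_{N,t,x}$ and using the cancellation $\tfrac{2p-1}{4p}-\tfrac12 = -\tfrac1{4p}$, the $d_N$-drift contributions cancel and one finds $D_{N,t,N^{-1/2}(x-d_Nt)} = \exp\big(\beta x - Nt\log M(\beta)\big)$, so that
$$\prod_{j=1}^k \Psi_N(R^j_{Nt}) = \exp\Big(\beta\sum_{j=1}^k R^j_{Nt} - kNt\log M(\beta)\Big)\prod_{j=1}^k \phi\big(N^{-1/2}(R^j_{Nt}-d_Nt)\big).$$
Then I would invoke \eqref{m_n}: by definition $\exp\big(\beta\sum_{j=1}^k R^j_{Nt}\big) = \mathpzc M^\beta_{Nt}(\mathbf R)\,\exp\big(\sum_{\ell=0}^{Nt-1} f^{\beta,k}_\ell\big)$, and by the Markov property of the $k$-point motion $f^{\beta,k}_\ell = \log \mathbf E_{RW^{(k)}}\big[e^{\beta\sum_j(R^j_{\ell+1}-R^j_\ell)}\,\big|\,\mathcal F_\ell\big]$, so after re-indexing $s=\ell+1$ the full exponent $\sum_{\ell=0}^{Nt-1} f^{\beta,k}_\ell - kNt\log M(\beta)$ becomes $\sum_{s=1}^{Nt}\big\{\log\mathbf E_{RW^{(k)}}\big[e^{\beta\sum_j(R^j_s-R^j_{s-1})}\,\big|\,\mathcal F_{s-1}\big]-k\log M(\beta)\big\}$.

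Finally, I would observe that the resulting integrand under $\mathbf E_{RW^{(k)}}$ is $\mathpzc M^\beta_{Nt}(\mathbf R)$ multiplied by an $\mathcal F_{Nt}$-measurable factor; since $\mathpzc M^\beta_{Nt}(\mathbf R)$ is exactly the Radon--Nikodym density $\frac{\mathrm d\mathbf P^{(\beta,k)}}{\mathrm d\mathbf P_{RW^{(k)}}}\big|_{\mathcal F_{Nt}}$ of Definition \ref{q}, absorbing it into the measure turns $\mathbf E_{RW^{(k)}}$ into $\mathbf E^{(\beta,k)}$ applied to that remaining factor. With $\beta = N^{-1/(4p)}$ this is precisely the asserted identity.

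I do not anticipate a genuine obstacle: the argument is bookkeeping around the definitions. The points to be careful about are (i) the exponent cancellation $\tfrac{2p-1}{4p}-\tfrac12=-\tfrac1{4p}$, which is what makes the deterministic weight $D_{N,t,\cdot}$ collapse onto the exponential martingale weight; (ii) checking $\beta = N^{-1/(4p)}\in[-\alpha/k,\alpha/k]$ for $N$ large, using Assumption \ref{a1} Item \eqref{a22}, so that \eqref{m_n}, the Fubini interchange, and the change of measure are all legitimate (each coordinate $R^j$ is marginally a random walk with increment law $\mu$, so $\sum_j R^j$ has exponential moments up to order $\alpha/k$ by H\"older); and (iii) the Markov-property re-indexing that rewrites $\sum_{\ell=0}^{Nt-1} f^{\beta,k}_\ell$ as a sum of one-step conditional log-moment generating functions.
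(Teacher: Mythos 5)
Your argument is correct and matches the paper's approach: the paper's proof is a terse one-liner (write $\mathfrak H^N(t,\phi)^k$ as a quenched expectation of $k$ independent walkers, take the annealed expectation, then "use the definitions of the measures"), and you have simply spelled out what that phrase encodes — the exponent cancellation $N^{\frac{2p-1}{4p}-\frac12}=N^{-\frac1{4p}}$, the decomposition of $\prod_j D_{N,t,\cdot}$ into $\mathpzc M^{\beta}_{Nt}$ times the compensating exponent, and absorbing the martingale into the measure to pass from $\mathbf E_{RW^{(k)}}$ to $\mathbf E^{(\beta,k)}$.
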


\begin{proof} Since $P^\omega(Nt,\cdot)$ as defined in \eqref{kn} is a probability measure we can write 
\begin{align}\mathfrak H^N(t,\phi)^k &= E^\omega_{(k)} \bigg[\prod_{j=1}^kD_{N,t,N^{-1/2}(R^j_{Nt}- d_N t)}\phi\big(N^{-1/2}(R^j_{Nt}- d_Nt)\big)\bigg],\label{fa}
\end{align}
where the latter is a \textit{quenched} expectation of $k$ independent particles $(R^1,...,R^k)$ started at 0 and sampled from a fixed realization of the environment kernels $\omega = \{K_n\}_{n\ge 1}.$ 
Therefore the claim just follows by taking the annealed expectation over \eqref{fa} and using the definitions of the measures.
\end{proof}

\subsection{Studying the tilted measures $\mathbf P^{(\beta,k)}$}

\begin{prop}\label{mkov}
    Take $|\beta|\leq \eta/k$ where $\eta$ is as in Assumption \ref{a1} Item \eqref{a22} and $k\in \mathbb N$. The canonical process $(\mathbf R_r)_{r\ge 0}$ on $(I^k)^{\mathbb Z_{\ge 0}}$ under the tilted measure $\mathbf P^{(\beta,k)}$ of Definition \ref{q} is a Markov chain on $I^k$, with one-step transition law given by \begin{equation}\label{qk}\boldsymbol q^{(k)}_\beta (\mathbf x,\mathrm d\mathbf y) := \frac{e^{\beta \sum_{i=1}^k(y_i-x_i)} \boldsymbol p^{(k)}(\mathbf x,\mathrm d\mathbf y)}{\int_{I^k}e^{\beta \sum_{i=1}^k(a_i-x_i)} \boldsymbol p^{(k)}(\mathbf x,\mathrm d\mathbf a)}.\end{equation}
\end{prop}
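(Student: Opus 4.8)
The plan is to exploit the multiplicative (telescoping) structure of the Radon--Nikodym derivative $\mathpzc M_r^\beta$ from \eqref{m_n}. Since $\mathbf P_{RW^{(k)}}$ is itself a time-homogeneous Markov chain with kernel $\boldsymbol p^{(k)}$, and $\mathpzc M_r^\beta$ factorizes into a product of terms each depending only on a consecutive pair $(\mathbf R_{s-1},\mathbf R_s)$ (times a function of $\mathbf R_0$ alone), the tilted measure $\mathbf P^{(\beta,k)}$ will automatically be a time-homogeneous Markov chain with a modified one-step kernel, and a short computation identifies that kernel as $\qdif$ of \eqref{qk}.

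First I would rewrite $\mathpzc M_r^\beta$ in telescoped form. Since $f^{\beta,k}_\ell=f^{\beta,k}(\mathbf R_\ell)$ and, directly from the definition of $f^{\beta,k}$, $e^{f^{\beta,k}(\mathbf x)}=\int_{I^k}e^{\beta\sum_i(a_i-x_i)}\boldsymbol p^{(k)}(\mathbf x,\mathrm d\mathbf a)$, together with $e^{\beta\sum_j R^j_r}=e^{\beta\sum_j R^j_0}\prod_{s=1}^r e^{\beta\sum_j(R^j_s-R^j_{s-1})}$, one obtains
$$\mathpzc M_r^\beta(\mathbf R)=e^{\beta\sum_{j=1}^k R^j_0}\prod_{s=1}^r g_\beta(\mathbf R_{s-1},\mathbf R_s),\qquad g_\beta(\mathbf x,\mathbf y):=\frac{e^{\beta\sum_i(y_i-x_i)}}{\int_{I^k}e^{\beta\sum_i(a_i-x_i)}\,\boldsymbol p^{(k)}(\mathbf x,\mathrm d\mathbf a)}.$$
The denominator is finite and strictly positive for every $\mathbf x\in I^k$: positivity is clear, and finiteness follows from Hölder's inequality and Assumption \ref{a1} Item \eqref{a22}, since $|\beta|\le\alpha/k$ and each coordinate increment is $\mu$-distributed. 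Consequently $g_\beta(\mathbf x,\mathbf y)\,\boldsymbol p^{(k)}(\mathbf x,\mathrm d\mathbf y)$ is exactly the probability kernel $\qdif(\mathbf x,\mathrm d\mathbf y)$ of \eqref{qk}; in particular $\int_{I^k}\qdif(\mathbf x,\mathrm d\mathbf y)=1$ for every $\mathbf x$.

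It then remains to match finite-dimensional distributions. Fix a deterministic starting point $\mathbf x_0$ for the chain (in the applications of this proposition it is the origin, whence the prefactor $e^{\beta\sum_j x_0^j}$ below equals $1$; for a general starting point it is an overall constant that cancels upon normalization and does not affect the conditional structure). For any $r\ge 1$ and any bounded measurable $F$ of the first $r+1$ coordinates, Definition \ref{q} together with the Markov property of $\mathbf P_{RW^{(k)}}$ and the factorization above gives
$$\mathbf E^{(\beta,k)}\!\big[F(\mathbf R_0,\dots,\mathbf R_r)\big]=\mathbf E_{RW^{(k)}}\!\big[F(\mathbf R_0,\dots,\mathbf R_r)\,\mathpzc M_r^\beta(\mathbf R)\big]=e^{\beta\sum_j x_0^j}\int_{(I^k)^r}F(\mathbf x_0,\dots,\mathbf x_r)\prod_{s=1}^r\qdif(\mathbf x_{s-1},\mathrm d\mathbf x_s).$$
Since $r$ and $F$ are arbitrary, the finite-dimensional laws of the canonical process under the (normalized) measure $\mathbf P^{(\beta,k)}$ coincide with those of a Markov chain started at $\mathbf x_0$ with the $s$-independent transition kernel $\qdif$, which proves the claim. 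The computation carries no real obstacle; the only points demanding care are the bookkeeping of the telescoping identity, the verification that \eqref{qk} is genuinely a probability kernel via the exponential moment bound, and the harmless normalization constant $e^{\beta\sum_j x_0^j}$ appearing when the chain is not started at the origin.
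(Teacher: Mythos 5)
Your proof is correct and rests on the same mechanism as the paper's: the one-step multiplicative factorization of $\mathpzc M_r^\beta$ together with the Markov property of $\mathbf P_{RW^{(k)}}$ and the Hölder/exponential-moment bound guaranteeing that the normalizer in \eqref{qk} is finite, the paper simply phrasing this as a direct computation of $\mathbf P^{(\beta,k)}(\mathbf R_{r+1}\in A\mid\mathcal F_r)=\mathbf E_{RW^{(k)}}[\ind_A(\mathbf R_{r+1})\mathpzc M^\beta_{r+1}\mid\mathcal F_r]/\mathpzc M^\beta_r$ rather than by matching finite-dimensional distributions. The only caveat is your aside about a general starting point: as defined, $\mathpzc M_r^\beta$ has $\mathpzc M_0^\beta=e^{\beta\sum_j R_0^j}$, so Definition \ref{q} yields a probability measure only when the chain starts at the origin (as it does here); general starting points are handled separately in Definition \ref{shfa}.
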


\begin{proof}
    Take any Borel set $A\subset I^k$. By the definition of the measures $\mathbf P^{(\beta,k)}$, we have \begin{align*}\mathbf P^{(\beta,k)}(\mathbf R_{r+1} \in A | \mathcal F_r) &= \frac{\mathbf E_{\mathrm{RW}^{(k)}} \big[ \ind_A (\mathbf R_{r+1})\mathpzc M^\beta_{r+1}(\mathbf R)|\mathcal F_r \big]}{\mathpzc M^\beta_{r}(\mathbf R)} \\ &=\frac{\mathbf E_{\mathrm{RW}^{(k)}} \big[ \ind_A (\mathbf R_{r+1})e^{\beta \sum_{i=1}^k (R^i_{r+1}-R^i_r) }|\mathcal F_r\big]}{\mathbf E_{\mathrm{RW}^{(k)}} \big[ e^{\beta \sum_{i=1}^k (R^i_{r+1}-R^i_r) }|\mathcal F_r\big]} \\ &= \frac{\int_A e^{\beta \sum_{i=1}^k(y_i-R^i_r)}\boldsymbol p^{(k)} (\mathbf R_r,\mathrm d\mathbf y) }{\int_{I^k} e^{\beta \sum_{i=1}^k(y_i-R^i_r)}\boldsymbol p^{(k)} (\mathbf R_r,\mathrm d\mathbf y)} \\ &= \boldsymbol q^{(k)}_\beta(\mathbf R_r,A),
    \end{align*}
    thus proving the claim. In the third line we applied the Markov property of $\mathbf P_{\mathrm{RW}^{(k)}}.$ The condition $|\beta|\leq \eta/k$ ensures that all moment generating functions actually exist, i.e., the integrals are convergent for all $\mathbf x\in I^k$ by e.g. H\"older's inequality which says that $\int_{I^k}e^{\beta\sum_{i=1}^k (a_i-x_i)} \boldsymbol p^{(k)}(\mathbf x,\mathrm d\mathbf a) \leq \prod_{i=1}^k \big(\int_{I^k}e^{k\beta (a_i-x_i)} \boldsymbol p^{(k)}(\mathbf x,\mathrm d\mathbf a)\big)^{1/k} = M(\beta k)$, where $M$ as always denotes the moment generating function of the annealed one-step law as in \eqref{dn}.
\end{proof}

Given the explicit expression \eqref{qk}, we thus make the following definition of the tilted $k$-point Markov chain, generalized to start from arbitrary points. These Markov chains will be of fundamental importance.

\begin{defn}\label{shfa}
    Define the measure $\mathbf P_\mathbf x^{(\beta,k)}$ to be the law on the canonical space $(I^k)^{\mathbb Z_{\ge 0}}$ of the Markov chain associated to \eqref{qk} started from $\mathbf x \in I^k.$ We also let $\mathbf E_\mathbf x^{(\beta,k)}$ be the associated expectation operator. 
\end{defn}

Immediately from Proposition \ref{mkov} we see that the path measures $\mathbf P^{(\beta,k)}$ from Definition \ref{q} are a special case of the measures $\mathbf P_\mathbf x^{(\beta,k)}$ with $\mathbf x= (0,0,...,0).$ In particular $\mathbf P_{\mathrm{RW}^{(k)}} = \mathbf P^{(0,k)}_{(0,...,0)}.$

\begin{lem}\label{grow} 
    Fix $k\in \mathbb N$. Let $\eta$ be as in Assumption \ref{a1} Item \eqref{a22}. With $\boldsymbol q^{(k)}_\beta$ defined in \eqref{qk}, we have $$\sup_{|\beta|\leq \eta/(2k)}\;\; \sup_{\mathbf x\in I^k} \max_{1\le i\le k} \int_{I^k} e^{\frac{\eta}2 |y_i-x_i|} \boldsymbol q^{(k)}_\beta (\mathbf x,\mathrm d\mathbf y) <\infty.$$ In particular, we have that $$\sup_{|\beta|\leq \eta/(2k)}\; \; \sup_{\mathbf x\in I^k} \int_{I^k} e^{\frac{\eta}{2k}  \sum_{i=1}^k |y_i-x_i|} \boldsymbol q^{(k)}_\beta (\mathbf x,\mathrm d\mathbf y) <\infty.$$
\end{lem}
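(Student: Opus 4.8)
The plan is to reduce everything to a bound on the annealed moment generating function $M$, using the explicit formula \eqref{qk} for $\qdif$. Fix $k$ and $i\in\{1,\dots,k\}$, and fix $|\beta|\le\alpha/(2k)$ and $\mathbf x\in I^k$. By definition of $\boldsymbol q^{(k)}_\beta$, the integral $\int_{I^k} e^{\frac{\alpha}{2}|y_i-x_i|}\boldsymbol q^{(k)}_\beta(\mathbf x,\mathrm d\mathbf y)$ equals
\[
\frac{\int_{I^k} e^{\frac{\alpha}{2}|y_i-x_i|}e^{\beta\sum_{j=1}^k(y_j-x_j)}\boldsymbol p^{(k)}(\mathbf x,\mathrm d\mathbf y)}{\int_{I^k}e^{\beta\sum_{j=1}^k(a_j-x_j)}\boldsymbol p^{(k)}(\mathbf x,\mathrm d\mathbf a)}.
\]
The first step is to lower-bound the denominator. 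Since each marginal increment $y_j-x_j$ under $\boldsymbol p^{(k)}(\mathbf x,\cdot)$ has law $\mu$ (as noted right after Lemma \ref{diff0}, this is immediate from Assumption \ref{a1}), Jensen's inequality applied to the convex function $t\mapsto e^t$ gives $\int_{I^k}e^{\beta\sum_j(a_j-x_j)}\boldsymbol p^{(k)}(\mathbf x,\mathrm d\mathbf a)\ge \exp\big(\beta k m_1\big)$, which is bounded below by a positive constant depending only on $\alpha,k,m_1$ (using $|\beta|\le\alpha/(2k)$).

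The second step is to upper-bound the numerator. Bound $e^{\frac{\alpha}{2}|y_i-x_i|}\le e^{\frac{\alpha}{2}(y_i-x_i)}+e^{-\frac{\alpha}{2}(y_i-x_i)}$, and then for each of the two terms absorb the extra $e^{\pm\frac{\alpha}{2}(y_i-x_i)}$ into the $i$-th factor of the product $e^{\beta\sum_j(y_j-x_j)}$. This gives a sum of two integrals of the form $\int_{I^k}\prod_{j=1}^k e^{\gamma_j(y_j-x_j)}\boldsymbol p^{(k)}(\mathbf x,\mathrm d\mathbf y)$, where $\gamma_j=\beta$ for $j\ne i$ and $\gamma_i=\beta\pm\frac{\alpha}{2}$, so that $|\gamma_j|\le\alpha/(2k)<\alpha$ for $j\ne i$ and $|\gamma_i|\le\alpha/(2k)+\alpha/2\le\alpha$. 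Now apply Hölder's inequality with exponents $(k,\dots,k)$, exactly as in the proof of Proposition \ref{mkov}: each integral is bounded by $\prod_{j=1}^k\big(\int_{I^k}e^{k\gamma_j(y_j-x_j)}\boldsymbol p^{(k)}(\mathbf x,\mathrm d\mathbf y)\big)^{1/k}=\prod_{j=1}^k M(k\gamma_j)^{1/k}$, using again that the $j$-th marginal increment has law $\mu$. Since $|k\gamma_j|\le\alpha$ for all $j$, and $M$ is finite and continuous on $[-\alpha,\alpha]$ by Assumption \ref{a1} Item \eqref{a22}, we get $\sup_{|k\gamma|\le\alpha}M(k\gamma)=:C_0<\infty$, so the numerator is at most $2C_0$. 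Combining, the ratio is bounded by a constant depending only on $\alpha,k,m_1$ but not on $\beta$ or $\mathbf x$, which is the first displayed inequality.

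For the second displayed inequality, write $e^{\frac{\alpha}{2k}\sum_{i=1}^k|y_i-x_i|}=\prod_{i=1}^k e^{\frac{\alpha}{2k}|y_i-x_i|}$ and apply Hölder's inequality with exponents $(k,\dots,k)$ under the measure $\boldsymbol q^{(k)}_\beta(\mathbf x,\cdot)$: this gives $\int_{I^k}e^{\frac{\alpha}{2k}\sum_i|y_i-x_i|}\boldsymbol q^{(k)}_\beta(\mathbf x,\mathrm d\mathbf y)\le\prod_{i=1}^k\big(\int_{I^k}e^{\frac{\alpha}{2}|y_i-x_i|}\boldsymbol q^{(k)}_\beta(\mathbf x,\mathrm d\mathbf y)\big)^{1/k}$, and each factor is uniformly bounded by the first part. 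Taking the supremum over $\mathbf x$ and $|\beta|\le\alpha/(2k)$ finishes the proof. There is no serious obstacle here; the only points requiring a little care are keeping track of the exponent arithmetic (verifying $|k\gamma_i|\le\alpha$ rather than something larger, which is exactly why the hypothesis is $|\beta|\le\alpha/(2k)$ and the gain in the exponent is $\alpha/2$ rather than $\alpha$) and noting that Jensen gives a clean lower bound on the normalizing denominator uniformly in $\mathbf x$.
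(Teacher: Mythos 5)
Your reduction to moment generating function bounds is the right idea, and the Jensen lower bound on the normalizing constant as well as the derivation of the second display from the first are both fine (and match the paper). But there is a genuine arithmetic error in the H\"older step applied to the numerator, and it breaks the proof for every $k\ge 2$.

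After you write $e^{\frac\alpha2|y_i-x_i|}\le e^{\frac\alpha2(y_i-x_i)}+e^{-\frac\alpha2(y_i-x_i)}$ and absorb $\pm\alpha/2$ into the $i$-th exponent, you have $\gamma_j=\beta$ for $j\ne i$ and $|\gamma_i|\le \alpha/2+\alpha/(2k)$. You then apply H\"older with uniform exponents $(k,\dots,k)$ and invoke $M(k\gamma_j)$. For $j=i$ this requires $M$ to be finite at $k\gamma_i$, and
$$k|\gamma_i|\;\le\; \tfrac{k\alpha}{2}+\tfrac\alpha2,$$
which exceeds $\alpha$ as soon as $k\ge 2$. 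Your sentence ``Since $|k\gamma_j|\le\alpha$ for all $j$'' does not follow from the bound $|\gamma_i|\le\alpha$ that you actually established a line earlier; you slipped from $|\gamma_i|\le\alpha$ to $|k\gamma_i|\le\alpha$. Assumption~\ref{a1} Item~\eqref{a22} only guarantees $\int_I e^{\alpha|x|}\mu(\dr x)<\infty$, so $M(k\gamma_i)$ is not under control and the estimate collapses.

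The fix (and the paper's actual proof) is to not merge the heavy factor into a single coordinate and to use \emph{unequal} H\"older exponents calibrated to the size of each factor. Keep $e^{\lambda|y_i-x_i|}$ (with $\lambda=\alpha/2$) as its own factor, view the numerator as a product of $k+1$ factors $e^{\lambda|y_i-x_i|}\cdot\prod_{j=1}^k e^{\beta(y_j-x_j)}$, and apply H\"older with exponents $(2,2k,2k,\dots,2k)$, which satisfy $\tfrac12+k\cdot\tfrac1{2k}=1$. Then the heavy factor contributes $\big(\int_I e^{2\lambda|a|}\mu(\dr a)\big)^{1/2}$, requiring only $2\lambda=\alpha$, and each light factor contributes $\big(\int_I e^{2k|\beta||a|}\mu(\dr a)\big)^{1/(2k)}$, requiring only $2k|\beta|\le\alpha$ — both exactly at the edge of what the hypothesis supplies. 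Equivalently, in your decomposition one could use exponents $p_j=2k$ for $j\ne i$ and $p_i=2k/(k+1)$ (so $\sum 1/p_j=1$ and $p_i|\gamma_i|\le\alpha$), but the uniform-exponent version you wrote does not work.
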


\begin{proof}
    The second bound follows immediately from the first, since by H\"older's inequality we have that $\int_{I^k} e^{\lambda \sum_{i=1}^k |y_i-x_i|} q^{(k)}_\beta (\mathbf x,\mathrm d\mathbf y) \le \prod_{i=1}^k \big( \int_{I^k} e^{k\lambda |y_i-x_i|} q^{(k)}_\beta (\mathbf x,\mathrm d\mathbf y) \big)^{1/k}.$
    
    Thus we will prove the first bound. We find for $\lambda\ge 0$ and $\beta \in [-\eta/(2k),\eta/2k]$ and $1\le i \le k$ that 
    \begin{align*}
        \int_{I^k} e^{\lambda |y_i-x_i|} \boldsymbol q^{(k)}_\beta (\mathbf x,\mathrm d\mathbf y) &= \frac{\int_{I^k} e^{\lambda |y_i-x_i|} e^{\beta \sum_{j=1}^k (y_j-x_j)}\boldsymbol p^{(k)} (\mathbf x,\mathrm d\mathbf y) }{\int_{I^k} e^{\beta \sum_{j=1}^k (y_j-x_j)} \boldsymbol p^{(k)} (\mathbf x,\mathrm d\mathbf y) } 
        \\ &\leq \frac{\big(\int_{I^k} e^{2\lambda  |y_i-x_i|} \boldsymbol p^{(k)} (\mathbf x,\mathrm d\mathbf y) \big)^{1/2}\prod_{j=1}^k \big(\int_{I^k} e^{2k|\beta| |y_j-x_j|} \boldsymbol p^{(k)} (\mathbf x,\mathrm d\mathbf y) \big)^{1/(2k)}}{e^{\int_{I^k}\beta \sum_{j=1}^k (y_j-x_j) \boldsymbol p^{(k)} (\mathbf x,\mathrm d\mathbf y)} } \\ &= \frac{\big(\int_I e^{2\lambda |a|} \mu(\mathrm da)\big)^{1/2}\big(\int_I e^{2k |\beta||a|} \mu(\mathrm da)\big)^{1/2}}{e^{\beta k m_1}}.
    \end{align*}
    In the first line we simply applied the definition of the measures $\boldsymbol q^{(k)}_\beta$. In the second line we used H\"older's inequality for the numerator and Jensen for the denominator. In the last line we used the fact that the marginal law of each $y_j-x_j$ under $\boldsymbol p^{(k)} (\mathbf x,\mathrm d\mathbf y)$ is just $\mu$, and $m_1:= \int_I a\mu(\mathrm da).$

    By Assumption \ref{a1} Item \eqref{a22} the last expression is clearly finite if $\lambda =\eta/2$ and $\beta \in [-\eta/(2k),\eta/(2k)]$, thus completing the proof.
\end{proof}

\begin{cor}\label{per}
    Let $\eta$ be as in Assumption \ref{a1} Item \eqref{a22}, and let $\beta \in [-\eta/(2k),\eta/(2k)]$. For any $f:I^k\to \mathbb R$ such that $|f(\mathbf x)| \leq Ce^{\frac{\eta}{2k}\sum_{i=1}^k |x_i|}$, define $\mathscr Q_k^\beta f(\mathbf x) := \int_{I^k} f(\mathbf y) q^{(k)}_\beta (\mathbf x,\mathrm d\mathbf y)$. Then the process $$\mathcal M_r^{f,\beta}(\mathbf R):= f(\mathbf R_k)- \sum_{s=0}^{r-1} (\mathscr Q_k^\beta-\mathrm{Id}) f(\mathbf R_s)$$ is a $\mathbf P^{(\beta,k)}_\mathbf x$-martingale in the canonical filtration $(\mathcal F_r)_{r\ge 0},$ for every $\mathbf x\in I^k$. 
\end{cor}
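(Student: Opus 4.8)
The plan is to recognize $\mathcal M_r^{f,\beta}$ as the standard Dynkin (discrete compensated) martingale attached to the Markov chain of Proposition \ref{mkov}; the only real content is verifying the integrability that makes this meaningful, and for that Lemma \ref{grow} does essentially all the work. (We read the statement with the evident typo corrected, i.e.\ $\mathcal M_r^{f,\beta}(\mathbf R)=f(\mathbf R_r)-\sum_{s=0}^{r-1}(\mathscr Q_k^\beta-\mathrm{Id})f(\mathbf R_s)$.) Throughout set
\[
C_0:=\sup_{|\beta|\le\alpha/(2k)}\;\sup_{\mathbf x\in I^k}\int_{I^k}e^{\frac{\alpha}{2k}\sum_{i=1}^k|y_i-x_i|}\,\boldsymbol q^{(k)}_\beta(\mathbf x,\mathrm d\mathbf y),
\]
which is finite by the second display of Lemma \ref{grow}.

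First I would check that $\mathscr Q_k^\beta f$ is well defined and inherits the same exponential growth bound as $f$. Writing $|f(\mathbf y)|\le Ce^{\frac{\alpha}{2k}\sum_i|y_i|}\le Ce^{\frac{\alpha}{2k}\sum_i|x_i|}\,e^{\frac{\alpha}{2k}\sum_i|y_i-x_i|}$ and integrating against $\boldsymbol q^{(k)}_\beta(\mathbf x,\cdot)$, the definition of $\mathscr Q_k^\beta f(\mathbf x)$ is an absolutely convergent integral with $|\mathscr Q_k^\beta f(\mathbf x)|\le CC_0\,e^{\frac{\alpha}{2k}\sum_i|x_i|}$. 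Hence $f$ and $\mathscr Q_k^\beta f$, and therefore $(\mathscr Q_k^\beta-\mathrm{Id})f$, are all dominated by a constant multiple of $\mathbf x\mapsto e^{\frac{\alpha}{2k}\sum_i|x_i|}$.

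Next I would show $f(\mathbf R_r)$ and $(\mathscr Q_k^\beta-\mathrm{Id})f(\mathbf R_s)$ lie in $L^1(\mathbf P_\mathbf x^{(\beta,k)})$ for all $r,s$, so that $\mathcal M_r^{f,\beta}$ is integrable. By the previous paragraph it suffices to bound $\mathbf E_\mathbf x^{(\beta,k)}\!\big[e^{\frac{\alpha}{2k}\sum_i|R^i_r|}\big]$. Using $\sum_i|R^i_r|\le\sum_i|x_i|+\sum_{s=1}^r\sum_i|R^i_s-R^i_{s-1}|$ one gets the pointwise bound $e^{\frac{\alpha}{2k}\sum_i|R^i_r|}\le e^{\frac{\alpha}{2k}\sum_i|x_i|}\prod_{s=1}^r e^{\frac{\alpha}{2k}\sum_i|R^i_s-R^i_{s-1}|}$. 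Now condition successively on $\mathcal F_{r-1},\mathcal F_{r-2},\dots$: by Proposition \ref{mkov} the conditional law of $\mathbf R_s$ given $\mathcal F_{s-1}$ is $\boldsymbol q^{(k)}_\beta(\mathbf R_{s-1},\cdot)$, so $\mathbf E_\mathbf x^{(\beta,k)}\!\big[e^{\frac{\alpha}{2k}\sum_i|R^i_s-R^i_{s-1}|}\mid\mathcal F_{s-1}\big]\le C_0$ uniformly; iterating via the tower property yields $\mathbf E_\mathbf x^{(\beta,k)}\!\big[e^{\frac{\alpha}{2k}\sum_i|R^i_r|}\big]\le e^{\frac{\alpha}{2k}\sum_i|x_i|}C_0^{\,r}<\infty$.

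Finally the martingale identity is immediate. The random variable $\mathcal M_r^{f,\beta}$ is $\mathcal F_r$-measurable, being a finite sum of Borel functions of $\mathbf R_0,\dots,\mathbf R_r$, and
\[
\mathcal M_{r+1}^{f,\beta}-\mathcal M_r^{f,\beta}=f(\mathbf R_{r+1})-f(\mathbf R_r)-(\mathscr Q_k^\beta-\mathrm{Id})f(\mathbf R_r)=f(\mathbf R_{r+1})-\mathscr Q_k^\beta f(\mathbf R_r).
\]
By the Markov property of Proposition \ref{mkov}, $\mathbf E_\mathbf x^{(\beta,k)}[f(\mathbf R_{r+1})\mid\mathcal F_r]=\int_{I^k}f(\mathbf y)\,\boldsymbol q^{(k)}_\beta(\mathbf R_r,\mathrm d\mathbf y)=\mathscr Q_k^\beta f(\mathbf R_r)$, so $\mathbf E_\mathbf x^{(\beta,k)}[\mathcal M_{r+1}^{f,\beta}-\mathcal M_r^{f,\beta}\mid\mathcal F_r]=0$, which completes the argument. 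There is no genuine obstacle; the one mildly delicate point is ensuring that the exponential moment of Lemma \ref{grow} iterates at the \emph{sharp} rate $\tfrac{\alpha}{2k}$ rather than a smaller one, which is precisely why that lemma is stated with the exponent it has and why condition \eqref{a22} of Assumption \ref{a1} is invoked there.
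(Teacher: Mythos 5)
Your proof is correct and takes essentially the same route as the paper's: the paper's argument is exactly the Markov-property identity $\mathbf E_\mathbf x^{(\beta,k)}[f(\mathbf R_{r+1})\mid\mathcal F_r]=\mathscr Q_k^\beta f(\mathbf R_r)$ from Proposition~\ref{mkov} combined with Lemma~\ref{grow} for integrability, which the paper dispatches in one sentence and you simply spell out (including the iterated exponential-moment bound $C_0^r$ and the correct reading $f(\mathbf R_r)$ of the obvious typo $f(\mathbf R_k)$). No discrepancy.
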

\begin{proof}
    The proof of martingality is immediate from the fact that $\mathbf E_\mathbf x^{(\beta,k)} [f(\mathbf R_{r+1})|\mathcal F_r] = \mathscr Q_k^\beta f(\mathbf R_r)$ by Proposition \ref{mkov}. The growth condition on $f$ ensures that all integrals and expectations are finite by e.g. Lemma \ref{grow}. 
\end{proof}

\begin{defn}\label{z}
    Henceforth we define the function $\z:I\to \mathbb R$ by 
    \begin{equation*}
         \z(y_1-y_2) := (p!)^{-2} \int_{I^2} \prod_{j=1}^2 (x_j-y_j)^p \bigg(\boldsymbol p^{(2)} \big( (y_1,y_2),(\dr x_1,\dr x_2)\big) - \mu(\dr x_1-y_1)\mu(\dr x_2-y_2)\bigg).
    \end{equation*}
\end{defn}

The fact that the expression on the right side defines a function of $y_1-y_2$ follows from the translation invariance in Assumption \ref{a1} Item \eqref{a12}. This function $\z$ will play a crucial role throughout the remainder of the paper, as it is precisely the function appearing in the numerator of the noise coefficient $\gamma_{\mathrm{ext}}$ in Theorem \ref{main2}. We remark that $\z$ decays to $0$ quite fast at infinity thanks to Assumption \ref{a1} Item \eqref{a24}.

Note that the relevant observable inside Lemma \ref{1.3} is given by the cumulant generating function of a certain sum minus the sum of individual cumulant generating functions (latter being the $k\log M(N^{-\frac1{4p}})$ term). The natural thing to do here is to Taylor expand everything in the variable $N^{-\frac1{4p}}$, thus a study of joint cumulants will be necessary going forward.

\begin{prop}\label{dbound}
    Fix $k\in \mathbb N$. There exists $C>0$ such that the following bounds hold uniformly over $\beta_1,...,\beta_m \in \ak$ and $\x\in I^k$ and multi-indices $\mathbf j =(j_1,...,j_m)\in \bigcup_{m\ge 0} \{1,...,k\}^m $
    \begin{align*}\bigg|\frac{\partial^m}{\partial \beta_{j_1}\cdots\partial\beta_{j_m}}&\bigg( \log \int_{I^k} e^{\sum_{i=1}^k \beta_i (a_i-x_i)} \boldsymbol p^{(k)}(\x,\dr\bfa) - \sum_{i=1}^k\log M(\beta_i) - \sum_{1\le i<j\le k} \beta_i^p\beta_j^p\z (x_i-x_j)\bigg)\bigg| \\&\leq C\bigg( \bigg|\sum_{i=1}^k|\beta_i|\bigg|^{2p+1-m} \Fd\big( \min_{1\le i<j\le k}|x_i-x_j|\big)\;\;+\;\;\bigg|\sum_{i=1}^k|\beta_i|\bigg|^{4p+1-m}\bigg).
       \end{align*}
       Here $\Fd$ is the function from Assumption \ref{a1} Item \eqref{a24}.
\end{prop}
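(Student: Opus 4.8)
The plan is to reduce the estimate to Taylor's theorem with remainder applied to the joint cumulant generating function, using the structure of the cumulants established in Lemma \ref{prev}. Let me write $G(\boldsymbol\beta,\x):= \log \int_{I^k} e^{\sum_{i=1}^k \beta_i(a_i-x_i)} \boldsymbol p^{(k)}(\x,\dr\bfa)$ for $\boldsymbol\beta=(\beta_1,\dots,\beta_k)$, which is a well-defined real-analytic function of $\boldsymbol\beta$ on a neighborhood of the origin (of size $\sim\alpha/k$, independently of $\x$, by Assumption \ref{a1} Item \eqref{a22} and Hölder's inequality as in Proposition \ref{mkov}). By construction its mixed partial derivatives at the origin are exactly the joint cumulants $\kappa^{(m;k)}(\mathbf j;\x)$ of Definition \ref{cumu}. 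Similarly $\sum_i \log M(\beta_i)$ is analytic with derivatives at the origin equal to the ``diagonal'' cumulants $\kappa^{(m;k)}((j,\dots,j);\x)$ (which equal the cumulants of $\mu$, independent of $\x$), and $\sum_{i<j}\beta_i^p\beta_j^p\z(x_i-x_j)$ is a polynomial in $\boldsymbol\beta$. So the function $\Phi(\boldsymbol\beta,\x)$ inside the absolute value on the left is analytic in $\boldsymbol\beta$ near $0$, uniformly in $\x$.

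The key step is to identify the low-order Taylor coefficients of $\Phi$ in $\boldsymbol\beta$ and show they vanish or are controlled by $\Fd$. Expanding $G$ as $\sum_{m\ge 0}\frac{1}{m!}\sum_{\mathbf j\in\{1,\dots,k\}^m}\kappa^{(m;k)}(\mathbf j;\x)\prod_{\ell}\beta_{j_\ell}$ and subtracting the two other terms, the purely-diagonal cumulants ($\mathbf j=(j,\dots,j)$) cancel exactly against $\sum_i\log M(\beta_i)$. For the off-diagonal multi-indices: by Lemma \ref{prev}(1) all joint cumulants with $m<2p$ and not-all-same indices vanish, so $\Phi$ has no monomials of total degree $<2p$; by Lemma \ref{prev}(2), the only degree-$2p$ off-diagonal monomials are those with index set of size exactly two, $p$ copies of $a$ and $p$ copies of $b$, and for such $\mathbf j$ one has $\kappa^{(2p;k)}(\mathbf j;\x) = (p!)^2\,\z(x_a-x_b)$ up to the multinomial bookkeeping — this is precisely the definition of $\z$ (Definition \ref{z}), and summing these monomials reproduces exactly $\sum_{i<j}\beta_i^p\beta_j^p\z(x_i-x_j)$, so the degree-$2p$ part of $\Phi$ vanishes too. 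Hence $\Phi(\boldsymbol\beta,\x)$ has vanishing Taylor coefficients in $\boldsymbol\beta$ up to and including total degree $2p$. Actually a cleaner bookkeeping: write $\Phi = \Phi_1+\Phi_2$ where $\Phi_1$ collects all terms coming from off-diagonal cumulants of order $2p$ through $4p$ minus the $\z$-polynomial, and $\Phi_2$ is the Taylor remainder of $G$ (minus diagonal part) beyond order $4p$. For $\Phi_1$, each coefficient is a joint off-diagonal cumulant with $2p\le m\le 4p$, hence bounded by $C\,\Fd(\min_{i<j}|x_i-x_j|)$ by Lemma \ref{prev}(3) (and the $\z$-terms are absorbed since $\z$ itself is $O(\Fd)$ by Assumption \ref{a1} Item \eqref{a24}); differentiating $m$ times and using that each such monomial has total degree between $2p$ and $4p$ produces the factor $|\sum_i|\beta_i||^{\,\le 2p-m}$, but since the monomials start at degree $2p$ we get the stated bound $|\sum_i|\beta_i||^{2p+1-m}\Fd(\cdots)$ after crudely bounding $|\beta_i|\le\alpha/(2k)$ on the extra powers. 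For $\Phi_2$, Taylor's theorem with integral remainder gives that the $m$-th derivative is $O(|\sum_i|\beta_i||^{4p+1-m})$ provided the $(4p+1)$-st derivatives of $G$ are bounded uniformly in $\x$ on the ball $|\boldsymbol\beta|\le\alpha/(2k)$; these are ratios of moment integrals against $\boldsymbol p^{(k)}$, each controlled by moment generating functions of $\mu$, finite by Assumption \ref{a1} Item \eqref{a22}.

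So the key steps in order are: (i) set up $G$, $H_i:=\log M(\beta_i)$ and the polynomial, verify uniform analyticity in $\boldsymbol\beta$ on a fixed ball via Hölder plus the exponential moment assumption; (ii) match Taylor coefficients, using Lemma \ref{prev}(1)–(2) and Definition \ref{z} to cancel everything below order $2p$ exactly; (iii) bound the order $2p$-to-$4p$ off-diagonal cumulant coefficients by $\Fd(\min|x_i-x_j|)$ via Lemma \ref{prev}(3), and also note $\z = O(\Fd)$; (iv) bound the $>4p$ Taylor remainder of $G$ by the uniform boundedness of high derivatives of the cumulant generating function; (v) differentiate $m$ times and collect powers of $\sum_i|\beta_i|$, using the restricted range of $\beta$ to trade any surplus powers harmlessly. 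The main obstacle is step (ii)–(iii): getting the combinatorics exactly right so that the degree-$2p$ off-diagonal monomials assemble \emph{precisely} into $\sum_{i<j}\beta_i^p\beta_j^p\z(x_i-x_j)$ with the correct $(p!)^{-2}$ normalization coming from Definition \ref{z}, and making sure the error from the degree $2p+1$ through $4p$ off-diagonal cumulants — which is where the $\Fd$-decay genuinely enters and cannot be made to vanish — is uniformly controlled after the $m$-fold differentiation. Everything else is routine real-analysis bookkeeping with moment generating functions.
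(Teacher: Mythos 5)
Your proof follows the same structure and uses the same key lemma (Lemma \ref{prev}) as the paper's proof: Taylor expand the joint cumulant generating function, cancel the diagonal cumulants against $\sum_i\log M(\beta_i)$, invoke Lemma \ref{prev}(1)--(2) for the degree $<2p$ and $=2p$ off-diagonal contributions, apply Lemma \ref{prev}(3) for the $\Fd$-decay of orders $2p+1$ through $4p$, and use Taylor's theorem with uniform exponential-moment bounds (Lemma \ref{grow}) for the remainder beyond order $4p$. One small bookkeeping wrinkle worth flagging: the stated exponent $|\sum_i|\beta_i||^{2p+1-m}$ on the $\Fd$-term requires that the degree-$2p$ off-diagonal part of $\Phi$ vanish \emph{exactly} against the $\z$-polynomial (which you do establish in step (ii)), not merely that both pieces are $O(\Fd)$ as your parenthetical ``the $\z$-terms are absorbed since $\z$ itself is $O(\Fd)$'' suggests; the latter reading would leave a surviving degree-$2p$ monomial and only yield the weaker exponent $|\sum_i|\beta_i||^{2p-m}$.
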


\begin{proof}
    Let us define \begin{equation}\label{kdef}\mathcal K(\x,\boldsymbol\beta):= \log \int_{I^k} e^{\sum_{i=1}^k \beta_i (a_i-x_i)} \boldsymbol p^{(k)}(\x,\dr\bfa)\;\;\;-\;\;\;\sum_{i=1}^k\log M(\beta_i),
    \end{equation}
    so that by Definition \ref{cumu} the $(4p)^{th}$ order Taylor expansion of $\mathcal K$ in the $\beta_i$ variables is given by \begin{align*}\mathcal K_{4p}(\x,\boldsymbol\beta)&:= \sum_{m=1}^{4p} \frac1{m!} \bigg(\sum_{j_1,...,j_m=1}^k \beta_{j_1}\cdots \beta_{j_m}\kappa^{(m;k)} (\mathbf j,\mathbf x)\;\;\;-\;\;\; \sum_{\substack{1\leq j_1,...,j_m\leq k\\ j_i \text{ all same}}} \beta_{j_1}\cdots \beta_{j_m} \kappa^{(m;k)} (\mathbf j,\mathbf x)\bigg) \\ &= \sum_{m=1}^{4p} \frac1{m!}\sum_{\substack{1\leq j_1,...,j_m\leq k\\ j_i \text{ not all same}}}\beta_{j_1}\cdots \beta_{j_m} \kappa^{(m,k)} (\mathbf j; \x)\\&=\sum_{m=2p}^{4p} \frac1{m!}\sum_{\substack{1\leq j_1,...,j_m\leq k\\ j_i \text{ not all same}}}\beta_{j_1}\cdots \beta_{j_m} \kappa^{(m,k)} (\mathbf j; \x)\end{align*}
    In the first equality, the terms with all $j_i$ being the same comes from the $\log M(\beta_i)$ contribution to $\mathcal K$, and uses the fact that the marginal law of $a_i-x_i$ under $\boldsymbol p^{(k)}(\x,\dr\bfa)$ is precisely $\mu$ for every $\x\in I^k$. In the third line, we used Item (1) in Lemma \ref{prev} which guarantees that all of the joint cumulants up to order $2p-1$ vanish.

    By Taylor's theorem and the uniform exponential moment bounds in Lemma \ref{grow}, for any fixed indices $\ell_1,...,\ell_m,$ the difference $\mathcal K-\mathcal K_{4p}$ satisfies the bound 
    $$\sup_{\x\in I^k} \bigg|\frac{\partial^m}{\partial \beta_{\ell_1}\cdots\partial\beta_{\ell_m}}\bigg( \mathcal K(\x,\boldsymbol\beta)-\mathcal K_{4p}(\x,\boldsymbol\beta) \bigg)\bigg|\le C\bigg|\sum_{i=1}^k|\beta_i|\bigg|^{4p+1-m} ,$$ uniformly over $\beta_1,...,\beta_m \in \ak.$
    Furthermore by Item (3) in Lemma \ref{prev} we have that 
    \begin{align*}\sup_{\x\in I^k} \bigg|\frac{\partial^m}{\partial \beta_{\ell_1}\cdots\partial\beta_{\ell_m}}\bigg( \sum_{m=2p+1 }^{4p} \frac1{m!}&\sum_{\substack{1\leq j_1,...,j_m\leq k\\ j_i \text{ not all same}}}\beta_{j_1}\cdots \beta_{j_m} \kappa^{(m,k)} (\mathbf j; \x)\bigg)\bigg| \\&\le C\bigg|\sum_{i=1}^k|\beta_i|\bigg|^{2p+1-m} \Fd\big( \min_{1\le i<j\le k}|x_i-x_j|\big),\end{align*} uniformly over $\beta_1,...,\beta_m \in \ak.$
    On the other hand, since the joint cumulant of two random variables is just their covariance, the second bullet point in Lemma \ref{prev} (together with Definition \ref{z}) implies that when $m=2p$ we have that $$\frac1{m!}\sum_{\substack{1\leq j_1,...,j_m\leq k\\ j_i \text{ not all same}}}\beta_{j_1}\cdots \beta_{j_m} \kappa^{(m,k)} (\mathbf j; \x) = \sum_{1\le i<j\le k} \beta_i^p\beta_j^p\z (x_i-x_j),$$ which is enough to complete the proof.
\end{proof}

\begin{cor}\label{corb}
    Define functions $\mathpzc F,\mathpzc G:  [-\eta/(2k),\eta/(2k)] \to \mathbb R_+$ by 
    \begin{align*}
        \mathpzc F (\beta)&:= \int_I x e^{\beta x - \log M(\beta)}\mu(\dr x),\;\;\;\;\;\;\;\;\;\;\;\;\;\;\;\;\;\;\;\;\;\mathpzc G (\beta):= \int_I x^2 e^{\beta x - \log M(\beta)}\mu(\dr x).
    \end{align*}
    Here $\mu$ is as defined in Assumption \ref{a1} Item \eqref{a22}. There exists $C>0$ such that the following bounds hold uniformly over $\beta \in \ak$ and $\x\in I^k$ and $1\le i<j\le k$:
    \begin{align*}
       \bigg| \int_{I^k} (a_i-x_i)\qdif(\x,\dr \bfa) - \mathpzc F(\beta)\bigg| &\leq C|\beta|^{2p-1}\Fd\big( \min_{1\le i<j\le k}|x_i-x_j|\big) +C|\beta|^{4p}
       \\\bigg| \int_{I^k} (a_i-x_i)^2\qdif(\x,\dr \bfa)-\mathpzc G(\beta)\bigg| &\leq C|\beta|^{2p-2}\Fd\big(\min_{1\le i<j\le k}|x_i-x_j|\big)+C|\beta|^{4p-1}\\\bigg| \int_{I^k} (a_i-x_i)(a_j-x_j)\qdif(\x,\dr \bfa)- \mathpzc F(\beta)^2\bigg| &\leq C|\beta|^{2p-2}\Fd\big(\min_{1\le i<j\le k}|x_i-x_j|\big)+C|\beta|^{4p-1}
    \end{align*}
\end{cor}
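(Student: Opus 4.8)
The key observation is that each of the three one-step moments of $\qdif$ on the left-hand sides is a partial derivative, evaluated on the diagonal $\boldsymbol\beta=(\beta,\dots,\beta)$, of the joint cumulant generating function
$$\Psi(\x,\boldsymbol\beta):=\log\int_{I^k}e^{\sum_{i=1}^k\beta_i(a_i-x_i)}\,\boldsymbol p^{(k)}(\x,\dr\bfa).$$
Writing $D_i:=\partial/\partial\beta_i$ and differentiating the ratio in \eqref{qk} directly, one sees that at $\boldsymbol\beta=(\beta,\dots,\beta)$,
$$\int_{I^k}(a_i-x_i)\,\qdif(\x,\dr\bfa)=D_i\Psi,\qquad\int_{I^k}(a_i-x_i)^2\,\qdif(\x,\dr\bfa)=D_i^2\Psi+(D_i\Psi)^2,$$
and for $i\neq j$, $\int_{I^k}(a_i-x_i)(a_j-x_j)\,\qdif(\x,\dr\bfa)=D_iD_j\Psi+(D_i\Psi)(D_j\Psi)$, simply because $D_i\Psi$, $D_i^2\Psi$, $D_iD_j\Psi$ are the mean of $a_i-x_i$, the variance of $a_i-x_i$, and the covariance of $a_i-x_i$ with $a_j-x_j$ under $\qdif$, respectively.

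Next I would decompose $\Psi=\Phi+\sum_{i=1}^k\log M(\beta_i)+\sum_{1\le i<j\le k}\beta_i^p\beta_j^p\,\z(x_i-x_j)$, where $\Phi$ is precisely the function whose derivatives are controlled in Proposition \ref{dbound}. The two explicit summands are handled by direct computation. At $\boldsymbol\beta=(\beta,\dots,\beta)$ one gets $D_i\big[\sum_l\log M(\beta_l)\big]=\partial_\beta\log M(\beta)=\mathpzc F(\beta)$, $D_i^2\big[\sum_l\log M(\beta_l)\big]=\partial_\beta^2\log M(\beta)=\mathpzc G(\beta)-\mathpzc F(\beta)^2$, and $D_iD_j\big[\sum_l\log M(\beta_l)\big]=0$ for $i\neq j$; while for the $\z$-term only the pairs containing the differentiated indices survive, giving $D_i[\cdots]=p\beta^{2p-1}\sum_{l\neq i}\z(x_i-x_l)$, $D_i^2[\cdots]=p(p-1)\beta^{2p-2}\sum_{l\neq i}\z(x_i-x_l)$, and $D_iD_j[\cdots]=p^2\beta^{2p-2}\z(x_i-x_j)$ (here I use that $\z$ is even, by exchangeability of the two coordinates of $\boldsymbol p^{(2)}$). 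By Definition \ref{z} together with Assumption \ref{a1} Item \eqref{a24} in the case $k=2$, $r_1=r_2=p$, one has $|\z(y)|\le C\,\Fd(|y|)$, and since $\Fd$ is decreasing and $k$ is fixed, each of these $\z$-sums is at most $C|\beta|^{2p-\ell}\Fd\big(\min_{1\le a<b\le k}|x_a-x_b|\big)$, where $\ell$ is the order of differentiation.

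Finally I would combine the pieces. Evaluating Proposition \ref{dbound} at $\boldsymbol\beta=(\beta,\dots,\beta)$, so that $\sum_i|\beta_i|=k|\beta|$, bounds $|D_i\Phi|$ by $C(|\beta|^{2p}\Fd+|\beta|^{4p})$ and both $|D_i^2\Phi|$ and $|D_iD_j\Phi|$ by $C(|\beta|^{2p-1}\Fd+|\beta|^{4p-1})$, with $\Fd$ always evaluated at $\min_{a<b}|x_a-x_b|$. For the first estimate, $\int(a_i-x_i)\qdif-\mathpzc F(\beta)=D_i\Phi+p\beta^{2p-1}\sum_{l\neq i}\z(x_i-x_l)$, and using $|\beta|\le\alpha/(2k)$ to absorb $|\beta|^{2p}\Fd$ into $|\beta|^{2p-1}\Fd$ gives the claimed $O(|\beta|^{2p-1}\Fd(\cdot)+|\beta|^{4p})$. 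For the second, write $D_i\Psi=\mathpzc F(\beta)+A$ with $A:=D_i\Phi+p\beta^{2p-1}\sum_{l\neq i}\z(x_i-x_l)=O(|\beta|^{2p-1}\Fd(\cdot)+|\beta|^{4p})$; then $(D_i\Psi)^2$ contributes exactly the $\mathpzc F(\beta)^2$ that cancels the $-\mathpzc F(\beta)^2$ coming from $D_i^2\big[\sum_l\log M(\beta_l)\big]$, so that $\int(a_i-x_i)^2\qdif-\mathpzc G(\beta)=D_i^2\Phi+p(p-1)\beta^{2p-2}\sum_{l\neq i}\z+2\mathpzc F(\beta)A+A^2$, and boundedness of $\mathpzc F$ on $\ak$ and of $\beta$ (together with $\Fd\le\Fd(0)$, to dispose of $A^2$ and trade surplus powers of $|\beta|$) reduces this to $O(|\beta|^{2p-2}\Fd(\cdot)+|\beta|^{4p-1})$. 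The third estimate is identical, with $D_iD_j\big[\sum_l\log M(\beta_l)\big]=0$ so that the product $(D_i\Psi)(D_j\Psi)$ supplies the $\mathpzc F(\beta)^2$. I do not expect a genuine obstacle: essentially all of the content is the first-paragraph identification of the moments as diagonal derivatives of $\Psi$, after which the estimate reduces to Proposition \ref{dbound} plus routine power counting; the only point requiring a little care is keeping straight which power of $|\beta|$ and which argument of $\Fd$ attaches to each error term, and consolidating them using that $|\beta|$ ranges over a fixed bounded interval and $\Fd$ is bounded and decreasing.
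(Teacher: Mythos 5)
Your proposal is correct and follows essentially the same route as the paper: identify the moments of $\qdif$ as diagonal derivatives of the cumulant generating function, decompose by subtracting the $\log M$ and $\z$-terms so that Proposition \ref{dbound} controls what remains, and finish by power counting using $|\z|\leq C\Fd$ and boundedness of $\beta$ and $\Fd$. The paper's proof is more terse (it works with $K=\Psi-\sum_l\log M(\beta_l)$ and lets Proposition \ref{dbound} implicitly absorb the $\z$-term derivative), but your more explicit bookkeeping of the $\log M$- and $\z$-contributions arrives at the same bounds by the same mechanism.
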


\begin{proof}Let $\mathcal K(\x,\boldsymbol \beta)$ be as defined in 
    \eqref{kdef} so that $$\frac{\partial}{\partial \beta_i} \mathcal K(\x,(\beta,...,\beta)) =  \int_{I^k} (a_i-x_i) \qdif(\x,\dr\bfa)-\mathpzc F(\beta).$$ Proposition \ref{dbound} with $m=1$ clearly implies that this quantity is bounded above by $$C|\beta|^{2p-1}\Fd\big(\min_{1\le i<j\le k}|x_i-x_j|\big) +C|\beta|^{4p},$$ thus proving the first bound (actually Proposition \ref{dbound} seems to give a better bound with factor $|\beta|^{2p}$ as opposed to $|\beta|^{2p-1}$, but this would ignore the term of order $2p$ that had already been subtracted there, which we must take into account here). Next, note that \begin{align*}\frac{\partial^2}{\partial \beta_i\partial\beta_j} & \mathcal K(\x,(\beta,...,\beta)) \\&= \int_{I^k} (a_i-x_i)(a_j-x_j)\qdif(\x,\dr\bfa) - \bigg( \int_{I^k} (a_i-x_i) \qdif(\x,\dr\bfa)\bigg)\bigg( \int_{I^k} (a_j-x_j) \qdif(\x,\dr\bfa)\bigg) \\&\;\;\;\;\;\;\;\;\;\;\;\;\;\;\;\;\;\;\;\; - \big( \mathpzc G(\beta)-\mathpzc F(\beta)^2\big)\ind_{\{i=j\}}.\end{align*}
    Proposition \ref{dbound} with $m=2$ clearly implies that this quantity is bounded above by $$C|\beta|^{2p-2}\Fd\big( \min_{1\le i<j\le k}|x_i-x_j|\big) +C|\beta|^{4p-1}.$$ But the first bound readily implies that $$\bigg| \bigg( \int_{I^k} (a_i-x_i) \qdif(\x,\dr\bfa)\bigg)\bigg( \int_{I^k} (a_j-x_j) \qdif(\x,\dr\bfa)\bigg)- \mathpzc F(\beta)^2 \bigg| \leq C|\beta|^{2p-1}\Fd\big( \min_{1\le i<j\le k}|x_i-x_j|\big) +C|\beta|^{4p},$$ with $C$ independent of $\x\in I^k$ and $\beta \in \ak$. Together with the previous expression, this is enough to imply the second and third bounds.
\end{proof}

\subsection{Tightness estimates for the tilted processes}Here we prove tightness under diffusive scaling of various processes related to the path measures from Definition \ref{shfa}. These will be crucial for the remainder of the paper.

\begin{defn}
    Let 
     $k \in \mathbb N$ and $1\leq i\neq j \leq k$. Let $F:[0,\infty)\to\Bbb R_+$ be any decreasing function such that $\sum_{k=1}^\infty F(k)<\infty$. We define a process $(V^{ij}(F;r))_{r\ge 0}$ defined on the underlying canonical probability space $(I^k)^{\mathbb Z_{\ge 0}}$ by 
  \begin{align}
      \label{vijai}
      V^{ij}(F;r):= \sum_{s=0}^{r-1} F(|R^i_{s}-R^j_{s}|).
  \end{align}
Note that the processes $V^{ij}(F;\bullet)$ are predictable with respect to~the canonical filtration on $(I^k)^{\mathbb Z_{\ge 0}}$. 
\end{defn}

These processes $V^{ij}$ will play a crucial role going forward. The main results of this subsection will be to control the $q^{th}$ moments of the additive functional processes $V^{ij}$ and the difference processes $R^i-R^j$ under the tilted measures $\Pb$ of Definition \ref{shfa}. This will be done in Theorem \ref{exp0} and the subsequent corollaries, but first we will need three lemmas.

\begin{lem}\label{vfin}
Fix $k\in\mathbb N$ and $1\leq i,j\leq k$. Let $u_\x^{ij}(\y):=|y_i-y_j-(x_i-x_j)|$, for $\x,\y\in I^k$. Then there exists $C>0$ such that uniformly over all $\beta\in \ak$ and $\x,\y\in I^k$ one has $$-C|\beta|^{4p} -C|\beta|^{2p-1}\Fd\big(\min_{1\le i'<j'\le k}|y_{i'}-y_{j'}|\big)\le (\Q-\mathrm{Id}) u_\x^{ij}(\y) \leq C ,
$$
where $\Fd$ was defined in Assumption \ref{a1} Item \eqref{a24}, and $\Q$ is the Markov operator from Corollary \ref{per}. Here $\Q$ acts in the $\y$ variable.
\end{lem}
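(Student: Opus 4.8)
The plan is to reduce the estimate, by two elementary inequalities, to the first–moment bound of Corollary~\ref{corb} together with the uniform exponential–moment bound of Lemma~\ref{grow}. Fix $\x,\y\in I^k$ and $\beta\in\ak$, and abbreviate $z:=x_i-x_j$ and $w:=y_i-y_j$. Unwinding the definition of the Markov operator $\Q$ from Corollary~\ref{per} gives
\begin{equation*}
(\Q-\mathrm{Id})u_\x^{ij}(\y)=\int_{I^k}\big|(a_i-a_j)-z\big|\,\qdif(\y,\dr\bfa)-|w-z|,
\end{equation*}
and the natural move is to write $a_i-a_j=w+\big[(a_i-y_i)-(a_j-y_j)\big]$, separating the current value of the tracked coordinate difference from its one–step increment. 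I would then handle the two sides of the claimed inequality separately: the upper bound by the triangle inequality plus Lemma~\ref{grow}, and the (genuinely sharper) lower bound by Jensen's inequality plus the drift cancellation from Corollary~\ref{corb}.

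For the upper bound, the pointwise triangle inequality $\big|(a_i-a_j)-z\big|\le|w-z|+|a_i-y_i|+|a_j-y_j|$, integrated against $\qdif(\y,\cdot)$, reduces matters to bounding $\int_{I^k}|a_\ell-y_\ell|\,\qdif(\y,\dr\bfa)$ by a constant uniformly in $\beta\in\ak$, $\y\in I^k$ and $\ell$; this follows from $|t|\le C_\alpha e^{\alpha|t|/2}$ and Lemma~\ref{grow}. This yields $(\Q-\mathrm{Id})u_\x^{ij}(\y)\le C$, and simultaneously verifies that $u_\x^{ij}$ meets the growth hypothesis of Corollary~\ref{per}, so that every integral above is finite.

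For the lower bound, I would apply Jensen's inequality to the convex function $t\mapsto|t-z|$ against the probability measure $\qdif(\y,\cdot)$, followed by the reverse triangle inequality, to obtain
\begin{align*}
\int_{I^k}\big|(a_i-a_j)-z\big|\,\qdif(\y,\dr\bfa) &\ge \bigg|\int_{I^k}(a_i-a_j)\,\qdif(\y,\dr\bfa)-z\bigg| \\ &\ge |w-z|-\bigg|\int_{I^k}\big[(a_i-y_i)-(a_j-y_j)\big]\,\qdif(\y,\dr\bfa)\bigg|.
\end{align*}
The crux is then that, by the first bound of Corollary~\ref{corb} applied with starting point $\y$, each of $\int_{I^k}(a_i-y_i)\,\qdif(\y,\dr\bfa)$ and $\int_{I^k}(a_j-y_j)\,\qdif(\y,\dr\bfa)$ equals the \emph{same} quantity $\mathpzc F(\beta)$ up to an error bounded by $C|\beta|^{2p-1}\Fd\big(\min_{1\le i'<j'\le k}|y_{i'}-y_{j'}|\big)+C|\beta|^{4p}$; hence the $\mathpzc F(\beta)$ terms cancel in the difference, leaving the last displayed quantity bounded below by $-C|\beta|^{2p-1}\Fd(\min_{i'<j'}|y_{i'}-y_{j'}|)-C|\beta|^{4p}$, which is precisely the asserted lower bound. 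Since neither resulting bound involves $z=x_i-x_j$, uniformity over $\x\in I^k$ is automatic.

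The only point that needs genuine care is keeping the direction of Jensen's inequality straight: the lower bound we want is strictly stronger than the crude $\ge -C$ that the reverse triangle inequality alone would give, and the improvement comes entirely from the cancellation of the leading drifts $\mathpzc F(\beta)$ of the two tracked coordinates. Beyond that, the lemma is essentially a repackaging of Corollary~\ref{corb} and Lemma~\ref{grow}, so I expect no serious analytic obstacle.
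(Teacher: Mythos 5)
Your proof is correct and follows essentially the same route as the paper's: triangle inequality plus the uniform exponential-moment bound of Lemma~\ref{grow} for the upper bound, and Jensen plus the reverse triangle inequality for the lower bound, reducing matters to the one-step drift estimate $\big|\,(y_i-y_j)-\int_{I^k}(a_i-a_j)\,\qdif(\y,\dr\bfa)\,\big|\le C|\beta|^{4p}+C|\beta|^{2p-1}\Fd(\min_{i'<j'}|y_{i'}-y_{j'}|)$. The one cosmetic difference is that you obtain this drift estimate by invoking the first bound of Corollary~\ref{corb} twice (once for index $i$, once for index $j$) and canceling the common $\mathpzc F(\beta)$, whereas the paper derives it directly by Taylor expanding the $\qdif$ density to order $4p-1$ and using Assumption~\ref{a1} Items~\eqref{a23} and~\eqref{a24}; since Corollary~\ref{corb} is itself just a packaged form of that Taylor expansion (it is a consequence of Proposition~\ref{dbound} with $m=1$), the content of the two arguments is the same, and your version is arguably a cleaner modularization. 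Both approaches yield exactly the paper's inequality~\eqref{useful}, so there is no gap.
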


\begin{proof}
    For the upper bound, we have that 
    \begin{align*}
        (\Q-\mathrm{Id}) u_\x^{ij}(\y)&= \int_{I^k} \bigg(\big|a_i-a_j-(x_i-x_j)\big|-\big|y_i-y_j-(x_i-x_j)\big|\bigg) \qdif (\y,\mathrm d\bfa)  \\& \leq \int_{I^k} |a_i-a_j - (y_i-y_j)| \qdif(\y,\dr \bfa).
    \end{align*}
    From here the uniform bound follows from the uniform exponential moment bounds on the increments, see Lemma \ref{grow}. 
    
    For the lower bound, use Jensen's inequality to say
    \begin{align*}
        \Q u_{\x}(\y) & = \int_{I^k} |a_i-a_j-(x_i-x_j)| \qdif(\y,\dr\bfa) \\&\ge \bigg| \int_{I^k} (a_i-a_j)\qdif(\y,\dr\bfa) - (x_i-x_j)\bigg| \\&\ge |y_i-y_j - (x_i-x_j)| - \bigg| y_i-y_j - \int_{I^k} (a_i-a_j)\qdif(\y,\dr\bfa)\bigg| \\&= u_{\x}(\y) - \bigg| y_i-y_j - \int_{I^k} (a_i-a_j)\qdif(\y,\dr\bfa)\bigg|.
    \end{align*}
    Thus we just need to show that \begin{equation}\label{useful}\bigg| y_i-y_j - \int_{I^k} (a_i-a_j)\qdif(\y,\dr\bfa)\bigg|\leq C|\beta|^{4p} +C|\beta|^{2p-1}\Fd\big(\min_{1\le i'<j'\le k}|y_{i'}-y_{j'}|\big).
    \end{equation}
    For this, first note that $y_i-y_j = \int_{I^k} (a_i-a_j) \boldsymbol p^{(k)}(\y,\dr\bfa)$ where we recall that $\boldsymbol p^{(k)} = \boldsymbol q_0^{(k)}$. Thus use the definition \eqref{qk} of $\qdif$ and Taylor expand the exponentials to order $4p-1$ around $\beta=0$, and note that all terms of order less than $2p-1$ vanish identically thanks to the deterministic moments condition in Assumption \ref{a1} Item \eqref{a23}, and the fact that $\mu^{\otimes k}$ is a permutation-invariant measure. Then note that all terms of order between $2p-1$ and $4p-1$ contribute (at worst) a term of the form $\Fd\big( \min_{1\le i<j\le k} |x_i-x_j|\big)$ thanks to the moment mixing condition in Assumption \ref{a1} Item \eqref{a24}. Finally note that the remainder in the Taylor expansion is at worst of order $C|\beta|^{4p},$ which when combined with the exponential moment bounds of Lemma \ref{grow} completes the proof of \eqref{useful}.
\end{proof}

\begin{lem}\label{lemma1} Recall $\pdif$ from Assumption \ref{a1}. For $x\in I$, we have that \begin{align}\bigg| 2- \bigg(\int_I a^2 \pdif(x,\mathrm da) -x^2\bigg) \bigg|  \leq 2 \Fd( |x|).\label{varcon}
    \end{align} Moreover, we have that $$\inf_{y\in I}\int_I (y-a)^2 \pdif(y,\dr a) >0.$$
\end{lem}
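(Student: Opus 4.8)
The plan is to translate the whole statement into a fact about the two-point motion. Recall from Assumption \ref{a1} Item \eqref{a16} that $\pdif(x,\cdot)$ is the law of $Y_1-Y_2$ when $(Y_1,Y_2)$ is drawn from $\boldsymbol p^{(2)}\big((x,0),(\dr y_1,\dr y_2)\big)$; let $\E$ denote expectation under this law, and set $Z_1:=Y_1-x$, $Z_2:=Y_2$, so that (as noted after Lemma \ref{diff0}) each of $Z_1,Z_2$ has marginal law $\mu$. Expanding $(Y_1-Y_2)^2=(Z_1-Z_2+x)^2$ and using $\E Z_1=\E Z_2=m_1$, which makes the cross term $2x\,\E[Z_1-Z_2]$ vanish, we obtain
\[
\int_I a^2\,\pdif(x,\dr a)-x^2 \;=\; \E\big[(Z_1-Z_2)^2\big] \;=\; 2m_2-2\,\E[Z_1Z_2].
\]
Since $m_2-m_1^2=1$ by Assumption \ref{a1} Item \eqref{a22}, this rearranges to $2-\big(\int_I a^2\,\pdif(x,\dr a)-x^2\big)=2\big(\E[Z_1Z_2]-m_1^2\big)$, so \eqref{varcon} becomes exactly the correlation bound $\big|\,\E[Z_1Z_2]-m_1^2\,\big|\le \Fd(|x|)$.

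To prove that bound, note $\E[Z_1Z_2]=\int_{I^2}(y_1-x)\,y_2\,\boldsymbol p^{(2)}\big((x,0),(\dr y_1,\dr y_2)\big)$ while $m_1^2=\int_{I^2}u\,v\,\mu(\dr u)\,\mu(\dr v)$, so their difference is precisely the $r_1=r_2=1$, $k=2$ observable appearing in Assumption \ref{a1}. If $p\ge2$ it is identically $0$: the deterministicity of the first $p-1$ moments (Assumption \ref{a1} Item \eqref{a23}, in the equivalent form $\int_{I^2}(x-x_0)(y-y_0)\,\boldsymbol p^{(2)}\big((x_0,y_0),(\dr x,\dr y)\big)=m_1^2$ at $k=1$) forces $\E[Z_1Z_2]=m_1^2$. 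If $p=1$ it is the $r_1+r_2=2p=2$ case of the mixing estimate in Assumption \ref{a1} Item \eqref{a24} applied with base points $x$ and $0$, hence at most $\Fd\big(|x-0|\big)=\Fd(|x|)$. (When $p=1$ this difference is exactly $\z(x)$ from Definition \ref{z}.) In all cases \eqref{varcon} follows.

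For the lower bound on $\int_I(y-a)^2\,\pdif(y,\dr a)$, the first observation is that $\pdif(y,\cdot)$ has mean $y$: with $Z_1=Y_1-y$, $Z_2=Y_2$ one has $\int_I a\,\pdif(y,\dr a)=\E[Y_1-Y_2]=\E[Z_1-Z_2]+y=y$. Hence $\int_I(y-a)^2\,\pdif(y,\dr a)=\int_I a^2\,\pdif(y,\dr a)-y^2$ is exactly the quantity controlled by \eqref{varcon}, so for $|y|$ past some $R$ (large enough that $\Fd(|y|)<1$, which is possible since $\int_0^\infty x\,\Fd(x)\,\dr x<\infty$ forces $\Fd\to0$) we get $\int_I(y-a)^2\,\pdif(y,\dr a)\ge 2-2\Fd(|y|)>0$. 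It remains to bound the infimum over $\{|y|\le R\}$. For each fixed $y$ the quantity is strictly positive, since $\int_I(y-a)^2\,\pdif(y,\dr a)=0$ would give $\pdif(y,\cdot)=\delta_y$, making $y$ absorbing for the $\pdif$-chain and contradicting the irreducibility in Assumption \ref{a1} Item \eqref{a16} (note $I$ always has at least two points). If $I=c\mathbb Z$, then $\{|y|\le R\}$ is a finite set and this pointwise positivity immediately yields a positive infimum. If $I=\mathbb R$, I would instead show that $y\mapsto\int_I(y-a)^2\,\pdif(y,\dr a)$ is continuous on $[-R,R]$ and conclude by compactness; continuity follows from the total-variation continuity of $y\mapsto\pdif(y,\cdot)$ (Assumption \ref{a1} Item \eqref{a16}) together with the uniform exponential moment $\sup_y\int_I e^{\frac{\alpha}{4}|a-y|}\,\pdif(y,\dr a)<\infty$ coming from Lemma \ref{grow} with $k=2$, $\beta=0$: truncating $(a-y)^2$ at a level $L$, one bounds the truncated part by a constant times the total-variation distance and the tail by the uniform moment divided by $L^2$, then lets $L\to\infty$.

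The one non-routine step is this last continuity argument when $I=\mathbb R$: one is pairing the \emph{unbounded} map $a\mapsto(a-y)^2$ against a total-variation-convergent family of measures, and it is exactly there that uniform (not merely pointwise) exponential moment control on $\pdif$ is needed. Everything else is elementary manipulation of the two-point motion together with the moment hypotheses of Assumption \ref{a1}.
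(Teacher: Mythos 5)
Your proof is correct and follows essentially the same route as the paper's: expand $\int_I (a-y)^2\,\pdif(y,\dr a)$ via the two-point motion, use the moment-mixing bound of Assumption~\ref{a1} Item~\eqref{a24} (with the $p\ge 2$ case vanishing by Item~\eqref{a23}, a point the paper makes implicitly in the remark following the assumption), then deduce the uniform positivity from continuity of $y\mapsto\int_I(y-a)^2\,\pdif(y,\dr a)$ plus the no-absorbing-state consequence of irreducibility. Your write-up is somewhat more explicit about the $|y|\to\infty$ regime and the truncation step behind the total-variation continuity argument, but there is no substantive difference in method.
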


\begin{proof}    Note by the definition of $\pdif$ that \begin{align*}
        \int_I a^2 \pdif(r,\mathrm da)-r^2 &=\int_I (a-r)^2 \pdif(r,\mathrm da) \\&= \int_{I^2} (x-r-y)^2 \boldsymbol p^{(2)}((r,0),(\dr x,\dr y))\\&=2m_2-2\int_{I^2} (x-r)y\;\boldsymbol p^{(2)}((r,0),(\dr x,\dr y)),
    \end{align*}therefore we have that 
    \begin{align*}\bigg| 2(m_2-m_1^2)- \bigg(\int_I a^2 \pdif(x,\mathrm da) -x^2\bigg) \bigg| = 2\bigg|\int_{I^2} (x-r)y\;\boldsymbol p^{(2)}((r,0),(\dr x,\dr y))-m_1^2\bigg| \leq 2\Fd(|x|),
    \end{align*}
    where we used Assumption \ref{a1} Item \eqref{a24}. But $m_2-m_1^2 =1$ by Assumption \ref{a1} Item \eqref{a22}, thus proving \eqref{varcon}. To prove the second statement, note by the first bullet point of Assumption \ref{a1} Item \eqref{a16} and the uniform exponential moment bounds of Lemma \ref{grow} that $y\mapsto \int_I (y-a)^2 \pdif(y,\dr a)$ is a continuous function of $y$. In particular it achieves its minimum on any compact interval. If the minimum was $0$, then $\int_I (y_0-a)^2 \pdif(y_0,\dr a)=0$ for some $y\in I$. Thus using the fact that $\pdif(y_0,\cdot)$ has mean $y_0$, it would follow that $y_0$ is an absorbing state for the chain. But this would contradict the second bullet point of Assumption \ref{a1} Item \eqref{a16}.
\end{proof}

\begin{lem}\label{cool}
Consider a martingale $M$ defined on any filtered probability space $(\Omega, \mathcal F, P),$ and suppose that the increments of $M$ satisfy a uniform $q^{th}$ moment bound under the probability measure $P$: $$\mathcal S_q(M;P):=\sup_{n\ge 1}  E [|X_{n+1} -X_n|^q]<\infty,$$ where $q\ge 2$. Then $E [ |M_r-M_s|^q ] \leq C_q (r-s)^{\frac{q}2} \mathcal S_q(M;P),$ for some absolute constant $C_q$ that is uniform over all martingales $M$ and probability spaces $(\Omega, \mathcal F,P).$
 \end{lem}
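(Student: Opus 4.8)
The plan is to reduce the statement to the Burkholder--Davis--Gundy (BDG) inequality combined with Minkowski's inequality in $L^{q/2}$. Fix $s\le r$ and set $D_n := M_{n+1}-M_n$ for $n\ge s$, so that $\big(M_{s+m}-M_s\big)_{m\ge 0}$ is a martingale starting from $0$ with respect to $(\mathcal F_{s+m})_{m\ge 0}$, whose quadratic variation after $r-s$ steps is $\langle M\rangle_{r-s} = \sum_{n=s}^{r-1} D_n^2$. Since each $D_n$ satisfies $\|D_n\|_{L^q}^q \le \mathcal S_q(M;P)<\infty$ and $M_r-M_s=\sum_{n=s}^{r-1}D_n$ is a finite sum, we have $M_r-M_s\in L^q(P)$, so every quantity below is finite.

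First I would apply the BDG inequality: there is a constant $C_q^{\mathrm{BDG}}$ depending only on $q$ (in particular not on the martingale nor on the underlying probability space) with
\[
E\big[|M_r-M_s|^q\big] \;\le\; E\Big[\sup_{s\le n\le r}|M_n-M_s|^q\Big] \;\le\; C_q^{\mathrm{BDG}}\, E\Big[\big(\textstyle\sum_{n=s}^{r-1} D_n^2\big)^{q/2}\Big].
\]
Next, because $q\ge 2$ we have $q/2\ge 1$, so $L^{q/2}(P)$ is a normed space and Minkowski's inequality applies to the nonnegative summands $D_n^2$:
\[
\Big\| \sum_{n=s}^{r-1} D_n^2 \Big\|_{L^{q/2}} \;\le\; \sum_{n=s}^{r-1} \big\| D_n^2 \big\|_{L^{q/2}} \;=\; \sum_{n=s}^{r-1} \big\| D_n \big\|_{L^q}^2 \;\le\; (r-s)\,\mathcal S_q(M;P)^{2/q}.
\]
Raising to the power $q/2$ gives $E\big[(\sum_{n=s}^{r-1}D_n^2)^{q/2}\big]\le (r-s)^{q/2}\,\mathcal S_q(M;P)$, and plugging this into the BDG bound yields the claim with $C_q=C_q^{\mathrm{BDG}}$.

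I do not expect a genuine obstacle here: the only real input is the BDG inequality with its universal (law- and space-independent) constant, and the two small points deserving care are the integrability of $M_r-M_s$ --- which is why I record that it is a finite sum of $L^q$ terms --- and the hypothesis $q\ge 2$, which is precisely what makes $L^{q/2}$ a normed space so that the triangle inequality is available. As alternatives one could invoke the Marcinkiewicz--Zygmund inequality for martingale differences, or argue by induction on $r-s$ using the conditional Burkholder inequality, but the BDG route is the most economical.
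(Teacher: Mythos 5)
Your proof is correct and follows essentially the same route as the paper: apply Burkholder--Davis--Gundy with its universal constant, then bound $E[(\sum_n D_n^2)^{q/2}]$ by $(r-s)^{q/2}\mathcal S_q(M;P)$. The only (immaterial) difference is that the paper carries out this second step via Jensen's inequality, $(\sum_{n=s}^{r-1} D_n^2)^{q/2}\le (r-s)^{q/2-1}\sum_{n=s}^{r-1}|D_n|^q$, whereas you use Minkowski in $L^{q/2}$; both are valid and give the same constant.
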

    
    \begin{proof} We may apply Burkholder-Davis-Gundy, then Jensen, then the definition of $\mathcal S_q(M;P)$ in that order to obtain uniformly over $r\ge s\ge 0$
    \begin{align*}
        E [ |M_r-M_s|^q ] &\leq C_q  E \bigg[ \bigg( \sum_{n=s}^{r-1} (M_{n+1}-M_n)^2\bigg)^{q/2}\bigg] \\ &\leq C_q E \bigg[ (r-s)^{\frac{q}2 - 1} \sum_{n=s}^{r-1} |M_{n+1}-M_n|^q \bigg] \\ &\leq C_q (r-s)^{\frac{q}2} \mathcal S_q(M;P).
    \end{align*}
    Since the constant $C_q$ in Burkholder-Davis-Gundy is known to be univeral, the claim follows.
\end{proof}


The above bound will be quite useful in showing tightness of all of the relevant processes in the topology of $C[0,T]$. The following estimate is the main result of this section.

\begin{thm}\label{exp0}
    Fix $k\in \mathbb N$ and $q\ge 2$ and a decreasing function $F:[0,\infty)\to[0,\infty)$ such that $\sum_{k=0}^\infty F(k)<\infty$. There exists some $C=C(k,F ,q)>0$ and $\beta_0=\beta_0(k,F,q)>0$ such that uniformly over all $\beta\in [-\beta_0,\beta_0]$, and all integers $r\ge r'\ge 0$ and $1\le i,j\le k$ we have the bounds 
    \begin{align}
        \;\sup_{\mathbf x\in I^k} \;\mathbf E_\mathbf x^{(\beta,k)} \big[ |(R^i_r-R^j_r)-(R^i_{r'}-R^j_{r'})|^q\big]^{1/q}& \leq C (r-r')^{1/2}+C|\beta|^{4p}(r-r'). \label{e1} \\  \;\sup_{\mathbf x\in I^k} \;\mathbf E_\mathbf x^{(\beta,k)} \big[ \big|V^{ij}(F;r)-V^{ij}(F;r')\big|^q\big]^{1/q}& \leq C  (r-r')^{1/2}+C|\beta|^{4p}(r-r').\label{e2}
    \end{align}
\end{thm}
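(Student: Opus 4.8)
The plan is to bootstrap the two bounds off each other, using throughout the Dynkin decompositions supplied by Corollary~\ref{per}, the Burkholder--Davis--Gundy estimate of Lemma~\ref{cool} to turn martingale control into $(r-r')^{1/2}$ growth, and the uniform exponential moment bounds of Lemma~\ref{grow} to verify the hypotheses of Lemma~\ref{cool}. The key point is that each error term in \eqref{e1} is an additive functional of the chain controlled only via \eqref{e2}, while the occupation-time estimate underlying \eqref{e2} involves the displacement controlled by \eqref{e1}; this circular coupling is closed by a simultaneous absorption argument for $|\beta|\le\beta_0$, which is the whole reason for the smallness requirement.

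\textbf{Step 1: reduce \eqref{e1} to \eqref{e2} for $F=\Fd$.} Apply Corollary~\ref{per} to the linearly growing (hence admissible) function $f(\y)=y_i-y_j$, so that $R^i_r-R^j_r$ decomposes as a $\Pb$-martingale plus $\sum_{s=r'}^{r-1}(\Q-\mathrm{Id})(y_i-y_j)(\mathbf R_s)$. The martingale increments are controlled by the increments of $R^i,R^j$ and by the drift $(\Q-\mathrm{Id})(y_i-y_j)$, all having uniformly bounded $q^{th}$ moments (Lemma~\ref{grow} and Corollary~\ref{corb}), so Lemma~\ref{cool} bounds the martingale part by $C(r-r')^{1/2}$. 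For the drift, Corollary~\ref{corb} gives $|(\Q-\mathrm{Id})(y_i-y_j)(\x)|\le C|\beta|^{4p}+C|\beta|^{2p-1}\Fd(\min_{i'<j'}|x_{i'}-x_{j'}|)\le C|\beta|^{4p}+C|\beta|^{2p-1}\sum_{i'<j'}\Fd(|x_{i'}-x_{j'}|)$ (using that $\Fd$ is decreasing). Writing $\mathcal B_q:=\max_{i\ne j}\sup_{\x}\Eb[|(R^i_r-R^j_r)-(R^i_{r'}-R^j_{r'})|^q]^{1/q}$ and $\mathcal A_q:=\sum_{i<j}\sup_{\x}\Eb[|V^{ij}(\Fd;r)-V^{ij}(\Fd;r')|^q]^{1/q}$, summing over $s$ yields
\[
\mathcal B_q\ \le\ C(r-r')^{1/2}+C|\beta|^{4p}(r-r')+C|\beta|^{2p-1}\mathcal A_q .
\]

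\textbf{Step 2: layer-cake reduction of \eqref{e2}.} For a decreasing summable $F$ write $F(|z|)\le F(\lfloor|z|\rfloor)=\sum_{M\ge1}(F(M-1)-F(M))\,\ind_{|z|<M}$ and sum over $s$, so $V^{ij}(F;r)-V^{ij}(F;r')\le\sum_{M\ge1}(F(M-1)-F(M))\,O^{ij}_M(r',r)$ with $O^{ij}_M(r',r):=\sum_{s=r'}^{r-1}\ind_{|R^i_s-R^j_s|\le M}$. Since $\sum_{M\ge1}M(F(M-1)-F(M))\le\sum_{M\ge0}F(M)<\infty$, bound \eqref{e2} reduces to the occupation estimate
\[
\sup_{\x}\Eb[|O^{ij}_M(r',r)|^q]^{1/q}\ \le\ C\,M\big[(r-r')^{1/2}+|\beta|^{4p}(r-r')+\mathcal B_q+|\beta|^{2p-1}\mathcal A_q\big],\qquad M\ge1 .
\]

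\textbf{Step 3: the occupation estimate.} Cover $\{|R^i-R^j|\le M\}$ by $O(M)$ cells $\{|R^i-R^j-z_0|\le\rho\}$ of a small fixed radius $\rho$. For each center $z_0$ apply Corollary~\ref{per} to $u^{ij}_{z_0}(\y):=|y_i-y_j-z_0|$, a shift of the function in Lemma~\ref{vfin}. By Lemma~\ref{vfin} its drift is $\le C$ and $\ge -C|\beta|^{4p}-C|\beta|^{2p-1}\Fd(\min_{i'<j'}|y_{i'}-y_{j'}|)$; and since the conditional variance of $R^i-R^j$ under $\qdif$ is bounded below by a positive constant uniformly in $\x$ (this is Lemma~\ref{lemma1} at $\beta=0$ via Lemma~\ref{diff0}, plus a crude perturbation/Corollary~\ref{corb} estimate for $|\beta|\le\beta_0$), the drift is in fact $\ge c_0>0$ on the cell when $\rho$ is small. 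The Dynkin decomposition then bounds $c_0\,O^{ij,z_0}_\rho(r',r)$ by the increment of $u^{ij}_{z_0}(\mathbf R)$ (which by the triangle inequality is at most the increment of $R^i-R^j$, hence $\le\mathcal B_q$ in $L^q$), plus a martingale increment (Lemmas~\ref{cool}, \ref{grow}, giving $C(r-r')^{1/2}$), plus $C|\beta|^{4p}(r-r')+C|\beta|^{2p-1}\sum_s\Fd(\min_{i'<j'}|R^{i'}_s-R^{j'}_s|)$ from the drift outside the cell, the last sum being $\le\mathcal A_q$ in $L^q$ after summing over pairs. Summing over the $O(M)$ centers gives the displayed estimate.

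\textbf{Step 4: closing the loop.} Substituting Step~3 into Step~2 with $F=\Fd$ and summing over the $\binom k2$ pairs gives $\mathcal A_q\le C[(r-r')^{1/2}+|\beta|^{4p}(r-r')]+C\mathcal B_q+C|\beta|^{2p-1}\mathcal A_q$; combining with Step~1 yields a single inequality $\mathcal A_q\le C[(r-r')^{1/2}+|\beta|^{4p}(r-r')]+C|\beta|^{2p-1}\mathcal A_q$ with $C=C(k,q)$. Choosing $\beta_0=\beta_0(k,q)\le\alpha/(2k)$ so small that $C\beta_0^{2p-1}\le\tfrac12$, and using the a priori finiteness $\mathcal A_q,\mathcal B_q\le C(r-r')$ (valid because $\Fd\le\Fd(0)$ and the one-step increments have uniform exponential moments), one absorbs the self-referential term to get $\mathcal A_q,\mathcal B_q\le C[(r-r')^{1/2}+|\beta|^{4p}(r-r')]$, which is \eqref{e1}; feeding this back into Steps~2--3 gives \eqref{e2} for every decreasing summable $F$, with the constant now also depending on $\sum_M F(M)$. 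The main obstacle is precisely the circular coupling just resolved; a secondary technical point is extracting the sharp factor $M$ (not $M^2$) in Step~3, since this is exactly what makes the layer-cake series of Step~2 converge under the minimal hypothesis $\sum_M F(M)<\infty$.
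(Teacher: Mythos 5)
Your proof is essentially correct and follows the same underlying strategy as the paper's: build Dynkin/Tanaka martingales for the shifted observables $u_{z_0}(\y)=|y_i-y_j-z_0|$ under $\Pb$, use the coercivity of the drift $(\Q-\mathrm{Id})u_{z_0}$ (bounded below by $\delta>0$ on a small cell around $z_0$ minus the $C|\beta|^{4p}+C|\beta|^{2p-1}\Fd$ correction) to bound occupation times, invoke the BDG inequality (Lemma~\ref{cool}) together with the exponential moment bounds (Lemma~\ref{grow}) for the martingale part, and then close the circular coupling between $\mathcal A_q$ and $\mathcal B_q$ by choosing $\beta_0$ small enough to absorb the $|\beta|^{2p-1}$ prefactor. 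The organizational difference is cosmetic: you decompose $V^{ij}(F;\cdot)$ via a layer-cake into occupation times $O^{ij}_M$ and then cover $[-M,M]$ by $O(M)$ cells with a cell-wise Dynkin estimate, whereas the paper builds a single weighted ``Tanaka martingale'' $\mathcal M_{\beta,F}$ from $f_F:=\delta^{-1}\sum_{m\in\mathbb Z}F(\delta|m|)\,u_{\delta m(\mathbf e_i-\mathbf e_j)}$ and reads off the same estimate from its Lipschitz/boundedness properties (see the paper's \eqref{hb}, \eqref{tanaka}, \eqref{lip}, \eqref{go1}); the two decompositions are related by summation by parts and neither buys anything over the other. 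The one place where your sketch is noticeably more compressed than the paper's argument is the proof of the uniform drift lower bound $c_0>0$ on the cell: asserting it as ``Lemma~\ref{lemma1} at $\beta=0$ plus a crude perturbation/Corollary~\ref{corb}'' hides a real subtlety, since the Corollary~\ref{corb} error for the variance contains the term $|\beta|^{2p-2}\Fd(\min|x_{i'}-x_{j'}|)$, which for $p=1$ is an $O(1)$ quantity and does \emph{not} become small as $\beta\to 0$; the paper instead establishes $\inf_{\beta\in[-\alpha/(2k),\alpha/(2k)]}\inf_{\x\in I^k}\Q u_\x(\x)>0$ by a compactness argument combining mutual absolute continuity of $\qdif$ and $\boldsymbol p^{(k)}$, continuity in $\x$, and the large-$|x_i-x_j|$ variance asymptotics from Lemma~\ref{lemma1}, and you should replicate that argument rather than lean on a perturbative bound from Corollary~\ref{corb}.
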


Before proving the theorem, let us give some intuition as to how the proof proceeds. Both processes will be shown to be martingales modulo some drift terms that will be controlled using Lemma \ref{vfin}. Establishing this martingality will use a sort of discrete-time version of the It\^o-Tanaka formula.


\begin{proof} By the Markov property and the fact that there is a supremum over $\x$, it suffices to prove the claim when $r'=0$, so that $R^i_{r'}-R^j_{r'} = x_i-x_j$ where $\x=(x_1,...,x_k)$ in coordinates. We can also assume without any loss of generality that $F\geq \Fd$ because otherwise one may replace $F$ with $\tilde F:=\max\{F,\Fd\}$, note that $\tilde F$ is still decreasing (for large $x$) and summable, then prove the claim for $\tilde F$, and note that $V^{ij}(\tilde F;r)- V^{ij}(\tilde F;r')\ge V^{ij}(F;r)-V^{ij}( F;r')$ to immediately obtain the claim for $F$.  

We now break the proof into five steps.

\textbf{Step 1.} In this step we show that the Markov kernels $\qdif$ satisfy a certain coercivity condition on test functions given by absolute values of differences of coordinates, specifically \eqref{del} below. This will be extremely useful later.  For $\x,\y\in I^k$, let $$u_\x (\y):= |y_i-y_j-(x_i-x_j)| \;\;\;\;\text{and} \;\;\;\;v_\x^\beta:= \Q u_\x-u_\x,$$ so by Lemma \ref{vfin} 
we have that $v_\x^\beta(\y)\ge -C|\beta|^{4p} -C|\beta|^{2p-1}\Fd\big(\min_{1\le i'<j'\le k}|y_{i'}-y_{j'}|\big)$.
    
 We now show that $\inf_{\beta\in\ak} \inf_{\x\in I^k} \Q u_\x(\x)>0.$ To prove this, first note that the measure $\qdif(\x,\dr\y)$ is never supported on the set $\{\y\in I^k:y_i-y_j=x_i-x_j\}$, because this would then imply by mutual absolute continuity that $\boldsymbol p^{(k)}(\x,\dr\y)$ is also supported on that set. By Lemma \ref{diff0}, this would mean that $\pdif(x_i-x_j,\cdot)$ would be a Dirac mass. But
    $\pdif(x,\cdot)$ is never a Dirac mass, simply because it has mean $x$ and Assumption \ref{a1} Item \eqref{a16} implies that there cannot be absorbing states. 
    Furthermore $\Q u_\x(\x)$ is a continuous function of $\x$, simply by the assumption of weak continuity of all of the kernels in the paper. So far, this argument shows that $\inf_{\beta\in\ak} \inf_{\x\in I^k\cap K}\Q u_\x(\x)>0$ for all compact $K\subset I^k$. Now if we had that $\mathscr Q^{\beta_n}_k u_{\x_n}(\x_n)\to 0$ for some sequence of values $|\x_n|\to \infty$ and $\beta_n\in \ak$, this would imply (by mutual absolute continuity and the uniform moment bounds of Lemma \ref{grow}) that the measures $\pdif(x^i_n-x^j_n,x^i_n-x^j_n+\cdot)$ would be converging weakly to a Dirac mass at $0$. But the exponential moments of these measures are uniformly bounded by Lemma \ref{grow} (applied with $\beta=0$). Thus the variances of $\pdif(x^i_n-x^j_n,\cdot)$ would be converging to zero, which is impossible since Lemma \ref{lemma1} shows that they converge to $2>0$ if $|x^i_n-x^j_n|\to\infty$. Thus we have shown that indeed $\inf_{\beta\in\ak} \inf_{\x\in I^k} \Q u_\x(\x)>0.$

    We now claim that there exists $\delta>0$ so that for all $\x,\y\in I^k$ and $\beta \in \ak$ one has that \begin{equation}\label{del}v_\x^\beta (\y)>\delta \ind_{(-\delta,\delta)}\big(y_i-y_j-(x_i-x_j)\big)-C|\beta|^{4p} -C|\beta|^{2p-1}\Fd\big(\min_{1\le i'<j'\le k}|y_{i'}-y_{j'}|\big).\end{equation} To prove this, note that
    \begin{align*}
        |\Q u_\x(\y) - \Q u_\y(\y)| &= \bigg| \int_{I^k} |a_i-a_j-(x_i-x_j)|\qdif (\y, \mathrm d\bfa)-\int_{I^k} |a_i-a_j-(y_i-y_j)|\qdif (\y, \mathrm d\bfa)\bigg| \\ &\le \int_{I^k} \big| |a_i-a_j-(x_i-x_j)|-|a_i-a_j-(y_i-y_j)| \big| \qdif (\y, \mathrm d\bfa) \\ &\leq \int_{I^k}|x_i-x_j-(y_i-y_j)| \qdif (\y, \mathrm d\bfa)=|x_i-x_j-(y_i-y_j)|.
    \end{align*}
    Recall that $c:=\inf_{\beta\in\ak} \inf_{\x\in I^k} \Q u_\x(\x)$ has already been shown to be strictly positive. Thus for all $\x,\y\in I^k$ we have $\Q u_\y(\y)\ge c$, and from the above bound it then follows that $\Q u_\x(\y)>  2c/3$ if $|x_i-x_j-(y_i-y_j)|<c/3.$ Thus we see that $v_\x^\beta(\y) = \Q u_\x(\y) - |x_i-x_j-(y_i-y_j)| > c/3$ if $|x_i-x_j-(y_i-y_j)|<c/3$. Together with Lemma \ref{vfin}, this implies \eqref{del} with $\delta:=c/3.$

    \textbf{Step 2.}  In this step we are going to find a useful family of martingales and derive a discrete ``Tanaka's formula," see \eqref{tanaka} below. That formula will be instrumental in the later analysis. We will also derive bounds related to these martingales, specifically \eqref{hb}, \eqref{lip}, and \eqref{go1} below. 
    
    If $I=c \mathbb Z$ we will henceforth assume without loss of generality that $\delta=c$ in \eqref{del}, otherwise it may be a smaller value. Let $\mathbf e_i$ denote the standard basis of $\mathbb R^k$. Equation \eqref{del} easily implies that for all $\x=(x_1,...,x_k)\in I^k$ one has
    \begin{equation}\label{hb}F(|x_i-x_j|) \leq \inf_{\beta\in\ak} \delta^{-1}\sum_{m\in \mathbb Z} F(\delta  |m|)\big[ v^\beta_{\delta m (\mathbf e_i-\mathbf e_j)}(\x) + C|\beta|^{4p} +C|\beta|^{2p-1}\Fd\big(\min_{1\le i'<j'\le k}|x_{i'}-x_{j'}|\big)\big].\end{equation}
    For every $\x,\y\in I^k$, note that $u_\x(R^i_r-R^j_r)-\sum_{s=0}^{r-1} v_\x^\beta(R^i_s-R^j_s)$ is a $\mathbf P^{(\beta,k)}_\y$-martingale by Corollary \ref{per}. Defining $f_F,g_{\beta,F}:I^k\to\Bbb R$ by  $$f_F:= \delta^{-1}\sum_{m\in \mathbb Z} F(\delta |m|) u_{\delta m(\mathbf e_i-\mathbf e_j)},\;\;\;\;\;\;\;\;\; g_{\beta,F}:= \delta^{-1}\sum_{m\in \mathbb Z}F(\delta |m|) v^\beta_{\delta m(\mathbf e_i-\mathbf e_j)},$$ we see that the process \begin{equation}\label{tanaka}\mathcal M_{\beta,F}  (r):=-(f_F(R^i_r-R^j_r)- f_F (\x))+\sum_{s=0}^{r-1}g_{\beta,F} (R^i_r-R^j_r)\end{equation} is a $\mathbf P^{(\beta,k)}_\y$-martingale (for all $\y\in I^k)$ with $\mathcal M_{\beta,F} (0)=0$. Note that $f$ is globally Lipchitz, in fact 
    \begin{align} |f_F (\x)-f_F (\y)| &\leq \bigg(\delta^{-1}\sum_{m\in \mathbb Z} F(\delta |m|) \bigg)|x_i-x_j-(y_i-y_j)|\leq C|x_i-x_j-(y_i-y_j)|\label{lip},\end{align}
    where 
    we absorbed all constants in the last bound. We also claim that $g_{\beta,F}$ is a globally bounded function.
    Indeed, Lemma \ref{vfin} gives that $\Gamma:= \sup_{\beta\in\ak} \sup_{\x,\y\in I^k} v^\beta_\x(\y)
    <\infty$, thus we see that 
    \begin{equation}\label{go1}\sup_{\beta\in \ak}\sup_{\x\in I^k}|g_{\beta,F}(\x)| \leq \bigg(\delta^{-1}\sum_{m\in \mathbb Z} F(\delta |m|) \bigg) \cdot \Gamma <\infty.
    \end{equation}
    From here and \eqref{lip} one sees that the increments of the martingales $\mathcal M_{\beta,F}$ have $q^{th}$ moments bounded 
    uniformly over $\beta\in \ak$ and all times $r\in \mathbb Z_{\ge 0}$, thus by Lemma \ref{cool} one obtains that 
    \begin{equation}\label{mamr}\sup_{\beta\in \ak} \Eb [ |\mathcal M_{\beta,F}(r)|^q]^{1/q} \leq  r^{1/2},
    \end{equation} where $C$ does not depend on $q$.
    
\textbf{Step 3.}  In this step we will show an \textit{a priori} bound that the left side of \eqref{e2} can actually be bounded above by some multiple of the left side of \eqref{e1} plus $C(r-r')^{1/2} +C|\beta|^{4p}r,$ which will effectively reduce the problem to showing the first bound.  This bound is \eqref{preb1} below.

In \eqref{hb}, note that $\Fd\big(\min_{1\le i'<j'\le k}|x_{i'}-x_{j'}|\big) \leq \sum_{1\le i'<j'\le k} \Fd(|x_{i'}-x_{j'}|)$. Furthermore, recall from the beginning of the proof our assumption (without loss of generality) that $F\ge \Fd$, thus $\Fd(|x_{i'}-x_{j'}|) \leq F(|x_{i'}-x_{j'}|)$. Thus from \eqref{hb} 
    we have the preliminary bound 
    \begin{align}\notag \Eb &[ V^{ij}(F;r)^q ]^{1/q} \\& \leq \Eb \bigg[ \bigg( \sum_{s=0}^{r-1} g_{\beta,F}(R^i_s-R^j_s) \bigg)^q\bigg]^{1/q}+C|\beta|^{4p}r + C|\beta|^{2p-1} \sum_{1\le i'<j' \le k} \Eb [ V^{i'j'}(F;r)^q]^{1/q}.
    \end{align}
    Using \eqref{tanaka}, \eqref{lip}, \eqref{go1}, and \eqref{mamr}, we furthermore have 
    \begin{align*}\Eb \bigg[ \bigg( \sum_{s=0}^{r-1} g_{\beta,F}(R^i_s-R^j_s) \bigg)^q\bigg]^{1/q} &\leq \Eb [ |\mathcal M_{\beta,F}(r)|^q]^{1/q}+\Eb [ |f_F (R^i_r-R^j_r) - f_F(\x)|^q ] ^{1/q}  \notag \\& \leq Cr^{1/2} + C \Eb[ |R^i_r-R^j_r-(x_i-x_j)|^q]^{1/q}.
    \end{align*}
    Combining the last two expressions and summing over all indices $(i,j)$ with $1\le i<j\le k$, then moving all instances of $\Eb[ V^{ij}(F;r)^q ]^{1/q}$ to the left side of the inequality, yields 
    \begin{align*}\big(1- C|\beta|^{2p-1} \tfrac12k(k-1)\big)& \sum_{1\le i<j \le k} \Eb[ V^{ij}(F;r)^q ]^{1/q} \\ &\leq C r^{1/2} + C|\beta|^{4p}r + C \sum_{1\le i<j\le k}\Eb[ |R^i_r-R^j_r-(x_i-x_j)|^q]^{1/q}. 
    \end{align*}
    Now choose $\beta_0$ small enough so that $C\beta_0^{2p-1} \tfrac12k(k-1) \leq \frac12.$ Then after absorbing all constants into some larger one, the last expression yields for $|\beta|\leq \beta_0$ that
    \begin{equation}
        \label{preb1}  \sum_{1\le i<j \le k} \Eb[ V^{ij}(F;r)^q ]^{1/q} \leq C r^{1/2} +C |\beta|^{4p}r + C \sum_{1\le i<j\le k}\Eb[ |R^i_r-R^j_r-(x_i-x_j)|^q]^{1/q}. 
    \end{equation}
This bound will be a key step in proving the theorem shortly.

\textbf{Step 4.}  In this step we will obtain a bound that is in a sense converse to \eqref{preb1}. 
Recall from \eqref{useful} that \begin{equation}\label{useful'} \bigg| \int_{I^k} \big( y_i-y_j - (x_i-x_j)\big) \qdif(\x,\dr\y) \bigg| \leq C|\beta|^{2p-1} \Fd\big(\min_{1\le i'<j'\le k} |x_{i'}-x_{j'}|\big) + |\beta|^{4p}\end{equation}
uniformly over $\beta\in \ak$ and $\x\in I^k.$ 
Let $\pi_i:I^k\to I$ be the coordinate function $\x\mapsto x_i$, and define the process $$M^{i,j,\beta}_r:= R^i_r-R^j_r - (x_i-x_j)-\sum_{s=0}^{r-1} \big((\Q-\mathrm{Id}) (\pi_i-\pi_j)\big)(\mathbf R_s).$$ By Corollary \ref{per}, this is a $\Pb$-martingale for all $\x\in I^k$, which should \textit{not} be confused with the Tanaka martingales appearing in \eqref{tanaka}, as they are unrelated. Now we can use Lemma \ref{cool}, because \eqref{useful'} and Lemma \ref{grow} show that the increments of this martingale have uniformly bounded $q^{th}$ moments. Thus we see that $\Eb[ |M^{i,j,\beta}_r|^q]^{1/q} \leq C r^{1/2},$ where $C$ does not depend on integers $r\ge 0$. Thus we find that \begin{equation}\label{ftrr}\Eb[ |R^i_r-R^j_r - (x_i-x_j)|^q]^{1/q} \leq C r^{1/2} + \Eb\bigg[ \bigg( \sum_{s=0}^{r-1} \big((\Q-\mathrm{Id}) (\pi_i-\pi_j)\big)(\mathbf R_s)\bigg)^q\bigg]^{1/q}.
\end{equation}
Equation \eqref{useful'} precisely says $|(\Q-\mathrm{Id}) (\pi_i-\pi_j)\big)(\mathbf R_s)| \leq C|\beta|^{2p-1} \Fd\big( \min_{1\le i<j\le k} |R^i_s-R^j_s|\big) + |\beta|^{4p},$ which can be further bounded above by $C|\beta|^{2p-1} \sum_{1\le i'<j'\le k}\Fd\big( |R^{i'}_s-R^{j'}_s|\big) + |\beta|^{4p}.$ Using the definition \eqref{vijai} of the processes $V^{ij}$, this precisely means that $$\sum_{s=0}^{r-1} \big|\big(\Q-\mathrm{Id}) (\pi_i-\pi_j)\big)(\mathbf R_s) \big| \leq Cr|\beta|^{4p}+ C|\beta|^{2p-1} \sum_{1\le i'<j'\le k}V^{i'j'}(\Fd;r). $$
Plugging that bound into \eqref{ftrr}, and recalling $F\ge \Fd$, we obtain 
\begin{equation}\label{back}\Eb[ |R^i_r-R^j_r - (x_i-x_j)|^q]^{1/q} \leq C r^{1/2} + C|\beta|^{4p} r + C|\beta|^{2p-1} \sum_{1\le i'<j'\le k} \Eb [ V^{i'j'}(F,r)^q]^{1/q}.
\end{equation}
uniformly over $\beta\in \ak$ and $\x\in I^k.$

\textbf{Step 5.} In this step, we will establish the theorem. Apply \eqref{back} and then \eqref{preb1} in that order, and one finds that 
\begin{align*}
    \Eb[ |R^i_r-R^j_r - (x_i-x_j)|^q]^{1/q} \leq C r^{1/2} + C|\beta|^{4p} r +C|\beta|^{2p-1} \sum_{1\le i'<j'\le k} \Eb[ |R^{i'}_r-R^{j'}_r - (x_i-x_j)|^q]^{1/q} .
\end{align*}
Now sum the left side over all indices $(i,j)$ with $1\le i<j\le k$, and move all instances of $\Eb[ |R^i_r-R^j_r - (x_i-x_j)|^q]^{1/q}$ to the left side of the expression, and we obtain $$\big(1-\tfrac12 k(k-1) C |\beta|^{2p-1} \big) \sum_{1\le i<j\le k}\Eb[ |R^i_r-R^j_r - (x_i-x_j)|^q]^{1/q} \leq Cr^{1/2} + C|\beta|^{4p}r.$$ 
Now make $\beta_0>0$ smaller (if needed) so that $C \cdot \beta_0^{2p-1} \cdot \frac12 k(k-1) \leq 1/2$. Then for $|\beta|\leq \beta_0$ one obtains from the previous expression (after absorbing constants on the right side into some larger constant) that 
$$\sum_{1\le i<j\le k}\Eb[ |R^i_r-R^j_r - (x_i-x_j)|^q]^{1/q} \leq Cr^{1/2} + C|\beta|^{4p}r.$$
This proves \eqref{e1}, and then \eqref{e2} follows immediately from \eqref{e1} and \eqref{preb1}.
\end{proof}

\begin{rk}
    Note that \eqref{go1} and \eqref{lip} actually imply a stronger bound. Specifically if $F:[0,\infty)\to [0,\infty)$ is decreasing and satisfies $\sum_{k=0}^\infty F(k)<\infty$, then uniformly over all $0\le f\le F$ with $f$ decreasing, and uniformly over $r\ge 0$ and $\beta\in [-\beta_0,\beta_0]$, one has that \begin{equation} \label{e3} \sup_{\mathbf x\in I^k} \;\mathbf E_\mathbf x^{(\beta,k)} \big[ V^{ij}(f;r)^q\big]^{1/q} \leq C \bigg(\sum_{n=0}^\infty f(k) \bigg)\big( r^{1/2}+|\beta|^{4p}r\big).\end{equation}
    The proof of this can be copied almost verbatim from Step 2 above, noting that for decreasing $f$ one has $\sum_{m\in \Bbb Z} f(\delta|m|) \leq 2\delta^{-1} \sum_{m=0}^\infty f(m)$.
\end{rk}

\begin{cor}\label{lptight1}
    Fix $k\in \mathbb N$, $T>0$, and $q\ge 1$, and any decreasing function $F:[0,\infty)\to[0,\infty)$ with $\sum_{k=1}^\infty F(k)<\infty$. Consider any sequence $\beta_N$ in $\ak$ such that $|\beta_N|\leq MN^{-\frac1{4p}}$ for some $M>0$. There exists some $C=C(T,k,q,F,M)>0$ such that for all $N\ge 1$ and $0\le s\le t \le T$ with $s,t\in N^{-1}\mathbb Z_{\ge 0}$ one has 
    \begin{align*}\sup_{\x\in I^k} \Ebn \big[\big|N^{-1/2}\big(R^i_{Nt} - R^j_{Nt} - (R^i_{Ns} -R^j_{Ns})\big)\big|^q\big]^{1/q} &\leq C\sqrt{t-s},\\
    \sup_{\x\in I^k} \Eb \big[\big|N^{-\frac12}\big(V^{ij}(F;Nt) - V^{ij}(F;Ns) \big)\big|^q\big]^{1/q} &\leq C\sqrt{t-s}.
    \end{align*}
\end{cor}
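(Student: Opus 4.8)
The statement is an immediate parabolic rescaling of Theorem \ref{exp0}. The plan is: (i) reduce to $q\ge 2$; (ii) apply the bounds \eqref{e1}--\eqref{e2} with $r=Nt$, $r'=Ns$, $\beta=\beta_N$ for $N$ large and rescale; (iii) dispose of the finitely many remaining small $N$ by a crude bound. The single point worth emphasizing is that the hypothesis $|\beta_N|\le MN^{-1/(4p)}$ is exactly the threshold that keeps the "error term" $|\beta|^{4p}(r-r')$ in Theorem \ref{exp0} from dominating the diffusive term $(r-r')^{1/2}$ after rescaling space by $N^{-1/2}$ and time by $N^{-1}$, precisely because $|\beta_N|^{4p}\cdot N\le M^{4p}$; this is the reason the exponent $\tfrac1{4p}$ appears here.

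First, for $1\le q<2$ the left-hand sides only increase when $q$ is replaced by $2$ (Jensen: $\mathbf E[|X|^q]^{1/q}\le\mathbf E[|X|^2]^{1/2}$), so we may assume $q\ge 2$. With $F$ fixed, Theorem \ref{exp0} supplies constants $\beta_0=\beta_0(k,F,q)>0$ and $C=C(k,F,q)>0$; since $|\beta_N|\le MN^{-1/(4p)}\to 0$, there is $N_0$ with $|\beta_N|\le\beta_0$ for all $N\ge N_0$. Because $s,t\in N^{-1}\mathbb Z_{\ge 0}$ we have $Ns,Nt\in\mathbb Z_{\ge 0}$, so for $N\ge N_0$ the bounds \eqref{e1} and \eqref{e2} apply with $r=Nt$, $r'=Ns$, $\beta=\beta_N$, giving
\begin{align*}
\sup_{\x}\Ebn\big[\big|(R^i_{Nt}-R^j_{Nt})-(R^i_{Ns}-R^j_{Ns})\big|^q\big]^{1/q}&\le C\big(N(t-s)\big)^{1/2}+C|\beta_N|^{4p}N(t-s),\\
\sup_{\x}\Ebn\big[\big|V^{ij}(F;Nt)-V^{ij}(F;Ns)\big|^q\big]^{1/q}&\le C\big(N(t-s)\big)^{1/2}+C|\beta_N|^{4p}N(t-s).
\end{align*}
Multiplying through by $N^{-1/2}$ and using $|\beta_N|^{4p}N\le M^{4p}$ together with $t-s\le T$ (hence $t-s\le\sqrt T\,\sqrt{t-s}$) bounds both right-hand sides by $C\big(1+M^{4p}\sqrt T\big)\sqrt{t-s}$, which is of the required form.

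Finally, for the finitely many $N<N_0$: the increment $(R^i_{Nt}-R^j_{Nt})-(R^i_{Ns}-R^j_{Ns})$ is a sum of at most $2N_0T$ one-step increments, each with uniformly bounded $q$-th moment under $\Pbn$ by Lemma \ref{grow}, so its rescaled $L^q$-norm is at most a constant $c=c(N_0,T,q,k,F)$; similarly $0\le V^{ij}(F;Nt)-V^{ij}(F;Ns)\le N_0TF(0)$ deterministically. Since $t-s$ is either $0$ (making the asserted inequality trivial) or at least $1/N\ge 1/N_0$ (so that $\sqrt{t-s}\ge N_0^{-1/2}$), these small-$N$ contributions are absorbed into the final constant $C=C(T,k,q,F,M)$. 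There is no genuinely hard step; the only care needed is the bookkeeping that the $|\beta_N|^{4p}$ error term survives the rescaling exactly at the borderline.
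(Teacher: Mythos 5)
Your proof is correct and follows the same route as the paper: apply \eqref{e1}--\eqref{e2} with $r=Nt$, $r'=Ns$, multiply by $N^{-1/2}$, and observe that the error term $C|\beta_N|^{4p}(r-r')\cdot N^{-1/2}\le CM^{4p}(t-s)\le CM^{4p}\sqrt T\sqrt{t-s}$ on $[0,T]$. You are slightly more careful than the paper in two places it leaves implicit: the reduction from $q\ge 1$ to $q\ge 2$ via Jensen (Theorem \ref{exp0} is stated only for $q\ge 2$), and the finitely many $N$ for which $|\beta_N|$ may exceed the threshold $\beta_0$ of Theorem \ref{exp0}, which you dispose of by the crude one-step bound from Lemma \ref{grow} together with $\sqrt{t-s}\ge N_0^{-1/2}$ when $t>s$. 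Both fixes are correct and appropriate; no gaps.
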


\begin{proof}
    This is immediate from \eqref{e1} and \eqref{e2}. The extra term of the form $C|\beta|^{4p}(r-r')$ on the right side of \eqref{e1} and \eqref{e2} will contribute $C|t-s|$ which can be bounded above by $C_T |t-s|^{1/2}$ on the bounded time interval $[0,T].$
    \end{proof}

    \begin{cor}\label{exp2}
        Fix arbitrary $\lambda,t>0$ and any decreasing function $F:[0,\infty)\to[0,\infty)$ with $\sum_{k=1}^\infty F(k)<\infty$. Consider any sequence $\beta_N$ in $\ak$ such that $|\beta_N|\leq MN^{-\frac1{4p}}$ for some $M>0$. Then  $$\sup_{N\ge 1} \sup_{\x\in I^k} \Ebn[ e^{\lambda N^{-1/2} V^{ij}(F;Nt)
        }]<\infty . $$
    \end{cor}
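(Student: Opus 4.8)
The plan is to upgrade the $L^{1}$ control of the additive functional over short time windows provided by Corollary \ref{lptight1} to a uniform exponential bound, using the time-homogeneous Markov property of the tilted chains (Proposition \ref{mkov}) in a Khasminskii-type argument.

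\emph{Reduction to large $N$.} Fix $\beta_{0}=\beta_{0}(k,F,1)>0$ as in Theorem \ref{exp0}. Since $|\beta_{N}|\le MN^{-1/(4p)}\to 0$, there is $N_{1}$ with $|\beta_{N}|\le\beta_{0}$ for all $N\ge N_{1}$, so Corollary \ref{lptight1} with $q=1$ applies for such $N$, with a constant $C_{1}=C_{1}(t,k,F,M)$ independent of $N$ and of the starting configuration. For the finitely many $N<N_{1}$ we use the deterministic bound $V^{ij}(F;Nt)\le Nt\,F(0)$ (recall $F$ is decreasing, hence $\le F(0)$), giving $\sup_{\x}\Ebn[e^{\lambda N^{-1/2}V^{ij}(F;Nt)}]\le e^{\lambda F(0)\sqrt{N}\,t}<\infty$; there are only finitely many such terms. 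We may also assume $t\in N^{-1}\mathbb Z_{\ge0}$, since $r\mapsto V^{ij}(F;r)$ is non-decreasing and replacing $Nt$ by $\lceil Nt\rceil$ only enlarges the quantity. Thus it remains to bound $\sup_{\x}\Ebn[e^{\lambda N^{-1/2}V^{ij}(F;Nt)}]$ uniformly over $N\ge N_{0}$ for a suitably large $N_{0}\ge N_{1}$.

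\emph{The single-window estimate.} Choose $h_{0}>0$ so small that $2\lambda C_{1}\sqrt{2h_{0}}<1$, and take $N_{0}$ large enough that $N_{0}\ge 1/h_{0}$ and $\lambda N_{0}^{-1/2}F(0)\le\log2$. Fix $N\ge N_{0}$ and an integer $m\le Nh_{0}+1$, and write $a_{l}:=\lambda N^{-1/2}F(|R^{i}_{l}-R^{j}_{l}|)$, so that $\sum_{l=0}^{m-1}a_{l}=\lambda N^{-1/2}V^{ij}(F;m)$ and $0\le a_{l}\le\log2$, whence $e^{a_{l}}-1\le 2a_{l}$. Then for every starting point $\mathbf y\in I^{k}$,
\[
\mathbf E^{(\beta_{N},k)}_{\mathbf y}\!\big[e^{\lambda N^{-1/2}V^{ij}(F;m)}\big]\le\mathbf E^{(\beta_{N},k)}_{\mathbf y}\Big[\prod_{l=0}^{m-1}(1+2a_{l})\Big]=\sum_{s=0}^{m}2^{s}\,\mathbf E^{(\beta_{N},k)}_{\mathbf y}\Big[\sum_{0\le l_{1}<\cdots<l_{s}\le m-1}a_{l_{1}}\cdots a_{l_{s}}\Big].
\]
Peeling off the largest index $l_{s}$ and conditioning on $\mathcal F_{l_{s-1}}$, the time-homogeneous Markov property, nonnegativity of $F$, and Corollary \ref{lptight1} give $\mathbf E^{(\beta_{N},k)}_{\mathbf y}\!\big[\sum_{l_{s-1}<l_{s}\le m-1}a_{l_{s}}\mid\mathcal F_{l_{s-1}}\big]\le\alpha$, where $\alpha:=\lambda N^{-1/2}\sup_{\mathbf z}\mathbf E^{(\beta_{N},k)}_{\mathbf z}[V^{ij}(F;m)]\le\lambda C_{1}\sqrt{(Nh_{0}+1)/N}\le\lambda C_{1}\sqrt{2h_{0}}$; iterating over $s$ bounds the $s$-th inner expectation by $\alpha^{s}$. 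Since $2\alpha<1$ this gives
\[
\sup_{\mathbf y\in I^{k}}\mathbf E^{(\beta_{N},k)}_{\mathbf y}\!\big[e^{\lambda N^{-1/2}V^{ij}(F;m)}\big]\le\frac{1}{1-2\lambda C_{1}\sqrt{2h_{0}}}=:K,
\]
a constant independent of $N\ge N_{0}$ and of $m\le Nh_{0}+1$.

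\emph{Assembly and the main obstacle.} Partition $\{0,1,\dots,Nt\}$ into $J:=\lceil t/h_{0}\rceil$ consecutive blocks $r_{0}=0<r_{1}<\cdots<r_{J}=Nt$ with $r_{l}-r_{l-1}\le Nh_{0}+1$, and write $e^{\lambda N^{-1/2}V^{ij}(F;Nt)}=\prod_{l=1}^{J}e^{\lambda N^{-1/2}(V^{ij}(F;r_{l})-V^{ij}(F;r_{l-1}))}$. Conditioning on $\mathcal F_{r_{J-1}}$ and applying the Markov property, the conditional expectation of the last factor equals $\mathbf E^{(\beta_{N},k)}_{\mathbf R_{r_{J-1}}}[e^{\lambda N^{-1/2}V^{ij}(F;r_{J}-r_{J-1})}]\le K$ by the previous step; iterating the tower property over all $J$ blocks yields $\sup_{\x}\Ebn[e^{\lambda N^{-1/2}V^{ij}(F;Nt)}]\le K^{J}$ for all $N\ge N_{0}$, which together with the finitely many $N<N_{0}$ completes the proof. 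The only non-routine point is the single-window estimate: one must produce a bound simultaneously uniform in $N$ and in the starting configuration from only an $L^{1}$ input. Two facts make this work: over a window of length $Nh_{0}$ the functional $N^{-1/2}V^{ij}(F;\cdot)$ has expected size of order $\sqrt{h_{0}}$ (the $\sqrt{\text{length}}$ growth of Corollary \ref{lptight1}), which can be forced below $\tfrac12$ by shrinking $h_{0}$; and the individual increments of $\lambda N^{-1/2}V^{ij}(F;\cdot)$ are $O(N^{-1/2})$, so the diagonal terms in the expansion of the exponential are negligible and one reduces to strictly-increasing-index sums that the Markov property controls cleanly. The remaining ingredients — the $N<N_{0}$ versus $N\ge N_{0}$ split, the partition into windows, and the tower-property iteration — are standard.
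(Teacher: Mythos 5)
Your proof is correct, and it takes a genuinely different route from the paper's. The paper upgrades the first-moment bound $\sup_{\x}\Ebn[V^{ij}(F;r)]\le Cr^{1/2}$ to the full family of moment bounds $\sup_{\x}\Ebn[V^{ij}(F;r)^m]\le C^m\sqrt{m!}\,r^{m/2}$ by an induction on $m$ that iterates the Markov property and uses a summation-by-parts step to gain the crucial factor $m^{-1/2}$ at each stage; the exponential moment then follows by Taylor expanding $e^{\lambda N^{-1/2}V}$, the series $\sum_m(\lambda C\sqrt t)^m/\sqrt{m!}$ being convergent. You instead run a Khasminskii-type scheme: on a window of length $O(Nh_0)$ the expected value of $\lambda N^{-1/2}V^{ij}$ is $O(\sqrt{h_0})$, uniformly in the starting point, and since each single increment is $O(N^{-1/2})$ you can replace $e^{a_l}$ by $1+2a_l$, expand into distinct-index sums, and bound them geometrically by iterating the Markov property (Proposition \ref{mkov}, Definition \ref{shfa}) together with Corollary \ref{lptight1} with $q=1$; chaining $\lceil t/h_0\rceil$ windows by the tower property gives a bound independent of $N$ and $\x$, with the finitely many small $N$ handled by the trivial bound $V^{ij}(F;Nt)\le NtF(0)$. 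The two arguments use the same inputs (uniform-in-$\x$ first-moment control plus time-homogeneity of the tilted chain), but yours avoids the $\sqrt{m!}$-refined induction entirely at the mild cost of the large-$N$ reduction needed to make individual increments small, whereas the paper's route has the side benefit of producing the explicit all-moment bounds \eqref{mbdd}, which are of independent interest. Only cosmetic points remain in your write-up (e.g.\ ensuring the window horizon $m/N\le 2h_0$ sits inside the interval $[0,T]$ on which the constant of Corollary \ref{lptight1} is taken, say by choosing $h_0\le t/2$ or $T=t+1$), none of which affects correctness.
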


    \begin{proof}
        Note that \eqref{e2} easily implies the bound that \begin{equation}\label{217}\sup_{\x\in I^k} \Ebn [ V^{ij}(F,r)] \leq Cr^{1/2}, \end{equation} uniformly over $N\ge 1$ and  $r\le Nt.$ Since $V^{ij}$ is an additive functional, one may iteratively apply the Markov property to automatically obtain higher moment bounds just from this first moment bound, specifically we will show that
        \begin{align}\label{mbdd}
            \sup_{\x\in I^k}\Ebn [ V^{ij}(F;r)^m ]  \leq C^m \sqrt{m!}\cdot  r^{m/2},
        \end{align}
        where $C$ is independent of $m\in \mathbb N$ and $r\le Nt$. If we can prove \eqref{mbdd}, the claim will follow immediately simply by Taylor expanding the exponential, and setting $r=Nt$. 
        
        The proof of \eqref{mbdd} follows from a straightforward induction on $m$. The $m=1$ case is simply \eqref{217}. For the inductive step, let us define $\Omega_m$ to be the optimal constant so that the left side of \eqref{mbdd} is bounded above by $\Omega_m r^{m/2}$ uniformly over all $r\ge 0$. 
        We then have \begin{align*}
           h_m(r;\x)&:= \sum_{s_1,s_2,...,s_m=0}^{r-1} \Eb\bigg[ \prod_{j=1}^{m} F(|R^i_{s_j}-R^j_{s_j}|) \bigg] \\&=  \sum_{\substack{0\le s_1,s_2,...,s_m<r\\ \mathrm{no\;repeated\;index}}} \Eb\bigg[ \prod_{j=1}^{m} F(|R^i_{s_j}-R^j_{s_j}|) \bigg] +\sum_{\substack{0\le s_1,s_2,...,s_m<r\\\mathrm{repeated\;index}}} \Eb\bigg[ \prod_{j=1}^{m} F(|R^i_{s_j}-R^j_{s_j}|) \bigg] \\&\leq m! \cdot \Eb\bigg[ \sum_{0\leq s_1 < ...<s_m< r} \prod_{j=1}^{m} F(|R^i_{s_j}-R^j_{s_j}|) \bigg] + \|F\|_{\infty} h_{m-1}(r;\x) \\&= m! \cdot  \Eb\bigg[ \sum_{s_1=0}^{r-1} F(|R^i_{s_1}-R^j_{s_1}|)\cdot \frac1{(m-1)!} h_{m-1}(r-s_1;\mathbf R_{s_1})] \bigg] + \|F\|_{\infty} h_{m-1}(r;\x) \\ &\le m\Omega_{m-1}   \Eb\bigg[ \sum_{s_1=0}^{r-1} F(|R^i_{s_1}-R^j_{s_1}|) \cdot  (r-s_1)^{(m-1)/2}\bigg] +  \|F\|_{\infty} h_{m-1}(r;\x) 
        \end{align*}
        We used the Markov property in the fourth line. Now for the first term on the right side, use summation by parts 
        and we obtain that $$\sum_{s=0}^{r-1} F(|R^i_{s}-R^j_{s}|)\cdot  (r-s)^{(m-1)/2} = \sum_{s=0}^{r-1} \big[ (r-s)^{(m-1)/2}-(r-1-s)^{(m-1)/2}\big]\cdot \bigg(\sum_{t=0}^{s-1} F(|R^i_{t}-R^j_{t}|)\bigg).$$ Now note that $(r-s)^{(m-1)/2}-(r-1-s)^{(m-1)/2}$ can be bounded above by $C (\frac{m-1}{2}) (r-s)^{(m-3)/2} $, then apply the expectation $\Eb$ over the last expression, then use the $m=1$ case once again, and we see that the expectation is bounded above by $C (\frac{m-1}{2})\cdot \sum_{s=0}^{r-1} (r-s)^{(m-3)/2}s^{1/2}  $ which is equal to $r^{m/2} $ multiplied by a Riemann sum approximation for $\int_0^1 (1-u)^{(m-3)/2}u^{1/2}du $. Notice that the latter is $O(m^{-3/2})$ as $m\to \infty.$ This whole discussion yields $$\Eb\bigg[ \sum_{s_1=0}^{r-1} F(|R^i_{s_1}-R^j_{s_1}|) \cdot  (r-s_1)^{(m-1)/2}\bigg] \leq Cm^{-1/2}r^{m/2} $$ for some absolute constant $C$.

        As for the second term $ \|F\|_{\infty} h_{m-1}(r;\x) $, we can absorb $\|F\|_{\infty}$ into some absolute constant $C$ since $F$ is just a fixed function. By the inductive hypothesis, $ \|F\|_{\infty} h_{m-1}(r;\x) $ may be bounded above by $ C\Omega_{m-1} r^{(m-1)/2} \leq C \Omega_{m-1} r^{m/2}. $ 
        
        The entire above discussion yields the relation $\Omega_m \leq Cm^{1/2} \Omega_{m-1}$ for some absolute constant $C$ independent of $m\in\Bbb N,$ which easily implies \eqref{mbdd}.
    \end{proof}

    The next corollary will allow us to control exponential terms of the form appearing in Lemma \ref{1.3}, and will be useful in Section \ref{hopf}.

    \begin{cor}\label{convd}
    Let $\mathbf R=(\mathbf R_r)_{r\ge 0}$ denote the canonical process on $(I^{2k})^{\mathbb Z_{\ge 0}}$. For $\x\in I^{2k}$, $\beta \in \ak$, and $r\in \mathbb Z_{\ge 0}$ define $$\mathcal H^{2k}(\beta,\x,\mathbf R,r):=\exp \bigg( \sum_{s=1}^r \bigg\{ \log \mathbf E_{\x}^{(0,2k)} \big[ e^{\beta\sum_{j=1}^{2k} (R^j_s -R^j_{s-1})} \big| \mathcal F_{s-1}\big] \; - 2k \log M(\beta)\bigg\}\bigg). $$
    Then for all $q\ge 1$ and $t\ge 0$ we have the bound $$\sup_{N\ge 1} \sup_{\x\in I^{2k}} \mathbf E_\x^{(N^{-\frac1{4p}} , 2k)} \big[ \sup_{r\leq Nt} \mathcal H^{2k}(N^{-\frac1{4p}},\x,\mathbf R,r)^q\big]<\infty.$$ Furthermore, let $T>0$. There exists $C>0$ such that uniformly over $N\ge 1$, $\x\in I^{2k},$ and $s,t \in [0,T]\cap (N^{-1}\mathbb Z)$ we have that $$\mathbf E_\x^{(N^{-\frac1{4p}} , 2k)} \big[ 
 |\mathcal H^{2k}(N^{-\frac1{4p}},\x,\mathbf R,Nt)-\mathcal H^{2k}(N^{-\frac1{4p}},\x,\mathbf R,Ns) |^q\big]^{1/q} \leq C\sqrt{t-s}.$$
\end{cor}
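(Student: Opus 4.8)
The plan is to reduce both parts to the additive-functional estimates of Corollaries~\ref{exp2} and~\ref{lptight1} for the $2k$-point motion, applied with the choice $F=\Fd$ (decreasing and summable, since $\int_0^\infty x\Fd(x)\dr x<\infty$). Write $\mathcal H_r:=\mathcal H^{2k}(N^{-\frac1{4p}},\x,\mathbf R,r)$. First I would rewrite the exponent of $\mathcal H_r$: by the Markov property under $\mathbf P_\x^{(0,2k)}=\mathbf P_{RW^{(2k)}}$, the conditional expectation $\mathbf E_\x^{(0,2k)}[e^{\beta\sum_{j=1}^{2k}(R^j_s-R^j_{s-1})}\mid\mathcal F_{s-1}]$ equals $\int_{I^{2k}}e^{\beta\sum_i(a_i-R^i_{s-1})}\boldsymbol p^{(2k)}(\mathbf R_{s-1},\dr\bfa)$, so with $K$ as in \eqref{kdef},
\[\log\mathcal H_r=\sum_{s=1}^{r}K\bigl(\mathbf R_{s-1},(N^{-\tfrac1{4p}},\dots,N^{-\tfrac1{4p}})\bigr).\]
Then Proposition~\ref{dbound} with $m=0$, dimension $2k$, and all entries of the tilt vector equal to $\beta=N^{-\frac1{4p}}$ (so $\beta^{2p}=N^{-1/2}$), together with $|\z(x)|\le(p!)^{-2}\Fd(|x|)$ (from Definition~\ref{z} and Assumption~\ref{a1} Item~\eqref{a24}) and $\Fd(\min_{i<j}|y_i-y_j|)\le\sum_{i<j}\Fd(|y_i-y_j|)$, gives the deterministic bound
\[\bigl|\log\mathcal H_r\bigr|\le CN^{-1/2}\!\!\sum_{1\le i<j\le 2k}\!\!V^{ij}(\Fd;r)+Ct,\qquad r\le Nt,\]
with $C=C(k,p)$, the $Ct$ term absorbing $\sum_{s\le Nt}|\beta|^{4p+1}\le Nt\cdot N^{-1-\frac1{4p}}\le t$.

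For the first assertion, since $r\mapsto V^{ij}(\Fd;r)$ is nondecreasing, the last display yields $\sup_{r\le Nt}\mathcal H_r^{\,q}\le e^{qCt}\exp\bigl(qCN^{-1/2}\sum_{i<j}V^{ij}(\Fd;Nt)\bigr)$. Expanding the exponential into a product over the $\binom{2k}{2}$ pairs and applying the generalized H\"older inequality reduces the claim to finiteness of $\sup_N\sup_\x\mathbf E_\x^{(N^{-\frac1{4p}},2k)}[e^{\lambda N^{-1/2}V^{ij}(\Fd;Nt)}]$ for each pair $(i,j)$ and each $\lambda>0$, which is exactly Corollary~\ref{exp2} (with $2k$ in place of $k$, $F=\Fd$, $\beta_N=N^{-\frac1{4p}}$).

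For the increment bound, I would fix $s\le t$ in $[0,T]\cap N^{-1}\mathbb Z_{\ge0}$, set $A:=\log\mathcal H_{Nt}-\log\mathcal H_{Ns}=\sum_{r=Ns+1}^{Nt}K(\mathbf R_{r-1},(N^{-\frac1{4p}},\dots,N^{-\frac1{4p}}))$, so that $\mathcal H_{Nt}-\mathcal H_{Ns}=\mathcal H_{Ns}(e^{A}-1)$, and note that the estimate above localized to $[Ns,Nt]$ gives $|A|\le CN^{-1/2}\sum_{i<j}(V^{ij}(\Fd;Nt)-V^{ij}(\Fd;Ns))+C(t-s)$. Using $|e^{A}-1|\le|A|e^{|A|}$ and $\mathcal H_{Ns}e^{|A|}\le e^{CT}\exp(CN^{-1/2}\sum_{i<j}V^{ij}(\Fd;Nt))=:G$ (the logarithms telescope and $s,t\le T$), one gets $|\mathcal H_{Nt}-\mathcal H_{Ns}|\le G|A|$, hence $\mathbf E[|\mathcal H_{Nt}-\mathcal H_{Ns}|^q]\le\mathbf E[G^{2q}]^{1/2}\mathbf E[|A|^{2q}]^{1/2}$ by Cauchy--Schwarz. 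The first factor is bounded uniformly in $N,\x$ exactly as in the preceding paragraph, and Minkowski's inequality plus the second bound of Corollary~\ref{lptight1} (with $2q$ in place of $q$, $F=\Fd$) give $\mathbf E[|A|^{2q}]^{1/(2q)}\le C\sqrt{t-s}+C(t-s)\le C\sqrt{t-s}$ on $[0,T]$; raising to the power $q$ and combining yields $\mathbf E[|\mathcal H_{Nt}-\mathcal H_{Ns}|^q]^{1/q}\le C\sqrt{t-s}$.

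The hard part will be the first step — seeing through \eqref{kdef} and Proposition~\ref{dbound} that the exponent of $\mathcal H^{2k}$ is, up to an error of order $N^{-1/2}\sum_{i<j}V^{ij}(\Fd)$ plus an $O(t)$ term, a sum of $\z$-values along the $2k$-point trajectory, i.e.\ exactly the type of additive functional that Corollaries~\ref{exp2} and~\ref{lptight1} were built to control. Once this identification is in place, both statements follow from those two corollaries and H\"older's inequality with no further obstacle.
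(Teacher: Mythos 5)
Your proposal is correct and follows essentially the same approach as the paper: both proofs use the Markov property together with Proposition~\ref{dbound} (with $m=0$, all $\beta_i=N^{-1/(4p)}$) to reduce the logarithm of $\mathcal H^{2k}$ to an $N^{-1/2}$-scaled additive functional of $\Fd(|R^i-R^j|)$ plus a uniformly small remainder, then apply Corollary~\ref{exp2} for the uniform moment bound and the elementary inequality $|e^u-1|\le|u|e^{|u|}$ with Cauchy--Schwarz and Corollary~\ref{lptight1} for the increment bound. Your write-up is somewhat more explicit than the paper's (spelling out the H\"older step over the $\binom{2k}{2}$ pairs and the $G\cdot|A|$ factorization), but the underlying strategy is identical.
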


\begin{proof}
    Let us use the bound 
    \begin{align*}\sup_{r\leq Nt} \mathcal H^{2k}(N^{-\frac1{4p}},\x,\mathbf R,r)^q &\leq \exp \bigg( q\sum_{s=1}^{Nt} \bigg| \log \mathbf E_{\x}^{(0,2k)} \big[ e^{N^{-\frac1{4p}}\sum_{j=1}^{2k} (R^j_s -R^j_{s-1})} \big| \mathcal F_{s-1}\big] \; - 2k \log M(N^{-\frac1{4p}})\bigg|\bigg)\\&=\exp \bigg( q\sum_{s=1}^{Nt} \bigg| \log \int_{I^k} e^{N^{-\frac1{4p}}(y_j - R^j_{s-1})} \boldsymbol p^{(k)} (\mathbf R_{s-1}, \dr \y) \; - 2k \log M(N^{-\frac1{4p}})\bigg|\bigg),
    \end{align*}
    where we applied the Markov property in the second line. Using Proposition \ref{dbound} with $m=0$ and all $\beta_i = N^{-\frac1{4p}}$ will show that the expression inside the absolute value can be bounded above by \begin{equation}\label{twise}CN^{-1/2} \sum_{1\le i<j\le k} \Fd\big( |R^i_r-R^j_r| \big) + CN^{-1-\frac1{4p}},\end{equation} where $C$ is a large constant independent of $N$. An application of Corollary \ref{exp2} will then immediately prove the first bound. The second bound can then be proved by first using the elementary bound $|e^u-1|\leq |u|e^{|u|}$, then applying Cauchy-Schwarz, and then applying the first bound in conjunction with Corollary \ref{lptight1} (noting the termwise bound \eqref{twise} of each summand).
\end{proof}

    \section{Convergence theorems for the tilted processes}
In this section, we will prove various convergence theorems for the tilted Markov processes introduced in Definition \ref{shfa}. These convergence results (specifically Theorem \ref{conv2} below) will build upon the estimates of the previous section, and they will be crucial to proving Theorem \ref{main2} in later sections. Due to many additional subtleties not present in the previous works \cite{DDP23,DDP23+}, we will deviate completely from the methods of those papers and derive a new method of studying the tilted measures which leverages the Markov property of the tilted processes proved in Proposition \ref{mkov} (whereas those papers did not use the Markov property). 

\begin{prop}\label{x0}
    Fix $k\in \mathbb N$ and $T>0$ and $1\leq i<j\leq k$. Recall the measure $\mathbf P_\x^{(\beta,k)}$ as given in Definition \ref{shfa}. Consider any pair of sequences $\beta_N\in \ak$ and $\x_N\in I^k$, such that $N^{\frac1{4p}}\beta_N$ remains bounded and $N^{-1/2}\x_N\to \boldsymbol x\in \mathbb R^k$ as $N\to \infty$. Then the rescaled and linearly interpolated processes $\big( N^{-1/2} (R^i_{Nt}-R^j_{Nt})\big)_{t \in (N^{-1} \mathbb Z_{\ge 0}) \cap [0,T]}$, viewed as random variables under $\mathbf P_{\x_N}^{(\beta_N,k)},$ converge in law as $N\to \infty$ to a Brownian motion of rate 2 started from $x_i-x_j$ where $\boldsymbol x=(x_1,...,x_k)$. Convergence occurs with respect to the topology of $C[0,T]$.
\end{prop}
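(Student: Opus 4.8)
The plan is to reduce the statement to a martingale functional central limit theorem for the $\mathbf P_{\x_N}^{(\beta_N,k)}$-martingale
\[
M_r := (R^i_r - R^j_r) - (x^1_N - x^2_N)\big|_{1\to i,\,2\to j} - \sum_{s=0}^{r-1}\big((\Q-\mathrm{Id})(\pi_i-\pi_j)\big)(\mathbf R_s),
\]
that is, $M_r = (R^i_r-R^j_r)-(x^i_N-x^j_N)-\sum_{s=0}^{r-1}\big((\Q-\mathrm{Id})(\pi_i-\pi_j)\big)(\mathbf R_s)$ where $\x_N=(x^1_N,\dots,x^k_N)$, $\pi_\ell(\mathbf x)=x_\ell$, and $\Q,\mathrm{Id}$ are as in Corollary \ref{per} taken at $\beta=\beta_N$; this is a martingale by Corollary \ref{per}. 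One works through this martingale rather than through the kernel $\pdif$ precisely because $R^i-R^j$ is \emph{not} Markov under the tilt, the tilt acting on the sum rather than the difference of coordinates. The first step is to check that the compensator is negligible after diffusive scaling: by \eqref{useful'} and $|\beta_N|\le MN^{-1/4p}$,
\[
\big|(\Q-\mathrm{Id})(\pi_i-\pi_j)(\mathbf R_s)\big| \le C|\beta_N|^{2p-1}\sum_{1\le i'<j'\le k}\Fd\big(|R^{i'}_s-R^{j'}_s|\big) + CN^{-1},
\]
so that $N^{-1/2}\sum_{s=0}^{Nt-1}$ of the left side has expectation at most $CN^{-1/2}|\beta_N|^{2p-1}\sum_{i'<j'}\mathbf E[V^{i'j'}(\Fd;Nt)]+CN^{-1/2}t$, which tends to $0$ by Corollary \ref{lptight1} (first-moment bound, equivalently \eqref{e2}), since $N^{-1/2}|\beta_N|^{2p-1}(Nt)^{1/2}\lesssim N^{-(2p-1)/4p}\to0$. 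Together with the hypothesis $N^{-1/2}(x^i_N-x^j_N)\to x_i-x_j$, this shows that $N^{-1/2}(R^i_{N\cdot}-R^j_{N\cdot})$ and $(x_i-x_j)+N^{-1/2}M_{N\cdot}$ have the same limit in law in $C[0,T]$.

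To identify the limit of $N^{-1/2}M_{N\cdot}$ I would invoke a standard martingale functional central limit theorem, whose two hypotheses are: (a) convergence of the rescaled predictable bracket to $2t$, and (b) the conditional Lindeberg condition. Hypothesis (b) is immediate from the uniform $q$-th moment bound ($q>2$) on the increments $M_{s+1}-M_s$ supplied by Lemma \ref{grow}; the same bound makes the jumps, and hence the error from linear interpolation, uniformly negligible, and tightness of the interpolated path in $C[0,T]$ is in any case already provided by Corollary \ref{lptight1}. For (a), the bracket increment is the conditional variance $\mathbf E[(M_{s+1}-M_s)^2\mid\mathcal F_s] = \int_{I^k}\big(a_i-a_j-(R^i_s-R^j_s)\big)^2\,\qdif(\mathbf R_s,\dr\bfa) - \big((\Q-\mathrm{Id})(\pi_i-\pi_j)(\mathbf R_s)\big)^2$; writing $(a_i-a_j-(R^i_s-R^j_s))^2=(a_i-R^i_s)^2+(a_j-R^j_s)^2-2(a_i-R^i_s)(a_j-R^j_s)$ and applying Corollary \ref{corb} to each of the three terms shows that this equals $2\big(\mathpzc G(\beta_N)-\mathpzc F(\beta_N)^2\big)$ up to an error of size $C|\beta_N|^{2p-2}\Fd\big(\min_{i'<j'}|R^{i'}_s-R^{j'}_s|\big)+C|\beta_N|^{4p-1}$ (the subtracted square being of even smaller order). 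Summing over $s<Nt$ and dividing by $N$: the main term is $t\cdot 2\big(\mathpzc G(\beta_N)-\mathpzc F(\beta_N)^2\big)\to 2t$ by continuity of $\mathpzc F,\mathpzc G$ at the origin and $m_2-m_1^2=1$, while the error vanishes in $L^1$ again by Corollary \ref{lptight1}. Hence $N^{-1}\langle M\rangle_{Nt}\to 2t$ in probability, the FCLT gives $N^{-1/2}M_{N\cdot}\Rightarrow\sqrt2\,B$ with $B$ a standard Brownian motion (i.e.\ a Brownian motion of rate $2$), and combining with the previous paragraph yields the claim.

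The step I expect to be the main obstacle is (a), the quadratic-variation computation: one must verify that each of the error terms carrying a power of $\beta_N$ together with the decay function $\Fd$ is killed by the $N^{-1}$ normalization, and this is exactly the place where the additive-functional moment estimate \eqref{e2} of Theorem \ref{exp0} (used through Corollary \ref{lptight1}) is essential — a crude pointwise bound on $\Fd$ would not be enough. A minor point requiring some care, but already handled by the fact that every estimate in Section 2 is uniform in the starting configuration, is that the hypothesis permits $\x_N$ to run off to infinity at speed $N^{1/2}$.
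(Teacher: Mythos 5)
Your proposal is correct and follows essentially the same route as the paper: the same Dynkin martingale $M^{i,j,\beta}_r$, the same argument via Corollary \ref{lptight1} that the compensator is killed by the $N^{-1/2}$ scaling, and the same quadratic-variation computation using Corollary \ref{corb} to identify the bracket as $2(\mathpzc G(\beta_N)-\mathpzc F(\beta_N)^2)\to 2$. The only cosmetic difference is at the final step: where you invoke a packaged martingale functional CLT (checking the Lindeberg condition via Lemma \ref{grow}), the paper proves the same thing by hand, showing tightness of the auxiliary martingale $Y^{i,j,\beta}_r:=(M^{i,j,\beta}_r)^2-\sum_s\mathbf E[(M^{i,j,\beta}_{s+1}-M^{i,j,\beta}_s)^2\mid\mathcal F_s]$, passing to a joint limit point $(\mathcal X,\mathcal Y)$, concluding $\mathcal Y_t=\mathcal X_t^2-2t$, and finishing with L\'evy's criterion; both are standard and interchangeable.
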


We remark that a Brownian motion of rate 2 is defined as a process distributed as $\sqrt 2 \cdot B$ for a standard Brownian motion $B$.

\begin{proof}
    The tightness of this family of processes has already been established in Corollary \ref{lptight1}. Thus we need to identify the limit points. For this we will use Levy's criterion. 

    Let $\pi_i:I^k\to I$ be the coordinate function $\x\mapsto x_i$, and define the $\Pbn$-martingales $$M^{i,j,\beta}_r:= R^i_r-R^j_r - (x_i-x_j)-\sum_{s=0}^{r-1} \big((\Q-\mathrm{Id}) (\pi_i-\pi_j)\big)(\mathbf R_s).$$ Recall from \eqref{useful} that one has the pathwise bound \begin{align*}\sup_{r\le Nt} |M^{i,j,\beta}_r-( R^i_r-R^j_r - (x_i-x_j))\big|  &\leq \sum_{s=0}^{Nt} \big|\big((\Q-\mathrm{Id}) (\pi_i-\pi_j)\big)(\mathbf R_s)\big| \\&\leq C|\beta|^{2p-1} \sum_{s=0}^{Nt} \Fd\big( \min_{1\le i<j\le k} |R^i_s-R^j_s|\big) + |\beta|^{4p},\\& \leq C|\beta|^{2p-1} \sum_{1\le i'<j'\le k} \sum_{s=0}^{Nt} \Fd\big( |R^{i'}_s-R^{j'}_s|\big) + |\beta|^{4p}.
    \end{align*}
    Specialize to the case where $\beta=\beta_N$ with $|\beta_N|\leq CN^{-\frac1{4p}}$, and where and $\x=\x_N$, then multiply both sides by $N^{-1/2}.$ Then using the result of Corollary \ref{lptight1}, one finds that the processes $$N^{-1/2} \big(M^{i,j,\beta_N}_{Nt}-( R^i_{Nt}-R^j_{Nt} - (x_i^N-x_j^N))\big)_{t\in [0,T]}$$ converge in probability to 0 in the topology of $C[0,T]$ as $N\to\infty$, under the measures $\mathbf P^{(\beta_N,k)}_{\x_N}.$ In fact one has for all $q\ge 1$ that $$\lim_{N\to\infty}\mathbf E^{(\beta_N,k)}_{\x_N}\bigg[ \sup_{r\le Nt}\big(N^{-1/2} |M^{i,j,\beta}_r-( R^i_r-R^j_r - (x_i-x_j))\big|\big)^q\bigg]=0 .$$ 

    Consequently, it suffices to show that the family of processes $\big(N^{-1/2} M^{i,j,\beta_N}_{Nt}\big)_{t\in [0,T]}$ converge in law to a Brownian motion of rate 2 in $\mathbb R$ started from the origin, as $N\to\infty$ viewed under the measures $\mathbf P^{(\beta_N,k)}_{\x_N}$. Since martingality is preserved by limit points as long as one has uniform integrability, it is clear from the $q^{th}$ moment bounds in the last expression and from Corollary \ref{lptight1} that any limit point in $C[0,T]$ of the family $\big(M^{i,j,\beta_N}_{Nt}\big)_{t\in [0,T]}$ must be a martingale under the canonical filtration of $C[0,T]$. Thus we only need to show that the quadratic variation is $2t$.

    Define the process $$Y_r^{i,j,\beta}:= (M_r^{i,j,\beta})^2 - \sum_{s=0}^{r-1} \Eb [ (M_{s+1}^{i,j,\beta}-M_s^{i,j,\beta})^2 |\mathcal F_s]$$
    which is also a $\Pb$ martingale. Notice just from the definition of these martingales that 
    \begin{align*}
        \bigg|2r - \sum_{s=0}^{r-1} \Eb [ (M_{s+1}^{i,j,\beta}&-M_s^{i,j,\beta})^2 |\mathcal F_s]\bigg|  \leq \bigg| 2r - \sum_{s=0}^{r-1} \Eb \big[ \big(R^i_{s+1} - R^j_{s+1} - (R^i_s - R^j_s)\big)^2 \big|\mathcal F_s\big]\bigg| \\& \;\;\; +2\sum_{s=0}^{r-1} \Eb \bigg[ \big|R^i_{s+1} - R^j_{s+1} - (R^i_s - R^j_s)\big| \cdot \big|\big((\Q-\mathrm{Id}) (\pi_i-\pi_j)\big)(\mathbf R_s)\big| \bigg|\mathcal F_s\bigg]\\&\;\;\; + \sum_{s=0}^{r-1} \Eb \bigg[  \big|\big((\Q-\mathrm{Id}) (\pi_i-\pi_j)\big)(\mathbf R_s)\big| ^2 \bigg|\mathcal F_s\bigg]
    \end{align*} 
    We claim that all three terms on the right side can be bounded above by $$C|\beta|r + C\sum_{s=0}^{r-1} \Fd\big(\min_{1\le i'<j'\le k}  |R_s^{i'}-R_s^{j'}|\big),$$ where the constant $C$ is uniform over $r\ge 0$ and $\beta \in \ak.$ Indeed, the first term on the right side is given by the absolute value of $\sum_{s=0}^{r-1} g_\beta(\mathbf R_s)$, where the function $g:I^k\to \mathbb R$ is given by $$g_\beta(\x):= 2-\int_{I^k} (a_i-a_j-(x_i-x_j))^2 \qdif(x,\mathrm d\bfa).$$ We expand the square as $(a_i-x_i)^2 + (a_j-x_j)^2 -2 (a_i-x_i)(a_j-x_j)$, and from Corollary \ref{corb} one obtains that 
    \begin{align*}|g_\beta(\x)|&\leq | 2 - 2(\mathpzc F(\beta)^2-\mathpzc G(\beta))| + C\Fd\big(\min_{1\le i'<j'\le k}  |x_{i'}-x_{j'}|\big) + C|\beta|^{4p-1} \\&\leq C|\beta| +C\Fd\big(\min_{1\le i'<j'\le k}  |x_{i'}-x_{j'}|\big) .
    \end{align*}For the second term on the right hand side, note that $\big|\big((\Q-\mathrm{Id}) (\pi_i-\pi_j)\big)(\mathbf R_s)$ is actually $\mathcal F_s$-measurable, thus it can be pulled out of the conditional expectation. By Lemma \ref{grow}, the remaining part $\Eb \bigg[ \big|R^i_{s+1} - R^j_{s+1} - (R^i_s - R^j_s)\big| \bigg|\mathcal F_s\bigg]$ is deterministically bounded uniformly over $\beta \in \ak$ and $\x\in I^k$, while the term $\big|\big((\Q-\mathrm{Id}) (\pi_i-\pi_j)\big)(\mathbf R_s)\big|$ itself can be bounded from above using \eqref{useful}, obtaining an upper bound of $ C|\beta|^{4p} + C \Fd\big(\min_{1\le i'<j'\le k}  |R_s^{i'}-R_s^{j'}|\big) .$ The third term can also be bounded from above by can be bounded from above using \eqref{useful}, obtaining the same upper bound of $C|\beta|^{4p} + C \Fd\big(\min_{1\le i'<j'\le k}  |R_s^{i'}-R_s^{j'}|\big)$.

    Summarizing and specializing to the case where $r=Nt$ and $\beta=\beta_N$, we see that $$
        \bigg|2t - N^{-1}\sum_{s=0}^{Nt-1} \Ebn [ (M_{s+1}^{i,j,\beta}-M_s^{i,j,\beta})^2 |\mathcal F_s]\bigg| \leq  CN^{-1}\cdot(Nt |\beta_N|)+N^{-1} \sum_{r=0}^{Nt} \Fd\big( \min_{1\le i'<j'\le k} |R^{i'}_r - R^{j'}_r|\big) .$$
        The latter tends to $0$ in probability under $\mathbf P_{\x_N}^{(\beta_N,k)}$ in the topology of $C[0,T]$. Indeed from Corollary \ref{lptight1} and the elementary bound $e^{-\min_\ell u_\ell} \leq \sum_\ell e^{-u_\ell}$ we can immediately deduce that 
    $$N^{-1} \sum_{r=0}^{Nt} \Fd\big( \min_{1\le i'<j'\le k} |R^{i'}_r - R^{j'}_r|\big)\leq N^{-1} \sum_{r=0}^{Nt} \sum_{1\le i'<j'\le k} \Fd\big( |R^{i'}_r - R^{j'}_r|\big) \stackrel{N\to \infty}{\longrightarrow} 0$$
    in probability in the topology of $C[0,T]$ under $\mathbf P_{\x_N}^{(\beta_N,k)}$. This is because the multiplying factor $N^{-1}$ here is converging to 0 faster than the fluctuation exponent $N^{-1/2}$ appearing in Corollary \ref{lptight1}. 
    

     We claim that $(N^{-1} Y^{i,j,\beta_N}_{Nt})_{t\ge 0}$ is a tight family of processes in $C[0,T]$ as $N\to \infty$. Indeed, one easily verifies that $\big (N^{-1}(M^{j,\beta_N}_{Nt})^2\big)_{t\ge 0}$ is tight and satisfies the same $L^q$ estimates as the processes $\big (N^{-1/2}M^{j,\beta_N}_{Nt}\big)_{t\ge 0}$ on any compact time interval $t\in [0,T]$, simply because it is the square of a process satisfying such bounds as shown in Steps 2 and 3. On the other hand, by Minkowski's inequality, we also have that \begin{align*}\bigg\| N^{-1} \sum_{r=Ns}^{Nt} \mathbf E_{\x_N}^{(\beta_N,k)} [ (M^{i,j,\beta_N}_{r+1}-M^{i,j,\beta_N}_r)^2|\mathcal F_s]\bigg\|_{L^q(\mathbf P_{\x_N}^{(\beta_N,k)})}& \leq N^{-1} \sum_{r=Ns}^{Nt} \big\|M^{i,j,\beta_N}_{r+1}-M^{i,j,\beta_N}_r\big\|^{1/2}_{L^{2q}(\mathbf P_{\x_N}^{(\beta_N,k)})}\\ &\leq \bigg(\sup_{N\ge 1} \mathcal S_{2q}(M^{i,j,\beta_N};\mathbf P_{\x_N}^{(\beta_N,k)})^{1/2} \bigg)|t-s|.\end{align*}
Summarizing these bounds, we have that for each $q\ge 1$ we have $E_{\x_N}^{(\beta_N,k)}\big[ \big(N^{-1}| Y^{j,\beta_N}_{Nt}- Y^{i,j,\beta_N}_{Ns}|\big)^q\big]^{1/q} \leq C|t-s|^{1/2}$ for all $s,t$ in a compact interval, thus completing the proof that $(N^{-1} Y^{j,\beta_N}_{Nt})_{t\ge 0}$ is a tight family of processes in $C[0,T]$ as $N\to \infty$.
    
    Thus if we take a joint limit point $(\mathcal X,\mathcal Y)$ as $N\to \infty$ of the processes $\big( N^{-1/2}M^{i,j,\beta_N}_{Nt}, N^{-1} Y^{i,j,\beta_N}_{Nt}\big)_{t\ge 0} $, then it must hold that $\mathcal Y_t = \mathcal X_t^2 - 2t$ and moreover both processes are continuous martingales, since martingality is preserved by limit points as long as uniform integrability holds. By Levy's criterion, we conclude that $\mathcal X$ must be a Brownian motion of rate 2 starting at $x_i-x_j$ since the starting values of $N^{-1/2}X_{Nt}$ converge to $\boldsymbol x$, i.e., $N^{-1/2}\x_N\to\boldsymbol x$.
\end{proof}

While the convergence of the process itself has been established in Proposition \ref{x0}, it is much more subtle to prove convergence of additive functionals of the process to the local time of the Brownian motion. This will be needed to prove the main results, and this will be the main focus going forward.

    \subsection{Analysis of the annealed difference process}

    In this subsection we will prove some results for the annealed difference process, that is, the Markov chain on $I$ with transition density $\pdif$ as defined in Assumption \ref{a1} Item \eqref{a16}. By Lemma \ref{diff0} this Markov chain equals the averaged law of the difference of two particles in the random environment, that is, the law of the process $R^i-R^j$ under $\Pb$ with $\beta=0$ (the untilted case). These untilted results will be crucial in deriving the generalizations to the tilted case later.

    \begin{defn}\label{def0} 
Recall the Markov kernel $\pdif(x,A):= \int_{I^2} \ind_{\{a-b\in A\}} \boldsymbol p^{(2)} \big((x,0),(\mathrm da,\mathrm db)\big)$ from Assumption \ref{a1} Item \eqref{a16}. We also define the associated Markov operator $$(P_{\mathbf{dif}} f)(x):= \int_I f(a) \pdif(x,\mathrm da),$$ for all measurable functions $f:I\to \mathbb R$ for which the integral is absolutely convergent.
\end{defn}

\begin{defn}
    Throughout this subsection, we will let $(X_r)_{r\ge 0}$ denote the canonical process on the space $I^{\mathbb Z_{\ge 0}}$. We will let $\mathbf P_x$ denote the probability measure on $I^{\mathbb Z_{\ge 0}}$ given by the law of the Markov chain associated to $\pdif$ started from $x\in I.$
\end{defn}

Thus by Lemma \ref{diff0}, the canonical process $(X_r)_{r\ge 0}$ under $\mathbf P_x$ is distributed the same as the process $R^i-R^j$ under $\Pb$ with $\beta=0$ and $x_i-x_j=x.$

\begin{prop}[Existence of a Foster-Lyapunov drift function] \label{sce}There exists a function $V:I\to \mathbb R_+$ with compact sublevel sets such that $\Pdif V(x)\leq V(x)$ for all $x$ outside some compact interval.
\end{prop}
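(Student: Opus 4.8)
The plan is to take $V$ to be essentially a logarithm. Concretely, set $V(x):=\log(2+x^2)$, which is strictly positive and, since $V(x)\to\infty$ as $|x|\to\infty$, trivially has compact sublevel sets (on $I=\mathbb R$ as well as on $I=c\mathbb Z$). Thus the entire content of the proposition is the drift inequality $\Pdif V(x)\le V(x)$ for all $|x|$ large. This is the discrete-chain manifestation of the classical fact that a centered random walk with finite variance is null-recurrent precisely because $\log|x|$ is harmonic-at-infinity up to a favorable error term; here $\pdif$ is not a random walk, but it is centered with a bounded, nondegenerate conditional variance, which is all that will be needed.

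Before the estimate I would record three structural facts. First, $\int_I(a-x)\,\pdif(x,\dr a)=0$: by Lemma \ref{diff0} the chain is the difference of two coordinates of the $2$-point motion, each of which is a $\mu$-random walk, so the difference increment has mean zero. Second, the increments have uniformly bounded exponential moments: applying Lemma \ref{grow} with $k=2$ and $\beta=0$ (so that $\boldsymbol q^{(2)}_0=\boldsymbol p^{(2)}$) and using $|a_1-a_2-(x_1-x_2)|\le|a_1-x_1|+|a_2-x_2|$ yields $C_0:=\sup_{x\in I}\int_I e^{\lambda_0|a-x|}\,\pdif(x,\dr a)<\infty$ with $\lambda_0:=\alpha/4$; in particular $\sup_{x\in I}\int_I|a-x|^n\,\pdif(x,\dr a)<\infty$ for every $n$. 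Third, writing $\sigma^2(x):=\int_I(a-x)^2\,\pdif(x,\dr a)$, Lemma \ref{lemma1} gives both $|\sigma^2(x)-2|\le 2\Fd(|x|)$ and $\inf_{x\in I}\sigma^2(x)>0$, so $\sigma^2$ is bounded and, for $|x|$ large, bounded below by a positive constant.

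The core step is a second-order Taylor expansion of $V$. Fix $x\in I$ with $|x|$ large, write $\xi$ for the increment $X_1-x$ under $\mathbf P_x$, and set $U:=\frac{2x\xi+\xi^2}{x^2+2}$, so that $X_1^2+2=(x^2+2)(1+U)$ and hence $\Pdif V(x)-V(x)=\mathbf E_x[\log(1+U)]$. Split this expectation according to whether $|\xi|\le\delta|x|$ or $|\xi|>\delta|x|$, where $\delta\in(0,1/5]$ is a fixed constant chosen so that $|U|\le 1/2$ on the first event. On $\{|\xi|>\delta|x|\}$ I would use the crude bound $\log(1+U)=\log\frac{X_1^2+2}{x^2+2}\le C+2\log(1+|\xi|)$ (valid for $|x|\ge 1$), then Cauchy--Schwarz together with $\mathbf P_x(|\xi|>\delta|x|)\le C_0e^{-\lambda_0\delta|x|}$ and the uniform moment bounds, to conclude that this contribution is $O(e^{-c'|x|})$, hence super-polynomially small in $|x|$. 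On $\{|\xi|\le\delta|x|\}$ I would use $\log(1+u)\le u-\tfrac12u^2+|u|^3$ (valid for $|u|\le 1/2$), take expectations, and use $\mathbf E_x[\xi]=0$, $\mathbf E_x[\xi^2]=\sigma^2(x)$ and the uniformly bounded third and fourth moments of $\xi$; the corrections from the truncation are again super-polynomially small, and $\mathbf E_x[|U|^3\ind_{\{|\xi|\le\delta|x|\}}]=O(|x|^{-3})$. Combining the two pieces yields
\[
\Pdif V(x)-V(x)\;\le\;\frac{\sigma^2(x)}{x^2+2}-\frac{2x^2\sigma^2(x)}{(x^2+2)^2}+O(|x|^{-3})\;=\;-\,\frac{\sigma^2(x)\,(x^2-2)}{(x^2+2)^2}+O(|x|^{-3}).
\]
Since $\sigma^2(x)$ is bounded below by a positive constant for $|x|$ large, the leading term is of order $-|x|^{-2}$ and dominates the $O(|x|^{-3})$ error; thus $\Pdif V(x)-V(x)<0$ for all $x$ outside a sufficiently large compact interval. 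Finiteness of $\Pdif V(x)$ for every $x$ follows from the same crude logarithmic bound on $V$ together with $\mathbf E_x[|\xi|]<\infty$.

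The step I expect to require the most care is the bookkeeping on $\{|\xi|\le\delta|x|\}$: one must check that $\mathbf E_x[U\,\ind_{\{|\xi|\le\delta|x|\}}]$ and $\mathbf E_x[U^2\,\ind_{\{|\xi|\le\delta|x|\}}]$ equal $\frac{\sigma^2(x)}{x^2+2}$ and $\frac{4x^2\sigma^2(x)}{(x^2+2)^2}$, respectively, up to $O(|x|^{-3})$ --- in particular that the $4x\,\mathbf E_x[\xi^3]$-type cross term, the $\mathbf E_x[\xi^4]$ term, and all the truncation tails are genuinely of lower order --- so that the cancellation leaving the strictly negative leading term $-\frac{\sigma^2(x)}{x^2}(1+o(1))$ survives. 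None of this is deep; it is exactly the computation certifying null-recurrence of a centered, uniformly exponentially integrable chain with nondegenerate variance, and no finer structural information about $\pdif$ is used.
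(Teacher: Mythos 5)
Your proof is correct and takes essentially the same approach as the paper: a concave Lyapunov function, second-order Taylor expansion exploiting the zero mean of $\pdif(x,\cdot)$ together with the exponential moment bounds of Lemma~\ref{grow} and the uniform lower bound on the conditional variance from Lemma~\ref{lemma1}, plus an exponential-tail truncation argument. The only cosmetic difference is the choice of $V$ (the paper uses $\sqrt{|x|+1}$, which gives a drift $\sim -|x|^{-3/2}$ and a slightly shorter tail estimate via the global $1/2$-Lipschitz bound, whereas your $\log(2+x^2)$ gives $\sim -|x|^{-2}$ and uses Cauchy--Schwarz on the tail); both establish the same drift condition.
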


We remark that in some references such as \cite{MTbook}, any function $V$ as above would be said to satisfy the \textit{(V1)} drift condition (as opposed to \textit{(V2)} or \textit{(V3)} which will not be relevant in this paper).

\begin{proof}

    We set $V(x):= \sqrt{|x|+1}.$ We will show that $\Pdif V(x)\leq V(x)$ for all positive and sufficiently large $x$, and the proof for negative $x$ would be completely symmetric. 

    For $x>2$ and $a\in (x/2,3x/2)$ we can write \begin{align*}V(a)-V(x) &= V'(x) (a-x) + \frac12 V''(x) (a-x)^2 +R(a,x) \\ &= \tfrac12 (x+1)^{-1/2} (a-x) -\frac14 (x+1)^{-3/2} (a-x)^2 + R(a,x),
    \end{align*} where by Taylor's remainder theorem one has uniformly over all $x>2$ and $a\in (x/2,3x/2)$ the bound $$|R(a,x)| \leq \frac16 |a-x|^3 \sup_{b\in [x/2,3x/2]}|V'''(b)| \le \frac3{48} (x/2)^{-5/2} |a-x|^3 . $$ Now recall that $\int_I (a-x)\pdif(x,\dr a) = 0$ for all $x\in I$, which is immediate from the definitions, see e.g. Lemma \ref{diff0}. Consequently we can disregard the first-order term when calculating $\Pdif V$ and we find that \begin{align}\notag \Pdif V(x)-V(x) &= \int_I (V(a)-V(x)) \pdif (x,\dr a) \\ \notag &\leq -\frac14 (x+1)^{-3/2}\int_I (a-x)^2 \pdif(x,\dr a) + \frac3{48}(x/2)^{-5/2} \int_I |a-x|^3 \pdif(x,\dr a) \\ \label{b8}&\;\;\;\;\;\;\;\;\;\;\;\;\;\;\;\;\;+ \int_I |V(a)-V(x)| \ind_{\{|x-a|>x/2\}} \pdif(x,\dr a).\end{align}
    Let $\eta$ be as in Assumption \ref{a1} Item \eqref{a22}, and let us define constants
    \begin{align*}
        \omega_{exp}: &= \sup_{y\in I} \int_I e^{\frac\eta4|a-y|} \pdif(y,\dr a),\\
        \omega_2:&= \sup_{y\in I} \int_I (a-y)^2 \pdif(y,\dr a),\\ \omega_3 :&= \sup_{y\in I} \int_I |a-y|^3 \pdif(y,\dr a), \\ \delta_2 :&= \inf_{y\in I} \int_I (a-y)^2 \pdif(y,\dr a).
    \end{align*}
    The first three constants are all finite by e.g. Lemma \ref{grow}, and the last constant is strictly positive as noted in Lemma \ref{lemma1}. Notice that $V$ is a globally Lipchitz function on $I$ with Lipchitz constant 1/2, i.e., $|V(x)-V(a)|\leq \frac12 |x-a|,$ thus using Cauchy-Schwarz and Markov's inequality we see that 
    \begin{align*}
        \int_I |V(a)-V(x)| &\ind_{\{|x-a|>x/2\}} \pdif(x,\dr a) \\&\leq \bigg(\int_I (V(a)-V(x))^2\pdif(x,\dr a)\bigg)^{1/2} \bigg(\int_I \ind_{\{|x-a|>x/2\}} \pdif(x,\dr a)\bigg)^{1/2} \\ &\leq  \bigg(\frac14 \int_I (a-x)^2\pdif(x,\dr a)\bigg)^{1/2} \bigg(\frac{\int_I e^{\frac\eta4|x-a|} \pdif(x,\dr a)}{e^{\frac\eta4(x/2)}}\bigg)^{1/2} \\ &\leq \frac12 \omega_2^{1/2} \omega_{exp}^{1/2}\cdot  e^{-\frac{\eta x}{16}}.
    \end{align*}
    Plugging this bound back into \eqref{b8}, we find that for $x>2$ one has $$\Pdif V(x) - V(x) \leq -\frac{\delta_2} 4 (x+1)^{-3/2} + \frac{3\omega_3}{48} (x/2)^{-5/2} + \frac12 \omega_2^{1/2} \omega_{exp}^{1/2} \cdot e^{-\frac{\eta x}{16}}.$$ The right side is clearly negative for sufficiently large $x>2$, thus proving the claim.
\end{proof}

\begin{thm}\label{pinv1}
    Let $\pdif$ be the Markov kernel from Assumption \ref{a1} Item \eqref{a16}, and $\Pdif$ its associated Markov operator as in Definition \ref{def0}. Then there exists a unique (up to scalar multiple) invariant measure $\pi^{\mathrm{inv}}$ on $I$, in other words a measure such that $\Pdif f\in L^1(\pi^{\mathrm{inv}})$ whenever $f\in L^1(\pi^{\mathrm{inv}})$, and moreover $\int_I (\Pdif f-f)d\pi^{\mathrm{inv}} = 0$ for all $f\in L^1(\pi^{\mathrm{inv}})$. Furthermore,
    \begin{itemize}
    \item $\pi^{\mathrm{inv}}(K)<\infty$ for all compact subsets $K\subset I.$
    \item $\pi^{\mathrm{inv}}$ has full support on $I$.
    \item For all $f,g \in L^1(\pi^{\mathrm{inv}})$ such that $\int_I g\;d\pi^{\mathrm{inv}} \neq 0$ we have that $$\lim_{n\to \infty} \frac{\sum_{r=1}^n f(X_k)}{\sum_{r=1}^n g(X_k)} = \frac{\int_I f\;\dr\pi^{\mathrm{inv}}}{\int_I g\;\dr\pi^{\mathrm{inv}}},\;\;\;\; \mathbf P_x\;a.s. \;\;\;\;\; for\;all\; x\in I.$$
    \end{itemize}
\end{thm}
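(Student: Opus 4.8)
The plan is to identify $(X_r)$, the Markov chain with transition kernel $\pdif$, as a (null-)recurrent Harris chain, and then to invoke the standard structure and ergodic theory for such chains as developed in \cite{MTbook}. All the needed inputs are already available: Assumption \ref{a1} Item \eqref{a16} says the chain is open-set irreducible and that $x\mapsto\pdif(x,\bullet)$ is continuous in total variation; Proposition \ref{sce} provides the Foster--Lyapunov function $V(x)=\sqrt{|x|+1}$, which has compact sublevel sets and satisfies $\Pdif V\le V$ outside a compact set $C$; and Lemma \ref{grow} (applied with $\beta=0$) gives the uniform exponential moment bounds for $\pdif$ used throughout.

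First I would settle irreducibility and regularity. If $I=c\mathbb Z$, the hypotheses of Item \eqref{a16} are exactly classical irreducibility, so singletons are accessible atoms and every compact (hence finite) set is small. If $I=\mathbb R$, total variation continuity makes $\Pdif$ strong Feller (it maps bounded measurable functions to continuous ones), and together with open-set irreducibility this makes $(X_r)$ a $T$-chain; consequently every compact subset of $I$ is petite, and the maximal irreducibility measure $\psi$ is equivalent to Lebesgue measure, so in particular $\psi$ has full support (by irreducibility $\psi(B(y,\e))>0$ for all $y\in I$, $\e>0$). In both cases the sublevel sets $\{V\le n\}$ are petite.

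Next, recurrence and the invariant measure. Since $V$ is unbounded off petite sets and $\Pdif V\le V$ off the petite set $C$, the \textit{(V1)} drift criterion of \cite{MTbook} shows $(X_r)$ is Harris recurrent: the nonnegative supermartingale $V(X_{n\wedge\sigma_C})$ converges almost surely, and compactness of the sublevel sets of $V$ forces $\sigma_C<\infty$, so $C$ --- and hence every petite set --- is visited infinitely often $\mathbf P_x$-a.s.\ for every $x$. (The drift condition is only \textit{(V1)}, not \textit{(V2)} or \textit{(V3)}, so we obtain recurrence but not positive recurrence, consistent with $\pi^{\mathrm{inv}}$ being infinite.) The structure theorem for Harris chains then yields a unique-up-to-scalar invariant measure $\pi^{\mathrm{inv}}$, which is $\sigma$-finite and equivalent to $\psi$, so $\pi^{\mathrm{inv}}$ has full support. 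Finiteness on compacts follows since compacts are petite and petite sets carry finite invariant mass: using the regeneration structure (Nummelin splitting on a small set containing an artificial atom, or just the atom itself in the lattice case), from a petite set the chain regenerates within boundedly many steps with uniformly positive probability, so the expected occupation time of that set per regeneration cycle --- which equals $\pi^{\mathrm{inv}}$ of the set up to normalization --- is finite.

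Finally, the ratio limit theorem is the Chacon--Ornstein ergodic theorem for Harris recurrent chains (\cite{MTbook}): writing $S_nf:=\sum_{r=1}^n f(X_r)$, for $h\ge 0$ in $L^1(\pi^{\mathrm{inv}})$ with $\int h\,d\pi^{\mathrm{inv}}>0$ one has $S_nf/S_nh\to\int f\,d\pi^{\mathrm{inv}}/\int h\,d\pi^{\mathrm{inv}}$ $\mathbf P_x$-a.s.\ for \emph{every} starting point $x$, the exceptional null set being empty precisely because of Harris recurrence. For a general $g\in L^1(\pi^{\mathrm{inv}})$ with $\int g\,d\pi^{\mathrm{inv}}\ne 0$, apply this with $h=|g|$ to each of $f$, $g^+$ and $g^-$ and divide, using $S_n|g|\to\infty$ a.s.\ and $\int|g|\,d\pi^{\mathrm{inv}}\ge|\int g\,d\pi^{\mathrm{inv}}|>0$, to obtain $S_nf/S_ng\to\int f\,d\pi^{\mathrm{inv}}/\int g\,d\pi^{\mathrm{inv}}$. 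The main obstacle is the $I=\mathbb R$ case: extracting petiteness of compact sets and $\psi\sim\mathrm{Leb}$ from the total-variation-continuity hypothesis, and verifying that the recurrence produced by the \textit{(V1)} condition is genuinely Harris, so that the ergodic theorem holds from every initial point rather than merely $\psi$-a.e.\ one; the lattice case, by contrast, is essentially bookkeeping over classical irreducible recurrent chain theory.
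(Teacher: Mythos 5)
Your proposal is correct and follows essentially the same route as the paper's proof: establish that the chain is a $\psi$-irreducible $T$-chain via the strong Feller and open-set irreducibility hypotheses, apply the Foster--Lyapunov drift function from Proposition \ref{sce} to obtain Harris recurrence, and then invoke the standard Meyn--Tweedie theory for existence/uniqueness of the invariant measure, its full support and finiteness on compacts, and the ratio limit theorem. You supply slightly more detail at a few steps (the supermartingale argument behind the \textit{(V1)} criterion, the regeneration heuristic for finiteness on compacts, the $g^\pm$ reduction in the ergodic theorem) where the paper simply cites \cite[Thms.~9.2.2, 12.3.3, 17.3.2]{MTbook}, but the logical skeleton is identical.
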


\begin{proof}
    By the first bullet point in Assumption \ref{a1} Item \eqref{a16}, the Markov kernel $\pdif$ is \textit{Strong Feller,} that is, the operator $\Pdif$ sends bounded measurable functions to bounded continuous functions. In particular $\pdif$ is a ``$T$-chain" in the sense of \cite[Definition 6.0.0 (iii)]{MTbook}. 

    Furthermore $\pdif$ is ``open-set irreducible" in the sense of \cite[Section 6.1.2]{MTbook} by the second bullet point of Assumption \ref{a1} Item \eqref{a16}. In particular, by \cite[Proposition 6.1.5]{MTbook} the chain is ``$\psi$-irreducible" where the maximal irreduciblilty measure $\psi$ has full support due to the open-set irreducibility and the Strong Feller property.

    Furthermore, by Proposition \ref{sce} and \cite[Theorem 9.4.1]{MTbook}, for all $x\in I$, there exists a compact set $K\subset I$ such that the Markov chain $(X_r)_{r\ge 0}$ visits $K$ infinitely often $\mathbf P_x$-a.s.. In other words, the Markov kernel $\pdif$ is ``non-evanescent" in the sense of \cite[Section 9.2.1]{MTbook}. Now by \cite[Theorem  9.2.2 (ii)]{MTbook} any non-evanescent $\psi$-irreducible $T$-chain is automatically \textit{Harris recurrent}. 
    
    By \cite[Theorem 17.3.2]{MTbook}, Harris recurrent chains can be characterized as exactly those chains for which a unique invariant measure $\pi^{\mathrm{inv}}$ exists (up to scalar multiple) and satisfies the ratio limit theorem as in the third bullet point of the theorem statement. By \cite[Theorem 10.4.9]{MTbook}, the measure $\pi^{\mathrm{inv}}$ is equivalent to the maximal irreducibility measure $\psi$, which has already been explained to have full support. By \cite[Theorem 12.3.3]{MTbook}, the invariant measure $\pi^{\mathrm{inv}}$ is finite on compact sets as long as the Foster-Lyapunov drift criterion is satisfied as we have already shown in Proposition \ref{sce}.    
\end{proof}

\begin{lem}\label{est1} Let $\eta$ be as in Assumption \ref{a1} Item \eqref{a22} and let $C>0$. There exists $C'>0$ such that uniformly over all $f:I\to \mathbb R$ and $b,x\in I$ and $\theta\in [0,\frac\eta2 ]$ the following bound holds true.
\begin{itemize}
\item If $|f(x)-|x-b||\leq C e^{-\theta |x-b|}$, then $|(P_{\mathbf{dif}}f-f)(x)| \leq C' \theta^{-1} e^{-\theta |x-b|}$ where $C'$ is a larger constant depending on $C$ but not $f,x,b,\theta$.

\end{itemize}
\end{lem}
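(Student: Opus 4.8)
The plan is to expand $(\Pdif f-f)(x)=\int_I\big(f(a)-f(x)\big)\,\pdif(x,\mathrm da)$ and to separate the profile $|\cdot-b|$ from the exponentially small perturbation $g:=f-|\cdot-b|$, which (reading the hypothesis as holding at every argument) satisfies $|g(y)|\le Ce^{-\theta|y-b|}$ for all $y\in I$. Writing $f(a)-f(x)=\big(g(a)-g(x)\big)+\big(|a-b|-|x-b|\big)$ gives
\[
(\Pdif f-f)(x)=(\Pdif g-g)(x)+\int_I\big(|a-b|-|x-b|\big)\,\pdif(x,\mathrm da),
\]
and it suffices to bound each summand by a constant multiple of $e^{-\theta|x-b|}$; the extra $\theta^{-1}$ in the statement is then automatic, since $\theta\le\alpha/2$ forces $e^{-\theta|x-b|}\le\frac{\alpha}{2}\theta^{-1}e^{-\theta|x-b|}$ (the case $\theta=0$ being vacuous). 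Throughout I will use $\omega_{\mathrm{exp}}:=\sup_{y\in I}\int_I e^{\frac{\alpha}{2}|a-y|}\,\pdif(y,\mathrm da)<\infty$, finiteness of which follows by writing the difference-chain increment $a-y$ as $U-W$ with $U,W$ each distributed as $\mu$ (the marginal increment laws of the $k$-point motion), applying Cauchy--Schwarz, and invoking the exponential moment bound in Assumption \ref{a1} Item \eqref{a22}; this is exactly why $\theta$ is restricted to $[0,\alpha/2]$.

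For the $g$-term, $|(\Pdif g-g)(x)|\le\int_I|g(a)|\,\pdif(x,\mathrm da)+|g(x)|\le C\int_I e^{-\theta|a-b|}\,\pdif(x,\mathrm da)+Ce^{-\theta|x-b|}$. The reverse triangle inequality $|a-b|\ge|x-b|-|a-x|$ gives $e^{-\theta|a-b|}\le e^{-\theta|x-b|}e^{\theta|a-x|}$, and since $\theta\le\alpha/2$ we get $\int_I e^{-\theta|a-b|}\,\pdif(x,\mathrm da)\le\omega_{\mathrm{exp}}e^{-\theta|x-b|}$. Hence $|(\Pdif g-g)(x)|\le C(1+\omega_{\mathrm{exp}})e^{-\theta|x-b|}$.

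The delicate term is $\int_I(|a-b|-|x-b|)\,\pdif(x,\mathrm da)$: the crude bound $\big||a-b|-|x-b|\big|\le|a-x|$ destroys the decay, so one must keep the cancellation coming from $\int_I(a-x)\,\pdif(x,\mathrm da)=0$ (the difference kernel $\pdif(x,\cdot)$ has mean $x$, by Lemma \ref{diff0}). Using the trivial identity
\[
|a-b|-|x-b|=\operatorname{sgn}(x-b)\,(a-x)+\Big[\,|a-b|-|x-b|-\operatorname{sgn}(x-b)(a-x)\,\Big],
\]
a short case check shows the bracketed term lies between $0$ and $2(|a-x|-|x-b|)^+$ (it vanishes when $a$ and $x$ are on the same side of $b$, and equals $2(|a-x|-|x-b|)$ otherwise). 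Integrating and using $\int(a-x)\pdif(x,\mathrm da)=0$, then the layer-cake formula together with the Chernoff bound $\pdif(x,\{|a-x|>u\})\le\omega_{\mathrm{exp}}e^{-\frac\alpha2 u}$,
\[
\Big|\int_I(|a-b|-|x-b|)\,\pdif(x,\mathrm da)\Big|\le 2\int_I(|a-x|-|x-b|)^+\pdif(x,\mathrm da)=2\!\int_0^\infty\!\pdif\big(x,\{|a-x|>|x-b|+t\}\big)\,\mathrm dt\le\tfrac{4\omega_{\mathrm{exp}}}{\alpha}e^{-\frac\alpha2|x-b|},
\]
which is $\le\tfrac{4\omega_{\mathrm{exp}}}{\alpha}e^{-\theta|x-b|}$ because $\theta\le\alpha/2$.

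Adding the two estimates yields $|(\Pdif f-f)(x)|\le\big(C(1+\omega_{\mathrm{exp}})+\tfrac{4\omega_{\mathrm{exp}}}{\alpha}\big)e^{-\theta|x-b|}\le C'\theta^{-1}e^{-\theta|x-b|}$ with $C':=\tfrac\alpha2 C(1+\omega_{\mathrm{exp}})+2\omega_{\mathrm{exp}}$, depending only on $C$ and $\alpha$. The one genuinely non-routine step is the second term: one must not discard the zero-mean cancellation, and it is the layer-cake bound (as opposed to a naive pointwise estimate picking up a spurious polynomial or a $(\alpha/2-\theta)^{-1}$ factor) that lets the argument reach the borderline exponent $\theta=\alpha/2$, the largest rate at which the difference chain still has finite exponential increments.
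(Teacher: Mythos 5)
Your proof is correct, and its skeleton matches the paper's: both split $f$ into the profile $|\cdot-b|$ plus an exponentially small perturbation, dispose of the perturbation with the uniform exponential moment bound $\omega_{\mathrm{exp}}<\infty$, and then must exploit the fact that $\pdif(x,\cdot)$ is centered at $x$ to handle $\int_I(|a-b|-|x-b|)\,\pdif(x,\mathrm da)$ without losing the decay. Where you diverge is in how that last term is estimated. The paper (taking $b=0$, $x\ge 0$) rewrites it exactly as $2\int_I|a|\ind_{\{a\le 0\}}\pdif(x,\mathrm da)$ and then dominates $|a|\ind_{\{a\le 0\}}\le \theta^{-1}e^{\theta|a-x|}e^{-\theta|x|}$, which is precisely where the $\theta^{-1}$ in the statement comes from and yields decay only at rate $e^{-\theta|x|}$. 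You instead bound the same quantity by $2\int_I(|a-x|-|x-b|)^{+}\pdif(x,\mathrm da)$ and apply layer-cake plus the Chernoff bound $\pdif(x,\{|a-x|>u\})\le\omega_{\mathrm{exp}}e^{-\frac{\alpha}{2}u}$, giving $\frac{4\omega_{\mathrm{exp}}}{\alpha}e^{-\frac{\alpha}{2}|x-b|}$ uniformly in $\theta$. This is slightly sharper: your argument shows the conclusion holds with a constant independent of $\theta$ and with the faster rate $e^{-\frac{\alpha}{2}|x-b|}$ for this term, so the $\theta^{-1}$ in the lemma is an artifact of the paper's pointwise domination rather than a necessity; you only reinstate it at the end via $1\le\frac{\alpha}{2}\theta^{-1}$ to match the stated form. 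Your reading of the hypothesis as holding at every argument (i.e.\ $|f(y)-|y-b||\le Ce^{-\theta|y-b|}$ for all $y\in I$) is also the reading the paper's own proof uses, and your verification of $\omega_{\mathrm{exp}}<\infty$ via Cauchy--Schwarz and Assumption \ref{a1} Item \eqref{a22} is the same mechanism as Lemma \ref{grow} with $\beta=0$.
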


\begin{proof}
    For simplicity of notation we will prove the claim with $b=0$ but the proof for general $b$ is similar, recentering around $b$ rather than $0$ everywhere in the argument below.
    
    By the triangle inequality, we can write \begin{align*}|\Pdif f(x)-f(x)| &= |\int_I (f(a)-f(x)) \pdif(x,\mathrm da)| \\ &\leq \int_I |f(a)-|a|| \pdif(x,\mathrm da) + \bigg|\int_I (|a|-|x|)\pdif(x,\mathrm da)\bigg| + ||x|-|f(x)|| \\ &\leq C \int_I e^{-\theta|a|} \pdif(x,\mathrm da) + \bigg|\int_I (|a|-|x|)\pdif(x,\mathrm da)\bigg|  + C e^{-\theta |x|}.
    \end{align*}
    Let us call these terms $E_1,E_2,E_3$. Note that $E_3$ already satisfies the required bound. Since $-|a| \leq   |x-a| - |x|,$ we can bound $E_1$ by $$C \bigg( \sup_{x'\in I}\sup_{\theta'\in[0,\frac\eta2 ]}\int_I e^{\theta |x'-a|} \pdif(x',\mathrm da)\bigg)\cdot e^{-\theta|x|},$$ and the supremum is finite by the assumption $\theta<\eta/2.$ 
    This is indeed a bound of the desired form after replacing $C $ by a larger constant.

    Finally we need to bound $E_2$. Without loss of generality we assume $x \ge 0$, because the case $x\le 0$ is symmetric. Then we have $|x| = x = \int_I a \;\pdif(x,\mathrm da),$ since every $\pdif(x',\cdot)$ is centered at $x'$ by construction. Thus we have that 
    \begin{align*}
        E_2 &= \bigg| \int_I (|a|-a) \pdif (x,\mathrm da)\bigg| = 2  \int_I |a|\ind_{\{a\le 0\}} \pdif (x,\mathrm da)
    \end{align*}
    Note that for $a\le 0$ and $x\ge 0$ one has $$|a|\ind_{\{a\le 0\}} \leq \theta^{-1} \sqrt r e^{-\theta a} = \theta^{-1}e^{-\theta (a-x)}e^{-\theta x} =\theta^{-1} e^{\theta |a-x|} e^{-\theta |x|}.$$ We can thus bound $E_2$ by $$C\bigg( \sup_{x'\in I}\sup_{\theta'\in[0,\frac\eta2 ]}\int_I e^{\theta |x'-a|} \pdif^{r'}(x',\mathrm da)\bigg)\theta^{-1} e^{-\theta |x|},$$ which is again finite by the previous lemma, as explained before. This proves the lemma.
\end{proof}

\begin{prop}\label{x}
    let $x_N$ be any sequence of values in $I$ such that $N^{-1/2}x_N\to x\in \mathbb R$ as $N\to \infty$. Consider the rescaled process $\big(N^{-1/2}X_{Nt}\big)_{t\in (N^{-1}\mathbb Z_{\ge 0}) \cap [0,T]},$ viewed as random variables in $C[0,T]$ under the measure $\mathbf P_{x_N}$. This is a tight sequence of processes in $C[0,T]$ and any limit point is given by the law of a Brownian motion of rate 2 starting at $x$.
\end{prop}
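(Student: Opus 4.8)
The plan is to reduce the statement to material already in place, then handle the one genuinely new ingredient, namely the convergence of the additive functional to local time. By Lemma~\ref{diff0}, under $\mathbf P_{x_N}$ the canonical process $(X_r)_{r\ge 0}$ is distributed as $R^1-R^2$ under the $2$-point measure $\mathbf P^{(0,2)}_{(x_N,0)}$; hence the tightness of $(N^{-1/2}X_{Nt})_t$ in $C[0,T]$ and the identification of its limit points as a Brownian motion of rate $2$ started at $x$ are precisely the $\beta_N\equiv 0$, $k=2$ case of Proposition~\ref{x0} (with $\x_N=(x_N,0)$, so $x_1-x_2\to x$). For the additive functional, writing $V_f(r):=\sum_{s=0}^{r-1}f(X_s)$ for $f$ in the decay class of Proposition~\ref{inv8} (so $|f|\le F$ for some decreasing summable $F$), the termwise bound $|V_f(Nt)-V_f(Ns)|\le V^{12}(F;Nt)-V^{12}(F;Ns)$ together with Corollary~\ref{lptight1} yields $\mathbf E_{x_N}[|N^{-1/2}(V_f(Nt)-V_f(Ns))|^q]^{1/q}\le C\sqrt{t-s}$, so after linear interpolation the pair $(N^{-1/2}X_{Nt},N^{-1/2}V_f(Nt))_t$ is tight in $C([0,T];\mathbb R^2)$ by Kolmogorov's criterion. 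It then remains to show that every joint limit point is $(W,\gamma(f)L^W_0)$.

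The key tool will be a discrete Tanaka formula. Set $\rho(x):=(\Pdif|\cdot|)(x)-|x|$. Jensen's inequality gives $\rho\ge 0$, and Lemma~\ref{est1} applied with $f=|\cdot|$, $b=0$ and $\theta$ slightly below $\alpha/2$ shows $\rho(x)\le C'\theta^{-1}e^{-\theta|x|}$; thus $\rho$ belongs to the class of Proposition~\ref{inv8}, and $c_\rho:=\int_I\rho\,d\pi^{\mathrm{inv}}$ is a strictly positive finite constant. The Doob decomposition of the submartingale $r\mapsto|X_r|$ reads $|X_r|=|x_N|+M_r+V_\rho(r)$ with $M$ a $\mathbf P_{x_N}$-martingale, so $N^{-1/2}V_\rho(Nt)=N^{-1/2}|X_{Nt}|-N^{-1/2}|x_N|-N^{-1/2}M_{Nt}$, and I would pin down the limit of each of these three terms. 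The first converges in law to $|W_t|$ by the continuous mapping theorem applied to $N^{-1/2}X_{Nt}\to W$, and $N^{-1/2}|x_N|\to|x|$. For the martingale piece I would compute the predictable brackets $\langle M\rangle_r=\sum_{s<r}\big(\int_I a^2\,\pdif(X_s,da)-(\Pdif|\cdot|)(X_s)^2\big)$ and $\langle X,M\rangle_r=\sum_{s<r}\int_I(a-X_s)|a|\,\pdif(X_s,da)$. Using \eqref{varcon}, the identity $\int_I(a-X_s)\,\pdif(X_s,da)=0$, and the exponential decay of $\rho$ (which makes $x\rho(x)$ and $\rho(x)^2$ bounded and rapidly decaying), one checks that $\langle M\rangle_r=2r+\sum_{s<r}e_1(X_s)$ and $\langle X,M\rangle_r=2\sum_{s<r}\mathrm{sgn}(X_s)+\sum_{s<r}e_2(X_s)$ with $|e_i|\le G$ for some decreasing summable $G$. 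By Corollary~\ref{lptight1} the error sums, divided by $N$, vanish in $C[0,T]$, while $2N^{-1}\sum_{s<Nt}\mathrm{sgn}(X_s)\to 2\int_0^t\mathrm{sgn}(W_s)\,ds$ (the discontinuity of $\mathrm{sgn}$ at $0$ being harmless since $\{s:W_s=0\}$ is Lebesgue-null a.s.).

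Passing to a joint limit point $(W,\widetilde W)$ of $(N^{-1/2}X_{Nt},N^{-1/2}M_{Nt})$ — both limits are continuous martingales thanks to the uniform $L^q$ bounds (which give uniform integrability and negligibility of jumps) — Lévy's theorem makes $\widetilde W$ a rate-$2$ Brownian motion, and the relation $\langle W,\widetilde W\rangle_t=\int_0^t 2\,\mathrm{sgn}(W_s)\,ds$ forces $\widetilde W_t=\int_0^t\mathrm{sgn}(W_s)\,dW_s$ (the difference is a continuous martingale of zero quadratic variation). Tanaka's formula $|W_t|=|x|+\int_0^t\mathrm{sgn}(W_s)\,dW_s+L^W_0(t)$ then gives $N^{-1/2}V_\rho(Nt)\to L^W_0(t)$ in law, jointly with $N^{-1/2}X_{Nt}\to W$. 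For a general $f$ in the decay class I would write $N^{-1/2}V_f(Nt)=\big(V_f(Nt)/V_\rho(Nt)\big)\cdot N^{-1/2}V_\rho(Nt)$: since the chain of $\pdif$ is Harris recurrent with infinite invariant measure, the ratio ergodic theorem of Theorem~\ref{pinv1} gives $V_f(r)/V_\rho(r)\to\int_I f\,d\pi^{\mathrm{inv}}/c_\rho=:\gamma(f)$, $\mathbf P_{x_N}$-a.s. as $r\to\infty$, so that $(N^{-1/2}X_{Nt},N^{-1/2}V_f(Nt))\to(W,\gamma(f)L^W_0)$ with $\gamma(f)=\frac{\int_I f\,d\pi^{\mathrm{inv}}}{\int_I(\Pdif|\cdot|-|\cdot|)\,d\pi^{\mathrm{inv}}}$ (the denominator being the positive constant $\int_I[\int_I|a|\,\pdif(z,da)-|z|]\,\pi^{\mathrm{inv}}(dz)$, i.e.\ minus the denominator of $\gamma_{\mathrm{ext}}^2$ in Theorem~\ref{main2}).

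I expect two points to require care. The first is the bookkeeping of the $V^{12}$-type error terms in $\langle M\rangle$ and $\langle X,M\rangle$: one must verify that each error is dominated by a decreasing summable function of $|X_s|$, so that it is destroyed by the $N^{-1}$ (rather than $N^{-1/2}$) normalization — this is exactly where the uniform moment estimates of Theorem~\ref{exp0} enter. The second, and the main obstacle, is promoting the almost sure ratio-ergodic limit (valid as $r\to\infty$) to a statement that holds uniformly in $t\in[0,T]$ and is compatible with the weak convergence of $N^{-1/2}V_\rho(Nt)$; here I would exploit the monotonicity of $r\mapsto V_\rho(r)$ and $r\mapsto V_{|f|}(r)$, the continuity and monotonicity of $t\mapsto L^W_0(t)$, and the uniform $L^q$ control from Corollary~\ref{lptight1} to run a Dini-type argument (and, at times where $L^W_0$ vanishes because $W$ has not yet reached $0$, to check that $N^{-1/2}V_f(Nt)$ is correspondingly small).
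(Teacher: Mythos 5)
For what this proposition actually asserts (and what the paper proves) — tightness of $(N^{-1/2}X_{Nt})_{t\in[0,T]}$ and identification of every limit point as a rate-$2$ Brownian motion started at $x$ — your first paragraph is exactly the paper's proof: by Lemma \ref{diff0} the chain under $\mathbf P_{x_N}$ has the law of $R^1-R^2$ under $\mathbf P^{(0,2)}_{(x_N,0)}$, and Proposition \ref{x0} with $\beta_N\equiv 0$ does the rest. The pair $(W,\gamma(f)L_0^W)$ in the statement is an editing leftover (no second process and no $f$ are introduced there); the paper proves no local-time statement at this point, deferring it to Step~1 of Proposition \ref{pinv2} in the untilted case and to Lemma \ref{crit} and Proposition \ref{finally} in the tilted case.

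Your remaining two paragraphs, which supply the additive-functional/local-time convergence, are therefore extra material; they are essentially sound in outline but take a heavier route than the paper's, and they carry one ordering problem. Where you identify the compensator limit through $\langle M\rangle$, $\langle X,M\rangle$, L\'evy's theorem and a zero-quadratic-variation argument giving $\widetilde W=\int\mathrm{sgn}(W)\,dW$, the paper's later argument is shorter: $|X_r|-\sum_{s<r}\rho(X_s)$ is a martingale by Corollary \ref{per} with $\beta=0$, martingality survives the limit under the uniform $L^q$ bounds of Corollary \ref{lptight1}, and then $|W|-\mathcal V$ being a continuous martingale with $\mathcal V$ increasing forces $\mathcal V=L_0^W$ by the uniqueness in Tanaka's (Doob--Meyer) decomposition, with no bracket computations. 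More importantly, you invoke Proposition \ref{inv8} to get $\rho,f\in L^1(\pi^{\mathrm{inv}})$ and $0<\int_I\rho\,d\pi^{\mathrm{inv}}<\infty$, but in the paper those integrability facts are Proposition \ref{pinv2}, which comes after this proposition and whose proof deduces $\rho\in L^1(\pi^{\mathrm{inv}})$ precisely from the Brownian limit proved here together with the local-time identification; as written your appeal is a forward reference that risks circularity, and you should either prove the integrability of $\rho$ directly or arrange the logic so that only the Brownian-limit half of the present proposition feeds into Proposition \ref{pinv2}. Finally, the caveat you flag yourself — upgrading the a.s.\ ratio limit (taken as $r\to\infty$) to joint functional convergence on $[0,T]$, in particular at times before $W$ first reaches $0$ where both functionals are small and the ratio is uninformative — is the genuinely nontrivial remaining step; your Dini-type sketch points in a workable direction but would need to be fleshed out, e.g.\ by a strong-Markov argument at the first hitting time of the origin as the paper effectively does in its tilted-case treatment.
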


\begin{proof} Take $\beta_N=0$ in Proposition \ref{x0}.
\end{proof}

\begin{prop}\label{pinv2}
    Let $\pi^{\mathrm{inv}}$ be the invariant measure constructed in Theorem \ref{pinv1}, with any fixed scalar multiple. For all decreasing functions $F:[0,\infty)\to [0,\infty)$ satisfying $\sum_{k=0}^\infty F(k) <\infty$, one has $$\int_I F(|x|) \pi^{\mathrm{inv}}(\dr x) \leq C\sum_{k=0}^\infty F(k),$$ where $C$ is some absolute constant that does not depend on $F.$
\end{prop}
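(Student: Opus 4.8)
The plan is to identify $\int_I F(|x|)\,\pi^{\mathrm{inv}}(\dr x)$ with a limit of ratios of occupation expectations of the annealed difference chain and to bound numerator and denominator separately. Write $(X_r)_{r\ge 0}$ for the $\pdif$-chain and $\mathbf P_x,\mathbf E_x$ for its law and expectation from $x\in I$; recall that under $\mathbf P^{(0,2)}$ the coordinate difference $R^1-R^2$ is exactly this chain (Lemma \ref{diff0}), so that \eqref{e3} with $k=2$, $(i,j)=(1,2)$, $\beta=0$, $q=1$ supplies an occupation bound with a constant $C_\star$ not depending on the test function: for every decreasing $f\colon[0,\infty)\to[0,\infty)$ with $\sum_{k\ge 0}f(k)<\infty$ and all $n\ge 1$,
\[
\sup_{x\in I}\mathbf E_x\Big[\sum_{s=0}^{n-1}f(|X_s|)\Big]\ \le\ C_\star\Big(\sum_{k=0}^\infty f(k)\Big)\sqrt n.
\]
This is the main analytic input; everything else is soft. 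As a first reduction, for $M>0$ set $F_M(u):=F(u)\ind_{\{u\le M\}}$, which is decreasing with $F_M(|\cdot|)\in L^1(\pi^{\mathrm{inv}})$ since $\pi^{\mathrm{inv}}$ is finite on compacts (Theorem \ref{pinv1}); it then suffices to bound $\pi^{\mathrm{inv}}(F_M(|\cdot|))$ uniformly in $M$ and let $M\to\infty$ by monotone convergence.

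The crux is a matching \emph{lower} bound for the Green's function of a fixed compact set: there exist a compact $C_\#=[-L_\#,L_\#]\cap I$ with $\pi^{\mathrm{inv}}(C_\#)>0$ (by full support, Theorem \ref{pinv1}) and a constant $c_1>0$ such that $\mathbf E_0[\sum_{s=0}^{n-1}\ind_{C_\#}(X_s)]\ge c_1\sqrt n$ for all large $n$. I would get this from the submartingale decomposition of $|X_\cdot|$. Let $v(x):=(\Pdif|\cdot|-|\cdot|)(x)$. Then $v\ge 0$ by Jensen (since $\int a\,\pdif(x,\dr a)=x$), $\kappa:=\sup_x v(x)<\infty$, and $v(x)\le C e^{-c|x|}$ for absolute $c,C>0$: indeed $v(x)\le 2\int_I|a-x|\ind_{\{|a-x|>|x|\}}\,\pdif(x,\dr a)$, which is dominated by a multiple of $e^{-c|x|}$ via the uniform exponential-moment estimate of Lemma \ref{grow}. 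By Corollary \ref{per} (with $k=2$, $\beta=0$, $f(\mathbf x)=|x_1-x_2|$), $|X_n|-\sum_{s=0}^{n-1}v(X_s)$ is a $\mathbf P_0$-martingale, so $\mathbf E_0[|X_n|]=\mathbf E_0[\sum_{s<n}v(X_s)]$. On the other hand $\mathbf E_0[|X_n|]\ge c_0\sqrt n$ for large $n$: Proposition \ref{x} gives $N^{-1/2}X_N\Rightarrow\mathcal N(0,2)$ and Corollary \ref{lptight1} gives $\sup_N N^{-1}\mathbf E_0[X_N^2]<\infty$, hence $\{N^{-1/2}|X_N|\}$ is uniformly integrable and $N^{-1/2}\mathbf E_0[|X_N|]\to\sqrt 2\,\mathbf E|\mathcal N(0,1)|>0$. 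Splitting $v=v\ind_{\{|\cdot|\le L_\#\}}+v\ind_{\{|\cdot|>L_\#\}}$, bounding $v\le\kappa$ on $C_\#$ and $v(x)\ind_{\{|x|>L_\#\}}\le Ce^{-cL_\#/2}e^{-c|x|/2}$, and applying the occupation bound to $f(\,\cdot\,)=Ce^{-c\,\cdot/2}$, gives $c_0\sqrt n\le \kappa\,\mathbf E_0[\sum_{s<n}\ind_{C_\#}(X_s)]+C'e^{-cL_\#/2}\sqrt n$; choosing $L_\#$ so large that $C'e^{-cL_\#/2}\le c_0/2$ yields the lower bound with $c_1=c_0/(2\kappa)$.

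To finish, I would invoke the ratio limit theorem for Harris recurrent Markov chains (Harris recurrence of the chain was established inside the proof of Theorem \ref{pinv1}, and the Cesàro-summed ratio is insensitive to any periodicity). Applying it with $f=F_M(|\cdot|)$ and $g=\ind_{C_\#}$, and then the occupation bound in the numerator (using $F\ge F_M$) and the Green's-function lower bound in the denominator,
\[
\frac{\pi^{\mathrm{inv}}(F_M(|\cdot|))}{\pi^{\mathrm{inv}}(C_\#)}=\lim_{n\to\infty}\frac{\sum_{s=0}^{n-1}\mathbf E_0[F_M(|X_s|)]}{\sum_{s=0}^{n-1}\mathbf E_0[\ind_{C_\#}(X_s)]}\le\lim_{n\to\infty}\frac{C_\star\big(\sum_{k\ge 0}F(k)\big)\sqrt n}{c_1\sqrt n}=\frac{C_\star}{c_1}\sum_{k=0}^\infty F(k).
\]
Letting $M\to\infty$ and using monotone convergence ($F_M(|\cdot|)\uparrow F(|\cdot|)$) then gives the proposition with $C=\pi^{\mathrm{inv}}(C_\#)C_\star/c_1$, a constant depending only on the model. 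The hard part is arranging the $\sqrt n$ normalizations to cancel — equivalently, establishing the \emph{sharp} Green's-function lower bound $\mathbf E_0[\sum_{s<n}\ind_{C_\#}(X_s)]\gtrsim\sqrt n$, which forces one to combine the martingale decomposition of $|X_\cdot|$, the diffusive central limit theorem of Proposition \ref{x}, and the occupation bound; the remaining steps are comparatively routine.
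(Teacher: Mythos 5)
Your overall strategy is sound and genuinely different from the paper's proof, but there is one step that does not follow from what the paper provides: the final display equates $\pi^{\mathrm{inv}}(F_M(|\cdot|))/\pi^{\mathrm{inv}}(C_\#)$ with $\lim_n \sum_{s<n}\mathbf E_0[F_M(|X_s|)]\big/\sum_{s<n}\mathbf E_0[\ind_{C_\#}(X_s)]$, i.e.\ a ratio limit theorem for \emph{expectations} (Green's functions). Theorem \ref{pinv1} only gives the almost-sure (Chacon--Ornstein type) ratio limit, and the expectation form is a different statement that does not follow from the a.s.\ one without an additional uniform-integrability input; it is not quoted anywhere in the paper, and for null-recurrent chains such statements require care. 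Fortunately the gap closes with tools already at hand: set $A_n=\sum_{s<n}F_M(|X_s|)$, $B_n=\sum_{s<n}\ind_{C_\#}(X_s)$, $\rho=\pi^{\mathrm{inv}}(F_M(|\cdot|))/\pi^{\mathrm{inv}}(C_\#)$. By \eqref{e3} with $\beta=0$, $q=2$ (applied to $F$ and to $\ind_{[0,L_\#]}$), both $A_n/\sqrt n$ and $B_n/\sqrt n$ are bounded in $L^2(\mathbf P_0)$, hence uniformly integrable; moreover $A_n/\sqrt n-\rho B_n/\sqrt n=(B_n/\sqrt n)\,(A_n/B_n-\rho)\to 0$ in probability, by tightness of $B_n/\sqrt n$ together with the a.s.\ ratio limit of Theorem \ref{pinv1} (and $B_n\to\infty$ a.s.\ by Harris recurrence, since $\pi^{\mathrm{inv}}(C_\#)>0$). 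Hence $\mathbf E_0[A_n]/\sqrt n-\rho\,\mathbf E_0[B_n]/\sqrt n\to 0$, and your two estimates $\mathbf E_0[A_n]\le C_\star(\sum_k F(k))\sqrt n$ and $\mathbf E_0[B_n]\ge c_1\sqrt n$ give $\rho\le C_\star c_1^{-1}\sum_k F(k)$, which is exactly what the display was meant to deliver; monotone convergence in $M$ then finishes. The remaining ingredients are all correct as you set them up: the occupation bound \eqref{e3} at $\beta=0$ (whose constant is indeed uniform in the test function, as its proof via \eqref{hb}, \eqref{lip}, \eqref{go1} shows, so $C_\star$ is legitimately $F$-free), the martingale identity $\mathbf E_0[|X_n|]=\mathbf E_0[\sum_{s<n}v(X_s)]$ from Corollary \ref{per}, the $\sqrt n$ lower bound on $\mathbf E_0[|X_n|]$ from Proposition \ref{x} plus the $L^2$ bound of Corollary \ref{lptight1}, the exponential decay of $v$ (Lemma \ref{est1}), and the truncation bookkeeping; the resulting constant depends only on the model, as required.

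For comparison, the paper also works with the Tanaka compensator $v_b=\Pdif u_b-u_b$ but takes a different route: it first proves a functional limit theorem, $N^{-1/2}\sum_{r\le Nt}v_b(X_r)\Rightarrow L_0^W(t)$, then combines this with the a.s.\ ratio limit theorem and a contradiction argument to show $v_b\in L^1(\pi^{\mathrm{inv}})$ with $\int_I v_b\,\dr\pi^{\mathrm{inv}}$ independent of $b$, and finally covers $F$ by translates, $F\le\delta^{-1}\sum_m F(\delta|m|)\,v_{\delta m}$ (using the uniform positivity $v_b>\delta\ind_{(b-\delta,b+\delta)}$), and integrates against $\pi^{\mathrm{inv}}$. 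Your argument replaces the local-time identification and the covering-by-translates step with an expected-occupation comparison: the numerator handled directly by \eqref{e3}, the denominator by the martingale identity plus the CLT. This is somewhat more elementary and quantitative, at the cost of needing the expectation form of the ratio limit, hence the uniform-integrability patch above; both proofs ultimately hinge on the same $\sqrt n$ normalizations cancelling, which you correctly identified as the crux.
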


\begin{proof}We break the proof into several steps. 

\textbf{Step 1. } Let $u_b(y):=|y-b|$, and $v_b:=\Pdif u_b-u_b$. In this step we will prove that for every fixed $x,b\in I$, the process $N^{-1/2}\sum_{r=0}^{Nt} v_b(X_r)$ converges in law under $\mathbf P_x$ to the process $L_0^W$, where the latter denotes the local time at zero of a Brownian motion $W$ of rate 2 started from $0$. 

Fix $T>0$ and $q\ge 1$ and $b\in I$. We claim that there exists some $C>0$ such that for $N\ge 1$ and $0\le s\le t \le T$ with $s,t\in N^{-1}\mathbb Z_{\ge 0}$ and $q\ge 1$ one has 
    \begin{equation*}\sup_{x\in I}  \mathbf E_x \bigg[\bigg|N^{-1/2}\sum_{r=Ns}^{Nt} v_b(X_r)\bigg|^q\bigg]^{1/q} \leq C(b)\sqrt{t-s}.\end{equation*}
    Indeed, this is immediate from applying Corollary \ref{lptight1} with $\beta=0$, since we have exponential decay of $v_b$ by Lemma \ref{est1} with $\theta=\eta/2$.
    The tightness of the pair of processes is thus immediate from this bound. Therefore we just need to show how to identify the limit points. To do this we will use martingale problems. We already know from Proposition \ref{x} that $(N^{-1/2}X_{Nt})_{t\ge 0}$ is converging to a Brownian motion.
    
    Using Corollary \ref{per} with $\beta=0$, one sees that the process $Z_r:=u_b(X_r)- \sum_{s=0}^{r-1} v_b(X_s)$ is a $\mathbf P_x$-martingale for every $x\in I$. Consider any joint limit point $(\mathcal X,\mathcal U,\mathcal V)$ of the 3-tuple of processes $$\big(N^{-1/2}X_{Nt},N^{-1/2} u_b(X_{Nt}), N^{-1/2} \sum_{s=0}^{Nt-1} v_b(X_s), \big)_{t\ge 0}.$$
    Tightness of the first and third coordinates has already been explained, and the second coordinate is tight by Lemma \ref{est1} and Corollary \ref{lptight1}, since Lemma \ref{est1} with $\theta=\eta/2$ guarantees that $|v(x)|\leq C e^{-\frac2{\eta}|x|}$. We have already shown that $\mathcal X$ must be distributed as $W$ for a Brownian motion $W$. For fixed $b\in I$, the functions $x\mapsto N^{-1/2}u_b(N^{1/2}x)$ are converging uniformly on compacts as $N\to \infty$ to absolute value, thus we must have $\mathcal U=|\mathcal X|=|W|$. Again using the fact that martingality is preserved by limit points, we see that $\mathcal U- \mathcal V = |W|-\mathcal V$ must be a martingale for the limit point (with respect to the canonical filtration on the space of 3-tuples of continuous paths), which by Tanaka's formula forces $\mathcal V = L_0^W,$ where $L_0^W$ is the local time. 

    \textbf{Step 2. } In this step we show that $v_b\in L^1(\pi^{\mathrm{inv}})$ for every $b\in I$, and moreover $\int_I v_b \; \dr \pi^{\mathrm{inv}}$ does not depend on $b.$ 

    We first show that $v_0\in L^1(\pi^{\mathrm{inv}})$, i.e., the case $b=0$. Suppose it was not the case, we will now derive a contradiction. By convexity of $u_0$ and Jensen's inequality, we know that $v_0$ is nonnegative, thus $\int_I v_0\; \dr\pi^{\mathrm{inv}} = +\infty.$ On the other hand by the first bullet point of Theorem \ref{pinv1} we have that $\ind_{[-J,J]}\in L^1(\pi^{\mathrm{inv}})$ for all $J>0$. Then by the third bullet point of Theorem \ref{pinv1} one easily shows that for all $J>0$ that $$\lim_{N\to \infty} \frac{\sum_{r=0}^N \ind_{[-J,J]}(X_r) }{\sum_{r=0}^N v_0(X_r)} = 0$$ $\mathbf P_x$-a.s. for all $x\in I$. By the result of Step 1, this means that for all $J>0$ we have that 
    $$N^{-1/2} \sum_{r=0}^N \ind_{[-J,J]} (X_r) =  \frac{\sum_{r=0}^N \ind_{[-J,J]}(X_r) }{\sum_{r=0}^N v_0(X_r)} \cdot N^{-1/2} \sum_{r=0}^N v_0(X_r) \;\;\;\stackrel{N\to\infty}{\longrightarrow}\;\;\; 0 \cdot L_0^W(1) = 0, $$ 
    in distribution (hence in probability) under every $\mathbf P_x$. On the other hand by Corollary \ref{lptight1} with $\beta=0$ and $q=2$ we know that for all (fixed) $J>0$ $$\sup_{N\ge 1} \sup_{x\in I} \mathbf E_x \bigg[ \bigg(N^{-1/2} \sum_{r=0}^N \ind_{[-J,J]} (X_r)\bigg)^2\bigg]<\infty,$$ 
    thus by uniform integrability and the previous convergence statement, for all $x\in I$ one has $$\lim_{N\to \infty} \mathbf E_x \bigg[ N^{-1/2} \sum_{r=0}^N \ind_{[-J,J]} (X_r)\bigg]=0, \;\;\; for\;all\;\; J>0.$$ Define $g_N(J):= \mathbf E_0 \big[ N^{-1/2} \sum_{r=0}^N \ind_{[-J,J]} (X_r)\big],$ so we have shown that $\lim_{N\to \infty}g_N(J)=0$ for all $J>0.$ For any $\theta >0 $ note that $\ind_{[-J,J]}(x) \leq e^{\theta J/2} e^{-\theta|x|/2},$ consequently we find that $$\sup_{N\ge 1} g_N(J) \leq e^{\theta J/2} \sup_{N\ge 1} \mathbf E_0 \bigg[ N^{-1/2} \sum_{r=0}^N e^{-\theta |X_r|/2} \bigg] = C(\theta) e^{\theta J/2}$$ where we know that the second supremum is finite by Corollary \ref{lptight1}. Thus by Fubini's theorem and the Dominated Convergence Theorem we see that for all fixed $\theta>0$ 
    \begin{align*}\lim_{N\to \infty} \mathbf E_0 \bigg[ N^{-1/2}\sum_{r=0}^N \bigg( \sum_{J=1}^\infty e^{-\theta J}\ind_{[-J,J]} (X_r)\bigg) \bigg] &= \lim_{N\to\infty}  \sum_{J=1}^\infty e^{-\theta J} g_N(J) =\sum_{J=1}^\infty e^{-\theta J}\lim_{N\to\infty} g_N(J) = 0.
    \end{align*}
    On the other hand, we know that $v_0$ decays exponentially fast at infinity thanks to Lemma \ref{est1}, which means that there exists some $C,\theta>0$ such that $v_0 \leq C \sum_{J=1}^\infty e^{-\theta J}\ind_{[-J,J]}$. Thus the last expression implies that $N^{-1/2}\sum_{r=0}^N v_0(X_r)$ converges to zero in $L^1(\mathbf P_0)$, contradicting the result of Step 1 and thus completing the proof that $v_0\in L^1(\pi^{\mathrm{inv}})$.

    It remains to show that $v_b\in L^1(\pi^{\mathrm{inv}})$ and $\int_I v_b \;\dr \pi^{\mathrm{inv}} = \int_I v_0 \;\dr \pi^{\mathrm{inv}}$, for all $b\in I$. First note that $\int_I v_0 \;\dr \pi^{\mathrm{inv}}\neq 0$. Indeed $v_0\ge 0$ and $\pi^{\mathrm{inv}}$ has full support by Theorem \ref{pinv1}. So if the integral vanishes, then $v_0=0$ which would imply that $|X_r|$ is a nonnegative martingale, thus convergent, contradicting the result of Proposition \ref{x}. From the result of Step 1 we know that for every $b\in I$ $$\frac{\sum_{r=0}^{N} v_b(X_r)}{\sum_{r=0}^{N} v_0(X_r)} \;\;\; \stackrel{N\to\infty}{\longrightarrow} \;\;\; \frac{L_0^W(1)}{L_0^W(1)} =1$$in distribution (hence in probability) under every $\mathbf P_x$. Using the third bullet point of Theorem \ref{pinv1}, it is clear that this would be impossible unless $\int_I v_b \;\dr \pi^{\mathrm{inv}}<\infty$ and $\int_I v_b \;\dr \pi^{\mathrm{inv}} = \int_I v_0 \;\dr \pi^{\mathrm{inv}}$, for all $b\in I$.

    \textbf{Step 3. } In this step we will finally prove the claim being made in the proposition statement. Let $F$ be as in the theorem statement. 
    The argument will proceed very similarly to the proof of \eqref{e2}, but we repeat the details in the present context for additional clarity.

    We first show that $\inf_{b\in I} \Pdif u_b(b)>0.$ To prove this, note that $\pdif(b,\cdot)$ is never a Dirac mass, simply because it has mean $x$ and Assumption \ref{a1} Item \eqref{a16} implies that there cannot be absorbing states. Consequently $\Pdif u_b(b)= \int_I |b-a|\pdif(b,\mathrm da)>0$ for all $b\in I$. Furthermore $\Pdif u_b(b)$ is a continuous function of $b$, simply by the assumption of weak continuity of all of the kernels in the paper. If we had that $u_{b_n}(b_n)\to 0$ for some $|b_n|\to \infty$, this would mean that the measures $\pdif(b_n,b_n+\cdot)$ would be converging weakly to a Dirac mass at $0$. But the exponential moments of these measures are uniformly bounded by Lemma \ref{grow}. Thus the variances of $\pdif(x_n,\cdot)$ would be converging to zero, which is impossible since we have already observed in \eqref{varcon} that they converge to $2>0$.

    We now claim that there exists $\delta>0$ so that for all $b\in I$ one has that $v_b >\delta \ind_{(b-\delta,b+\delta)}$. To prove this, note that
    \begin{align*}
        |\Pdif u_b(y) - \Pdif u_y(y)| &= \bigg| \int_I |a-b|\pdif (y, \mathrm da)-\int_I |a-y|\pdif (y, \mathrm da)\bigg| \\ &\le \int_I \big| |a-b|-|a-y| \big| \pdif (y, \mathrm da) \\ &\leq \int_I|b-y| \pdif (y, \mathrm da)=|b-y|.
    \end{align*}
    Recall that $c:= \inf_{y\in I} \Pdif u_y(y)$ has already been shown to be strictly positive. Thus for all $b,y\in I$ we have $\Pdif u_y(y)>c$, and from the above bound it then follows that $\Pdif u_b(y)> 2c/3$ if $|b-y|<c/3.$ Thus we see that $v_x(y) = \Pdif u_b(y) - |b-y| > c/3$ if $|b-y|<c/3$, thus proving the claim with $\delta:=c/3.$

    If $I=c \mathbb Z$ we will henceforth assume that $\delta=c$, otherwise it may be a smaller value. 
    Using the fact that $v_b >\delta \ind_{(b-\delta,b+\delta)}$, we obtain $$F\leq \delta^{-1}\sum_{m\in \mathbb Z} F(\delta |m|) v_{\delta m} .$$ Integrating both sides with respect to $\pi^{\mathrm{inv}}$ and then using the result of Step 2, we find that $$\int_I h\;\dr\pi^{\mathrm{inv}} \leq \delta^{-1}\sum_{m\in \mathbb Z} F(\delta |m|) \int_I v_{\delta m}\;\dr\pi^{\mathrm{inv}} = \bigg( \delta^{-1}\sum_{m\in \mathbb Z} F(\delta|m|)\bigg)\int_I v_0\;\dr\pi^{\mathrm{inv}},$$ and the sum is bounded above by $2\delta^{-1} \sum_{k=0}^\infty F(k)$.
\end{proof}

\subsection{Analysis and convergence results for nonzero $\beta$}

With all of the necessary prerequisites established for the untilted $\beta=0$ case, we will now move on to the case where the tilting coefficient $\beta$ will be nonzero, as will be needed to prove convergence to the KPZ equation. The following lemma will be a crucial step in proving convergence of the additive functional process to Brownian local time under a varying tilt.

\begin{lem}\label{crit}
     Fix $k\in \mathbb N$ and $t>0$ and $1\leq i<j\leq k$. Recall the tilted path measures $\mathbf P_\x^{(\beta,k)}$ as given in Definition \ref{shfa}. Consider any sequences $\beta_N\in \ak$ and $\x_N\in I^k$ such that $|\beta_N|\leq CN^{-\frac1{4p}}$ and $N^{-1/2}\x_N\to \boldsymbol x\in \mathbb R^k$ as $N\to \infty$. Take any continuous function $f:I\to \mathbb R$ such that $|f(x)| \leq F(|x|)$ for some decreasing $F:[0,\infty)\to[0,\infty)$ such that $\sum_{k=0}^\infty F(k)<\infty$. Then we have that $$\lim_{N\to\infty} \mathbf E_{\x_N}^{(\beta_N,k)} \bigg[ N^{-1/2} \sum_{r=0}^{Nt-1} f(R_r^i-R_r^j)\bigg]=\gamma(f)\mathbf E_{\mathrm{BM}}^{x_i-x_j}[L_0^W(t)]$$ where the latter denotes expectation with respect to
      a Brownian motion of rate 2 started from $x_i-x_j$ where $\boldsymbol x=(x_1,...,x_k)$. Furthermore $L_0^W$ is its local time at zero, and $$\gamma(f):= \frac{\int_I f(x)\pi^{\mathrm{inv}}(dx)}{\int_I \big[\int_I |a|\pdif(x,\mathrm da)-|x|\big] \pi^{\mathrm{inv}}(dx)}.$$
\end{lem}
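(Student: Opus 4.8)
\emph{Proof plan.} The strategy is to reduce to convergence in law together with a uniform integrability bound, extract a subsequential limit of the rescaled pair (difference process, additive functional), and identify the limiting additive functional as $\gamma(f)$ times the Brownian local time at $0$. Since $|f|\le F$, Corollary \ref{exp2} gives $\sup_N\mathbf E^{(\beta_N,k)}_{\x_N}\big[\big(N^{-1/2}\sum_{r=0}^{Nt-1}|f(R^i_r-R^j_r)|\big)^2\big]<\infty$, so it suffices to prove $N^{-1/2}\sum_{r=0}^{Nt-1}f(R^i_r-R^j_r)\to\gamma(f)L_0^W(t)$ in law with $W$ a rate-$2$ Brownian motion from $x_i-x_j$, and then take expectations (finite since $\mathbf E^{BM}_{x_i-x_j}[L_0^W(t)]<\infty$). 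By Corollary \ref{lptight1} the pair $\big(N^{-1/2}(R^i_{N\cdot}-R^j_{N\cdot}),\,N^{-1/2}\sum_{r=0}^{N\cdot-1}f(R^i_r-R^j_r)\big)$ is tight in $C([0,T],\R^2)$; along any subsequential limit $(\mathcal X,\mathcal A)$, Proposition \ref{x0} identifies $\mathcal X$ as a rate-$2$ Brownian motion from $x_i-x_j$. Writing $f=f_+-f_-$ reduces to $f\ge0$, so $\mathcal A$ is nondecreasing, and since $uF(u)\to0$ for decreasing summable $F$, the contribution of indices with $|R^i_r-R^j_r|>\e\sqrt N$ is $\le t\sqrt N F(\e\sqrt N)\to0$, so $\mathcal A$ is constant off the zero set of $\mathcal X$. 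It remains to pin down the multiplicative constant.

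I would first handle the reference function $v_0^{\mathrm{dif}}:=\Pdif u_0-u_0$ with $u_0=|\cdot|$ (nonnegative by Jensen, exponentially decaying by Lemma \ref{est1}, with $\int_I v_0^{\mathrm{dif}}\,d\pi^{\mathrm{inv}}>0$ by Proposition \ref{pinv2} and full support of $\pi^{\mathrm{inv}}$). Corollary \ref{per} applied to $\widetilde u_0(\x):=|x_i-x_j|$ under $\mathbf P^{(\beta_N,k)}_{\x_N}$ yields the martingale $\widetilde u_0(\mathbf R_r)-\sum_{s<r}(\Q-\mathrm{Id})\widetilde u_0(\mathbf R_s)$; after $N^{-1/2}$ rescaling its increments have uniformly bounded $q$-th moments (Lemmas \ref{grow}, \ref{vfin}, \ref{cool}), hence a continuous-martingale limit $\mathcal M$, while $N^{-1/2}\widetilde u_0(\mathbf R_{N\cdot})\to|\mathcal X_\cdot|$ and $N^{-1/2}\widetilde u_0(\mathbf R_0)\to|x_i-x_j|$. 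The lower bound of Lemma \ref{vfin} makes the limit $\mathcal A_0$ of $N^{-1/2}\sum_{s<N\cdot}(\Q-\mathrm{Id})\widetilde u_0(\mathbf R_s)$ nondecreasing, and from $|\mathcal X|-|x_i-x_j|=\mathcal M+\mathcal A_0$, Tanaka's formula forces $\mathcal A_0-L_0^{\mathcal X}$ to be a bounded-variation continuous local martingale, hence $\mathcal A_0=L_0^{\mathcal X}$. Finally, using the definition of $\qdif$, Lemma \ref{diff0} and \eqref{useful'}, one checks that $(\Q-\mathrm{Id})\widetilde u_0(\x)-v_0^{\mathrm{dif}}(x_i-x_j)=O\big(|\beta_N|^{2p-1}\Fd(\min_{i'<j'}|x_{i'}-x_{j'}|)+|\beta_N|^{4p}+|\beta_N|e^{-\theta|x_i-x_j|}\big)$, whose $N^{-1/2}$-rescaled sum vanishes by Corollary \ref{lptight1}; therefore $N^{-1/2}\sum_{r=0}^{N\cdot-1}v_0^{\mathrm{dif}}(R^i_r-R^j_r)\to L_0^{\mathcal X}$.

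For a general nonnegative $f$, assuming first the stronger decay $\sum_x xf(x)<\infty$, set $h:=f-\gamma(f)v_0^{\mathrm{dif}}$; it decays and has $\int_I h\,d\pi^{\mathrm{inv}}=0$ by the very definition of $\gamma(f)$. I would solve the Poisson equation $(\Pdif-\mathrm{Id})\Phi=-h$ by a bounded $\Phi:I\to\R$ whose oscillation over unit windows is summable in $|x|$, constructed from the Green's function of $\pdif$ killed on a large compact interval. Then Corollary \ref{per} applied to $\widetilde\Phi(\x):=\Phi(x_i-x_j)$ gives the Doob decomposition $N^{-1/2}\sum_{r=0}^{Nt-1}h(R^i_r-R^j_r)=N^{-1/2}\big[\widetilde\Phi(\mathbf R_{Nt})-\widetilde\Phi(\mathbf R_0)\big]-N^{-1/2}\mathcal N_{Nt}+N^{-1/2}\sum_{r=0}^{Nt-1}\mathrm{err}_N(\mathbf R_r)$, with $\mathcal N$ a martingale and the error $\mathrm{err}_N=(\Q-\mathrm{Id})\widetilde\Phi+h$ of size $O\big(|\beta_N|(\mathrm{osc}_\Phi(x_i-x_j)+e^{-\theta|x_i-x_j|})\big)$ up to an $\Fd(\min_{i'<j'}|\cdot|)$ term, since $\Phi$ has vanishing local oscillation and $\qdif\to\pdif$ as $\beta_N\to0$: the boundary term vanishes because $\Phi$ is bounded, the error term by Corollary \ref{lptight1}, and the martingale term because $\widetilde\Phi$ has asymptotically vanishing one-step conditional variance, so $\langle\mathcal N\rangle_{Nt}=o(N)$ by the occupation-time estimates \eqref{e3}. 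Thus $N^{-1/2}\sum_{r=0}^{Nt-1}f(R^i_r-R^j_r)\to\gamma(f)L_0^{\mathcal X}(t)$ in this case. A general $f\le F$ is then approximated from below by $f\wedge G_n$ with $G_n(x):=\min(F(x),ne^{-x/n})\uparrow F$; each $f\wedge G_n$ has the required fast decay, $\gamma(f\wedge G_n)\to\gamma(f)$ by monotone convergence for $\pi^{\mathrm{inv}}$, and $N^{-1/2}\sum(f-f\wedge G_n)(R^i_r-R^j_r)$ is controlled uniformly in $N$ by \eqref{e3} together with $uF(u)\to0$, so passing $N\to\infty$ then $n\to\infty$ gives $\mathcal A=\gamma(f)L_0^{\mathcal X}$. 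The case $x_i-x_j\neq0$ is reduced to the above by running the chain until $R^i-R^j$ enters a fixed compact interval, as in the local-time analysis of \cite{DDP23,DDP23+}, the pre-hitting contribution being negligible after the $N^{-1/2}$ rescaling. Taking expectations via the uniform integrability from the first paragraph completes the proof.

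\textbf{The main obstacle} is the potential-theoretic input behind the Poisson-equation construction (and the hitting-time reduction): a uniform bound on the Green's function of the recurrent Markov kernel $\pdif$ killed on large compact sets, of the order $G_{[-K,K]}(x,y)\le C\big((1+|x|)\wedge(1+|y|)\big)$, which furnishes a bounded, summably-oscillating solution of $(\Pdif-\mathrm{Id})\Phi=-h$. This rests on the Foster--Lyapunov function of Proposition \ref{sce}, the exponential tails of Lemma \ref{grow}, and the strong Feller/irreducibility of Assumption \ref{a1} Item \eqref{a16}. Everything else is a fairly routine combination of the moment and tightness estimates of Section~2 with the classical Tanaka/It\^o--Tanaka description of Brownian local time, the non-Markovianity of $R^i-R^j$ under the tilt and the drift of the starting point being absorbed, respectively, by the decorrelation estimates \eqref{useful'}/Corollary \ref{corb} (which make $\qdif$ resemble $\pdif$ on the diffusive scale) and by the hitting-time argument.
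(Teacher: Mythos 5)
Your proposal departs from the paper at the key identification step, and that is exactly where it has a genuine gap. The paper never solves a Poisson equation: its Step 1 treats only the single function $f_0=(\Pdif-\mathrm{Id})|\cdot|$, for which the Dynkin martingale of Corollary \ref{per} turns the expected additive functional into $\mathbf E^{(\beta_N,k)}_{\x_N}\big[N^{-1/2}|R^i_{Nt}-R^j_{Nt}|\big]-N^{-1/2}|x^i_N-x^j_N|$, which converges to $\mathbf E^{BM}_{x_i-x_j}[|W_t|]-|x_i-x_j|=\mathbf E^{BM}_{x_i-x_j}[L_0^W(t)]$ by Proposition \ref{x0} plus Tanaka; its Step 2 is then a Krylov--Bogoliubov argument: the expected rescaled occupation measures $\gamma_N$ are tight by Corollary \ref{lptight1}, any subsequential limit is shown to be $\Pdif$-invariant (by comparing $\mathscr Q^{\beta_N}_k$ with $\mathscr Q^{0}_k$ on exponentially decaying test functions, as in \eqref{prvces}--\eqref{cesro}), hence is a scalar multiple of $\pi^{\mathrm{inv}}$ by the uniqueness in Theorem \ref{pinv1}, and the scalar is pinned by Step 1. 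This works purely at the level of expectations (all the lemma asserts) and needs only first-moment occupation estimates plus the already-established uniqueness of $\pi^{\mathrm{inv}}$. Your treatment of $v_0^{\mathrm{dif}}$, the uniform integrability reduction, and the final approximation in $f$ are fine and essentially parallel the paper's ingredients.

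The gap is the corrector. Your argument for general $f$ requires a bounded solution $\Phi$ of $(\Pdif-\mathrm{Id})\Phi=-h$ whose local oscillation decays summably in $|x|$. This is not a technical loose end but the load-bearing step: without the oscillation decay, the comparison of $\mathscr Q^{\beta_N}_k$ with $\mathscr Q^{0}_k$ applied to the bounded, non-decaying $\widetilde\Phi$ only gives a uniform $O(|\beta_N|)$ error per step, and $N^{-1/2}\cdot Nt\cdot N^{-\frac1{4p}}\to\infty$, so both your error-term estimate and your bound $\langle\mathcal N\rangle_{Nt}=o(N)$ collapse. You assert the killed-Green's-function bound $G_{[-K,K]}(x,y)\le C\big((1+|x|)\wedge(1+|y|)\big)$ and the resulting boundedness and summable oscillation of $\Phi$, but none of the inputs you cite (Proposition \ref{sce}, Lemma \ref{grow}, Assumption \ref{a1} Item \eqref{a16}) delivers quantitative potential-kernel asymptotics for $\pdif$: the Meyn--Tweedie machinery used in Theorem \ref{pinv1} gives Harris recurrence, uniqueness of the (infinite) invariant measure and ratio limit theorems, not control of the type $a(x)=|x|/\sigma^2+O(1)$ with summable remainder, and $\pdif$ is only asymptotically space-homogeneous, so classical random-walk potential theory does not apply off the shelf. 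As written, the central claim of your third paragraph is therefore unproved, and proving it would be a substantial piece of work in its own right --- which the paper's route avoids entirely by letting the uniqueness of $\pi^{\mathrm{inv}}$ identify the limiting occupation measure instead of constructing a corrector.
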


Note by Lemma \ref{1.3} that we are ultimately interested in the case $\beta_N=N^{-\frac1{4p}}$, but there is no additional difficulty in considering $|\beta_N|\leq CN^{-\frac1{4p}}$ at the moment.

\begin{proof} 
\textbf{Step 1.} We first establish the claim in the very special case that $f=(\Pdif - \mathrm{Id})u$ where $u(x) = |x|.$ In this case it is clear that $\gamma(f)=1$. Letting $\bar u(\x):= |x_i-x_j|$ and $\bar f_\beta(\x):= (\Q -\mathrm{Id})\bar u$, both of which are functions on $I^k$, we claim that 
\begin{equation}\label{ano}|f(x_i-x_j) - \bar f_\beta(\x)|\leq |\beta|^{2p-1} F(\min_{1\le i'<j'\le k} |x_{i'}-x_{j'}| ) + C|\beta|^{4p}.
\end{equation} for some large enough constant $C>0$ and some decreasing function $F:[0,\infty)\to[0,\infty)$ satisfying $\sum_{k=0}^{\infty} F(k)<\infty$.
To prove \eqref{ano}, we first prove an intermediate claim that 
\begin{equation}
    \label{subano}\bigg| \int_{I^k} |y_i-y_j| \qdif(\x,\dr\y) -\mathrm{sign}(x_i-x_j) \int_{I^k} (y_i-y_j) \qdif(\x,\dr\y) \bigg| \leq Ce^{-\frac\eta{8k} |x_i-x_j| }.
\end{equation}
To prove the latter bound, it suffices by symmetry to consider the case $x_i-x_j\ge 0$. In this case, the integral appearing on the left is bounded above by $\int_{I^k} |y_i-y_j| \ind_{\{y_i-y_j\le 0\}} \qdif(\x,\dr\y), $ which by Cauchy-Schwartz can be bounded from above by $\big(\int_{I^k} (y_i-y_j)^2 \qdif(\x,\dr\y)\big)^{1/2}\big(\int_{I^k} \ind_{\{y_i-y_j\le 0\}} \qdif(\x,\dr\y)\big)^{1/2}. $ Using Lemma \ref{grow}, the first term can be bounded above by $(C+C(x_i-x_j)^2)^{1/2},$ with $C$ uniform over $\beta$ and $\x$. For the second term, note that the distance of the point $\x$ to the set $\{\y:y_i-y_j \le 0\}$ is $x_i-x_j$, thus by the uniform exponential moment bounds of Lemma \ref{grow} and Markov's inequality we have an upper bound of $Ce^{-\frac{\eta}{2k} |x_i-x_j|}.$ Finally note that by making the constant larger, one has $(C+C(x_i-x_j)^2)^{1/2} e^{-\frac{\eta}{2k} |x_i-x_j|} \leq Ce^{-\frac{\eta}{8k} |x_i-x_j|},$ thus establishing \eqref{subano}.

Note that \eqref{subano} reduces proving \eqref{ano} to showing that $$\bigg| \int_{I^k} (y_i-y_j) \qdif(\x,\dr\y) - \int_{I^k} (y_i-y_j) \boldsymbol p^{(k)} (\x,\dr\y) \bigg| \leq C|\beta|^{2p-1} \Fd\big(\min_{1\le i'<j'\le k} |x_{i'}-x_{j'}| \big) + C|\beta|^{4p}.$$ But this is clear simply by performing a Taylor expansion of $\qdif$ from \eqref{qk} of order $4p-1$ in the variable $\beta$, then applying Items \eqref{a23} and \eqref{a24} of Assumption \ref{a1}.

With \eqref{ano} established, use the bound $F\big(\min_{1\le i'<j'\le k} |x_{i'}-x_{j'}| \big) \leq \sum_{1\le i'<j'\le k} F( |x_{i'}-x_{j'}| ),$ and 
we find that $$\mathbf E_{\x_N}^{(\beta_N,k)} \bigg[ N^{-1/2} \sum_{r=0}^{Nt-1} |f(R_r^i-R_r^j)-\bar f_{\beta_N} (\mathbf R_r)|\bigg] \leq \mathbf E_{\x_N}^{(\beta_N,k)} \bigg[ N^{-1/2} \sum_{1\le i'<j'\le k} \sum_{r=0}^{Nt-1} \big(|\beta_N|^{2p-1} F(|R^i_r-R^j_r|)+|\beta_N|^{4p}\big)\bigg].$$ 
Using $|\beta_N|\leq CN^{-\frac1{4p}}$, we may use the second bound in Corollary \ref{lptight1} to immediately conclude that the above expectation tends to zero. Thus (recalling $\gamma(f)=1$) in order to complete this step, it suffices to prove that 
\begin{equation}
    \label{qnm0} \lim_{N\to\infty}\mathbf E_{\x_N}^{(\beta_N,k)} \bigg[ N^{-1/2} \sum_{r=0}^{Nt-1} \bar f_{\beta_N}(\mathbf R_r)\bigg] = \mathbf E_{\mathrm{BM}}^{x_i-x_j}[L_0^W(t)].
\end{equation}
But by Corollary \ref{per} we know that $|R^i_r-R^j_r| - \sum_{s=0}^{r-1} \bar f_\beta(\mathbf R_s)$ is a $\Pb$-martingale, thus the expectation on the left side is precisely $$\mathbf E_{\x_N}^{(\beta_N,k)} \bigg[ N^{-1/2}\big| R^i_{Nt} -R^j_{Nt}|\bigg] - N^{-1/2}|x_N^i-x_N^j|,$$ where $\x_N:= (x^1_N,...,x^k_N)$ in coordinates. By Proposition \ref{x0} and the uniform $L^q$ bounds in Corollary \ref{lptight1}, we immediately have that the last expectation converges to $$\mathbf E^{x_i-x_j}_{\mathrm{BM}}[|W_t|]- |x_i-x_j|.$$
By Tanaka's formula, this is precisely $\mathbf E_{\mathrm{BM}}^{x_i-x_j}[L_0^W(t)],$ thus establishing \eqref{qnm0} and proving the claim for this special case.

\textbf{Step 2.} Now we consider the case of general $f$. For this we use a Krylov-Bogoliubov type of trick. Fixing $t>0$ henceforth, consider the sequence of measures $\gamma_N$ on $I$ given by $$\int_{I} f \;\dr \gamma_N:= N^{-1/2} \sum_{s=0}^{Nt-1}\mathbf E^{(\beta_N,k)}_{\x_N} [ f(R^i_s-R^j_s)].$$

We will show that any subsequence $\gamma_{N_k}$ of this sequence of measures has a further subsequence $\gamma_{N_{k_j}}$ converging as $N\to\infty$ to an invariant measure $\gamma$ for the Markov kernel $\pdif$. By convergence, we mean that for all continuous $f:I\to \mathbb R$ of exponential decay, we have $\int_I f\;\dr\gamma_N \to \int_I f\;\dr\gamma$ along this subsequence $\gamma_{N_{k_j}}$. 

If we can prove this, then the lemma would be proved for all functions $f$ of exponential decay, because by the uniqueness in Theorem \ref{pinv1}, this would mean that the subsequential limit $\gamma$ must be a constant multiple of $\pi^{\mathrm{inv}}$. But the result of Step 1 uniquely identifies the constant as the one appearing in the lemma statement (the expectation of the Brownian local time). If every subsequence of $\gamma_N$ has a further subsequence converging to some fixed measure, then the sequence $\gamma_N$ must itself converge to that fixed measure, thus completing the proof. 

Thus consider a subsequence $\gamma_{N_k}$. There is a further convergent subsequence $\gamma_{N_{k_j}}$ simply by the second bound in Corollary \ref{lptight1} (with $q=1$) combined with e.g. Banach-Alaoglu (to extract subsequential limits on every compact set, then applying a diagonal argument). Let us call this subsequential limit $\gamma$. We need to show that for all continuous $f:I\to \mathbb R$ of exponential decay at infinity, one has $\int_I \Pdif f \;\dr\gamma = \int_I f\;\dr\gamma.$ To prove this, consider such $f$, say $|f(x)| \leq e^{-\theta|x|}$ where $\theta\leq \eta/(2k)$, and define $\bar f: I^k\to\mathbb R$ by $\bar f(\x) = f(x_i-x_j)$. To show that $\int_I \Pdif f \;\dr\gamma = \int_I f\;\dr\gamma,$ it suffices to show that \begin{equation}\label{cesro}\lim_{N\to\infty}N^{-1/2} \sum_{s=0}^{Nt-1} \mathbf E^{(\beta_N,k)}_{\x_N}[ |(\mathscr Q^0_k-\mathscr Q^{\beta_N}_k) \bar f(\mathbf R_s)|] = 0,
\end{equation}
where $\mathscr Q^0_k$ denotes the operator $\mathscr Q_k^\beta$ with $\beta=0$ (these operators were defined in Corollary \ref{per}). Indeed, it suffices to show this because $\mathscr Q^0_k\bar f(\mathbf R_s) = \Pdif f(R^i_s-R^j_s)$ so that $N^{-1/2} \sum_{s=0}^{Nt-1}\mathbf E^{(\beta_N,k)}_{\x_N} [ \mathscr Q^0_k\bar f(\mathbf R_s)] $ converges to $\int_I \Pdif f \;\dr\gamma$ along the subsequence $N_{k_j}$, and moreover because $N^{-1/2} \sum_{s=0}^{Nt-1}\mathbf E^{(\beta_N,k)}_{\x_N} [ \mathscr Q^{\beta_N}_k \bar f(\mathbf R_s)]$ by the Markov property is equal to $\int_I f\dr\gamma_N + N^{-1/2} \big( \mathbf E^{(\beta_N,k)}_{\x_N}[f(R^i_{Nt} -R^j_{Nt})] - f(x^N_i-x^N_j)\big),$ which converges to $\int_I f\;\dr\gamma$ along the subsequence $N_{k_j}$.

To prove \eqref{cesro}, we claim by the exponential decay assumption on $f$ and from the definition \eqref{qk} of the measures $\qdif$, one has the bound 
\begin{equation}\label{prvces}|(\mathscr Q^0_k-\mathscr Q^{\beta}_k) \bar f(\x)| \leq C\min\{ |\beta|, e^{-\frac\theta{2} |x_i-x_j|} \}\leq C|\beta|^{1/2}   e^{-\frac\theta{4} |x_i-x_j|}. 
\end{equation}
The second inequality is immediate from the first one using $\min\{u,v\}\leq u^{1/2}v^{1/2}$. To prove the first inequality, the upper bound of $C|\beta|$ is clear simply from differentiability of the kernels $\qdif$ in the variable $\beta$ near $\beta=0$, see their explicit expression \eqref{qk}. For the upper bound of $Ce^{-\frac\theta{2} |x_i-x_j|}$, we can show that both of the terms $\mathscr Q^{\beta}_k \bar f(\x)$ and $\mathscr Q^{0}_k) \bar f(\x)$ are bounded above by such a quantity. To show this, write the definition $\mathscr Q^\beta_k \bar f(\x):= \int_{I^k}  f(y_i-y_j) \qdif(\x,\dr\y), $ then split the integral into two parts: $\{\y: |\y-\x| \leq \frac12|x_i-x_j|\}  $ and $\{\y: |\y-\x| > \frac12|x_i-x_j|\}  $, where $|\x|=\sum_{j=1}^k |x_j|$. On the first set, use the exponential decay bound on $f$ to bound $|f(\y) |\leq Ce^{-\frac\theta 2|x_i-x_j|}$. On the second set, use the uniform moment bounds of Lemma \ref{grow} and Markov's inequality (and $|f|\leq 1$) to obtain an upper bound of $Ce^{-\frac\eta{4k} |x_i-x_j|} \leq Ce^{-\frac\theta2 |x_i-x_j|}.$

With \eqref{prvces} proved, the claim \eqref{cesro} follows immediately from the second bound of Corollary \ref{lptight1}. While this proves the claim for all $f$ of exponential decay, the general claim for all $f$ as in the theorem statement can be proved using the uniform bound \eqref{e3} (only $q=1$ is needed there) and an approximation argument of $f$ by some sequence $f_n$ with $|f_n|\leq |f|$ with each $|f_n|$ decaying exponentially.
\end{proof}

\begin{prop}\label{finally}
    Fix $k\in \mathbb N$ and $T>0$ and $1\leq i<j\leq k$. Recall the tilted path measure $\mathbf P_\x^{(\beta,k)}$ as given in Definition \ref{shfa}. Consider any sequences $\beta_N\in \ak$ and $\x_N\in I^k$ such that $|\beta_N|\leq CN^{-\frac1{4p}}$ and $N^{-1/2}\x_N\to \boldsymbol x\in \mathbb R^k$ as $N\to \infty$. Take any continuous function $f:I\to \mathbb R$ such that $|f(x)| \leq F(|x|)$ for some decreasing $F:[0,\infty)\to[0,\infty)$ such that $\sum_{k=0}^\infty F(k)<\infty$. Consider the pair of processes $$\bigg( N^{-1/2} \big(R^i_{Nt} -R^j_{Nt}\big), N^{-1/2} \sum_{r=0}^{Nt-1} f(R_r^i-R_r^j)\bigg)_{t\in (N^{-1}\mathbb Z_{\ge 0})\cap [0,T]},$$ viewed as $C([0,T],\mathbb R^2)$-valued random variables under the measures $\mathbf P_{\x_N}^{(\beta_N,k)}.$ This sequence converges in law to the pair 
    $(W,\gamma(f)L_0^W(t))$, where $W$ is a Brownian motion of rate 2 starting at $x_i-x_j$ and $\boldsymbol x=(x_1,...,x_k)$. Furthermore $L_0^W$ is its local time at zero, and $$\gamma(f):= \frac{\int_I f(x)\pi^{\mathrm{inv}}(\dr x)}{\int_I \big[|x| - \int_I |a|\pdif(x,\mathrm da)\big] \pi^{\mathrm{inv}}(\dr x)}.$$
\end{prop}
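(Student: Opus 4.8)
The plan is to combine the three ingredients already in place — tightness from Corollary~\ref{lptight1}, process-level convergence of the difference from Proposition~\ref{x0}, and convergence of the relevant expectation from Lemma~\ref{crit} — and to upgrade them to joint convergence in law of the pair. First I would invoke Corollary~\ref{lptight1}, which gives Kolmogorov--Chentsov-type increment bounds, uniformly in the starting point and in $\beta_N$ with $|\beta_N|\le CN^{-1/(4p)}$, for both $N^{-1/2}(R^i_{Nt}-R^j_{Nt})$ and $N^{-1/2}\sum_{r<Nt}f(R^i_r-R^j_r)$, and also for the analogous additive functionals built from any other integrand with the same decay. Hence the vector of all these processes is tight in $C([0,T],\mathbb R^m)$, and it suffices to identify every subsequential limit. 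By Proposition~\ref{x0} the first coordinate $\mathcal W$ is a Brownian motion of rate $2$ started at $x_i-x_j$; moreover, using the martingale $R^i_r-R^j_r-\sum_{s<r}(\Q-\mathrm{Id})(\pi_i-\pi_j)(\mathbf R_s)$ of Corollary~\ref{per}, the bound~\eqref{useful}, and stability of martingality under uniformly integrable limits, $\mathcal W$ remains a Brownian motion of rate $2$ with respect to the filtration $\mathcal G$ generated jointly by the whole vector, so Tanaka's formula is available for $\mathcal W$ under $\mathcal G$.

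Next I would identify the limit in the distinguished case $f_0:=(\Pdif-\mathrm{Id})u$ with $u(x)=|x|$, for which $\gamma(f_0)=1$, by recycling the computation in Step~1 of Lemma~\ref{crit}. Writing $\bar u(\x):=|x_i-x_j|$, the process $|R^i_r-R^j_r|-|x^i_N-x^j_N|-\sum_{s<r}(\Q-\mathrm{Id})\bar u(\mathbf R_s)$ (with $\beta=\beta_N$) is a $\mathbf P_{\x_N}^{(\beta_N,k)}$-martingale; the estimate~\eqref{ano} together with Corollary~\ref{lptight1} gives $N^{-1/2}\sum_{s<Nt}\big[(\Q-\mathrm{Id})\bar u(\mathbf R_s)-f_0(R^i_s-R^j_s)\big]\to 0$ uniformly, while $N^{-1/2}(|R^i_{Nt}-R^j_{Nt}|-|x^i_N-x^j_N|)\to |\mathcal W_t|-|\mathcal W_0|$. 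Thus $|\mathcal W_t|-|\mathcal W_0|-\mathcal A^{(f_0)}_t$ is a continuous $\mathcal G$-martingale, where $\mathcal A^{(f_0)}$ is the limit of $N^{-1/2}\sum_{s<Nt}f_0(R^i_s-R^j_s)$ and is non-decreasing because $f_0\ge 0$ by Jensen. Comparing with Tanaka's formula for $\mathcal W$, which makes $|\mathcal W_t|-|\mathcal W_0|-L_0^{\mathcal W}(t)$ a $\mathcal G$-martingale, we conclude that $L_0^{\mathcal W}-\mathcal A^{(f_0)}$ is a continuous martingale of bounded variation starting at $0$, hence identically $0$; so $\mathcal A^{(f_0)}=L_0^{\mathcal W}$.

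For general $f$ I would write $f=\gamma(f)f_0+g$ with $\int_I g\,d\pi^{\mathrm{inv}}=0$, so $g$ is continuous and dominated by a decreasing summable function; by linearity of the additive functional and joint tightness, $\mathcal A^{(f)}=\gamma(f)\mathcal A^{(f_0)}+\mathcal A^{(g)}=\gamma(f)L_0^{\mathcal W}+\mathcal A^{(g)}$ along the subsequence, so it remains to prove $\mathcal A^{(g)}\equiv 0$. To this end I use the Markov property of the tilted chain (Proposition~\ref{mkov}): for $s<t$ in $N^{-1}\mathbb Z_{\ge 0}$, conditioning on $\mathcal F_{Ns}$ and restarting, $\mathbf E_{\x_N}^{(\beta_N,k)}[N^{-1/2}\sum_{Ns\le r<Nt}g(R^i_r-R^j_r)\mid\mathcal F_{Ns}]$ equals the same quantity evaluated at the (random) configuration $\mathbf R_{Ns}$, and this depends on $\mathbf R_{Ns}$ only through its pairwise differences, since spatial translation invariance of $\boldsymbol p^{(k)}$ makes the differences a Markov chain on their own. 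Since $N^{-1/2}(R^i_{Ns}-R^j_{Ns})$ is tight and Lemma~\ref{crit} forces the limit $0$ for every deterministic sequence of rescaled-convergent starting differences (using $\gamma(g)=0$), a contradiction argument promotes this to uniform convergence to $0$ over bounded rescaled differences; combined with the uniform-in-start bound of Corollary~\ref{lptight1} and uniform integrability, this yields $\mathbf E[\mathcal A^{(g)}_t-\mathcal A^{(g)}_s\mid\mathcal G_s]=0$, so $\mathcal A^{(g)}$ is a continuous $\mathcal G$-martingale. Decomposing $g=g^+-g^-$ (both continuous, non-negative, with the same decay) shows $\mathcal A^{(g)}=\mathcal A^{(g^+)}-\mathcal A^{(g^-)}$ is a difference of non-decreasing processes, hence of bounded variation, and a continuous bounded-variation martingale starting at $0$ is identically $0$. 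Thus every subsequential limit of the pair equals $(W,\gamma(f)L_0^W)$ with $W$ a Brownian motion of rate $2$ from $x_i-x_j$; this law is uniquely determined, so the whole sequence converges.

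The step I expect to be the main obstacle is the restart/uniformization for $g$: transferring Lemma~\ref{crit}, stated only for deterministic starting sequences whose rescaled coordinates converge, to the random configuration $\mathbf R_{Ns}$, whose individual coordinates carry an $O(N)$ drift and are therefore not tight after rescaling by $N^{-1/2}$. Making this rigorous requires spelling out that every quantity in sight depends on $\mathbf R_{Ns}$ only through its pairwise differences — so the drift in the individual coordinates is irrelevant and the differences alone are tight by Corollary~\ref{lptight1} — together with promoting the pointwise convergence in Lemma~\ref{crit} (via the uniform bound of Corollary~\ref{lptight1} and a compactness/contradiction argument) to the locally uniform convergence needed to substitute a random starting point inside a conditional expectation, and then passing to the limit against bounded continuous test functionals of the path up to time $s$. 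A secondary point deserving care is the bookkeeping that places $\mathcal W$, $\mathcal A^{(f_0)}$, $\mathcal A^{(g)}$, $\mathcal A^{(g^{\pm})}$ on a common filtration $\mathcal G$ (by taking a single joint subsequential limit), so that Tanaka's formula for $\mathcal W$ and the vanishing of continuous bounded-variation martingales are both legitimate.
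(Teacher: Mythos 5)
Your proof is correct, and its skeleton is the same as the paper's: take a joint limit point, use Tanaka's formula for the limiting rate-$2$ Brownian motion, push Lemma \ref{crit} through the Markov property of the tilted chain at a restart time, and conclude because a continuous martingale of bounded variation started at $0$ vanishes. The difference is organizational. The paper assumes WLOG $f\ge 0$ and applies the restart/Lemma \ref{crit} step to $f$ itself, getting $\mathbf E_{\mathrm{lim}}[\mathcal L(t)-\mathcal L(s)\mid\mathcal F_{\mathrm{lim}}(s)]=\gamma(f)\,g(t-s,W_s)$ and comparing with the identical identity for $L_0^W$ coming from Tanaka; you instead first identify the additive functional of $f_0=(\Pdif-\mathrm{Id})u$, $u(x)=|x|$, with $L_0^W$ at the process level (a pathwise lift of Step 1 of Lemma \ref{crit}, via the Corollary \ref{per} martingale and \eqref{ano}), and then run the restart argument only for the $\pi^{\mathrm{inv}}$-mean-zero remainder $g$, showing $\mathcal A^{(g)}$ is a continuous BV martingale via $g=g^+-g^-$. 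What your version buys is that the random-restart application of Lemma \ref{crit} --- exactly the step the paper compresses into the word ``immediately'' --- is needed only for the mean-zero part, and the obstacle you flag is indeed the real content of that step; your sketched fix is the right one: the conditional increment depends on $\mathbf R_{Ns}$ only through its pairwise differences (translation invariance of $\boldsymbol p^{(k)}$), the rescaled differences are tight by Corollary \ref{lptight1}, the pointwise statement of Lemma \ref{crit} upgrades to locally uniform convergence in the rescaled differences by the standard compactness/contradiction argument, and the contribution of large differences is killed by the uniform moment bounds and uniform integrability.

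One bookkeeping caution: $\gamma(f_0)=1$ holds under the sign convention of Lemma \ref{crit} and Theorem \ref{conv2}, where the denominator is $\int_I\big[\int_I|a|\,\pdif(x,\mathrm da)-|x|\big]\pi^{\mathrm{inv}}(\mathrm dx)=\int_I f_0\,d\pi^{\mathrm{inv}}>0$; the statement of Proposition \ref{finally} writes the denominator with the opposite sign, which is an internal inconsistency of the paper rather than a flaw in your argument --- just keep the convention under which $\int_I g\,d\pi^{\mathrm{inv}}=0$ in your decomposition $f=\gamma(f)f_0+g$.
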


\begin{proof}
    Convergence of the first coordinate was already established in Proposition \ref{x0}. Tightness of the second coordinate is immediate from the second bound in Corollary \ref{lptight1}. We will thus show how to jointly identify the limit of the second coordinate. Without loss of generality we can assume $f\ge 0$, since we can always write $f=f_+-f_-$ with $f_+,f_-\ge 0$, both of which still satisfy the same exponential bounds, and the result respects the linearity in the function variable.

    Consider any joint limit point $\mathbf P_{\mathrm{lim}}$, which is a probability measure on the canonical space $C([0,T],\mathbb R^2)$. Let $(W,\mathcal L)$ denote the canonical process on that space, and let $(\mathcal F_{\mathrm{lim}}(t))_{t\ge 0}$ be the canonical filtration on that space. On one hand $|W_t|-|x_i-x_j|-L_0^W(t)= \int_0^t \mathrm{sign}(W_s)dW_s $ is a $\mathbf P_{\mathrm{lim}}$-martingale, because $W$ is a $\mathbf P_{\mathrm{lim}}$-Brownian motion as we already verified in Proposition \ref{x0}. In particular this implies that for $t\ge s\ge 0$, \begin{equation}\label{lima}\mathbf E_{\mathrm{lim}}[ L_0^W(t)-L_0^W(s)|\mathcal F_{\mathrm{lim}}(s)] = g(t-s,W_s),\end{equation} where $g(t,x):= \mathbf E^x_{\mathrm{BM}}[L_0^B(t)],$ and the expectation is with respect to a Brownian motion $B$ of rate 2 started from $x \in \mathbb R$. So far we have only studied the first part $W$ of the joint limit. Now we study the other part $\mathcal L$. Immediately from the Markov property of the prelimiting processes and the result of Lemma \ref{crit}, we find that for $t\ge s\ge 0$
    \begin{equation}\label{limb}\mathbf E_{\mathrm{lim}}[ \mathcal L(t)-\mathcal L(s)|\mathcal F_{\mathrm{lim}}(s)] = \gamma(f)\cdot g(t-s,W_s).\end{equation}
    Combining \eqref{lima} and \eqref{limb}, we see that $\mathcal L-\gamma(f)L_0^W$ is a continuous $\mathbf P_{\mathrm{lim}}$-martingale starting from 0. But it is also a difference of two increasing processes, thus of bounded variation $\mathbf P_{\mathrm{lim}}$-almost surely. We conclude that $\mathcal L-\gamma(f)L_0^W=0$, proving the theorem.
\end{proof}

We now come to the result that will be most crucial in proving convergence of the quenched density field to the KPZ equation, namely the invariance principle for the tilted Markov chains $\qdif$.

\begin{thm}\label{conv2} Fix any $k\in\mathbb N$ and constants $C, T>0$. Take any sequence $\x_N\in I^k$ such that $N^{-1/2} \x_N\to \boldsymbol x\in \mathbb R^k$. Recall the drift constant $d_N$ that was defined in Equation \eqref{dn} of the introduction. Fix any continuous function $f:I\to \mathbb R$ such that $|f(x)| \leq F(|x|)$ for some decreasing $F:[0,\infty)\to[0,\infty)$ such that $\sum_{k=0}^\infty F(k)<\infty$. Let $\mathbf{R}=(\mathbf{R}(r))_{r\ge 0}$ be the canonical process on $(I^k)^{\mathbb Z_{\ge 0}}$, and define the rescaled processes 
    \begin{align*}
        {\mathbf X}_N(t):= \frac{\mathbf R(Nt)-d_N t}{\sqrt{N}},\;\;\;\;\;\;\;\;\mathscr V^{ij}_N(f;t):=\frac{\sum_{r=1}^{Nt} f(R^i_r-R^j_r)}{\sqrt{N}}, 
    \end{align*}
    where these expressions are valid for $t\in N^{-1}\mathbb Z_{\ge 0}$, and understood to be linearly interpolated for $t \notin N^{-1}\mathbb Z_{\ge 0}$. 
    Now recall the measure $\mathbf P_\x^{(\beta,k)}$ as given in Definition \ref{shfa}. Then the processes $(\mathbf X_N, \mathscr V_N)$ under $\mathbf P_{\x_N}^{(N^{-\frac1{4p}},k)} $ 
    converge in law with respect to the topology of $C([0,T],\mathbb R^k\times \mathbb R^{k(k-1)/2})$ to $(\mathbf U, (\gamma(f) \cdot L^{ij})_{1\le i<j\le k})$ where \begin{itemize}\item $\mathbf U=(U^1,...,U^k)$ is a standard $k$-dimensional Brownian motion starting from $\boldsymbol x$. \item  $L^{ij} = L_0^{U^i- U^j}$ denotes the pairwise local time of the $i^{th}$ and $j^{th}$ coordinates. \item $\gamma(f)$ is a real number given by $$\gamma(f):= \frac{\int_{I} f(x)
\pi^{\mathrm{inv}}(\dr x)}{\int_{I} \big[ \int_{I} |a| \; \pdif(x,\mathrm da)-|x|\big]\pi^{\mathrm{inv}}(\dr x)}.$$
\end{itemize}
\end{thm}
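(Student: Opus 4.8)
The plan is to extract a joint subsequential limit of the tuple $(\mathbf X_N,(\mathscr V_N^{ij})_{1\le i<j\le k})$, identify the first component as a standard $k$-dimensional Brownian motion, and then identify each additive-functional component as $\gamma(f)$ times a pairwise Brownian local time, all by martingale-problem arguments in the spirit of Propositions \ref{x0} and \ref{finally}. Tightness of the whole tuple in $C([0,T],\R^k\times \R^{k(k-1)/2})$ is immediate from Corollary \ref{lptight1}: that bound controls the $L^q$ modulus of continuity of each $N^{-1/2}(R^i_{Nt}-R^j_{Nt})$ and each $\mathscr V_N^{ij}$, while for a single rescaled coordinate one writes, exactly as in Step 4 of the proof of Theorem \ref{exp0}, $R^j_{Nt}-d_N t = x_N^j + M^{j}_{Nt}+\sum_{s=0}^{Nt-1}\big[(\Q-\mathrm{Id})\pi_j(\mathbf R_s)-\mathpzc F(N^{-1/4p})\big]$, where $M^j$ is the Dynkin martingale, using $(\Q-\mathrm{Id})\pi_j(\x)=\int_{I^k}(a_j-x_j)\qdif(\x,\dr\bfa)=\mathpzc F(\beta)+O(|\beta|^{2p-1}\Fd(\cdot)+|\beta|^{4p})$ from Corollary \ref{corb}, and $d_N=N\mathpzc F(N^{-1/4p})$ from \eqref{dn}. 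The martingale $M^j$ has uniformly bounded $q$-th moment increments by Lemma \ref{grow}, so Lemma \ref{cool} gives tightness, and the error sum, divided by $\sqrt N$, is $O\big(|\beta_N|^{2p-1}N^{-1/2}V^{ij}(\Fd;Nt)\big)+O\big(t\sqrt N|\beta_N|^{4p}\big)\to 0$ by Corollary \ref{lptight1} (recall $\beta_N=N^{-1/4p}$).

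Fix a joint subsequential limit, realized on the canonical space with process $(\mathbf U,(\mc L^{ij}))$ and filtration $(\mc G_t)$. To identify $\mathbf U$ I run the proof of Proposition \ref{x0} for all $k$ coordinates at once. From the decomposition above, $N^{-1/2}x_N^j\to x_j$ and the error term is negligible, so each limiting coordinate $U^j$ is a continuous $\mc G$-martingale started from $x_j$ (uniform integrability from Corollary \ref{lptight1}). Its predictable bracket increment is $\int_{I^k}(a_j-x_j)^2\qdif(\mathbf R_s,\dr\bfa)-\big(\int_{I^k}(a_j-x_j)\qdif(\mathbf R_s,\dr\bfa)\big)^2=\mathpzc G(\beta_N)-\mathpzc F(\beta_N)^2+O(\cdot)$, and $\mathpzc G(0)-\mathpzc F(0)^2=m_2-m_1^2=1$, so after summing over $Nt$ steps and dividing by $N$ it converges to $t$; the cross-bracket increment of $U^i,U^j$ $(i\ne j)$ is governed by $\int_{I^k}(a_i-x_i)(a_j-x_j)\qdif-\mathpzc F(\beta)^2=O(|\beta|^{2p-2}\Fd(\cdot)+|\beta|^{4p-1})$ from Corollary \ref{corb}, which after the same rescaling tends to $0$ by Corollary \ref{lptight1}. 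Together with the uniform small-jump bound (again Lemma \ref{grow}), Lévy's criterion gives that $\mathbf U$ is a standard $k$-dimensional Brownian motion from $\boldsymbol x$; in particular each $U^i-U^j$ is a rate-$2$ $\mc G$-Brownian motion.

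It remains to prove $\mc L^{ij}=\gamma(f)L_0^{U^i-U^j}$ for each pair; by linearity of everything in $f$ we may assume $f\ge 0$, so $\gamma(f)\ge 0$ (the denominator of $\gamma(f)$ is $\int_I v_0\,\dr\pi^{\mathrm{inv}}>0$ as in Step 2 of the proof of Proposition \ref{pinv2}) and $\mc L^{ij}$ is a continuous nondecreasing $\mc G$-adapted process. Write $g(\tau,z):=\mathbf E_z^{BM}[L_0^B(\tau)]$ for a rate-$2$ Brownian motion $B$. Tanaka's formula applied to the $\mc G$-Brownian motion $U^i-U^j$ gives $\mathbf E[L_0^{U^i-U^j}(t)-L_0^{U^i-U^j}(s)\mid\mc G_s]=g(t-s,U^i_s-U^j_s)$. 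On the prelimit side, the Markov property of the tilted $k$-point motion (Proposition \ref{mkov}) shows $\mathbf E_{\x_N}^{(N^{-1/4p},k)}[\mathscr V_N^{ij}(f;t)-\mathscr V_N^{ij}(f;s)\mid\mathcal F_{Ns}]=\Psi_N(\mathbf R_{Ns})$ for a deterministic function $\Psi_N$, and Lemma \ref{crit} (with time horizon $t-s$) says $\Psi_N(\x_N)\to\gamma(f)\,g(t-s,y_i-y_j)$ for any deterministic $\x_N$ with $N^{-1/2}\x_N\to\boldsymbol y$. Passing to a Skorokhod coupling along which $N^{-1/2}\mathbf R_{Ns}\to\mathbf U_s$ almost surely turns $\mathbf R_{Ns}$ into such deterministic sequences realization-by-realization, and the uniform $L^2$-integrability from Corollary \ref{lptight1} lets us pass the conditional expectations to the limit, yielding $\mathbf E[\mc L^{ij}(t)-\mc L^{ij}(s)\mid\mc G_s]=\gamma(f)\,g(t-s,U^i_s-U^j_s)$. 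Subtracting, $\mc L^{ij}-\gamma(f)L_0^{U^i-U^j}$ is a continuous $\mc G$-martingale starting at $0$ that is a difference of two nondecreasing processes, hence of finite variation, hence identically $0$. Since the joint subsequential limit is uniquely determined, the stated convergence in law follows.

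The main obstacle is the last step: coupling the \emph{discrete} Markov property of the tilted $k$-point motion with the weak-limit statement of Lemma \ref{crit} evaluated at the \emph{random, shifted} initial data $\mathbf R_{Ns}$, and legitimately interchanging conditioning with the limit. The Skorokhod-representation reduction to deterministic initializing sequences (so that Lemma \ref{crit} applies verbatim) together with the uniform integrability supplied by Corollary \ref{lptight1} is what makes this rigorous. A secondary, purely computational point is the vanishing of the cross-brackets $\langle U^i,U^j\rangle$ for $i\ne j$, for which the covariance estimate in Corollary \ref{corb} (which encodes that distinct coordinates of the tilted motion decorrelate at the relevant scale) is precisely what is needed.
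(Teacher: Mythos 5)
Your proposal is correct and follows essentially the same route as the paper's proof: coordinate Dynkin martingales with the drift killed via Corollary \ref{corb} and Corollary \ref{lptight1}, L\'evy's criterion to identify $\mathbf U$, and then Lemma \ref{crit} combined with the Tanaka/bounded-variation-martingale argument to force $\mathcal L^{ij}=\gamma(f)\,L_0^{U^i-U^j}$. The only cosmetic differences are that you compute the cross-brackets directly from Corollary \ref{corb} where the paper deduces $\langle U^i,U^j\rangle=0$ from Proposition \ref{x0} (rate-$2$ differences plus rate-$1$ coordinates), and that you inline, with a Skorokhod-coupling justification, the identification step that the paper packages as Proposition \ref{finally}.
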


Before the proof, we remark that the tilting strength of $N^{-\frac1{4p}}$ can be replaced by any sequence $\beta_N$ satisfying $|\beta_N|\leq CN^{-\frac1{4p}}$ but then the constants $d_N$ need to be replaced by $N\mathpzc F(\beta_N)$ where $\mathpzc F$ was defined in Corollary \ref{corb}. This will be clear from the proof.

\begin{proof} We break the proof into several steps. Throughout the proof we will set $\beta_N:= N^{-\frac1{4p}}$.

\textbf{Step 1. } In this step we find a useful family of martingales, associated to each of the coordinates of the process. Define the $j^{th}$ coordinate function $v_j: I^k \to \mathbb R$ by $(x_1,...,x_k)\mapsto x_j.$ Note by Corollary \ref{per} that for each $1\le j \le k$ that the process $$M^{j,\beta}_r := R^j_r - \sum_{s=0}^{r-1} (\Q-\mathrm{Id})v_j(\mathbf R_s)$$ is a $\Pb$-martingale for all $\x\in I^k$ and $\beta \in \ak$. Explicitly we have that \begin{equation}\label{expre}(\Q-\mathrm{Id})v_j(\x) = \int_{I^k} (y_j-x_j) \qdif(\x,\dr\y),\end{equation} with $\qdif$ as defined in \eqref{qk}.

    \textbf{Step 2.} Fix $j\in \{1,...,k\}$, and let $R^j$ as usual denote the $j^{th}$ coordinate of the canonical process $\mathbf R$ on $(I^k)^{\mathbb Z_{\ge 0}}$. Recall the quantity $\mathcal S_q(M,P)$ from Lemma \ref{cool}. In this step we will show that for each $q\ge 1$, \begin{equation}\label{b4}\sup_{N\ge 1} \mathcal S_q(M^{j,\beta_N};\mathbf P_{\x_N}^{(\beta_N,k)})<\infty.\end{equation} First note by \eqref{expre} and Lemma \ref{grow} that \begin{equation}\label{b4a}\sup_{\beta\in\ak}\sup_{\x\in I^k} |(\Q -\mathrm{Id})v_j(\x)| \leq \sup_{\beta\in\ak} \sup_{\x\in I^k} \int_{I^k} |y_j-x_j| \qdif(\x,\dr\y)<\infty.\end{equation}
    Then note by the Markov property and another application of Lemma \ref{grow} that \begin{equation}\label{b4b}\sup_{r\ge 1} \sup_{\x\in I^k} \sup_{\beta\in\ak} \Eb[ |R^j_r-R^j_{r-1}|^q] \leq \sup_{\x'\in I^k} \sup_{\beta\in \ak} \int_{I^k} |y_j-x_j|^q \qdif(\x',\dr\y)<\infty .\end{equation} Combining \eqref{b4a} and \eqref{b4b}, we immediately obtain \eqref{b4}.
    \textbf{Step 3.} In this step we study the the processes given by $$\mathcal B_N(t):= N^{-1/2}d_Nt - N^{-1/2}\sum_{r=0}^{Nt-1} (\mathscr Q^{\beta_N}_k -\mathrm{Id})v_j(\mathbf R_r).$$
    In particular we will show that if $q\ge 1$ then $\mathbf E_{\x_N}^{(\beta_N,k)} [ |\mathcal B_N(t)-\mathcal B_N(s)|^q]^{1/q} \leq CN^{-(\frac12-\frac1{4p})} |t-s|^{1/2},$ so that $\mathcal B_N$ converges to the zero process in the topology of $C[0,T]$. To prove this, notice that in the notation of Corollary \ref{corb}, we have $d_N = N\mathpzc F(\beta_N)$. Consequently by that corollary we have that for $t\in N^{-1}\mathbb Z_{\ge 0}$
    \begin{align*}|\mathcal B_N(t+N^{-1})-\mathcal B_N(t)| &= N^{-1/2}|\mathpzc F(\beta_N)- \int_{I^k} (y_j-R^j_r)\qdif(\mathbf R_r,\dr\y)|\\ &\leq N^{-1/2} \big| |\beta_N|^{2p-1} \Fd\big(\min_{1\le i'<j'\le k} |R^{i'}_r-R^{j'}_r| \big) + |\beta_N|^{4p}\big| \\ &= \big| N^{-1+\frac1{4p}} \Fd\big( \min_{1\le i'<j'\le k} |R^{i'}_r-R^{j'}_r| \big) + N^{-3/2}|\end{align*} where $t=Nr$ and we used the fact that $\beta_N=N^{-\frac1{4p}}$ in the last equality. Thus using the bound $\Fd(\min_\ell u_\ell) \leq \sum_\ell \Fd(u_\ell)$ we find that 
    \begin{align*}
        |\mathcal B_N(t)-\mathcal B_N(s)| &\leq \sum_{u \in [s,t]\cap N^{-1}\mathbb Z_{\ge 0}} |\mathcal B_N(u+N^{-1})-\mathcal B_N(u)| \\ &\leq \bigg(N^{-1+\frac1{4p}}\sum_{r \in [Ns,Nt]\cap\mathbb Z_{\ge 0}} \sum_{1\le i'<j'\le k} \Fd(|R^{i'}_r-R^{j'}_r| )\bigg) \;\;\;\; + N^{-1/2} (t-s).
    \end{align*}
    Thus taking expectation and applying the second estimate of Corollary \ref{lptight1} we find that \begin{equation}\label{bbd}\mathbf E_{\x_N}^{(\beta_N,k)} [ |\mathcal B_N(t)-\mathcal B_N(s)|^q]^{1/q} \leq CN^{-(\frac12-\frac1{4p})}|t-s|^{1/2} + N^{-1/2} |t-s|.\end{equation}
\textbf{Step 4.} Combining the results of \eqref{b4} and \eqref{bbd} and Lemma \ref{cool}, we immediately obtain tightness of the rescaled process $\mathbf X_N$ in the space $C[0,T]$. It remains to identify the limit point as Brownian motion. In this step, we will begin by showing that any limit point $\mathbf U = (U^1,...,U^k)$ is necessarily a martingale in the joint filtration of all coordinates, and moreover each coordinate $U^j$ is \textit{individually} distributed as a standard Brownian motion started from $x_j$ where $\boldsymbol x=(x_1,...,x_k)$ is as in the theorem statement.

Write $\mathbf X_N = (X_N^1,...,X_N^j)$ in coordinates. By \eqref{bbd}, the difference $X_N^j(t) - N^{-1/2} M^{j,\beta_N}_{Nt}$ converges in probability to zero in the topology of $C[0,T]$, thus it suffices to show that if $\mathbf U$ is any limit point of the processes $(N^{-1/2} M^{j,\beta_N}_{Nt})_{1\le j \le k,t\ge 0}$ then $\mathbf U$ is a martingale and each coordinate is individually a Brownian motion. The martingality of the limit point is clear, since martingality is preserved by limit points as long as one has uniform integrability as guaranteed by the $L^q$ bounds \eqref{b4}.

Now we will show that each coordinate of the limit point $\mathbf U$ is a standard Brownian motion. To do this, we need to study the quadratic variations of the martingales $(N^{-1/2} M^{j,\beta_N}_{Nt})_{t\ge 0}$ for each fixed $j$. More precisely, note that the process $$Y^{j,\beta}_r:= (M^{j,\beta}_r)^2 - \sum_{s=0}^{r-1}\Eb [ (M^{j,\beta}_{s+1}-M^{j,\beta}_s)^2|\mathcal F_s]$$ is a $\Pb$-martingale. We claim that $(N^{-1} Y^{j,\beta_N}_{Nt})_{t\ge 0}$ is a tight family of processes in $C[0,T]$ as $N\to \infty$. Indeed, one easily verifies that $\big (N^{-1}(M^{j,\beta_N}_{Nt})^2\big)_{t\ge 0}$ is tight and satisfies the same $L^q$ estimates as the processes $\big (N^{-1/2}M^{j,\beta_N}_{Nt}\big)_{t\ge 0}$ on any compact time interval $t\in [0,T]$, simply because it is the square of a process satisfying such bounds as shown in Steps 2 and 3. On the other hand, by Minkowski's inequality, we also have that \begin{align*}\bigg\| N^{-1} \sum_{r=Ns}^{Nt} \mathbf E_{\x_N}^{(\beta_N,k)} [ (M^{j,\beta_N}_{r+1}-M^{j,\beta_N}_r)^2|\mathcal F_s]\bigg\|_{L^q(\mathbf P_{\x_N}^{(\beta_N,k)})}& \leq N^{-1} \sum_{r=Ns}^{Nt} \big\|M^{j,\beta_N}_{r+1}-M^{j,\beta_N}_r\big\|_{L^{2q}(\mathbf P_{\x_N}^{(\beta_N,k)})}\\ &\leq \bigg(\sup_{N\ge 1} \mathcal S_{2q}(M^{j,\beta_N};\mathbf P_{\x_N}^{(\beta_N,k)}) \bigg)|t-s|,\end{align*}
where the supremum is finite by \eqref{b4}. Summarizing these bounds, we have that for each $q\ge 1$ we have $\mathbf E_{\x_N}^{(\beta_N,k)}\big[ \big(N^{-1}| Y^{j,\beta_N}_{Nt}- Y^{j,\beta_N}_{Ns}|\big)^q\big]^{1/q} \leq C|t-s|^{1/2}$ for all $s,t$ in a compact interval, thus completing the proof that $(N^{-1} Y^{j,\beta_N}_{Nt})_{t\ge 0}$ is a tight family of processes in $C[0,T]$ as $N\to \infty$. 

We will now study the joint limit as $N\to \infty$ under the measures $\mathbf P_{\x_N}^{(\beta_N,k)}$ of the pair of processes $(N^{-1/2} M^{j,\beta_N}_{Nt}, N^{-1} Y^{j,\beta_N}_{Nt})_{t\ge 0}.$ Consider any joint limit point $(U,V).$ Then the pair are martingales in their joint filtration, since martingality is preserved by limit points as long as one has the $L^q$ bounds as shown above. Let us study how $U,V$ must be related. Corollary \ref{corb} easily implies that $$\sup_{\x\in I^k} \bigg|\int_{I^k} (y_j-x_j) \qdif(\x,\dr\y)-\mathpzc F(\beta)\bigg| \leq C|\beta|$$ where $C$ is independent of $\beta\in \ak.$ 
From this inequality and \eqref{expre}, it is immediate that $\mathbf E_{\x_N}^{(\beta_N,k)} \big[ \big(( \mathscr Q^{\beta_N}_k -\mathrm{Id})v_j(\mathbf R_r)-\mathpzc F(\beta_N)\big)^2 \big] \leq C|\beta_N|^2,$ which implies from the definition of the martingales $M^{j,\beta}$ that \begin{equation}\label{cob1}\lim_{N\to \infty} \mathbf E_{\x_N}^{(\beta_N,k)} \bigg[ N^{-1}\sum_{s=0}^{Nt-1} \big| \mathbf E_{\x_N}^{(\beta_N,k)} \big[ (M_{r+1}^{j,\beta_N} -M_r^{j,\beta_N})^2 - (R^j_{r+1}-R^j_r-\mathpzc F(\beta_N))^2\big| \mathcal F_r\big]\big| \bigg] =0,\end{equation}
see proof of Proposition \ref{x0} for a very similar calculation in more detail. Letting $\mathpzc G$ be as in Corollary \ref{corb}, we also have from that corollary that \begin{align}\notag \mathbf E_{\x_N}^{(\beta_N,k)} \bigg[ N^{-1}\sum_{s=0}^{Nt-1} \big| \mathbf E_{\x_N}^{(\beta_N,k)} &\big[(R^j_{r+1}-R^j_r-\mathpzc F(\beta_N))^2\big| \mathcal F_r\big] - \big(\mathpzc G(\beta_N)-\mathpzc F(\beta_N)^2\big)\big| \bigg] \\ \notag &= \mathbf E_{\x_N}^{(\beta_N,k)} \bigg[ N^{-1}\sum_{s=0}^{Nt-1} \bigg| \int_{I^k} (y_j-R^j_r-\mathpzc F(\beta_N))^2 q^{(k)}_{\beta_N} (\mathbf R_r,\dr\y) - \big(\mathpzc G(\beta_N)-\mathpzc F(\beta_N)^2\big)\bigg| \bigg] \\ \notag & \leq \mathbf E_{\x_N}^{(\beta_N,k)} \bigg[ N^{-1}\sum_{s=0}^{Nt-1} \bigg( \sum_{1\le i'<j'\le k}\Fd\big( |R^{i'}_r-R^{j'}_r| \big) + |\beta_N|^{4p-1}\bigg)\bigg]\\&\stackrel{N\to\infty}{\longrightarrow} 0.\label{cob2}\end{align}
where we applied Corollary \ref{lptight1} and $|\beta_N|\to 0$ in the last line. Combining \eqref{cob1} and \eqref{cob2}, and noting from Assumption \ref{a1} that $\mathpzc G(\beta_N)-\mathpzc F(\beta_N)^2 \to m_2-m_1^2=1$ as $\beta_N\to 0,$ we find that \begin{equation}\label{cob3}\lim_{N\to \infty} \mathbf E_{\x_N}^{(\beta_N,k)} \bigg[ \bigg|t\;\;\;-\;\;\;N^{-1}\sum_{s=0}^{Nt-1}  \mathbf E_{\x_N}^{(\beta_N,k)} \big[ (M_{r+1}^{j,\beta_N} -M_r^{j,\beta_N})^2\big| \mathcal F_r\big]\bigg| \bigg] =0.\end{equation}
Using \eqref{cob3}, one may immediately conclude that the joint limit point $(U,V)$ of the pair of processes $(N^{-1/2} M^{j,\beta_N}_{Nt}, N^{-1} Y^{j,\beta_N}_{Nt})_{t\ge 0}$ must satisfy $V_t=U_t^2-t$. Since both are continuous martingales as explained above, it must be true that $U$ is a standard Brownian motion.

\textbf{Step 5.} In this step we will complete the proof of the theorem. The tightness of $\mathbf X_N$ has been proved in previous steps. The tightness of the processes $\mathscr V^{ij}_N(f;\bullet)$ is immediate from Corollary \ref{lptight1} and the fact that $f$ is assumed to have exponential decay.

Consider any joint limit point $(\mathbf U,(\mathcal L^{ij})_{1\le i<j\le k})$ in the space $C([0,T],\mathbb R^k\times \mathbb R^{k(k-1)/2}).$ On one hand, the result of Step 5 shows that $\mathbf U$ is a martingale in the joint filtration of all processes, with $\mathbf U_0 = \boldsymbol x$, and each coordinate has quadratic variation $t$, i.e., $\langle U^j\rangle_t=t$. On the other hand, the result of Proposition \ref{x0} shows that $\langle U^i-U^j\rangle_t = 2t$ whenever $i<j$. This forces $\langle U^i,U^j\rangle_t = 0$ for $i\neq j$. We conclude by Levy's criterion that $\mathbf U$ is a standard $k$-dimensional Brownian motion. Now the result of Theorem \ref{finally} forces that for each $i<j$ the marginal law of the pair $(U^i-U^j, \mathcal L^{ij})$ must satisfy $\mathcal L^{ij} = \gamma(f)\cdot L_0^{U^i-U^j}.$ This uniquely identifies the joint limit point $(\mathbf U,(\mathcal L^{ij})_{1\le i<j\le k})$ in the space $C([0,T],\mathbb R^k\times \mathbb R^{k(k-1)/2}),$ completing the proof of the theorem.
\end{proof}

As a corollary of the above convergence theorem, we will now prove that moments of the field \eqref{hn} converge to the moments of \eqref{she}. While the moment convergence is not enough to prove the weak convergence of Theorem \ref{main2}, it gives a strong indication of it, and more importantly it implies some useful estimates that will be used later.

\begin{cor}[Convergence of moments of the rescaled field to those of the KPZ equation] Let $\mathfrak H^N(t,\cdot)$ be as defined in \eqref{hn} in the introduction. Then for each $k\in \mathbb N, t>0,$ and $\phi\in \mathcal S(\mathbb R),$
    \begin{align*}
        \lim_{N\to\infty} \mathbb E[\mathfrak H^N(t,\phi)^k] = \mathbf E_{\mathrm{BM}^{\otimes k}} \bigg[ e^{ \gamma(\z) \sum_{1\le i<j\le k} L_0^{U^i-U^j}(t)} \prod_{i=1}^k \phi(U^i_t)\bigg],
    \end{align*}
    where the expectation on the right side is with respect to a standard $k$-dimensional Brownian motion $(U^1,...,U^k)$, where $L_0^X$ denotes local time at zero of the process $X$, where $\z$ is the function in Definition \ref{z}, and $\gamma(f)$ is the coefficient defined in Theorem \ref{conv2}.
\end{cor}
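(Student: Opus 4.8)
The plan is to combine Lemma \ref{1.3} with the convergence theorem \ref{conv2}, together with the exponential moment estimates of Section 2 which supply the uniform integrability needed to pass the limit through. Recall from Lemma \ref{1.3} that
\[
\mathbb E[\mathfrak H^N(t,\phi)^k] = \mathbf E^{(N^{-\frac1{4p}},k)}\bigg[ \exp\bigg(\sum_{s=1}^{Nt}\Big\{\log \mathbf E_{RW^{(k)}}\big[e^{N^{-\frac1{4p}}\sum_j (R^j_s-R^j_{s-1})}\big|\mathcal F_{s-1}\big] - k\log M(N^{-\frac1{4p}})\Big\}\bigg)\prod_{j=1}^k\phi\big(N^{-1/2}(R^j_{Nt}-d_Nt)\big)\bigg].
\]
The expectation here is under $\mathbf P^{(N^{-\frac1{4p}},k)} = \mathbf P_{\boldsymbol 0}^{(N^{-\frac1{4p}},k)}$, so $\x_N=\boldsymbol 0$ and $\boldsymbol x=\boldsymbol 0$ in the notation of Theorem \ref{conv2}. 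The first step is to identify the exponential weight. By Proposition \ref{dbound} applied with $m=0$ and all $\beta_i=N^{-\frac1{4p}}$ (using the Markov property to rewrite the conditional expectation as an integral against $\boldsymbol p^{(k)}(\mathbf R_{s-1},\cdot)$), the summand equals $\sum_{1\le i<j\le k} N^{-\frac{2p}{4p}}\z(R^i_{s-1}-R^j_{s-1})$ up to an error bounded by $C N^{-1/2}\sum_{i<j}\Fd(|R^i_{s-1}-R^j_{s-1}|)+CN^{-1-\frac1{4p}}$; note $N^{-\frac{2p}{4p}}=N^{-1/2}$, which matches the $N^{-1/2}$ prefactor in the definition of $\mathscr V^{ij}_N$. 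Summing over $s$, the exponent is therefore $\gamma(\z)^{-1}$-free: it is exactly $\sum_{1\le i<j\le k}\mathscr V^{ij}_N(\z;t)$ plus a remainder which, after applying Corollary \ref{lptight1} (second bound) and the termwise bound above, tends to $0$ in probability under $\mathbf P^{(N^{-\frac1{4p}},k)}$. Here I would use that $\z$ decays fast enough (Assumption \ref{a1} Item \eqref{a24}) that $|\z|\le F$ for some summable decreasing $F\ge\Fd$, so that $\z$ is an admissible test function for both Theorem \ref{conv2} and Corollary \ref{exp2}.

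Next I would invoke Theorem \ref{conv2} with $f=\z$ and $\x_N=\boldsymbol 0$: the pair $(\mathbf X_N,\mathscr V_N)$ under $\mathbf P^{(N^{-\frac1{4p}},k)}$ converges in law in $C([0,T],\mathbb R^k\times\mathbb R^{k(k-1)/2})$ to $(\mathbf U,(\gamma(\z)L^{ij})_{i<j})$, where $\mathbf U=(U^1,\dots,U^k)$ is a standard $k$-dimensional Brownian motion from the origin and $L^{ij}=L_0^{U^i-U^j}$. Consequently the continuous functional $(\mathbf x(\cdot),(v^{ij}(\cdot)))\mapsto \exp\big(\sum_{i<j}v^{ij}(t)\big)\prod_i\phi(x^i(t))$ — which is bounded continuous on compacts and has controlled growth — applied to $(\mathbf X_N,\mathscr V_N)$ converges in law to $\exp\big(\gamma(\z)\sum_{i<j}L_0^{U^i-U^j}(t)\big)\prod_i\phi(U^i_t)$. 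Since $\phi$ is Schwartz hence bounded, the only genuinely unbounded factor is the exponential of $\sum_{i<j}\mathscr V^{ij}_N(\z;t)$; but $\z$ need not be nonnegative, so one must bound $\exp(\sum_{i<j}\mathscr V^{ij}_N(\z;t))\le \exp(\sum_{i<j}\mathscr V^{ij}_N(|\z|;t))$ and then invoke Corollary \ref{exp2} with $F=|\z|$ to get uniform boundedness of, say, the $(1+\e)$-th moment of this quantity for some $\e>0$ (Corollary \ref{exp2} gives all exponential moments $\mathbf E[e^{\lambda N^{-1/2}V^{ij}(|\z|;Nt)}]$ uniformly). This uniform integrability upgrades the convergence in law to convergence of expectations, yielding
\[
\lim_{N\to\infty}\mathbb E[\mathfrak H^N(t,\phi)^k] = \mathbf E_{BM^{\otimes k}}\bigg[e^{\gamma(\z)\sum_{1\le i<j\le k}L_0^{U^i-U^j}(t)}\prod_{i=1}^k\phi(U^i_t)\bigg].
\]
To handle the remainder term in the exponent cleanly, I would write $\exp(\mathscr V^{ij}_N(\z;t)+\mathscr E_N)=\exp(\mathscr V^{ij}_N(\z;t))(1+(e^{\mathscr E_N}-1))$, use $|e^{\mathscr E_N}-1|\le |\mathscr E_N|e^{|\mathscr E_N|}$, then Cauchy–Schwarz plus the bound $|\mathscr E_N|\le CN^{-1/2}\sum_{i<j}V^{ij}(\Fd;Nt)+CN^{-1/2}$ from Corollary \ref{lptight1}, so that the contribution of the remainder vanishes.

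The main obstacle is the uniform integrability step combined with the sign-indefiniteness of $\z$: naively one would like a nonnegative exponent, but $\z$ can change sign, so one must carefully use $e^{\mathscr V^{ij}_N(\z;t)}\le e^{\mathscr V^{ij}_N(|\z|;t)}$ and then the full strength of the iterated-moment bound \eqref{mbdd} inside Corollary \ref{exp2} (which requires $|\z|\le F$ with $F$ decreasing and summable, itself a consequence of Item \eqref{a24}) to produce a uniform $L^{1+\e}$ bound. A secondary, more routine, point is matching the normalizations — verifying that the microscopic exponent $N^{-\frac1{4p}}$ applied to the $k$-point cumulant generating function produces exactly the factor $N^{-1/2}\z$ at quadratic order and that the drift recentering $d_N t$ in $\mathbf X_N$ is precisely the one appearing in \eqref{hn} — but this is immediate from Proposition \ref{dbound} and the observation that $d_N=N\mathpzc F(N^{-\frac1{4p}})$ from Corollary \ref{corb}. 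Once these are in place the identification of the limit is automatic from Theorem \ref{conv2} and the continuous mapping theorem.
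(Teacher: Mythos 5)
Your proof is correct and follows essentially the same route as the paper: apply Lemma \ref{1.3} to express the moment, use Proposition \ref{dbound} (with $m=0$ and all $\beta_i = N^{-\frac1{4p}}$) to identify the exponential weight as $\sum_{i<j}\mathscr V^{ij}_N(\z;t)$ plus a vanishing remainder, invoke Theorem \ref{conv2} for convergence in law, and use Corollary \ref{exp2} for the uniform integrability that upgrades to convergence of expectations. Your explicit handling of the sign-indefiniteness of $\z$ via $e^{\mathscr V^{ij}_N(\z;t)}\le e^{\mathscr V^{ij}_N(|\z|;t)}$ is a useful detail the paper leaves implicit, but it is the same argument.
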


\begin{proof}
    This is immediate from Lemma \ref{1.3}, Proposition \ref{dbound} (with all $\beta_i=N^{-\frac1{4p}}$ and $m=0$), and Theorem \ref{conv2}, noting that Corollary \ref{exp2} guarantees uniform integrability of all relevant quantities up to exponential scales (hence convergence in law implies convergence of the respective expectations).
\end{proof}

The following corollary of Theorem \ref{conv2} will be useful in later sections. 

\begin{cor}\label{convc} Assume the same notations and assumptions of Theorem \ref{conv2}.
    Suppose $k=2m$ is even. For $t>s>0$ define
		\begin{align}\label{deltadef}
			\Delta_m[s,t]:=\{(s_1,s_2,\ldots,s_m)\in [s,t]^m \mid s\le s_1\le s_2\le\cdots\le s_m\le t\}
		\end{align}
		to be the set of all ordered $m$-tuples of points in $[s,t]$. We define $\mathcal M(\Delta_m[0,T])$ to be the space of finite and non-negative Borel measures on that simplex, equipped with the topology of weak convergence. Fix some measurable function $f:I\to \mathbb R$ such that $|f(x)|\leq F(|x|)$ for some decreasing $F:[0,\infty)\to[0,\infty)$ such that $\sum_{k=0}^\infty F(k)<\infty$. Consider the following sequence of $\mathcal M(\Delta_m[0,T])$-valued random variables
\begin{align}
\label{gamman}
	\nu_N^{f,k}:= N^{-m/2}\sum_{u_1\le ...\le u_m \in (N^{-1}\mathbb Z_{\ge 0})\cap [0,T]}\;\;\;\prod_{j=1}^m f\big(R^{2j-1}_{Nu_j} - R^{2j}_{Nu_j}\big)\delta_{(u_1,\ldots,u_m)}
\end{align}
where as usual $(R^1,...,R^{2m})$ is the canonical process on $(I^k)^{\mathbb Z_{\ge 0}}$. The random variables $\{\nu_N^{f,k}\}_{N\ge 1}$ are tight in the topology of $\mathcal M(\Delta_m[0,T])$. Moreover, recalling the notations from Theorem \ref{conv2}, as $N\to\infty$ any limit point under $\mathbf P_{\x_N}^{(\beta_N,k)}$ of the triple of processes $(\mathbf X_N,\;\big(\mathscr V^{ij}_N(f,\bullet)\big)_{1\leq i<j\leq k},\; \nu_N^{f,k})$ is of the form
\begin{align*}
    \left(\mathbf{U},\; \big(\gamma(f)\cdot L^{U^i-U^j}\big)_{1\leq i<j\leq k}, \;\gamma(f)^m\prod_{j=1}^m dL^{U^{2j-1}-U^{2j}}(u_j)\right) 
\end{align*}
where $\mathbf U, L^{ij}$ are as in Theorem \ref{conv2}, and $dL(t)$ denotes the Lebesgue-Stiltjes measure induced by the increasing function $t\mapsto L(t)$.
\end{cor}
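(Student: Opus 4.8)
The plan is to bootstrap this result from Theorem \ref{conv2} by viewing the random measure $\nu_N^{f,k}$ as built out of the additive functionals $\mathscr V^{ij}_N(f,\bullet)$ that were already controlled there. First I would establish tightness of $\{\nu_N^{f,k}\}$ in $\mathcal M(\Delta_m[0,T])$. By standard criteria for weak compactness of measures on the compact set $\Delta_m[0,T]$, it suffices to show that the total masses $\nu_N^{f,k}(\Delta_m[0,T])$ are tight as real random variables. But by symmetrization (the sum over ordered tuples times $m!$ is bounded by the sum over all tuples), the total mass is at most $\prod_{j=1}^m \big(N^{-1/2}\sum_{r=0}^{NT} |f|(R^{2j-1}_r - R^{2j}_r)\big) = \prod_{j=1}^m N^{-1/2} V^{2j-1,2j}(|f|;NT)$, where we may take $|f|\le F$. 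Then Corollary \ref{lptight1} (or Corollary \ref{exp2}) gives uniform $L^q$ bounds on each factor, hence via Hölder a uniform $L^1$ (indeed $L^q$) bound on the product. This yields tightness.

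Next I would identify the limit points. Fix a joint limit point $(\mathbf U, (\mathcal L^{ij})_{i<j}, \nu)$ along a subsequence; by Theorem \ref{conv2} we already know $\mathbf U$ is a standard $k$-dimensional Brownian motion from $\boldsymbol x$ and $\mathcal L^{ij} = \gamma(f) L^{U^i-U^j}$. It remains to show $\nu = \gamma(f)^m \prod_{j=1}^m dL^{U^{2j-1}-U^{2j}}(u_j)$. The cleanest route is to test against product functions: for $\phi_1,\dots,\phi_m\in C([0,T])$, consider $\int_{\Delta_m[0,T]} \prod_j \phi_j(s_j)\, \nu_N^{f,k}(ds)$. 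I would first handle the case $\phi_1=\cdots=\phi_m\equiv 1$ separately and then the general case by an inclusion-exclusion / nesting argument. The key algebraic identity is that the (unordered) sum $N^{-m/2}\sum_{u_1,\dots,u_m} \prod_j \phi_j(u_j) f(R^{2j-1}_{Nu_j}-R^{2j}_{Nu_j})$ factorizes across the $m$ independent ``blocks'' of coordinate pairs, but the ordered-simplex restriction couples them; however, the difference between the ordered sum and $\frac{1}{m!}$ times a fully factorized product comes from the diagonal terms $u_i = u_j$, which contribute $O(N^{-1/2})$ (one fewer time-sum) and vanish in the limit — exactly the argument used in the induction in the proof of Corollary \ref{exp2}. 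Thus on the simplex $\nu_N^{f,k}$ is asymptotically equivalent to the product of the increments of the processes $\mathscr V^{2j-1,2j}_N(f,\bullet)$; more precisely, for a partition $0=t_0<t_1<\cdots<t_\ell=T$ one expresses $\int \prod_j \mathbf 1_{[a_j,b_j]}(s_j)\,\nu_N^{f,k}(ds)$ (over nested intervals) in terms of $\prod_j \big(\mathscr V^{2j-1,2j}_N(f,b_j) - \mathscr V^{2j-1,2j}_N(f,a_j)\big)$ up to $O(N^{-1/2})$ errors, and Theorem \ref{conv2} identifies the limit of the latter as $\gamma(f)^m\prod_j \big(L^{U^{2j-1}-U^{2j}}(b_j) - L^{U^{2j-1}-U^{2j}}(a_j)\big)$. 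Passing from nested-rectangle test functions to all of $C(\Delta_m[0,T])$ is a routine approximation, using continuity of the Brownian local times and the uniform moment bounds to control remainders.

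A subtle point worth spelling out: because local time $L^{U^{2j-1}-U^{2j}}$ is a.s.\ continuous and increasing, the product measure $\prod_j dL^{U^{2j-1}-U^{2j}}(u_j)$ assigns zero mass to the boundary $\{s_i=s_j\}$ of $\Delta_m[0,T]$, which is precisely why the diagonal corrections are negligible and why the ordered and unordered versions have the same limit up to the $1/m!$ that is already built into the simplex. I would make this rigorous by noting $\mathbf E[\big(N^{-1/2}V^{ij}(|f|;N\varepsilon)\big)^q]\le C\varepsilon^{q/2}$ uniformly (Corollary \ref{lptight1}), so the $\nu_N$-mass of an $\varepsilon$-neighborhood of the diagonal is uniformly $O(\varepsilon^{1/2})$ in expectation, giving the needed equicontinuity/negligibility.

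The main obstacle is the bookkeeping in the second paragraph: cleanly converting the ordered-simplex sum defining $\nu_N^{f,k}$ into products of increments of the one-parameter additive functionals while controlling the off-diagonal/coincidence terms uniformly in $N$. This is not conceptually hard — it is essentially the combinatorial identity $\sum_{\text{ordered}} = \frac1{m!}\big(\sum_{\text{all}} - \sum_{\text{some coincidence}}\big)$ iterated, combined with the Markov property to re-root each block at a coincidence time — but it requires care to phrase at the level of measures tested against arbitrary continuous functions rather than just constants, and to invoke the joint convergence (not just marginal) of $(\mathbf X_N, (\mathscr V^{ij}_N)_{i<j})$ from Theorem \ref{conv2} so that the limiting local times appearing in different blocks are the correct (jointly defined) ones. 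Once that reduction is in place, everything else is an application of results already proved.
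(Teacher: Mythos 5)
Your proposal is correct and follows essentially the same approach as the paper: tightness via bounding the total mass by a product of the single-pair additive functionals $N^{-1/2}\sum_r F(|R^{2j-1}_r-R^{2j}_r|)$ with uniform $L^q$ control from Corollary \ref{lptight1} and H\"older, then identification of the limit measure by relating it to products of increments of the $\mathscr V^{ij}_N$ and invoking the joint convergence in Theorem \ref{conv2}. The paper compresses the identification step into the one-line assertion that ``the joint cumulative distribution function of the measure $\nu$ is necessarily given by a product of the $K^{ij}$, since this is true in the prelimit,'' whereas you correctly flag that the ordered-simplex constraint means the prelimiting CDF is not literally a product and handle the discrepancy by showing the diagonal coincidence terms are $O(N^{-1/2})$ and that the limiting product of local-time measures is atomless on $\{u_i=u_j\}$ --- this extra care spells out exactly what the paper leaves implicit.
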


\begin{proof}
Note that
\begin{align*}
	\nu_N^{f,k}(\Delta_m[0,T]) \le \prod_{j=1}^m\left[N^{-\frac12}\sum_{r=0}^{NT} f\big(R^{2j-1}_{Nu} - R^{2j}_{Nu}\big) \right].
\end{align*}
From the second bound in Corollary \ref{lptight1}, we know for all $q\ge 1$ that the $q^{th}$ moments of each of the terms in the product are uniformly bounded as $N\to \infty$ under the measures $\mathbf E_{\x_N}^{(\beta_N,k)}$, where $\beta_N=N^{-\frac1{4p}}$. Thus for all $q\ge 1$, we can deduce using e.g. H\"older inequality that
\begin{align}\label{gmass1}
	\sup_{\x\in I^k}\sup_{\beta\in \ak} \mathbf E_{\x}^{(\beta,k)}[\nu_N^{f,k}(\Delta_m[0,T])^q]<\infty.
\end{align}
Hence the laws of $\{\nu_N^{f,k}\}_{N\ge 1}$ are tight as measures on $\mathcal M(\Delta_m[0,T])$, because the total mass of $\nu_N^{f,k}$ is a tight family of random variables by \eqref{gmass1} and because $\Delta_m[0,T]$ as defined in \eqref{deltadef} is a compact space (hence so is the unit ball of $\mathcal M(\Delta_m[0,T])$ in the weak topology). Let $\left(\mathbf{U}, \big(K^{ij}\big)_{1\leq i<j\leq k}, \nu\right)$ be any limit point of the sequence $(\mathbf X_N,\;\big(\mathscr V^{ij}_N(f,\bullet)\big)_{1\leq i<j\leq k},\; \nu_N^{f,k})$.  Note that the joint cumulative distribution function of the measure $\nu$ is necessarily given by a product of the $K^{ij}$, since this is true in the prelimit. Since we know that $K^{ij}=\gamma(f) L_0^{U^{i}-U^{j}}$ by Theorem \ref{conv2}, this immediately implies that $\nu=\gamma(f)^m \prod_{j=1}^m dL_0^{U^{2j-1}-U^{2j}}(u_j)$. 
\end{proof}


This concludes our study of the $k$-point motion $\boldsymbol p^{(k)}$ and its Girsanov tilts $\qdif$, which was initiated in Section 2. Subsequent sections will use the convergence results of this section to prove Theorem \ref{main2}.

\section{The discrete stochastic heat equation}\label{hopf}

In this section, we are going to change topic to studying the rescaled field $\mathfrak H^N$ from \eqref{hn}. In particular, we will derive a discrete stochastic heat equation for this rescaled field $\mathfrak H^N$. This discrete equation will be crucial in proving Theorem \ref{main2}, because it will eventually allow us to show that any limit point satisfies the martingale problem characterization of the equation \eqref{she} \cite{KS88, BG97}. 

First we establish some notations that will be used in this section. For a measure $\nu$ on $I$ we will write $\int_I f(x) \nu (\mathrm d x+y)$ to mean the integral of $f$ with respect to the measure $\nu_y(A) := \nu(A+y),$ where $A+y$ is the translate of the Borel set $A$ by the real number $y$. More intuitively, one has the identity ``$\nu(\dr x+y) = \nu(x+y) \dr x.$"

Let $\vec d_N$ be as in \eqref{dn}, and define the discrete lattice $$\Lambda_N:=\{(t, x)\in \mathbb N\times\mathbb R : x+tN^{-1} d_N\in \mathbb N\times I\}.$$ Let $\mu$ be the measure from Assumption \ref{a1} Item \eqref{a22}, and for $\lambda\in \mathbb R$ define the measures $\mu^\lambda(\dr x) = e^{\lambda x - \log M(\lambda)} \mu(\dr x)$. Note that $\mu^\lambda$ makes sense for $|\lambda|<\eta$ where $\eta$ is as in  Assumption \ref{a1} Item \eqref{a22}. 

\begin{defn}Let $\Lambda_N$ be as above, and suppose that $f$ is a signed measure on $\Lambda_N$. We then define the \textbf{discrete heat operator} $$\mathcal L_N f(t,x) = f(t+1,x-N^{-1}d_N) - \int_I f(t,x-y) \mu^{N^{-\frac1{4p}}}(\dr y).$$ \end{defn}
Since convolution of measures is well-defined, this expression makes sense even if $f(t,\cdot)$ is a finite measure for all $t\in\mathbb N$. The resultant object $g=\mathcal L_N f$ makes sense as a signed measure on $\mathbb N\times I$, such that $g(t,\cdot)$ is a \textit{finite }signed measure for all $t \in \mathbb N,$ which has exponential moments up to some value.

In particular if $P^\omega(r,\cdot)$ is given by \eqref{kn}, then we can define for $r\in \mathbb Z_{\ge 0}$ 
\begin{equation}Z_N(r,\dr x) = D_{N,N^{-1}r,N^{-1/2}x} P^\omega(r,rN^{-1}d_N+\dr x).\label{zn}\end{equation}
Then $\mathcal L_N Z_N$ makes sense as a signed measure on $\mathbb N\times I.$ Furthermore $\mathcal L_N Z_N(Nt,\cdot)$ is a finite signed measure on $I$ for all $t$, and still has exponential moments. Note that the main object of interest in Theorem \ref{main2} is essentially the diffusively rescaled field $Z_N(Nt,N^{1/2}\dr x),$ and roughly speaking the main goal in the remainder of the section will be to find and study a nice family of martingales for this rescaled family.

\begin{lem}
    Let $N\in \mathbb N$. For $\phi\in C_c^\infty(\mathbb R)$ we define $M_t^N(\phi)$ for $t\in N^{-1}\mathbb Z_{\ge 0}$ by the formula $M_0^N(\phi)=0$, and \begin{equation}\label{mfield}M^N_{t+N^{-1}}(\phi) - M_t^N(\phi):= \int_I \phi(N^{-1/2}x)\; (\mathcal L_N Z_N)(Nt,\dr x).\end{equation}
    Then for all $N\in \mathbb N$ and $\phi\in C_c^\infty(\mathbb R)$, the process $M^N(\phi)$ is a martingale indexed by $N^{-1}\mathbb Z_{\ge 0}$, with respect to the filtration $(\mathcal F^\omega_{Nt})_{t\in N^{-1}\mathbb Z_{\ge 0}}$ where $\mathcal F_t^\omega:=\sigma(K_1,...,K_{t})$ for $t\in \mathbb N$ (and $\omega=(K_i)_{i=1}^\infty$ are the IID environment kernels as in Assumption \ref{a1}).
\end{lem}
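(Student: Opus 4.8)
The plan is to verify the martingale property by a direct conditional-expectation computation; the key point is that the exponential weights in \eqref{zn} are tuned precisely so that, after absorbing the drift $d_N$, the field $Z_N$ becomes a ``kernel-level'' Girsanov transform of $\mathfrak K$, and the discrete heat operator $\mathcal L_N$ then annihilates conditional-expectation increments. Write $\lambda_N := N^{-1/(4p)}$ for brevity. First I would unwind \eqref{zn}: inserting the explicit form of $D_{N,t,x}$ and performing the change of variables $z = x + rN^{-1}d_N$, all the $d_N$-dependent factors in $D_{N,\cdot,\cdot}$ cancel, so that for every $\phi\in C_c^\infty(\mathbb R)$ and $r\in\mathbb Z_{\ge0}$
\[
\int_I \phi(N^{-1/2}x)\, Z_N(r,\mathrm dx) \;=\; \int_I \psi_r(z)\, W_r(\mathrm dz),\qquad W_r(\mathrm dz) := e^{\lambda_N z - r\log M(\lambda_N)}\,\mathfrak K(r,\mathrm dz),
\]
where $\psi_r(z) := \phi\big(N^{-1/2}(z - rN^{-1}d_N)\big)$ is again compactly supported. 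The measure $W_r$ is the exact analogue, at the level of kernels, of the discrete Girsanov martingale in \eqref{aft}. Using this identity and the translation-of-measures conventions introduced at the start of the section, one rewrites the increment \eqref{mfield} at $r = Nt$ as
\[
M^N_{t+N^{-1}}(\phi) - M^N_t(\phi) \;=\; \int_I \psi_r(z)\, W_{r+1}(\mathrm dz) \;-\; \int_I\Big(\int_I \psi_r(z+y)\,\mu^{\lambda_N}(\mathrm dy)\Big)\, W_r(\mathrm dz).
\]

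Next I would compute the conditional expectation of the first term given $\mathcal F^\omega_r$. By the cocycle identity $\mathfrak K(r+1,A) = \int_I K_{r+1}(x,A)\,\mathfrak K(r,\mathrm dx)$ (immediate from \eqref{kn}), the $\mathcal F^\omega_r$-measurability of $\mathfrak K(r,\cdot)$, and the independence of $K_{r+1}$ from $\mathcal F^\omega_r$ (Assumption \ref{a1} Item \eqref{a11}), one pushes the expectation onto $K_{r+1}$; and by spatial translation invariance (Item \eqref{a12}) together with $\mu(A)=\mathbb E[K_1(0,A)]$ (Item \eqref{a22}) one has $\mathbb E[K_{r+1}(x,\mathrm dz)] = \mu(\mathrm dz - x)$. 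A short manipulation then gives $\int_I \psi_r(z)e^{\lambda_N z}\,\mathbb E[K_{r+1}(x,\mathrm dz)] = e^{\lambda_N x} M(\lambda_N)\int_I \psi_r(x+w)\,\mu^{\lambda_N}(\mathrm dw)$, and the extra factor $M(\lambda_N)$ combines with the normalization $e^{-(r+1)\log M(\lambda_N)}$ built into $W_{r+1}$ to produce $e^{-r\log M(\lambda_N)}$. Hence $\mathbb E\big[\int_I \psi_r\,\mathrm dW_{r+1}\,\big|\,\mathcal F^\omega_r\big]$ equals precisely the second term of the increment above, so the conditional expectation of the increment vanishes. Adaptedness is immediate: by induction the increment from $t$ to $t+N^{-1}$ involves only $Z_N(Nt,\cdot)$ and $Z_N(Nt+1,\cdot)$, hence is $\mathcal F^\omega_{N(t+N^{-1})}$-measurable; and all integrals are finite and the use of Fubini's theorem is legitimate because $\psi_r$ is compactly supported while the kernels have the exponential moments of Item \eqref{a22}.

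There is no serious obstacle here. The only substantive input is the identity $\mathbb E[K_{r+1}(x,\cdot)\mid\mathcal F^\omega_r] = \mu(\cdot - x)$, which is exactly where the i.i.d.\ and translation-invariance hypotheses enter, together with the algebraic fact that $W_r$ is the \emph{exact} Girsanov transform of $\mathfrak K(r,\cdot)$ — i.e.\ that the $\log M(\lambda_N)$ normalization is tuned to cancel the $M(\lambda_N)$ produced when one rewrites the $\mu$-convolution as a $\mu^{\lambda_N}$-convolution. The main care required is purely notational: simultaneously tracking the drift shift $N^{-1}d_N$, the diffusive rescaling $N^{-1/2}$, the weight $D_{N,\cdot,\cdot}$, and the measure-translation convention $\nu(\mathrm dx+y)$ without sign errors. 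This is the same ``discrete Girsanov'' mechanism described in the introduction, now applied one level up, to the products of kernels rather than to a single walk.
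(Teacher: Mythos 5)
Your proof is correct and follows essentially the same route as the paper: you recenter $Z_N$ into the exponentially tilted measure $W_r(\mathrm dz)=e^{\lambda_N z-r\log M(\lambda_N)}\mathfrak K(r,\mathrm dz)$ and check that the one-step conditional expectation of $\int\psi_r\,\mathrm dW_{r+1}$ reproduces the $\mu^{\lambda_N}$-convolution term, which is exactly the paper's computation in \eqref{diff} (cancellation of the $D_{N,\cdot,\cdot}$ ratio, then independence of $K_{Nt+1}$ from $\mathcal F^\omega_{Nt}$ together with $\mathbb E[K_{Nt+1}(y,\mathrm dx)]=\mu(\mathrm dx-y)$). The only difference is presentational — you work in the drift-removed coordinate $z$ while the paper keeps $Z_N$ directly — so no changes are needed.
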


\begin{proof}First note by the convolution property of the kernels $P^\omega$ of \eqref{kn} that if $r=Nt\in \mathbb N$ then $$Z_N(r+1,\dr x-N^{-1}d_N) = D_{N,N^{-1}(r+1),N^{-1/2}(x-N^{-1}d_N)} \int_I D_{N,N^{-1}r,N^{-1/2}y}^{-1} K_{r+1}(y,\dr x) Z_N(r,\dr y).$$ Now notice that 
\begin{align*}D_{N,N^{-1}(r+1),N^{-1/2}(x-N^{-1}d_N)} D_{N,N^{-1}r,N^{-1/2}y}^{-1} &= e^{N^{-\frac1{4p}}(x-N^{-1}d_N-y) + N^{-1-\frac1{4p}}d_N - \log M(N^{-\frac1{4p}})}\\ &= e^{N^{-\frac1{4p}}(x-y) - \log M(N^{-\frac1{4p}})}.
\end{align*}On the other hand we also have by definition that $\mu^{N^{-\frac1{4p}}}(\dr x-y) = e^{N^{-\frac1{4p}}(x-y) - \log M(N^{-\frac1{4p}})}\mu(\dr x-y).$ Therefore we can write \begin{equation}\label{diff}(\mathcal L_N Z_N)(Nt,\dr x) = \int_I e^{N^{-\frac1{4p}}(x-y) - \log M(N^{-\frac1{4p}})}\big[ K_{Nt+1}(y,\dr x) -\mu(\dr x-y)\big]Z_N(Nt,\dr y) .  \end{equation} From the last expression the martingality is clear because $Z_N(Nt,\dr y)$ is $\mathcal F_{Nt-1}$-measurable, and $K_{Nt+1}(y,\dr x)$ is independent of $\mathcal F_{Nt-1}$ with $\mathbb E[ K_{Nt+1}(y,\dr x)] = \mu(\dr x-y).$
\end{proof}
Next we will calculate the predictable quadratic variations of the above martingales. 
If $\phi\in C_c^\infty(\mathbb R)$ define $$J^N\phi(y_1,y_2) := \int_{I^2} \prod_{j=1}^2 e^{N^{-\frac1{4p}}(x_j-y_j) - \log M(N^{-\frac1{4p}})} \phi(N^{-1/2}x_j) \bigg[ \boldsymbol p^{(2)} \big( (y_1,y_2),(\dr x_1,\dr x_2)\big) - \mu(\dr x_1-y_1)\mu(\dr x_2-y_2)\bigg]. $$ 
By \eqref{diff} we find that
    \begin{align}\notag\langle M^N(\phi)\rangle_t &:= \sum_{s=1}^{Nt} \mathbb E \big[\big( M^N_{sN^{-1}}(\phi) - M^N_{(s-1)N^{-1}}(\phi)\big)^2 \big| \mathcal F_{s-1}^\omega \big] \\ &= \notag \sum_{s=1}^{Nt} \mathbb E \bigg[\bigg( \int_I \int_I \phi(N^{-1/2}x) e^{N^{-\frac1{4p}}(x-y) - \log M(N^{-\frac1{4p}})}\big[ K_{s+1}(y,\dr x) -\mu(\dr x-y)\big]Z_N(s,\dr y)\bigg)^2 \bigg| \mathcal F^\omega_{s-1} \bigg] \\ &= \sum_{s=1}^{Nt} \int_{I^2} (J^N\phi)(y_1,y_2) Z_N(s,\dr y_1)Z_N(s,\dr y_2).\label{quadvar}
    \end{align}
    In the last line we again use the fact that $Z_N(s,\bullet)$ is $\mathcal F^\omega_{s-1}$-measurable. Now we let $E^\omega_{(2)}$ denote a quenched expectation of two independent particles $(R^1_n,R^2_n)_{n\ge 0}$ sampled from a fixed realization of the environment kernels $\{K_n\}_{n\ge 1}.$ Then the last expression may be rewritten as $$\sum_{s=1}^{Nt} E^\omega_{(2)} \bigg[\prod_{j=1}^2 D_{N,N^{-1}s, N^{-1/2} (R^j_{Ns} - sN^{-1}d_N)} \big(J^N\phi\big) ( R^1_s,R^2_s) \bigg] .$$
Unfortunately $J^N\phi$ is not yet in a form that is amenable to asymptotic analysis and tightness arguments, therefore we will need to perform Taylor expansions of some expressions therein, which will yield many ``error" terms that we will need to show are inconsequential in the limit. 

First let us abbreviate \begin{equation}\label{rho}\rho\big( (y_1,y_2), (\dr x_1,\dr x_2) \big):= \boldsymbol p^{(2)} \big( (y_1,y_2),(\dr x_1,\dr x_2)\big) - \mu(\dr x_1-y_1)\mu(\dr x_2-y_2).\end{equation} Define$$\mathcal A^1_N(\phi,y_1,y_2):=  \int_{I^2} \prod_{j=1}^2 e^{N^{-\frac1{4p}}(x_j-y_j) - \log M(N^{-\frac1{4p}})} \cdot \big[ \prod_{j=1}^2\phi(N^{-1/2}x_j)\;\;-\;\phi(N^{-1/2}y_1)^2\big] \rho\big( (y_1,y_2), (\dr x_1,\dr x_2) \big).$$
We can then write 
\begin{align}\label{j1}
    J^N\phi(y_1,y_2) = \mathcal A^1_N(\phi,y_1,y_2)+\phi(N^{-1/2}y_1)^2\cdot \int_{I^2} \prod_{j=1}^2 e^{N^{-\frac1{4p}}(x_j-y_j) - \log M(N^{-\frac1{4p}})} \rho\big( (y_1,y_2), (\dr x_1,\dr x_2) \big).
\end{align}
Now we are going to expand the exponential in powers of $N^{-\frac1{4p}}$, truncating after the $(2p)^{th}$ term. To this end, let us define $$H_{2p}(x) := e^x - \sum_{k=0}^{2p}\frac{x^k}{k!},$$
so that $|H_{2p}(x)| \leq |x|^{2p+1} e^{|x|}.$ Now write the integral appearing on the right side of \eqref{j1} as 
\begin{align*}
    \int_{I^2}& \prod_{j=1}^2 e^{N^{-\frac1{4p}}(x_j-y_j) - \log M(N^{-\frac1{4p}})} \rho\big( (y_1,y_2), (\dr x_1,\dr x_2) \big) \\&= e^{-2 \log M(N^{-\frac1{4p}})} \int_{I^2} \prod_{j=1}^2 \bigg[ H_{2p} \big(N^{-\frac1{4p}}(x_j-y_j)\big) + \sum_{k=0}^{2p} \frac{N^{-\frac{k}{4p}} (x_j-y_j)^k}{k!} \bigg] \rho\big( (y_1,y_2), (\dr x_1,\dr x_2) \big).
\end{align*}
From the deterministic moments in Assumption \ref{a1} Item \eqref{a23}, note that we necessarily have 
\begin{equation}\label{vanish}\int_{I^2} \prod_{j=1}^2(x_j-y_j)^{k_j}\rho\big( (y_1,y_2), (\dr x_1,\dr x_2) \big) = 0,\;\;\;\;\; \text{if} \;\;\;\; k_1<p \;\;\; or \;\;\; k_2<p.\end{equation} Therefore we may write 
\begin{align*}
    \int_{I^2} &\prod_{j=1}^2 \bigg[ H_{2p} \big(N^{-\frac1{4p}}(x_j-y_j)\big) + \sum_{k=0}^{2p} \frac{N^{-\frac{k}{4p}} (x_j-y_j)^k}{k!} \bigg] \rho\big( (y_1,y_2), (\dr x_1,\dr x_2) \big) \\ &= N^{-1/2}\boldsymbol{\zeta}(y_1-y_2) + \sum_{j=2,3,4} \mathcal A^j_N(y_1-y_2),
\end{align*}
where 
\begin{align*}\boldsymbol{\zeta}(y_1-y_2)&:=(p!)^{-2} \int_{I^2} \prod_{j=1}^2 (x_j-y_j)^p \rho\big( (y_1,y_2), (\dr x_1,\dr x_2) \big)\\
    \mathcal A^2_N(y_1-y_2)&:= \int_{I^2}\prod_{j=1}^2  H_{2p} \big(N^{-\frac1{4p}}(x_j-y_j)\big)\rho\big( (y_1,y_2), (\dr x_1,\dr x_2) \big)\\ \mathcal A^3_N(y_1-y_2)&:= 2\int_{I^2} H_{2p} \big(N^{-\frac1{4p}}(x_1-y_1)\big)\sum_{k=0}^{2p} \frac{N^{-\frac{k}{4p}} (x_2-y_2)^k}{k!}\rho\big( (y_1,y_2), (\dr x_1,\dr x_2) \big)\\ \mathcal A^4_N(y_1-y_2)&:= \int_{I^2} \sum_{\substack{p\leq k_1,k_2\leq 2p\\k_1+k_2>2p}} \frac{N^{-\frac{(k_1+k_2)}{4p}} (x_1-y_1)^{k_1}(x_2-y_2)^{k_2}}{k_1!k_2!}\rho\big( (y_1,y_2), (\dr x_1,\dr x_2) \big).
\end{align*}
In the last expression we are using the fact that if $k_1+k_2\leq 2p$ then either $(k_1,k_2)=(p,p)$ (whose contribution is already measured by $\boldsymbol{\zeta}$) or at least one of $k_1,k_2$ is less than $p$ (hence the contribution vanishes by \eqref{vanish}). Note that the above expressions appear to be functions of $(y_1,y_2)$, but we have implicitly used the translation invariance condition in Assumption \ref{a1} Item \eqref{a12} to say that they are actually only dependent on $y_1-y_2$.

We will ultimately show that $\boldsymbol{\zeta}$ is important in the limit, while the terms $\mathcal A^j_N$ have vanishing contribution. Since $\boldsymbol{\zeta}$ has a prefactor of $N^{-1/2}$ and since $\mathcal A^2,\mathcal A^3,\mathcal A^4$ are of similar form but of order $N^{-\frac12-\frac{1}{4p}}$ and smaller, this is already somewhat intuitively clear. We therefore define \begin{align} \mathcal E^j_N(t,\phi)&:= e^{-2\log M(N^{-\frac1{4p}})}\sum_{s=0}^{Nt-1}\int_{I^2} \phi(N^{-1/2}y_1)^2\mathcal A^j_N(y_1-y_2)Z_N(s,\dr y_1)Z_N(s,\dr y_2),\;\;\;\;\;\;\;j=2,3,4.\label{err234}
\end{align}We also define\begin{equation}\label{err1}\mathcal E^1_N(t,\phi) = \sum_{s=0}^{Nt-1} 
\int_{I^2}\mathcal A^1_N(\phi,y_1,y_2)Z_N(s,\dr y_1)Z_N(s,\dr y_2).\end{equation}
We then summarize our calculations as follows.
\begin{lem}\label{4.3}
Let $M^N$ be the martingale from \eqref{mfield}. For all $\phi\in C_c^\infty(\mathbb R)$, $N\in \mathbb N$ and $t\in N^{-1}\mathbb Z_{\ge 0}$ we have that \begin{align*}\langle &M^N(\phi)\rangle_t= e^{-2\log M(N^{-\frac1{4p}})} Q^{\z}_N(t,\phi^2)+\sum_{j=1}^4 \mathcal E^j_N(t,\phi)\end{align*}
    where $\z$ is the function from Definition \ref{z} and where for any bounded function $f:I\to\mathbb R$, the random field $Q^f_N$ is defined as \begin{equation}\label{qfield}Q^f_N(t,\phi):=N^{-1/2}\sum_{s=1}^{Nt} E^\omega_{(2)} \bigg[\prod_{j=1}^2 D_{N,N^{-1}s, N^{-1/2} (R^j_{s} - N^{-1}d_Ns)} \cdot \phi\big(N^{-1/2} (R^1_{s} - N^{-1}d_Ns)\big)f( R^1_{s}-R^2_{s}) \bigg],\end{equation}
    and furthermore $\mathcal E^1_N$ is defined in \eqref{err1}, where $\mathcal E^2_N,\mathcal E^3_N,\mathcal E^4_N$ are defined in \eqref{err234}, and $E^\omega_{(2)}$ denotes a quenched expectation of two independent motions $(R^1_r,R^2_r)_{r\in \mathbb Z_{\ge 0}}$ in a fixed realization of the environment kernels $\omega= (K_i)_{i=1}^\infty$.
\end{lem}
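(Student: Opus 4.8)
The proof of Lemma \ref{4.3} is essentially a bookkeeping exercise that assembles the expansions carried out in the surrounding text, so the plan is simply to organize those computations into a clean argument. First I would start from the expression \eqref{quadvar} for $\langle M^N(\phi)\rangle_t$, namely $\langle M^N(\phi)\rangle_t = \sum_{s=1}^{Nt} \int_{I^2} (J^N\phi)(y_1,y_2) Z_N(s,\mathrm dy_1)Z_N(s,\mathrm dy_2)$, which was derived using the It\^o-type isometry for the martingale increments together with the $\mathcal F^\omega_{s-1}$-measurability of $Z_N$. Then I would substitute the decomposition \eqref{j1} of $J^N\phi$ into $\mathcal A^1_N(\phi,y_1,y_2)$ plus $\phi(N^{-1/2}y_1)^2$ times the integral against $\rho$. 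The first term, summed against $Z_N(s,\cdot)^{\otimes 2}$, is by definition exactly $\mathcal E^1_N(t,\phi)$ as given in \eqref{err1}.

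Next I would handle the second term by invoking the Taylor expansion of the exponentials with remainder $H_{2p}$ and using the vanishing relation \eqref{vanish} (a consequence of Assumption \ref{a1} Item \eqref{a23}) to kill all moment terms where either index is $<p$. This yields the identity that the integral against $\rho$ equals $e^{-2\log M(N^{-1/4p})}\big(N^{-1/2}\boldsymbol\zeta(y_1-y_2) + \sum_{j=2,3,4}\mathcal A^j_N(y_1-y_2)\big)$, where the $N^{-1/2}\boldsymbol\zeta$ term comes precisely from the $(k_1,k_2)=(p,p)$ contribution and the $\mathcal A^j_N$ collect the remaining remainder/higher-order pieces. Multiplying by $\phi(N^{-1/2}y_1)^2$ and summing against $Z_N(s,\cdot)^{\otimes 2}$ over $s$, the $\mathcal A^j_N$ pieces become exactly $\mathcal E^2_N+\mathcal E^3_N+\mathcal E^4_N$ by the definition \eqref{err234}, while the $\boldsymbol\zeta$ piece must be identified with $e^{-2\log M(N^{-1/4p})}Q^{\z}_N(t,\phi^2)$.

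The one genuinely substantive step is that last identification: I would rewrite the double integral $N^{-1/2}\sum_{s=1}^{Nt}\int_{I^2}\phi(N^{-1/2}y_1)^2\,\boldsymbol\zeta(y_1-y_2)\,Z_N(s,\mathrm dy_1)Z_N(s,\mathrm dy_2)$ in probabilistic form. Using the definition \eqref{zn} of $Z_N$ and the fact that $\mathfrak K(s,\cdot)$ is the quenched law of a single walker, one recognizes $Z_N(s,\mathrm dy_1)Z_N(s,\mathrm dy_2)$ as the quenched law of two independent walkers $(R^1,R^2)$ in the fixed environment, weighted by the product of the exponential factors $D_{N,N^{-1}s,N^{-1/2}(R^j_s - N^{-1}d_N s)}$; here one must also be careful to track how the recentering by $d_N s$ interacts with the factor $\phi(N^{-1/2}y_1)^2$, so that the argument of $\phi$ becomes $N^{-1/2}(R^1_s - N^{-1}d_N s)$ as in \eqref{qfield}. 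Since $\boldsymbol\zeta$ depends only on $y_1-y_2 = R^1_s - R^2_s$ by translation invariance (Assumption \ref{a1} Item \eqref{a12}), this matches the definition \eqref{qfield} of $Q^f_N$ with $f=\z$. The main obstacle, to the extent there is one, is purely notational: keeping the shifts by $d_N s$, the exponential prefactors $D_{N,\cdot,\cdot}$, and the $e^{-2\log M(N^{-1/4p})}$ normalization consistent across the measure-theoretic and probabilistic representations. Once all four $\mathcal E^j_N$ terms and the $Q^\z_N$ term are matched against their definitions, collecting everything gives the stated formula and completes the proof.
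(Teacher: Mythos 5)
Your proposal is correct and reproduces the paper's own derivation essentially verbatim: the paper establishes this lemma not with a separate proof environment but through the sequence of in-text computations beginning at \eqref{quadvar} and running through the definitions of $\rho$, $\mathcal A^j_N$, and $\mathcal E^j_N$, and you have reassembled those same steps in the same order. The identification of the $\boldsymbol\zeta$ piece with $e^{-2\log M(N^{-1/4p})}Q^{\z}_N(t,\phi^2)$ via the probabilistic rewriting of $Z_N(s,\cdot)^{\otimes 2}$-integrals as $E^\omega_{(2)}$-expectations, which you flag as the one substantive step, is exactly the bridge the paper takes between \eqref{quadvar} and \eqref{j1}.
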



The above expression \eqref{qfield} for the quadratic variation will be extremely useful when identifying the limit points. In particular, it will help us to obtain the correct noise coefficient because the expression for $\z$ is precisely the expression in the numerator of this coefficient $\gamma_{\mathrm{ext}}$ in Theorem \ref{main2}, up to a constant.

\subsection{Analysis of the quadratic variation field}

This subsection will heavily focus on obtaining crucial formulas and estimates for the quadratic variation field (QVF) $Q_N$ defined in \eqref{qfield}. Later, these formulas and estimates will allow us to show tightness of \eqref{hn} and also uniquely identify its limit points. The main idea of the proofs in this section is to use the probabilistic interpretation \eqref{qfield} of the  quadratic variation field together with the strong convergence results of Section 3 to obtain very precise estimates. We first have two lemmas before stating the key estimate of this section. At this point, one should also recall the path measures $\mathbf P_{\mathrm{RW}^{(k)}}=\mathbf P^{(\beta=0,k)}_{(0,...,0)}$ from Section 2, as well as their tilted generalizations from Definition \ref{shfa}.


 \begin{lem}[Moment formulas]\label{l:Qmom} Fix any bounded functions $\psi,\phi$ on $\R$ and $N\ge 1$. Suppose that $(R^1,\ldots, R^{2k})$ denotes the canonical process on $(I^{2k})^{\mathbb Z_{\ge 0}}$. Recall $\Delta_k[s,t]$ from \eqref{deltadef}, and define $\Delta_k^N[s,t]:= (N^{-1}\mathbb Z_{\ge 0})^k\cap \Delta_k[s,t]$. We have the following moment formulas.
		\begin{enumerate}[label=(\alph*),leftmargin=15pt]
			\item \label{l:QXmom} For all $t\in N^{-1}\mathbb Z_{\ge 0}$, all $\gamma>0$, and all bounded $f:I\to \mathbb R$, we have that
			\begin{equation}
				\label{e:QXmom}
				\begin{aligned}
					& \Ex\left[\bigg(Q_N^f(t,\psi)- \frac{\gamma}N\sum_{s\in (N^{-1}\mathbb Z_{\ge 0})\cap [0,t]} \mathfrak H^N(s,\phi)^2 \bigg)^{k}\right] \\  = k! N^{-k}\sum_{(s_1,\ldots,s_k)\in\Delta_k^N[0,t]}  &\mathbf E_{\mathrm{RW}^{(2k)}}\bigg[ \prod_{i=1}^k D_{N,s_i,N^{-1/2}(R^{2i-1}_{Ns_i}- d_N s_i)}\cdot D_{N,s_i,N^{-1/2}(R^{2i}_{Ns_i}- d_N s_i)}
     \Upsilon_N^f({Ns_i};R^{2i-1},R^{2i})\bigg],
				\end{aligned}
			\end{equation}
			where for two processes $X=(X_r)_{R\in \mathbb Z_{\ge 0}}$ and $Y=(Y_r)_{R\in \mathbb Z_{\ge 0}}$,
			\begin{align*}
				\Upsilon_N^f(r;X,Y) & := N^{1/2}\psi\big(N^{-1/2}(X_r-N^{-1/4}r)\big)f(X_r-Y_r)-\gamma \phi\big(N^{-1/2}(X_r-N^{-1/4}r)\big)\phi\big(N^{-1/2}(Y_r-N^{-1/4}r)\big).
			\end{align*}
			\item For all $0\le s<t$ we have the following moment formula for the temporal increment of the QVF
			\begin{equation}
				\label{e:Qincmom}
				\begin{aligned}
					& \Ex\left[\bigg(Q_N^f(t,\psi)-Q_N^f(s,\psi)\bigg)^{k}\right] \\ 
					  = N^{-k/2}k! \sum_{(s_1,\ldots,s_k)\in\Delta_k^N[s,t]} & \mathbf E_{\mathrm{RW}_{\nu}^{(2k)}}\bigg[ \prod_{i=1}^kD_{N,s_i,N^{-1/2}(R^{2i-1}_{Ns_i}- d_N s_i)} D_{N,s_i,N^{-1/2}(R^{2i}_{Ns_i}- d_N s_i)} \\&\;\;\;\;\;\;\;\;\;\;\;\;\;\;\;\;\;\;\;\;\;\;\;\cdot \psi(N^{-1/2}(R^{2i}_{Ns_i}-d_N s_i))f(R^{2i-1}_{Ns_i}-R^{2i}_{Ns_i})\bigg].
				\end{aligned}
			\end{equation}
		\end{enumerate}
	\end{lem}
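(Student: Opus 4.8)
The plan is to establish \eqref{e:QXmom} and \eqref{e:Qincmom} by a direct ``chaos-type'' computation, resting on three structural observations: $Q_N^f(t,\psi)$ and each $\mathfrak H^N(s,\phi)^2$ are \emph{quenched} expectations of pairs of independent walkers evolving in one common environment $\omega$; conditionally on $\omega$, the walker-pairs belonging to different copies are independent; and averaging a quenched $2k$-point expectation over the environment produces the $2k$-point motion, $\mathbf E_{RW^{(2k)}}=\mathbb E E^\omega_{(2k)}$ (see the Notations paragraph and Section~2). Concretely, I would first use the identity \eqref{fa} from the proof of Lemma~\ref{1.3} (with $k=2$) to write $\mathfrak H^N(s,\phi)^2 = E^\omega_{(2)}\big[\prod_{j=1}^2 D_{N,s,N^{-1/2}(R^j_{Ns}-d_Ns)}\,\phi(N^{-1/2}(R^j_{Ns}-d_Ns))\big]$, and combine it with the definition \eqref{qfield} of $Q_N^f$ to rewrite the bracketed quantity in \eqref{e:QXmom} as a single time-sum $\tfrac1N\sum_s E^\omega_{(2)}\big[D_{N,s,\cdot}D_{N,s,\cdot}\,\Upsilon_N^f(Ns;R^1,R^2)\big]$, where $\Upsilon_N^f$ is precisely the integrand in the statement (matching the summation ranges up to a single negligible boundary term). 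For part (b) this step is identical, with the $\phi\phi$-part of $\Upsilon_N^f$ deleted (i.e.\ $\gamma=0$) and the time-sum running over $(Ns',Nt]$.

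Next I would raise this to the $k$-th power and expand, $\big(\sum_s(\cdot)_s\big)^k=\sum_{(s_1,\dots,s_k)}\prod_{i=1}^k(\cdot)_{s_i}$, letting the $i$-th factor carry its own independent walker-pair $(R^{2i-1},R^{2i})$. Since conditionally on $\omega$ these $k$ pairs are independent, the product of $k$ quenched two-point expectations collapses to one quenched $2k$-point expectation, $\prod_{i=1}^k E^\omega_{(2)}[\cdot]_{s_i}=E^\omega_{(2k)}\big[\prod_{i=1}^k(\cdot)_{s_i}\big]$ for the $2k$ walkers $R^1,\dots,R^{2k}$ sharing the environment. Taking the annealed expectation and using $\mathbb E E^\omega_{(2k)}=\mathbf E_{RW^{(2k)}}$ then gives $\mathbb E[(\cdot)^k]=N^{-k}\sum_{(s_1,\dots,s_k)}\mathbf E_{RW^{(2k)}}\big[\prod_{i=1}^k D_{N,s_i,\cdot}D_{N,s_i,\cdot}\,\Upsilon_N^f(Ns_i;R^{2i-1},R^{2i})\big]$. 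All of these expectations are finite — after rewriting the $D$-products via the Girsanov change of measure as in Lemma~\ref{1.3} and invoking the uniform exponential-moment estimates of Section~2 (Corollaries~\ref{exp2},~\ref{lptight1},~\ref{convd}) together with the boundedness of $\phi,\psi,f$ — so Fubini applies and these interchanges are legitimate.

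Finally, $\mathbf P_{RW^{(2k)}}$ is invariant under permutations of its $2k$ coordinates (the kernel $\boldsymbol p^{(2k)}$ is symmetric directly from its definition as an expectation of a product), hence in particular under relabelings of the $k$ pairs, so the summand above is a symmetric function of $(s_1,\dots,s_k)$. Reorganizing the sum over all $k$-tuples of time points according to their non-decreasing reordering therefore produces the factor $k!$ and restricts the sum to the simplex $\Delta_k^N[0,t]$ (resp.\ $\Delta_k^N[s',t]$), which is exactly \eqref{e:QXmom} (resp.\ \eqref{e:Qincmom}). The computation is otherwise routine; the only delicate points are the combinatorial reindexing — in particular keeping track of time-tuples with coinciding entries, where the local-time-type bounds of Corollary~\ref{lptight1} are what keep the diagonal contributions under control — and the justification of each interchange of an infinite time-sum with an expectation, both handled by the Section~2 exponential-moment estimates. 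I expect the reindexing bookkeeping to be the only real subtlety; everything else sits on top of the conditional-independence-plus-annealing identity.
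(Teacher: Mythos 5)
Your proposal is correct and follows essentially the same route as the paper: rewrite $\mathfrak H^N(s,\phi)^2$ as a quenched two-point expectation via \eqref{fa}, combine with \eqref{qfield} to collapse the bracketed quantity into a single time-sum of quenched pair-observables, expand the $k$-th power, use conditional independence of the $k$ walker pairs given the environment to merge into one quenched $2k$-point expectation, and then anneal. The paper's own proof is terser and does not spell out the reindexing to the simplex with the $k!$ factor, which you make explicit via permutation invariance of $\mathbf P_{RW^{(2k)}}$.

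One caveat on your last point. Your remark that ``local-time-type bounds of Corollary~\ref{lptight1} keep the diagonal contributions under control'' conflates the exact claim with an asymptotic one. For a symmetric summand $g$, $\sum_{\text{all tuples}} g = \sum_{s_1\le\cdots\le s_k} \tfrac{k!}{\prod_j m_j!} g$ where $m_j$ are multiplicities, so the stated identity $\sum_{\text{all}} = k!\sum_{\Delta_k^N}$ (with $\le$ as in \eqref{deltadef}) overcounts on the diagonal and is not an exact equality. Local-time bounds tell you the diagonal is $o(1)$ as $N\to\infty$, which is a different statement. This imprecision is shared with the paper and is harmless in context, since the lemma is only invoked through limits (Lemma~\ref{add}, Proposition~\ref{4.1}) or through upper bounds (\eqref{e.tight1}, where overcounting only helps), but you should not present the local-time estimate as what makes the exact formula go through.
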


 \begin{proof} Note by \eqref{hn} and \eqref{kn} that
     \begin{align}\notag \mathfrak H^N(t,\phi)^2& = \int_{I^2} D_{N,t,N^{-1/2}(x- d_N t)}D_{N,t,N^{-1/2}(y- d_N t)} \\ \notag &\;\;\;\;\;\;\;\;\;\;\;\;\;\;\;\;\;\cdot \phi\big( N^{-1/2}(x- d_N t)\big)\phi\big( N^{-1/2}(y- d_N t)\big)P^\omega(Nt,\dr x)P^\omega(Nt,\dr y)  \\ &=E_{(2)}^\omega\notag \big[D_{N,t,N^{-1/2}(S_{Nt}- d_N t)}D_{N,t,N^{-1/2}(R_{Nt}- d_Nt)}\\ &\;\;\;\;\;\;\;\;\;\;\;\;\;\;\;\;\;\cdot\phi\big( N^{-1/2}(S_{Nt}- d_Nt)\big)\phi\big( N^{-1/2}(R_{Nt}-d_N t)\big)\big]. \nonumber
\end{align}
Here $E_{(2)}^\omega$ denotes quenched expectation for two independent motions $(R_r,S_r)_{r\ge 0}$ in the fixed realization of the IID environment kernels $\omega = (K_i)_{i=1}^\infty.$ Note that if we apply $\frac1N\sum_{s\in (N^{-1}\mathbb Z_{\ge 0})\cap [0,t]}$ to the last expression, then it is of a similar form to the definition \eqref{qfield} of $Q_N$, hence we have that 
\begin{align*}Q_N^f&(t,\psi)- \frac{\gamma}N\sum_{s\in (N^{-1}\mathbb Z_{\ge 0})\cap [0,t]} \mathfrak H^N(s,\phi)^2  \\&= \frac1N\sum_{s\in (N^{-1}\mathbb Z_{\ge 0})\cap [0,t]}E_{(2)}^\omega[D_{N,s,N^{-1/2}(R_{Ns}- d_N s)}D_{N,s,N^{-1/2}(S_{Ns}- d_N s)} \Upsilon_N^f (Ns;R,S)].
\end{align*}
From here one expands out the $k^{th}$ power of both sides of this equation, then uses the independence of the quenched motions, and finally applies the annealed expectation over the quenched expectation to deduce \eqref{e:QXmom}. The proof of \eqref{e:Qincmom} is similar.
 \end{proof}

 \begin{lem}\label{add} Fix any $k\in \mathbb N$. Let $\{\psi_i\}_{i=1}^k$ and $\{\phi_i\}_{i=1}^{2k}$  be bounded continuous functions on $\mathbb R$.

		Recall $\Delta_k[s,t]$ from \eqref{deltadef}, and define $\Delta_k^N[s,t]:= (N^{-1}\mathbb Z_{\ge 0})^k\cap \Delta_k[s,t].$ 
  Let $A$ be a subset of $\{1,2,\ldots, k\}$. Let $B=\{1,2\ldots,k\}\cap A^c$. For vectors $\vec t= (t_1,...,t_k)$ define
\begin{align*}
       &  E_1(\vec{t}):=  \prod_{i\in A} D_{N,t_i,N^{-1/2}(R^{2i-1}_{Nt_i}- d_N t_i)}\phi_{2i-1}\big(N^{-1/2}(R^{2i-1}_{Nt_i}- d_N t_i)\big) \\ & \hspace{3cm} \cdot \prod_{i\in A} D_{N,t_i,N^{-1/2}(R^{2i}_{Nt_i}- d_N t_i)}\phi_{2i}\big(N^{-1/2}(R^{2i}_{Nt_i}- d_N t_i)\big), \\
   &  E_2(\vec{t}):=\prod_{i\in B} D_{N,t_i,N^{-1/2}(R^{2i-1}_{Nt_i}-d_Nt_i)}D_{N,t_i,N^{-1/2}(R^{2i}_{Nt_i}-d_Nt_i)} \psi_i(N^{-1/2}(R_{Nt_i}^{2i} - d_Nt_i))f(R^{2i-1}_{Nt_{i}} -R^{2i}_{Nt_{i}}).
\end{align*}
For each $0\le s< t\le T<\infty$ we have
		\begin{equation}
			\label{e:add2}
			\begin{aligned}
				& \lim_{N\to\infty} N^{-|A|-\tfrac12|B|}\cdot\mathbf E_{\mathrm{RW}^{(2k)}}\bigg[\sum_{(t_1,\ldots,t_{k})\in \Delta_{k}^N[s,t]} E_1(\vec{t})E_2(\vec{t})\bigg]
      \\ & = \gamma(f)^{|B|}\mathbf E_{\mathrm{BM}^{\otimes (2k)}} \bigg[\int_{\Delta_{k}[s,t]} e^{\gamma(\z) \V_{k}(\vec{t})} \prod_{i\in A} \phi_{2i-1}(U_{t_{i}}^{2i-1}) \phi_{2i}(U_{t_{i}}^{2i})dt_i \cdot \prod_{i\in B} \psi_i(U_{t_{i}}^{2i}) dL_0^{U^{2i-1}-U^{2i}}(t_i) \bigg], 
			\end{aligned}
		\end{equation}
	where the expectation on the right is with respect to a $2k$-dimensional standard Brownian motion $(U^1,\ldots,U^{2k})$ starting at the origin, and 
		\begin{align}
			\label{def:v}
			\V_k(\vec{t}):=\sum_{i=1}^k\sum_{2i-1\le p<q\le 2k} \left[L_0^{U^p-U^q}(t_{i})-L_0^{U^p-U^q}(t_{i-1})\right].
		\end{align}	Here $t_0=0$, and $\int_0^t f(s) dL_0^{U^i-U^j}(s)$ denotes the integration of the continuous function $f:[0,t]\to\mathbb R$ against the (random) Lebesgue-Stiltjes measure $dL_0^{U^i-U^j}$ induced from the increasing function $t\mapsto L_0^{U^i-U^j}(t).$
	\end{lem}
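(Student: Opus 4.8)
\textbf{Proof plan for Lemma \ref{add}.}

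The plan is to reduce the statement to the convergence result already established in Corollary \ref{convc}, applied to the $2k$-point motion under the tilted measure $\mathbf P_{(0,\ldots,0)}^{(N^{-1/4p},2k)}$. The first step is to rewrite the left-hand side as an expectation under this tilted measure. Indeed, the product $\prod_{i\in A} D_{N,t_i,\bullet}\,D_{N,t_i,\bullet} \cdot \prod_{i\in B} D_{N,t_i,\bullet}\,D_{N,t_i,\bullet}$ telescopes: since $D_{N,t,x} = \exp(N^{(2p-1)/4p}x + [\ldots]t)$ and along each trajectory the products of these exponentials at the largest time $t_k$ contain all the others by the martingale property (recall from the introduction that $e^{N^{-1/4p}R_r - r\log M(N^{-1/4p})}$ is a martingale), we can express the whole sum as $\mathbf E^{(N^{-1/4p},2k)}_{(0,\ldots,0)}$ of $\mathcal H^{2k}(N^{-1/4p},\mathbf 0,\mathbf R, Nt_k)$ (the correction term of Corollary \ref{convd}) times $\prod_{i\in A}\phi_{2i-1}(\mathbf X_N(t_i)^{2i-1})\phi_{2i}(\mathbf X_N(t_i)^{2i})$ times $\prod_{i\in B}\psi_i(\mathbf X_N(t_i)^{2i}) f(R^{2i-1}_{Nt_i}-R^{2i}_{Nt_i})$, where $\mathbf X_N$ is the recentered rescaled process from Theorem \ref{conv2}. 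The combinatorial prefactor $N^{-|A|-\frac12|B|}$ is then exactly what is needed: the $|A|$ time-indices in $A$ (which carry no $f$-factor, only $\phi$'s) contribute a Riemann-sum approximation to $dt_i$ with density $N^{-1}$, while the $|B|$ indices carrying an $f$-factor contribute, via Corollary \ref{convc}, the measure $N^{-1/2}\sum \prod f(\cdots)\,\delta_{(\cdots)}$ which converges to $\gamma(f)^{|B|}\prod dL_0^{U^{2j-1}-U^{2j}}$.

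The second step is to invoke the joint convergence. Rescaling so that $\mathbf X_N \Rightarrow \mathbf U$ a standard $2k$-dimensional Brownian motion started at $\mathbf 0$ (Theorem \ref{conv2}), the additive functionals $\mathscr V^{pq}_N(\z;\bullet) \Rightarrow \gamma(\z)L_0^{U^p-U^q}$, and $\nu_N^{f,k}\Rightarrow \gamma(f)^k\prod_j dL_0^{U^{2j-1}-U^{2j}}$ (Corollary \ref{convc}), and noting that $\mathcal H^{2k}(N^{-1/4p},\mathbf 0,\mathbf R,Nt_k)$ converges, by Proposition \ref{dbound} with $m=0$ and all $\beta_i=N^{-1/4p}$, to $\exp(\gamma(\z)\V_k(\vec t))$ where $\V_k$ is the nested sum of local-time increments in \eqref{def:v} — here one must check carefully that the cumulant expansion organizes exactly into this nested structure, because $\mathcal H^{2k}$ at intermediate times $Nt_1 \le \cdots \le Nt_k$ accumulates, over the time-interval $(Nt_{i-1},Nt_i]$, only the pair interactions among coordinates $2i-1,\ldots,2k$ (the first $2i-2$ coordinates having ``split off'' and no longer interacting in the limit because the test functions involving them have already been evaluated). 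Uniform integrability of all these quantities up to exponential scales is guaranteed by Corollary \ref{exp2} and the first bound of Corollary \ref{convd}, so convergence in law upgrades to convergence of the expectations, yielding the right-hand side of \eqref{e:add2}.

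The main obstacle I expect is the bookkeeping in the second step: establishing that the limit of $N^{-|A|-\frac12|B|}\sum_{\vec t\in\Delta_k^N[s,t]}$ of a product of ``instantaneous'' factors (some of which converge to Lebesgue density $dt_i$, some to local-time measures $dL(t_i)$) against the weight $\mathcal H^{2k}$ genuinely produces the iterated integral $\int_{\Delta_k[s,t]}e^{\gamma(\z)\V_k(\vec t)}\prod(\cdots)$, rather than something with spurious diagonal contributions. The cleanest route is the same Krylov--Bogoliubov / simplex-decomposition argument used in the proof of Corollary \ref{exp2}: decompose the $k$-fold sum over $\Delta_k^N[s,t]$ by conditioning successively on the smallest time $t_1$, using the Markov property of the tilted chain (Proposition \ref{mkov}) to restart, and controlling the ``repeated index'' or near-diagonal terms by the uniform moment bounds \eqref{mbdd}. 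One then passes to the limit one time-variable at a time, each step handing off either a $\int_s^t \cdots dt_i$ (when $i\in A$) or a $\int_s^t \cdots dL_0^{U^{2i-1}-U^{2i}}(t_i)$ (when $i\in B$) by Corollary \ref{convc} applied to the restarted chain, and the accumulated weight is tracked to be $\exp(\gamma(\z)\V_k)$ via the additivity of local times and Proposition \ref{dbound}. A secondary technical point is continuity of the limiting functionals in the Skorokhod/uniform topology — the map $(\omega, \nu)\mapsto \int_{\Delta_k}e^{\gamma(\z)\V_k(\vec t)}\prod\phi(\omega_{t_i})\,d\nu$ must be shown continuous a.s. with respect to the law of $(\mathbf U, \prod dL)$, which follows since Brownian local time has no atoms and the paths are continuous.
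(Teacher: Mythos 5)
The first step of your plan contains a genuine gap. You claim that the products of $D$-factors at the distinct times $t_1\le\cdots\le t_k$ can be absorbed into a single change of measure to $\mathbf P^{(N^{-1/(4p)},2k)}_{(0,\ldots,0)}$, with one correction $\mathcal H^{2k}(N^{-1/(4p)},\mathbf 0,\mathbf R,Nt_k)$ evaluated at the largest time. This is false. The identity $\mathpzc M^\beta_r\cdot\mathcal H^{2k}(\beta,\mathbf 0,\mathbf R,r)=\exp\big(\beta\sum_j R^j_r-2kr\log M(\beta)\big)$ means that such a change of measure produces the exponential of $\beta\sum_j R^j_{Nt_k}$ — all $2k$ coordinates at the same terminal time $Nt_k$ — whereas $E_1E_2$ contains $R^{2i-1}_{Nt_i},R^{2i}_{Nt_i}$ at the distinct times $t_i$. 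Already for $k=2$, $A=\emptyset$, $B=\{1,2\}$, $t_1<t_2$, the left side carries $e^{\beta(R^1_{Nt_1}+R^2_{Nt_1}+R^3_{Nt_2}+R^4_{Nt_2})}$ while the tilted expectation carries $e^{\beta\sum_j R^j_{Nt_2}}$; these are different random variables and the constants $e^{-2\log M(\beta)\cdot N(t_1+t_2)}$ vs.\ $e^{-8\log M(\beta)\cdot Nt_2}$ also disagree. Correspondingly, $\mathcal H^{2k}(\beta,\mathbf 0,\mathbf R,Nt_k)$ converges to $\exp\big(\gamma(\z)\sum_{1\le p<q\le 2k}L_0^{U^p-U^q}(t_k)\big)$, which is \emph{not} $\exp(\gamma(\z)\V_k(\vec t))$: the nested sum $\V_k$ cannot arise from a single global tilt, and no ``organization of the cumulant expansion'' will produce it.

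The correct route — which you gesture at in your final paragraph but do not reconcile with the wrong first step — is iterative conditioning, and it is what the paper does. Fix $(t_1,\ldots,t_k)$ and condition the \emph{untilted} $2k$-point motion on $\mathcal F_{Nt_1}$. The $D$-factors for coordinates $2i-1,2i$ with $i\ge2$ factor as $D_{N,t_1,\bullet}\cdot(\text{increment factor on }[Nt_1,Nt_i])$, which pulls $\prod_{j\ge3}D_{N,t_1,X_N^j(t_1)}$ out of the conditional expectation and turns the inner expectation into a tilted $(2k-2)$-point motion on $[Nt_1,Nt_2]$ with its own $\mathcal H^{2k-2}$ correction, and so on inductively. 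It is precisely this cascade of tilted chains of diminishing dimension $2k,2k-2,\ldots,2$ that manufactures the nested structure of $\V_k$: on the slice $(t_{i-1},t_i]$ only coordinates $2i-1,\ldots,2k$ are being tilted, so only those pairs' local times accrue — a structural property of the change of measure, not a consequence of ``the test functions having been evaluated.'' One then sends $N\to\infty$ one time-variable at a time using Theorem~\ref{conv2} and Corollary~\ref{convc}, with uniform integrability supplied by Corollaries~\ref{lptight1}, \ref{exp2}, \ref{convd}. Your bookkeeping worry in the last paragraph is the right one to have; the resolution requires discarding the single-$\mathcal H^{2k}$ reduction that begins your argument.
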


  We remark that $A$ and $B$ are allowed to be empty in the above lemma; these are important cases.

 \begin{proof}
We are going to abbreviate $X^i_N(t):=N^{-1/2}(R^i_{Nt}-d_Nt)$, and we will also abbreviate $$C_N(t,x,y):= D_{N,t,x}D_{N,t,y}.$$
Let us illustrate the proof when $k=2$ and $A=\{1\},$ $B=\{2\}$, as this will already illustrate all of the main ideas of the general argument without becoming extremely notationally cumbersome. We will denote $(\mathcal F_r)_{r\in\mathbb Z_{\ge 0}}$ the canonical filtration on $(I^{2k})^{\mathbb Z_{\ge 0}}$. In this case, for each $(t_1,t_2) \in \Delta_2[s,t]$ we can condition on the smaller time $Nt_1$ to write $\mathbf E_{\mathrm{RW}^{(4)}}[E_1(t_1,t_2)E_2(t_1,t_2)] $ as
     \begin{align}
       \notag   &\mathbf E_{\mathrm{RW}^{(4)}}\big[ C_N\big(X_N^1(t_1),X_N^2(t_1)\big) \prod_{i=1}^2\phi_1(X_N^i(t_1)) C_N\big(X_N^3(t_2),X_N^4(t_2)\big) \psi_2(X_N^4(t_2)) f(R^3_{Nt_2}-R^4_{Nt_2})\big] \\ \notag &= \mathbf E_{\mathrm{RW}^{(4)}}\bigg[ C_N\big(X_N^1(t_1),X_N^2(t_1)\big) \prod_{i=1}^2\phi_1(X_N^i(t_1)) \\&\hspace{3 cm} \cdot \mathbf E_{\mathrm{RW}^{(4)}}[C_N\big(X_N^3(t_2),X_N^4(t_2)\big) \psi_2(X_N^4(t_2)) f(R^3_{Nt_2}-R^4_{Nt_2})\big|\mathcal F_{Nt_1}\big]\bigg]. \label{inner}
     \end{align}
For $\x\in I^k$ and $\beta\in\ak$ let $\Pb$ be the ``tilted" path measures from Definition \ref{shfa}, then define for $r\in\mathbb Z_{\ge 0}$, $x,y\in I$ and $\beta\in \ak$ 
\begin{align*}&\mathscr G_N(r,x,y):= \mathbf E_{(x,y)}^{(\beta,2)}\bigg [\mathcal H^{2}\big(N^{-\frac1{4p}},(x,y),\mathbf R,r\big) \cdot \psi_2\big((N^{-1/2} (R^2_r-N^{-1}d_Nr)\big) f(R^1_r-R^2_r)\bigg],
\end{align*}
where $$\mathcal H^{2k}(\beta,\x,\mathbf R,r):=\exp \bigg( \sum_{s=1}^r \bigg\{ \log \mathbf E_{\x}^{(0,2k)} \big[ e^{\beta\sum_{j=1}^{2k} (R^j_s -R^j_{s-1})} \big| \mathcal F_{s-1}\big] \; - 2k \log M(\beta)\bigg\}\bigg). $$
One sees from the Markov property of the $k$-point motion that the inner conditional expectation of \eqref{inner} can be rewritten in terms of the tilted measures as 
\begin{align*}
    \mathbf E&_{RW^{(4)}}[C_N\big(X_N^3(t_2),X_N^4(t_2)\big) \psi_2(X_N^4(t_2)) f(R^3_{Nt_2}-R^4_{Nt_2})\big|\mathcal F_{Nt_1}\big] \\&= C_N\big(X_N^3(t_1),X_N^4(t_1)\big) \cdot \mathscr G_N(N(t_2-t_1), R^3_{Nt_1},R^4_{Nt_1}),
\end{align*}
where the exponential term $\mathcal H_N$ in the expression for $\mathscr G_N$ appears completely analogously to Lemma \ref{1.3}. This has ``reduced" all of the constants $D_{N,t,x}$ down to time $t_1$, now if we apply another tilt we can get rid of those constants as well. More precisely, from the preceding observations and Definition \ref{shfa} we have that 
\begin{align*}
    &\mathbf E_{\mathrm{RW}^{(4)}}[E_1(t_1,t_2)E_2(t_1,t_2)] \\ &= \mathbf E_{\mathrm{RW}^{(4)}}\bigg[ C_N\big(X_N^1(t_1),X_N^2(t_1)\big)  C_N\big(X_N^3(t_1),X_N^4(t_1)\big) \prod_{i=1}^2\phi_1(X_N^i(t_1))\mathscr G_N( N(t_2-t_1), R^3_{Nt_1},R^4_{Nt_1})\bigg]\\&= \mathbf E_{(0,0,0,0)}^{(N^{-\frac1{4p}},4)}\bigg [\mathcal H^4(N^{-\frac1{4p}}, (0,0,0,0),\mathbf R, Nt_1)\cdot \prod_{i=1}^2\phi_1(X_N^i(t_1))\mathscr G_N( N(t_2-t_1), R^3_{Nt_1},R^4_{Nt_1})\bigg].
\end{align*}
Now to prove the lemma, we want to apply $N^{-3/2} \sum_{(t_1,t_2)\in \Delta_2^N[s,t]}$ to the last expression, then take the limit and show that it approaches the correct quantity. To do this, define for $t_1\in [s,t]\cap(N^{-1}\mathbb Z_{\ge 0})$ $$\mathscr U_N(t_1):= N^{-1/2} \sum_{t_2\in [t_1,t]\cap(N^{-1}\mathbb Z_{\ge 0})} \mathscr G_N( N(t_2-t_1), R^3_{Nt_1},R^4_{Nt_1}).$$ Due to the presence of the term $f(R^1_r-R^2_r)$ in the expression for $\mathscr G_N$, one sees that the expression for $\mathscr U_N$ can be interpreted as an integral with respect to the measures $\gamma_N^f$ of Corollary \ref{convc}. Thus using \eqref{gmass1} and Corollary \ref{convd} the reader may convince herself that $\mathbf E_{(0,0)}^{(N^{-\frac1{4p}},4)}[ |\mathscr U_N(t_1)-\mathscr U_N(t_1')|^q]^{1/q} \leq C|t_1-t_1'|^{1/2}$ with $C=C(q)$ independent of $N,t_1,t_1'$.

By Theorem \ref{conv2}, we know that under the measures $\mathbf E_{(0,0,0,0)}^{(N^{-\frac1{4p}},4)}$, the 4-tuple $(X_N^1,...,X_N^4)$ is converging in law to a standard 4-dimensional Brownian motion $(W^1,W^2,W^3,W^4)$. By applying Proposition \ref{dbound} (with $\beta_i=N^{-\frac1{4p}}$ and $m=0$), one sees that to leading order, the expression for $\mathcal H^2(N^{-\frac1{4p}}, (x,y),\mathbf R,r)$ is given by the exponential of $N^{-1/2}\z(R^1_r-R^2_r)$. Thus using Corollary \ref{convc} with $k=1$ and using the fact that the map from $\mathcal M(\Delta_k[0,T]) \times C(\Delta_k[0,T])\to \mathbb R$ given by $(f,\mu)\mapsto \int_{\Delta_k[0,T]} f\;d\mu$ is continuous, one may show that under the measures $\mathbf E_{(0,0,0,0)}^{(N^{-\frac1{4p}},4)}$, the process $\mathscr U_N$ is converging in law jointly with the $X_N^i$ (with respect to the topology of $C[0,T]$) to the process $$V(t_1):= \gamma(f) \cdot \mathbf E_{\mathrm{BM}^{\otimes 2}}^{(W^3_{t_1},W^4_{t_1})} \bigg[\int_{t_1}^te^{\gamma(\z) L_0^B(t_2-t_1)}  \psi_2(B_{t_2}) dL_0^B(t_2)\bigg].$$ Here the expectation is with respect to a Brownian motion $B$ of rate 2 that is independent of $(W^1,...,W^4)$. Uniform integrability of all quantities involved (hence convergence of the respective expectations) is clear from the moment bounds in Corollary \ref{lptight1}, Corollary \ref{exp2}, and Equation \eqref{gmass1}.

By again applying Proposition \ref{dbound} (with $\beta_i=N^{-\frac1{4p}}$ and $m=0$), one sees that to leading order the expression for $\mathcal H^4(N^{-\frac1{4p}}, (0,0,0,0),\mathbf R,r)$ is given by the exponential of $N^{-1/2}\sum_{1\le i<j\le 4}\z(R^i_r-R^j _r)$. Summarizing these facts and applying the convergence in law of the 5-tuple $(X_N^1,...,X_N^4,\mathscr U_N)$ to $(W^1,...,W^4,V)$, we see that 
\begin{align*}
    N&^{-3/2} \mathbf E_{\mathrm{RW}^{(4)}} \bigg[\sum_{(t_1,t_2)\in \Delta_2^N[s,t]}E_1(t_1,t_2)E_2(t_1,t_2)\bigg] \\ &= \mathbf E_{(0,0,0,0)}^{(N^{-\frac1{4p}},4)}\bigg [N^{-1}\sum_{t_1\in [s,t]\cap (N^{-1}\mathbb Z_{\ge 0})}\mathcal H^4(N^{-\frac1{4p}}, (0,0,0,0),\mathbf R, Nt_1)\cdot \prod_{i=1}^2\phi_1(X_N^i(t_1))\cdot \mathscr U_N(t_1)\bigg]\\ &\stackrel{N\to\infty}{\longrightarrow} \mathbf E_{\mathrm{BM}^{\otimes 4}} \bigg[ \int_s^t e^{\gamma(\z) \sum_{1\le i<j\le 4} L_0^{W^i-W^j}(t_1) } \prod_{i=1}^2 \phi_1( W^i(t_1)) \cdot V(t_1)dt_1 \bigg]
\end{align*}
where we used the fact that convergence in the topology of $C[0,T]$ is strong enough to imply convergence in law of the Riemann sums of the prelimiting process to the integral of the limiting process (and the uniform integrability guaranteed by Corollaries \ref{lptight1} and \ref{convd} guarantees convergence of the respective expectations). It is now an exercise with the definition of the process $V$ and the Markov property of Brownian motion to simplify the last expression to the desired form as in the lemma statement.

While we have proved the claim in a special case when $k=2$, the proof of the general case is similar: one applies the Markov property repeatedly and proceeds inductively in the variable $k$, using the convergence results of Sections 2 and 3 at each step (specifically Corollaries \ref{lptight1}, \ref{exp2}, \ref{convd}, \ref{convc}, and Theorem \ref{conv2} exactly as we have done above).
 \end{proof}

  \begin{prop}[Key estimate for the QVF] \label{4.1}Let $a \in \mathbb R$ and let $\xi(x):= \frac1{\sqrt{\pi}}e^{-x^2}$, and let $\xi_\e^a (x):= \e^{-1}\xi(\e^{-1} (x-a))$. Let $\z:I\to\mathbb R$ be the specific function from Definition \ref{z}. 
  Then for all $t>0$ and $a\in\mathbb R\setminus\{0\}$,
		\begin{align}
			\label{Qllim}
			\lim_{\e \to 0} \;\limsup_{N \to \infty}\;  \Ex\bigg[ \bigg( Q_N^{\z}(t,\xi_\e^a)- \frac{\gamma(\z)}N\sum_{s\in (N^{-1}\mathbb Z_{\ge 0})\cap [0,t]}\mathfrak H^N(s,\xi_{\e\sqrt{2}}^a)^2\bigg)^2 \bigg] = 0.
		\end{align}
		Furthermore, for all $t>0$ we have the bound \begin{equation}\label{e:QXpolylog}\sup_{\substack{\e>0\\a\in\mathbb R\setminus\{0\}}} \limsup_{N \to \infty}  \left[1\wedge\frac1{|\log a|^{2}}\right]\!\cdot\!\Ex\bigg[ \bigg( Q_N^{\z}(t,\xi_\e^a)-\frac{\gamma(\z)}N\sum_{s\in (N^{-1}\mathbb Z_{\ge 0})\cap [0,t]}\mathfrak H^N(s,\xi_{\e\sqrt{2}}^a)^2\bigg)^2 \bigg] <\infty.
		\end{equation}
	\end{prop}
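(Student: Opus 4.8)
\textbf{Proof proposal for Proposition \ref{4.1}.}

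The plan is to use the moment formula \eqref{e:QXmom} from Lemma \ref{l:Qmom}\ref{l:QXmom} with $k=2$, $\psi=\xi_\e^a$, $f=\z$, $\gamma=\gamma(\z)$, and $\phi=\xi_{\e\sqrt2}^a$, which writes the left-hand side of \eqref{Qllim} as a sum over $(s_1,s_2)\in\Delta_2^N[0,t]$ of annealed expectations of the four-point motion against the observable $\Upsilon_N^\z$. The key algebraic point is that $\Upsilon_N^\z(r;X,Y)$ is \emph{designed} to vanish in the scaling limit: $N^{1/2}\xi_\e^a\big(N^{-1/2}(X_r-N^{-1/4}r)\big)\z(X_r-Y_r)$ and $\gamma(\z)\,\xi_{\e\sqrt2}^a\big(N^{-1/2}(X_r-\cdots)\big)\xi_{\e\sqrt2}^a\big(N^{-1/2}(Y_r-\cdots)\big)$ converge, after the diffusive scaling of Theorem \ref{conv2}, to the same thing — namely $\gamma(\z)$ times the Brownian local-time density of $X-Y$ against a heat-kernel-mollified test function at $a$. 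Concretely, I would split each summand using the decomposition of Lemma \ref{add}: apply $N^{-3/2}\sum_{(s_1,s_2)\in\Delta_2^N[0,t]}$ and invoke \eqref{e:add2} with the appropriate choices of $A,B$. The first term of $\Upsilon_N^\z$ at the inner time $s_i$ falls into the ``$B$-type'' (local-time) slot with $\psi=\xi_\e^a$, $f=\z$ and contributes a factor $\gamma(\z)$ together with $\psi(U^{2i}_{s_i})\,dL_0^{U^{2i-1}-U^{2i}}(s_i)$; the second term is ``$A$-type'' (Lebesgue) with test functions $\xi_{\e\sqrt2}^a$ and contributes $\gamma(\z)\,\xi_{\e\sqrt2}^a(U^{2i-1}_{s_i})\xi_{\e\sqrt2}^a(U^{2i}_{s_i})\,ds_i$. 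After expanding the square (so $k=2$) there are cross terms and diagonal terms; each limit is an explicit Brownian functional, and I would then let $\e\to0$ and check that the local-time term and the Lebesgue term produce \emph{the same} limiting expression, so the difference vanishes. The identity to exploit is the occupation-time formula: $\int_0^{t}\xi_{\e\sqrt2}^a(U^p_s)\xi_{\e\sqrt2}^a(U^q_s)\,ds \to \int_0^t \xi_{2\e}^a\big(\tfrac{U^p_s-U^q_s}{\sqrt2}+\cdots\big)\cdots$, and more to the point, as $\e\to0$ both expressions converge to the same local-time integral against $\delta_a$, because heat-kernel mollification at scale $\e^2$ of $L_0$-type densities is continuous in $\e$ down to $\e=0$ away from $a=0$. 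This is exactly the mechanism already used in \cite{DDP23,DDP23+}; I would port that argument.

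For the polynomial-log bound \eqref{e:QXpolylog}, the plan is to \emph{not} take the $\e\to0$ limit but instead bound the $N\to\infty$ limit of the second moment uniformly in $\e$ and $a$. Using the moment formulas again, the limiting quantity is a sum of Brownian expectations of the form $\mathbf E_{BM^{\otimes 4}}\big[\int_{\Delta_2[0,t]} e^{\gamma(\z)\mathcal V_2(\vec s)}(\cdots)\big]$ where the integrand involves products of two factors each being either $\xi_\e^a(U^i)\z$-type local-time densities or $\xi_{\e\sqrt2}^a\otimes\xi_{\e\sqrt2}^a$ Lebesgue densities. Bounding the exponential $e^{\gamma(\z)\mathcal V_2}$ in $L^p$ of the Brownian measure (which is finite — this is the standard SHE second-moment bound, and follows from the fact that $\gamma(\z)$ times a sum of pairwise local times has Gaussian-type tails by the Kac moment formula), one reduces to estimating $\int_{\Delta_2[0,t]}\mathbf E[\prod \text{densities at }a]$. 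The key heat-kernel estimate is that for a Brownian motion started at $0$, $\mathbf E[\,p_{s}(a)\text{-type density at time }s\,]\lesssim (s\wedge 1)^{-1/2}e^{-a^2/Cs}$ and, crucially, $\int_0^t (s\wedge1)^{-1/2}e^{-a^2/Cs}\,ds \lesssim 1\wedge|\log a|$ for small $a$ (this is the classical estimate producing the $|\log a|$; the exponent $2$ in \eqref{e:QXpolylog} comes from squaring, i.e. from $k=2$). Chaining two such factors through the simplex integral yields a bound of the form $C(1\vee|\log a|)^2$, i.e.\ \eqref{e:QXpolylog}. The uniform-in-$N$ passage is justified by the $L^q$ moment bounds of Corollary \ref{lptight1}, the exponential moment bound of Corollary \ref{exp2}, the bound \eqref{gmass1}, and Corollary \ref{convd}, which together give uniform integrability so that $\limsup_N$ of the second moment equals the second moment of the limit.

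The main obstacle I anticipate is the bookkeeping for \eqref{Qllim}: after squaring one has $k=2$, so the diagonal $(s_1=s_2)$-type degenerations and the various $A/B$ assignments give several terms, and one must check that the local-time contribution and the mollified-square contribution cancel \emph{exactly} in the $\e\to0$ limit, not merely up to a constant — this requires being careful that the factor $\gamma(\z)$ and the factor $\sqrt2$ in the mollification scales are matched correctly (the $\sqrt2$ is there precisely because two independent Brownian motions' difference has rate $2$, so $\xi_{\e\sqrt2}^a\otimes\xi_{\e\sqrt2}^a$ integrated against the diagonal gives $\xi_{2\e}$ of the difference, matching $\xi_\e$ convolved with the rate-$2$ heat kernel). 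A secondary technical point is justifying the interchange of $\lim_\e$ and $\lim_N$ and the convergence of Riemann sums to Lebesgue--Stieltjes integrals against local time; this is handled by Lemma \ref{add} and Corollary \ref{convc} combined with the continuity of $(\mu,g)\mapsto\int g\,d\mu$ on $\mathcal M(\Delta_k)\times C(\Delta_k)$, but one must verify the relevant test functions $\xi_\e^a$ are continuous and bounded (they are, for fixed $\e>0$) so that the hypotheses of those results apply before sending $\e\to0$.
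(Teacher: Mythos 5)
Your proposal takes essentially the same route as the paper: apply Lemma \ref{l:Qmom}(a) with $k=2$, pass to the Brownian limit via Proposition \ref{add}, exploit the identity $\xi_{\e\sqrt2}^a(x)\xi_{\e\sqrt2}^a(y)=\xi_\e^a(\tfrac{x+y}{2})\xi_{2\e}^0(x-y)$ and the rate-$2$ decorrelation of $X^i\pm Y^i$ to match the local-time and Lebesgue contributions in the $\e\to0$ limit, and obtain the $|\log a|^2$ bound from two heat-kernel-type estimates chained through the $\Delta_2$ integral. The paper makes the final cancellation/estimation step concrete by writing out the four terms $A_1,\dots,A_4$ in $U^{i,\pm}$ coordinates and then deferring to \cite[Proof of Proposition 5.3, Steps 2--3]{DDP23}; you defer to the same reference at essentially the same point, so there is no substantive difference.
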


 The above proposition is the key estimate that will allow us to identify limit points. We remark that in the proposition statement $\z$ and $\gamma(\z)$ can respectively be replaced by any $f$ and $\gamma(f)$ for any function $f:I\to \mathbb R$ of exponential decay, though we will not need the more general version. 

 \begin{proof}
     Applying Lemma \ref{l:Qmom} \ref{l:QXmom} with $\gamma = \gamma(\z)$, and Proposition \ref{add} with $k=2$, then we obtain 
		\begin{align}
			\nonumber    & \lim_{N\to \infty}  \Ex\left[\bigg(Q_N(t,\psi)-\frac{\gamma(\z)}{N} \sum_{s\in (N^{-1}\mathbb Z_{\ge 0})\cap [0,t]} \mathfrak H^N(s,\phi)^2\bigg)^{2}\right] \\ \nonumber & =2\gamma(\z)^2 \cdot \mE_{\mathrm{BM}^{\otimes 4}}\left[\int_{\Delta_2(0,t)} e^{\gamma(\z) \V_2(s_1,s_2)}\prod_{i=1}^2 \left(\psi(X_{s_i}^i) \tfrac12 dL_0^{X^i-Y^i}(s_i)-\phi(X^i)\phi(Y^i)ds_i\right)\right]   \\ & = 2\gamma(\z)^2 \hspace{-0.1cm}\!\cdot\! \mE_{\mathrm{BM}^{\otimes 4}}\left[\int\limits_{\Delta_2(0,t)} \hspace{-0.3cm}e^{\gamma(\z)\V_2(s_1,s_2)}\prod_{i=1}^2 \hspace{-0.1cm}\left(\psi\big(\tfrac12(X_{s_i}^i+Y_{s_i}^i)\big) \tfrac12 dL_0^{X^i-Y^i}(s_i)-\phi(X^i)\phi(Y^i)ds_i\right)\hspace{-0.1cm}\right] \label{e:smom1}
		\end{align}
		for all $\psi,\phi \in \mathcal{S}(\R)$, where $\V_2$ is defined in \eqref{def:v}, and $(X^1,X^2,Y^1,Y^2)$ is a standard Brownian motion in $\Bbb R^4$ under $\mathbf P_{B^{\otimes 4}}$.
  The second equality in the above equation follows by observing that $X_u^i=Y_u^i$ for $u$ in the support of $L_0^{X^i-Y^i}(du)$. 
		
		\medskip
		
		We shall now write $\mE$ instead of $ \mE_{B^{\otimes 4}}$ for convenience. Let us now take  $$\psi(x) := \xi^a_\e(x)= \frac1{\sqrt{\pi \e^2 }}e^{-(x-a)^2/\e^2}, \quad \phi(x) := \xi^a_{\e\sqrt{2}}(x)= \frac1{\sqrt{2\pi \e^2 }}e^{-(x-a)^2/2\e^2},$$
		in \eqref{e:smom1}. Using the identity $\xi^a_{\e\sqrt{2}}(x)\xi^a_{\e\sqrt{2}}(y)= \xi^a_\e((x+y)/2)\xi^0_{2\e}(x-y)$, we may now write \eqref{e:smom1} as 
		\begin{align}
			\label{e:smom2}
			2\gamma(\z)^2\hspace{-0.1cm} \!\cdot\! \mE\left[\int\limits_{\Delta_2(0,t)} \hspace{-0.2cm}e^{\gamma(\z)\V_2(s_1,s_2)}\prod_{i=1}^2 \left(\xi_\e^a\big(\tfrac12(X_{s_i}^i+Y_{s_i}^i)\big)\!\cdot\!(\tfrac12 dL_0^{X^i-Y^i}(s_i)-\xi_{2\e}^{0}(X_{s_i}^i-Y_{s_i}^i)ds_i)\right)\hspace{-0.1cm}\right]\!.\!
		\end{align}
		Let us write $U^{i,-}:=X^i-Y^i$ and $U^{i,+}:=X^i+Y^i$. Note that under $\mathbf P_{B^{\otimes 4}}$ the four processes $U^{1,-},U^{1,+},U^{2,-},U^{2,+}$ are independent Brownian motions with diffusion coefficient $2$. This enables us to view \eqref{e:smom2} as
		\begin{align}
			\nonumber & 2\gamma(\z)^2 \cdot \mE\left[\int_{\Delta_2(0,t)} e^{\gamma(\z)\V_2(s_1,s_2)}\prod_{i=1}^2 \left(\xi_\e^a\big(\tfrac12U_{s_i}^{i,+})\big)\cdot(\tfrac12 dL_0^{U^{i,-}}(s_i)-\xi_{2\e}^{0}(U_{s_i}^{i,-})ds_i)\right)\right]
			\\ & =: 2\gamma(\z)^2 [A_1(\e)-A_2(\e)-A_3(\e)+A_4(\e)], \label{e:smom3}
		\end{align}
		where
		\begin{align*} 
			A_1(\e) & := \mE\left[\int_{\Delta_2(0,t)}  e^{\gamma(\z)\V_2(s_1,s_2)}\xi_\e^a\big(\tfrac12U_{s_1}^{1,+}\big)\xi_\e^a\big(\tfrac12U_{s_2}^{2,+}\big)\,\tfrac12 dL_0^{U^{1,-}}(s_1)\,\tfrac12dL_0^{U^{2,-}}(s_2)\right] \end{align*}
   \begin{align*}
			A_2(\e) & := \mE\left[\int_{\Delta_2(0,t)} e^{\gamma(\z)\V_2(s_1,s_2)}  \xi_\e^a\big(\tfrac12U_{s_1}^{1,+}\big)\xi_\e^a\big(\tfrac12U_{s_2}^{2,+}\big)\xi_{2\e}^{0}(U_{s_2}^{2,-})\, \tfrac12 dL_0^{U^{1,-}}(s_1)\, ds_2\right]
   \end{align*}
   \begin{align*}
			A_3(\e) & := \mE\left[\int_{\Delta_2(0,t)} e^{\gamma(\z)\V_2(s_1,s_2)} \xi_\e^a\big(\tfrac12U_{s_1}^{1,+}\big)\xi_\e^a\big(\tfrac12U_{s_2}^{2,+}\big)\xi_{2\e}^{0}(U_{s_1}^{1,-})\, \tfrac12 dL_0^{U^{2,-}}(s_2)\, ds_1\right]
   \end{align*}
   \begin{align*}
			A_4(\e) & := \mE\left[\int_{\Delta_2(0,t)} e^{\gamma(\z)\V_2(s_1,s_2)}\xi_\e^a\big(\tfrac12U_{s_1}^{1,+}\big)\xi_\e^a\big(\tfrac12U_{s_2}^{2,+}\big)\xi_{2\e}^{0}(U_{s_1}^{1,-})\xi_{2\e}^{0}(U_{s_2}^{2,-})\,ds_1\,ds_2\right].
		\end{align*}
  From here, the goal is to show that \eqref{e:smom3} vanishes as $\varepsilon\to 0$ as long as $a\neq 0$, as well as establish a bound given by the right side of \eqref{e:QXpolylog} for each of the four terms $A_i(\varepsilon)$. 
  
  Formally, $\tfrac12 dL_0^{U^{i,\pm}}(s_i)$ is the same as $\delta_0(U^{i,\pm}) ds_i$, which suggests that each of the $A_i$ may be written in terms of Brownian bridge expectations. This is indeed the case, and consequently the proofs of the desired convergence statements and bounds for the terms $A_i(\varepsilon)$ rely purely on elementary (albeit lengthy) disintegration formulas for Brownian motion at its endpoint, and these proofs can be copied verbatim from \cite[Proof of Proposition 5.3: Steps 2 and 3]{DDP23}, replacing the coefficient $\sigma$ appearing there with our coefficient $\gamma(\z)$ throughout the proof. For brevity, we will not reproduce the details here.
 \end{proof}

 With the ``key estimate" proved, next we focus on obtaining bounds that will be useful for proving tightness of the rescaled field \eqref{hn}.

 \begin{prop}[Estimates for moments of the increments of QVF]\label{tight1} Fix $k\in\mathbb N$, $T>0$, and $F:[0,\infty)\to[0,\infty)$ decreasing such that $\sum_{k=0}^\infty F(k)<\infty$. Let $f(x):=F(|x|)$. Then there exists a constant $C=C(k,T,F)>0$ such that for all bounded measurable functions $\phi$ on $\mathbb R$ and all $0\leq s<t\leq T$ with $s,t\in  N^{-1}\mathbb Z_{\ge 0}$ one has that 
		\begin{align}
			\label{e.tight1}
			\sup_{N\ge 1}\Ex\big[ (Q_N^f(t,\phi)-Q_N^f(s,\phi))^{k}\big] \leq C\|\phi\|^{k}_{L^\infty(\mathbb R)}(t-s)^{k/2}.
		\end{align}
		Furthermore fix $q>1$ and $\e>0$. Then there exists $C=C(q,\e,k,T,F)>0$ such that for all functions $\phi \in L^q(\R)$ and all $\e \leq s<t\leq T$ one has 
		\begin{align}
			\label{e.tight2}
			\lim_{N\to\infty}\Ex\big[ (Q_N^f(t,\phi)-Q_N^f(s,\phi))^{k}\big] \leq C\|\phi\|_{L^q(\mathbb R)}^k(t-s)^k.
		\end{align}
	\end{prop}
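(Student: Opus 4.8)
The plan is to derive both bounds \eqref{e.tight1} and \eqref{e.tight2} from the moment formula \eqref{e:Qincmom} in Lemma \ref{l:Qmom}\ref{l:Qincmom}, which expresses $\Ex[(Q_N^f(t,\phi)-Q_N^f(s,\phi))^k]$ as a sum over the discrete simplex $\Delta_k^N[s,t]$ of annealed expectations involving the $2k$-point motion with the exponential weights $D_{N,s_i,\cdot}$. The first step is to convert the annealed expectation $\mathbf E_{RW^{(2k)}}$ into a tilted expectation $\mathbf E_{(0,\ldots,0)}^{(N^{-1/4p},2k)}$: exactly as in Lemma \ref{1.3} and as carried out inside the proof of Lemma \ref{add}, the product $\prod_i D_{N,s_i,N^{-1/2}(R^{2i-1}_{Ns_i}-d_Ns_i)}D_{N,s_i,N^{-1/2}(R^{2i}_{Ns_i}-d_Ns_i)}$ combines with the Girsanov martingale to leave behind the bounded correction terms $\mathcal H^{2k}(N^{-1/4p},\cdot,\mathbf R,\cdot)$ of Corollary \ref{convd}, whose exponential moments are uniformly controlled under the tilted measure. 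After this reduction, $|Q_N^f(t,\phi)-Q_N^f(s,\phi)|$ is bounded (up to the $\mathcal H^{2k}$ factors, which are handled at the end by Cauchy--Schwarz) by $\|\phi\|_{L^\infty}$ times $N^{-k/2}\sum_{(s_1,\ldots,s_k)\in\Delta_k^N[s,t]}\prod_{i=1}^k f(R^{2i-1}_{Ns_i}-R^{2i}_{Ns_i})$, which is precisely the total mass $\nu_N^{f,2k}(\Delta_k[s,t])$ of the random measure from Corollary \ref{convc} (restricted to the sub-simplex over $[s,t]$).

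For \eqref{e.tight1}, the key input is then the moment bound already recorded inside the proof of Corollary \ref{exp2}, namely \eqref{mbdd}, which via the additive-functional structure gives $\sup_{\x}\mathbf E^{(\beta_N,2k)}_{\x}[V^{ij}(F;r)^m]\le C^m\sqrt{m!}\,r^{m/2}$, combined with the product bound $\nu_N^{f,2k}(\Delta_k[s,t])\le \prod_{j=1}^k\big[N^{-1/2}\sum_{r=0}^{N(t-s)}f(R^{2j-1}_{Nu}-R^{2j}_{Nu})\big]$ used in Corollary \ref{convc} and a H\"older inequality across the $k$ factors; each factor contributes $(t-s)^{1/2}$ by \eqref{e2}/Corollary \ref{lptight1}, yielding the claimed $(t-s)^{k/2}$. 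The factors $\mathcal H^{2k}$ and a stray Cauchy--Schwarz do not affect the power of $(t-s)$ since their contribution is a constant uniform in $N,s,t$ by Corollary \ref{convd}. This gives a clean $N$-uniform bound.

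For the sharper bound \eqref{e.tight2}, where $\phi\in L^q$ only and we want the full power $(t-s)^k$ at the cost of passing to the limit $N\to\infty$, the plan is to pass to the limit using Corollary \ref{convc}: the triple $(\mathbf X_N,(\mathscr V^{ij}_N)_{i<j},\nu_N^{f,2k})$ converges in law (with uniform integrability from \eqref{gmass1} and Corollary \ref{convd}) so that $\lim_N\Ex[(Q_N^f(t,\phi)-Q_N^f(s,\phi))^k]$ is expressed as a Brownian expectation of an integral over $\Delta_k[s,t]$ of $\prod_{i=1}^k\phi(U^{2i}_{s_i})\,\gamma(f)\,dL_0^{U^{2i-1}-U^{2i}}(s_i)$ against the exponential weight $e^{\gamma(\z)\V_k}$ (this is the $A=\emptyset$, $|B|=k$ case of the identity \eqref{e:add2} in Lemma \ref{add}, with $\psi_i=\phi$). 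One then bounds this by H\"older and the elementary fact that for Brownian local time $\int_0^T \phi(U^{2i}_s)\,dL_0^{U^{2i-1}-U^{2i}}(s)$ has finite moments of all orders controlled by $\|\phi\|_{L^q}$ for any $q>1$ (using the local-time occupation density together with the fact that $L_0$ is H\"older-$1/2$, so that a single local-time integral over a window of length $t-s$ contributes a factor $(t-s)^1$ rather than $(t-s)^{1/2}$); multiplying $k$ such factors gives $(t-s)^k$.

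\textbf{Main obstacle.} The delicate point is the last ingredient in the $L^q$-estimate: controlling $\Ex[(\int_s^t\phi(U^{2i}_u)\,dL_0^{U^{2i-1}-U^{2i}}(u))^k]$ by $\|\phi\|^k_{L^q}(t-s)^k$ when $\phi$ is merely in $L^q$, which requires a careful disintegration at the Brownian endpoints combined with the heat-kernel smoothing that turns an $L^q$-norm into a pointwise-like bound, and simultaneously extracting the full power $(t-s)$ from each local-time factor rather than $(t-s)^{1/2}$. This is exactly the type of Brownian-bridge and local-time computation that the proof of Proposition \ref{4.1} outsourced to \cite[Proof of Proposition 5.3]{DDP23}, and I expect the cleanest route is to likewise invoke the corresponding lemma from \cite{DDP23} (with $\sigma$ replaced by $\gamma(\z)$, or by $\gamma(f)$ in the general case), rather than reproducing the disintegration by hand. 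A secondary technical nuisance is making the tilted-measure reduction and the uniform integrability of the $\mathcal H^{2k}$-factors fully rigorous at the level of the $k$-fold product, but this is routine given Corollaries \ref{convd}, \ref{exp2}, and \ref{lptight1}.
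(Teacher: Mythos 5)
Your proposal is correct and follows essentially the same route as the paper: for \eqref{e.tight1} the moment formula \eqref{e:Qincmom} plus the reduction to the tilted measures with the exponential corrections controlled by Corollary \ref{convd} and the additive-functional bounds of Corollary \ref{lptight1} (the paper organizes this reduction as an induction on $k$ with successive conditioning, which is exactly the "as carried out inside the proof of Lemma \ref{add}" step you invoke), and for \eqref{e.tight2} the limit identity of Lemma \ref{add} with $A=\emptyset$ followed by the Brownian-bridge/local-time disintegration imported from \cite{DDP23}, which is precisely what the paper does. Two small cautions: since the $D_{N,s_i,\cdot}$ factors sit at different times, the Girsanov reduction cannot be a single $2k$-coordinate tilt over the whole horizon but must be performed interval by interval with fewer tilted coordinates on later intervals (your references already do this), and the full factor $(t-s)$ per local-time integral comes from the occupation density $p_{2t_j}(0)\,dt_j$ together with $t_j\ge\e$, not from H\"older-$1/2$ continuity of $L_0$ (which would only give $(t-s)^{1/2}$); the \cite{DDP23} lemmas you plan to cite supply the correct mechanism.
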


  \begin{proof}
We are going to use the same notation and the same family of martingales from the proof of Lemma \ref{add}. Using \eqref{e:Qincmom} together with the trivial bound $|\phi(N^{-1/2}(R^j(Nt_j)-N^{3/4}t_j))|\leq \|\phi\|_{L^\infty},$ we obtain that 
		\begin{align*}
			 \notag &\Ex[\big(Q_N^f(t,\phi)-Q_N^f(s,\phi)\big)^k]  \\ &  \le  N^{-k/2} k!\|\phi\|_{L^\infty}^k\sum_{(t_1,\ldots,t_k)\in \Delta_k^N[s,t]} \mathbf E_{\mathrm{RW}^{(2k)}}\bigg[\prod_{j=1}^k\bigg\{ f(R^{2j-1}_{Nt_j} -R^{2j}_{Nt_j}) \prod_{i=0,1} D_{N,t_j,N^{-1/2}(R^{2j-i}_{{Nt_j}}- d_N t_j)}\bigg\} \bigg].
		\end{align*}
  Now we actually claim, more generally than \eqref{e.tight1}, that one has $$\sup_{\substack{\x\in I^k\\N\ge 1}} N^{-k/2} \sum_{(t_1,\ldots,t_k)\in \Delta_k^N[s,t]} \mathbf E_\x^{(0,2k)}\bigg[\prod_{j=1}^k\bigg\{ f(R^{2j-1}_{Nt_j} -R^{2j}_{Nt_j}) \prod_{i=0,1} D_{N,t_j,N^{-1/2}(R^{2j-i}_{{Nt_j}}- d_N t_j)}\bigg\} \bigg]\leq C(t-s)^{k/2}.$$
The main idea in proving this is to proceed inductively in the variable $k$, applying similar arguments as in the proof of Lemma \ref{add} to get rid of the constants by repeatedly applying the Markov property and writing these expressions in terms of the tilted measures $\Pb$ of Definition \ref{shfa}.

For clarity, write some of the details of this inductive argument. The claim is vacuously true when $k=0$, let us assume \eqref{e.tight1} has been proved up to $k-1$ where $k\in \mathbb N.$ By conditioning on time $t_1$, using the Markov property, and applying the inductive hypothesis for the conditioned expression on the time interval $[Nt_1,Nt]$ one will obtain uniformly over $\x\in I^k$ and $N\ge 1$ that 
\begin{align*}
N&^{-k/2} \sum_{(t_1,\ldots,t_k)\in \Delta_k^N[s,t]} \mathbf E_\x^{(0,2k)}\bigg[\prod_{j=1}^k\bigg\{ f(R^{2j-1}_{Nt_j} -R^{2j}_{Nt_j}) \prod_{i=0,1} D_{N,t_j,N^{-1/2}(R^{2j-i}_{{Nt_j}}- d_N t_j)}\bigg\} \bigg] \\ &\leq N^{-1/2} \sum_{t_1\in [s,t]\cap (N^{-1}\mathbb Z_{\ge 0})} (t-t_1)^{(k-1)/2}\mathbf E_\x^{(0,2k)} \bigg[ f(R^1_{Nt_1}-R^2_{Nt_1}) \prod_{j=1}^k \bigg\{ \prod_{i=0,1} D_{N,t_1,N^{-1/2}(R^{2j-i}_{Nt_1}-d_Nt_1)}\bigg\}\bigg].
\end{align*}
In other words all of the renormalizing constants $D_{N,t,x}$ have been ``reduced" to time $t_1$ and the inductive hypothesis has yielded the factor $(t-t_1)^{(k-1)/2}$ after summing out all other coordinates. Let us define the adapted process $$\mathcal H_N(\mathbf R, r):= \exp \bigg( \sum_{s=1}^r \bigg\{ \log \mathbf E_{\mathrm{RW}^{(2k)}} \big[ e^{N^{\frac1{4p}}\sum_{j=1}^{2k} (R^j_s -R^j_{s-1})} \big| \mathcal F_{s-1}\big] \; - 2k \log M(N^{-\frac1{4p}})\bigg\}\bigg). $$
Then the previous expression can be rewritten in terms of the tilted measures $\Pb$ (see e.g. Lemma \ref{1.3}) as $$\mathbf E_\x^{(N^{-\frac1{4p}},2k)} \bigg[ N^{-1/2} \sum_{t_1\in [s,t]\cap (N^{-1}\mathbb Z_{\ge 0})} (t-t_1)^{(k-1)/2}\mathcal H_N(\mathbf R,Nt_1) f(R^1_{Nt_1}-R^2_{Nt_1}) \bigg].$$
If we brutally bound $\mathcal H_N(\mathbf R,Nt_1) \leq \sup_{0\leq r\leq Nt}| \mathcal H_N(\mathbf R,r)|:= \mathcal H_N^{\mathrm{sup}}(\mathbf R,r) $ , then an application of Corollary \ref{convd} will show that $\Gamma(q):=\sup_{N\ge 1} \sup_{\x\in I^k} \mathbf E_\x^{(N^{-\frac1{4p}},2k)} [\mathcal H_N^{\mathrm{sup}}(\mathbf R,r)^q]< \infty$ where $q\ge 1$ is arbitrary. We can also use the bound $(t-t_1)^{(k-1)/2}$ by $(t-s)^{(k-1)/2},$ thus by Cauchy-Schwarz the last expression can be bounded above
$$(t-s)^{(k-1)/2} \mathbf E_\x^{(N^{-\frac1{4p}},2k)} \bigg[ \mathcal H_N^{\mathrm{sup}}(\mathbf R,Nt) \cdot \nu^{f,1}_N([s,t])\bigg] \leq  (t-s)^{(k-1)/2} \Gamma(2)^{1/2} \cdot \mathbf E_\x^{(N^{-\frac1{4p}},2k)} \big[  \nu^{f,1}_N([s,t])^2\big]^{1/2}$$
where we recall that the measure $\nu_N^{f,k}$ was introduced in \eqref{gamman}. By Corollary \ref{lptight1} it is immediate that $\sup_{N\ge 1} \sup_{\x\in I^k}\mathbf E_\x^{(N^{-\frac1{4p}},2k)} \big[  \nu^{f,1}_N([s,t])^2\big]^{1/2} \leq C\sqrt{t-s},$ which completes the inductive step and thus finishes the proof of \eqref{e.tight1}.

\medskip
For \eqref{e.tight2}, the proof is completely analogous to a similar proof given in \cite[Eq.~(5.43)]{DDP23} but we rewrite some of the details here for completeness. Appealing to the moment formula for the increment of QVF from \eqref{e:Qincmom} and the convergence from \eqref{e:add2} we have
		\begin{align}
  \label{e.idef}
  \lim_{N\to \infty}\Ex\bigg[\big(Q_N^f(t,\phi)-Q_N^f(s,\phi)\big)^k\bigg] = k!\gamma(\z)^k \cdot \mathbf E_{B^{\otimes 2k}} \bigg[\int_{\Delta_k[s,t]}e^{\gamma(\z)\V_k(\vec{t})}\prod_{j=1}^k \phi(U_{t_j}^{2j})dL_0^{U^{2j-1}-U^{2j}}(t_j) \bigg],
		\end{align}
  where $\V_k(\vec{t})$ is defined in \eqref{def:v}. Now 
\cite[Lemma A.2]{DDP23} formalizes the intuition that $dL_0^{U^{2j-1}-U^{2j}}(t_j) = \delta_0(U_{t_j}^{2j-1}-U^{2j}_{t_j})$. Thus by appealing to that lemma, we can write the above expectation in terms of a certain family of concatenated bridge processes whose law we will write as $\mathbf P_c$. Specifically we have that 
		\begin{align}
  \label{e.idef2}
			\mbox{right hand side~of \eqref{e.idef}} = \int_{\Delta_k[s,t]}  \mathbf E_{c} \bigg[e^{\gamma(\z)\V_k(\vec{t})}\prod_{j=1}^k \phi(U^{2j}_{t_j})\bigg]\prod_{j=1}^k p_{2t_j}(0)dt_1\cdots dt_k,
		\end{align}
where the expectation is taken over a collection of paths $U^j$ such that
\begin{itemize}
\setlength\itemsep{0.5em}
    \item $(U^{2i-1}-U^{2i}, U^{2i-1}+U^{2i})_{i=1}^k$ are $2k$ many independent processes.
    \item $U^{2i-1}+U^{2i}$ is a Brownian motion of diffusion rate $2$ for $i=1,2,\ldots,k$.
    \item $U^{2i-1}-U^{2i}$ is a Brownian bridge (from $0$ to $0$) of diffusion rate $2$ from $[0,t_i]$ and an independent Brownian motion of diffusion rate $2$ from $[t_i,\infty)$ for $i=1,2,\ldots,k$.
\end{itemize}
 We have used a different notation for the expectation operator $\mE_c$ in \eqref{e.idef2} ($c$ for concatenation) just to stress that the law is different from the standard Brownian motion. We claim that for all $q>1$ we have
\begin{align}
    \label{e.idef3}
     \sup_{\vec{t}\in [0,T]} \mathbf E_{c} \bigg[e^{\gamma(\z)\V_k(\vec{t})}\prod_{j=1}^k |\phi(U^{2j}_{t_j})|\bigg] \le C\cdot \|\phi\|_{L^q}^k.
\end{align}
where the $C>0$ depends on $q,k,\e,T$.  Let us assume \eqref{e.idef3} for the moment. By hypothesis, $t_j\ge \e >0$. Thus, $p_{2t_j}(0) \le C$ for some constant $C>0$ depending on $\e$. Thus, in view of \eqref{e.idef3}, to get an upper bound for the right-hand side of \eqref{e.idef2}, we may take the supremum of the integrand in the right-hand side of \eqref{e.idef2} and pull it outside of the integration. As the Lebesgue measure of $\Delta_k[s,t]$ is $\frac{(t-s)^k}{k!}$, we thus have the desired estimate in \eqref{e.tight2}.

\medskip

Let us now establish \eqref{e.idef3}.  Fix $q>1$ and take $v>1$ so that $(q')^{-1}+q^{-1}=1$. Use H\"older's inequality to write
		\begin{align*}
			\mathbf E_{c} \bigg[e^{\gamma(\z)\V_k(\vec{t})}\prod_{j=1}^k |\phi(U^{2j}_{t_j})|\bigg]  &\leq \mathbf E_{c} \bigg[e^{q'\gamma(\z)\V_k(\vec{t})}\bigg]^{1/q'}\mathbf E_c \bigg[\prod_{j=1}^k |\phi(U^{2j}_{t_j})|^q\bigg]^{1/q}.
		\end{align*}
For the first expectation above, observe that by \cite[Lemma A.6]{DDP23}, $\mathbf E_{c} \bigg[e^{q'\gamma(\z)\V_k(\vec{t})}\bigg]$ is uniformly bounded over $\vec{t}\in \Delta_k(0,T)$. For the second expectation above, note that under $\mE_c$, we have $$U_{t_j}^{2j}=\tfrac12(U_{t_j}^{2j-1}+U_{t_j}^{2j})-\tfrac12(U_{t_j}^{2j-1}-U_{t_j}^{2j})=\tfrac12(U_{t_j}^{2j-1}+U_{t_j}^{2j})-0.$$ 
Thus under $\mE_c$, $U_{t_j}^{2j}$ are independent Gaussian random variables with variance $t_j/2$. Hence,
  \begin{align*}
      \mathbf E_c \bigg[\prod_{j=1}^k |\phi(U^{2j}_{t_j})|^q\bigg]^{1/q} = \prod_{j=1}^k\mathbf E_c \bigg[ |\phi(U^{2j}_{t_j})|^q\bigg]^{1/q} = \prod_{j=1}^k \bigg(\int_{\R} p_{t_j/2}(y)|\phi(y)|^q\,dy\bigg)^{1/q} \le C\cdot \|\phi\|_{L^q}^k.
  \end{align*}
where the last inequality follows by again using the fact that $t_j\ge \e>0$, together with the uniform bound $\sup_y p_t(y)\leq (2\pi t)^{-1/2}$, and noting that the constant $C$ is allowed to depend on $\e$.	This establishes \eqref{e.idef3} completing the proof of \eqref{e.tight2}.
 \end{proof}

\begin{lem}\label{fite}
    Fix $k\in \mathbb N$. For all $T>0$ and $q\ge 0$ we have that $$\sup_{0\leq t_1\leq ... \leq t_k\leq T} \sup_{N\in \mathbb N}\sup_{r\le NT} \mathbf E_{\mathrm{RW}^{(2k)}} \bigg[ \bigg| N^{-1/2}\sum_{1\le i<j\le k}  |R^i_{r} -R^j_r|\bigg|^q\cdot \prod_{j=1}^{k} D_{N,t_j,N^{-1/2}(R^j_{Nt_j}-d_Nt_j)}\bigg] <\infty.$$
\end{lem}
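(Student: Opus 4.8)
The approach mirrors the proofs of Proposition \ref{tight1} and Lemma \ref{add}: rewrite the annealed expectation in terms of the tilted path measures $\mathbf P^{(\beta_N,k)}$ with $\beta_N:=N^{-1/(4p)}$, absorbing the discrepancy into the cumulant-correction processes $\mathcal H$ that were controlled in Corollary \ref{convd}, and then peel off the distance term by Cauchy--Schwarz using the moment bounds of Corollary \ref{lptight1}. As a first reduction, only coordinates $1,\dots,k$ of the $2k$-point motion appear (both in $\prod_{j=1}^k D_{N,t_j,\cdots}$ and in the distance sum), so the marginalization property of the $k$-point motions — immediate from $\int_I K_1(x_j,\mathrm dy_j)=1$ — lets us replace $\mathbf E_{RW^{(2k)}}$ by $\mathbf E_{RW^{(k)}}$. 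Recalling that $D_{N,t,N^{-1/2}(R^j_{Nt}-d_Nt)}=e^{\beta_N R^j_{Nt}-Nt\log M(\beta_N)}$, the product $\prod_{j=1}^kD_{N,t_j,\cdots}$ in the special case $t_1=\cdots=t_k=t$ is exactly the density $\mathpzc M^{\beta_N}_{Nt}$ of $\mathbf P^{(\beta_N,k)}$ with respect to $\mathbf P_{RW^{(k)}}$ times the cumulant correction $\mathcal H(\beta_N,\mathbf 0,\mathbf R,Nt)=\exp\big(\sum_{s=1}^{Nt}\{\log\mathbf E_{RW^{(k)}}[e^{\beta_N\sum_j(R^j_s-R^j_{s-1})}\mid\mathcal F_{s-1}]-k\log M(\beta_N)\}\big)$, precisely as in Lemma \ref{1.3}.

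For general ordered times I would induct on $k$, conditioning at each step on $\mathcal F_{Nt_1}$ (the smallest time) and using the Markov property together with the translation invariance of the increments of the $k$-point motion (Assumption \ref{a1} Item \eqref{a12}). After conditioning, the coordinate-$1$ factor together with the ``carried'' pieces $e^{\beta_N R^j_{Nt_1}}$ of the remaining factors assembles into $\mathpzc M^{\beta_N}_{Nt_1}$ times a correction $\mathcal H(\beta_N,\mathbf 0,\mathbf R,Nt_1)$, while the leftover product $\prod_{j=2}^k e^{\beta_N(R^j_{Nt_j}-R^j_{Nt_1})-N(t_j-t_1)\log M(\beta_N)}$ is, given $\mathcal F_{Nt_1}$, distributed as the corresponding product for the $(k-1)$-point motion started at the origin with times $t_2-t_1\le\cdots\le t_k-t_1\le T$, so the inductive hypothesis applies to it. The distance sum $\sum_{1\le i<j\le k}|R^i_r-R^j_r|$ is split into the pairs not involving coordinate $1$ — which, after restarting, are absorbed by the inductive hypothesis; this forces me to carry a strengthened inductive statement with an $\mathbf x$-dependent correction factor of the form $(1+N^{-1/2}\sum_{1\le i<j\le k}|x_i-x_j|)^q$ on the right-hand side — and the pairs involving coordinate $1$, for which $N^{-1/2}|R^i_r-R^j_r|$ has uniformly bounded $q$-th moments under the restarted tilted dynamics by Corollary \ref{lptight1} (using $r\le NT$). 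A few applications of Cauchy--Schwarz separate the distance term from $\mathcal H$, whose moments are controlled uniformly under the tilted measures by bounding $\mathcal H(\cdot,Nt_1)\le\sup_{r\le NT}\mathcal H(\cdot,r)$ and invoking Corollary \ref{convd}, exactly as in the proof of Proposition \ref{tight1}. The case $r>Nt_k$ reduces to $r\le Nt_k$ by conditioning on $\mathcal F_{Nt_k}$ and controlling the $\pdif$-type increment of $R^i-R^j$ on $(Nt_k,r]$ via Lemma \ref{cool}.

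The main obstacle is uniformity in the bookkeeping. One must propagate the restart-point dependence cleanly through the induction (hence the strengthened hypothesis with the $(1+N^{-1/2}\sum_{i<j}|x_i-x_j|)^q$ factor); verify that the renormalization factors genuinely telescope at each stage into martingale densities times the controlled corrections $\mathcal H$ — this uses the consistency of the $k$-point motions together with Proposition \ref{dbound} applied with $m=0$, which bounds the cumulant correction by additive functionals of the form $N^{-1/2}\sum_{i<j}V^{ij}(\Fd;\cdot)$ whose exponential moments are finite by Corollary \ref{exp2}; and match each difference $R^i-R^j$ to a time window on which both its coordinates carry the tilt, so that the difference remains diffusive after the change of measure. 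Once these uniform estimates are assembled, combining them through the induction yields the claimed uniform bound.
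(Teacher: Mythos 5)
Your proposal follows essentially the same route as the paper's (sketched) proof: induct on $k$, condition at the smallest time $Nt_1$ and use the Markov property with a strengthened hypothesis uniform over starting configurations, rewrite the remaining window in terms of the tilted measures so that the renormalization constants become the martingale density times the cumulant correction $\mathcal H$ controlled by Proposition \ref{dbound} and Corollaries \ref{convd}, \ref{exp2}, and absorb the distance term via Corollary \ref{lptight1} and H\"older/Cauchy--Schwarz; your bookkeeping variants (marginalizing from $2k$ to $k$ coordinates, a multiplicative $(1+N^{-1/2}\sum_{i<j}|x_i-x_j|)^q$ correction instead of the paper's centering $|R^i_r-R^j_r-(x_i-x_j)|$) are cosmetic. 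One small imprecision: after conditioning, the restarted process is the $(k-1)$-point motion started at $(R^2_{Nt_1},\dots,R^k_{Nt_1})$, not at the origin (translation invariance only shifts all coordinates simultaneously), but since your strengthened hypothesis is uniform over starting configurations this does not affect the argument.
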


\begin{proof}
    The proof is very similar to the proofs of Lemma \ref{add} and Proposition \ref{tight1}, and we do not give the full argument. The sketch is as follows. One inducts on $k$ to show more generally that $$\sup_{0\leq t_1\leq ... \leq t_k\leq T} \sup_{N\in \mathbb N}\sup_{\x\in I^k} \sup_{r\le NT}\mathbf E_{\x}^{(0,k)} \bigg[ \bigg| N^{-1/2}\sum_{1\le i<j\le k}  |R^i_{r} -R^j_r-(x_i-x_j)|\bigg|^q\prod_{j=1}^{k} D_{N,t_j,N^{-1/2}(R^j_{Nt_j}-x_j-d_Nt_j)}\bigg] <\infty.$$ To perform the induction, one applies the Markov property and the inductive hypothesis to bound the expression on the time interval $[Nt_1,NT]$, then one rewrites the remaining expectation in terms of the tilted measures $\Pb$ of Definition \ref{shfa}. On the remaining time interval $[0,Nt_1]$ this will yield an exponential term which we called $\mathcal H_N(\mathbf R,r)$ in the proof of Proposition \ref{tight1}. This exponential term has uniform moment bounds as explained in that proof. The other term $N^{-1/2}\sum_{1\le i<j\le k} |R^i_{r} -R^j_r|$ also has uniform moment bounds using e.g. Corollaries \ref{lptight1} and \ref{exp2}. Using H\"older's inequality will then yield finiteness of the supremum to complete the inductive step.
\end{proof}

Recall the martingale field $M^N$ from \eqref{mfield}. The next few estimates will obtain a $L^p$ bound on its predictable and optional quadratic variation, which will be the main tool in obtaining tightness for the field \eqref{hn}. First we will need to control the error terms appearing in Lemma \ref{4.3}. For $\phi:\mathbb R\to \mathbb R$ denote by $\|\phi\|_{C^k}=\|\phi\|_{C^k(\mathbb R)}:=\sum_{j=0}^k \|\phi^{(j)}\|_{L^\infty(\mathbb R)},$ where $\phi^{(j)}(x) = \frac{d^j}{dx^j}\phi(x).$

\begin{prop}[Controlling the error terms]\label{errbound}Let $\mathcal E^j_N$ $(1\le j \le 4)$ be as in Lemma \ref{4.3}. Let $q\ge 1$ be an even integer, and let $T>0$. Then there exists $C>0$ and a function $f:I\to \mathbb R_+$ such that $|f(x)| \leq F(|x|)$ where $F:[0,\infty)\to[0,\infty)$ is decreasing and $\sum_{k=0}^\infty F(k)<\infty$, such that uniformly over all $N\ge 1$ and $s,t\in [0,T]\cap (N^{-1}\mathbb Z)$ we have 
    $$\sum_{j=2,3,4}\mathbb E[ |\mathcal E^j_N(t,\phi)-\mathcal E^j_N(s,\phi)|^q]^{1/q} \leq C N^{-\frac1{4p}}\mathbb E[ |Q^f_N(t,\phi^2)-Q^f_N(s,\phi^2)|^q]^{1/q}+N^{-\frac1{4p}}\|\phi\|_{L^\infty}^2|t-s|.$$
    Furthermore, if $\phi\in C_c^\infty(\mathbb R)$, say $\phi$ is supported on $[-J,J]$ then $$\mathbb E[ |\mathcal E^1_N(t,\phi)-\mathcal E^1_N(s,\phi)|^q]^{1/q} \leq C \|\phi\|_{C^2}^2 N^{-\frac12}\mathbb E[ |Q^f_N(t,\ind_{[-J,J]})-Q^f_N(s,\ind_{[-J,J]})|^q]^{1/q}+ 
    C \|\phi\|_{C^3}^2 N^{-\frac1{2}}|t-s|.$$
\end{prop}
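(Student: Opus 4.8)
\textbf{Proof strategy for Proposition \ref{errbound}.}

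The plan is to handle the two families of error terms separately, but with a common underlying idea: each $\mathcal E^j_N$ is (up to renormalizing constants and a two-particle quenched expectation) an additive functional of the two-point motion against an observable that decays fast in $R^1_s-R^2_s$ and carries an extra power of $N^{-1/2}$ beyond the single power already present in the definition of $Q^f_N$. So each $\mathcal E^j_N(t,\phi)-\mathcal E^j_N(s,\phi)$ should be bounded by $N^{-1/(4p)}$ times something of the form $Q^f_N(t,\phi^2)-Q^f_N(s,\phi^2)$ plus an honestly negligible remainder.

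First I would treat $\mathcal E^2_N,\mathcal E^3_N,\mathcal E^4_N$ from \eqref{err234}. Recalling $H_{2p}(x)=e^x-\sum_{k\le 2p}x^k/k!$ satisfies $|H_{2p}(x)|\le |x|^{2p+1}e^{|x|}$, and that the measure $\rho$ in \eqref{rho} has all joint moments of order $\le 2p$ matching those of the product measure except $(p,p)$ (see \eqref{vanish}), one gets pointwise bounds of the form $|\mathcal A^j_N(y_1-y_2)|\le C N^{-\frac12-\frac1{4p}} g(y_1-y_2)$ for $j=2,3,4$, where $g(z):= \big(1+|z|^{2p+1}\big)$ times a decaying factor coming from the mixing in Assumption \ref{a1} Item \eqref{a24} — more precisely, one bounds the relevant integrals against $\rho$ using $|\cdot|\le$ (something like) $\Fd(|z|)$ plus powers of $z$ controlled by the uniform exponential moments of Lemma \ref{grow} and H\"older, then chooses $f$ to dominate $g$ by a decreasing summable function $F$. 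The extra $N^{-\frac1{4p}}$ comes from the fact that $\mathcal A^j_N$ is of order $N^{-\frac12-\frac1{4p}}$ whereas $\z$ contributes at order $N^{-1/2}$. Writing out $\mathcal E^j_N(t,\phi)-\mathcal E^j_N(s,\phi)$ as a quenched two-particle expectation exactly as in the derivation of \eqref{qfield}, one recognizes it as $\le C N^{-\frac1{4p}}\big(Q^f_N(t,\phi^2)-Q^f_N(s,\phi^2)\big)$ plus a remainder of order $N^{-\frac1{4p}}\|\phi\|_\infty^2|t-s|$ coming from the diagonal/near-diagonal contribution (the $(p,p)$ term is already subtracted off, but bounded remainders from $H_{2p}$ near the origin give the $|t-s|$ term); taking $L^q$ norms and using $Q^f_N\ge 0$ gives the first displayed bound. (One should be slightly careful that $\mathcal E^j_N$ need not be non-negative, so one passes to $|\mathcal A^j_N|\le Cf$ first, which is why the bound features $Q^f_N$ with this specific $f$.)

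Next, for $\mathcal E^1_N$ from \eqref{err1}: here $\mathcal A^1_N(\phi,y_1,y_2)$ involves the difference $\prod_j\phi(N^{-1/2}x_j)-\phi(N^{-1/2}y_1)^2$. Since $\rho$ is supported near the diagonal (its moments of order $<p$ in each variable vanish, and the higher ones are controlled), I would Taylor expand $\phi(N^{-1/2}x_j)$ around $N^{-1/2}y_1$: the zeroth-order terms cancel, the first-order term picks up $N^{-1/2}\phi'(N^{-1/2}y_1)\cdot(x_j-y_1)$, and since $x_1-y_1,x_2-y_2$ have moments of order $<p$ vanishing under $\rho$ (with $p\ge 1$) the leading surviving term is at order $N^{-1}\|\phi\|_{C^2}$ times an integral $\int (x_1-y_1)^{a}(x_2-y_2)^{b}\,\rho$, with a remainder at $N^{-3/2}\|\phi\|_{C^3}$; the integrals are again controlled by $F(|y_1-y_2|)$ for a suitable decreasing summable $F$ using Assumption \ref{a1} Item \eqref{a24} and the uniform exponential moments. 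Using that $\phi$ is supported on $[-J,J]$, the $\phi$-factors can be bounded by $\|\phi\|_{C^2}^2\ind_{[-J,J]}(N^{-1/2}y_1)$ (for the term giving $Q^f_N(\cdot,\ind_{[-J,J]})$), yielding $\mathbb E[|\mathcal E^1_N(t,\phi)-\mathcal E^1_N(s,\phi)|^q]^{1/q}\le C\|\phi\|_{C^2}^2 N^{-1/2}\mathbb E[|Q^f_N(t,\ind_{[-J,J]})-Q^f_N(s,\ind_{[-J,J]})|^q]^{1/q}+C\|\phi\|_{C^3}^2 N^{-1/2}|t-s|$, where the $|t-s|$ term absorbs the $N^{-3/2}$ remainder after summing $\sim N|t-s|$ time steps.

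The main obstacle is the bookkeeping in the second part: one must expand $\mathcal A^1_N$ carefully enough to see that the genuinely dangerous contribution is only at order $N^{-1}$ (one full extra power of $N^{-1/2}$ beyond $Q^f_N$), and to extract the indicator $\ind_{[-J,J]}$ cleanly — this requires using the near-diagonal support of $\rho$ to say that if $N^{-1/2}y_1\in[-J,J]$ and $|y_1-y_2|$ is not huge then the $\phi$-factors at $x_1,x_2$ are also essentially supported there, while the large-$|y_1-y_2|$ contributions are killed by the decay $F$. The decay hypothesis on $\Fd$ (that $x\Fd(x)$ is decreasing and integrable) is precisely what is needed here, as flagged in Remark \ref{opt}, to guarantee the resulting $f$ has a decreasing summable majorant $F$ and that the summed spatial integrals stay bounded. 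Everything else — converting to quenched two-particle expectations, taking $L^q$ norms, and invoking $Q^f_N\ge 0$ — is routine given the framework of Lemma \ref{4.3} and Proposition \ref{tight1}.
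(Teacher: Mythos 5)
Your overall strategy matches the paper's proof: for $\mathcal E^2_N,\mathcal E^3_N,\mathcal E^4_N$ one bounds $\mathcal A^j_N$ either by a brutal uniform $N^{-1-\frac1{4p}}$-type bound (summed over $\sim N|t-s|$ steps this yields the $|t-s|$ term after applying the moment bounds of Lemma~\ref{fite}) or by $N^{-\frac1{4p}}$ times a decaying summable function (which converts to a $Q^f_N$-increment bound); and for $\mathcal E^1_N$ one Taylor-expands $\phi$ to second order around $N^{-1/2}y_1$ to extract an extra factor $N^{-1/2}$, with the cubic remainder handled brutally. Those are the same key moves the paper makes, so in outline the proposal is correct.

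However, there is an imprecision in your treatment of $\mathcal E^1_N$ that hides the genuine technical difficulty and would need to be fixed before the argument closes. After expanding $\phi(N^{-1/2}x_2)$ around $N^{-1/2}y_1$, the polynomial factor that appears is $(x_2-y_1)^{k_2}$, \emph{not} $(x_2-y_2)^{k_2}$, and the vanishing-moment statement \eqref{vanish} applies to $(x_1-y_1)^{k_1}(x_2-y_2)^{k_2}$ against $\rho$. One therefore has to expand $(x_2-y_1)^{k_2}=\sum_{i}\binom{k_2}{i}(x_2-y_2)^i(y_1-y_2)^{k_2-i}$ before \eqref{vanish} can be invoked. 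It is precisely the $(y_1-y_2)^{k_2-i}$ factors produced here (with the worst case being $k_2=1$, $i=k_1=0$, giving a linear factor in $|y_1-y_2|$ multiplied by $\Fd$ and exactly the scale $N^{-1}$) that force the hypothesis $\int_0^\infty x\Fd(x)\,dx<\infty$, so that $f(x)=(1+|x|)\Fd(|x|)$ is still dominated by a decreasing summable $F$. You do cite this hypothesis, but you attribute its role to a ``near-diagonal support of $\rho$'' and to extracting the indicator $\ind_{[-J,J]}$ cleanly; neither is quite right. The measure $\rho$ has no compact support (it merely decays by Assumption~\ref{a1}~\eqref{a24}), and the indicator extraction is simply pulling $|\phi^{(k_1)}(N^{-1/2}y_1)\phi^{(k_2)}(N^{-1/2}y_1)|\le \|\phi\|_{C^2}^2\ind_{[-J,J]}(N^{-1/2}y_1)$ out of the sum, independent of $\rho$. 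Rewriting the step in terms of the binomial expansion of $(x_2-y_1)^{k_2}$, identifying the worst case $(k_1,k_2,i)=(0,1,0)$, and showing that all remaining cases contribute at scale $\le N^{-1-\frac1{4p}}$ (brutal bound) or at scale $N^{-1}$ with a bounded polynomial weight $x^2\Fd(x)$ will close the gap.
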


\begin{proof}
    As introduced earlier in this section, we let $H_k(x)= e^x - \sum_{n=0}^k \frac{x^k}{k!}$ so that $|H_k(x)| \leq |x|^{k+1}e^{|x|}$ for all $x\in \mathbb R$.

    First let us deal with $\mathcal E_N^2$. In the expression for $\mathcal A^2_N$ preceding \eqref{err234}, use the fact that $|H_{2p}(N^{-\frac1{4p}} (x_j-y_j))| \leq CN^{-\frac{(2p+1)}{4p}} |x_j-y_j|^{2p+1} e^{N^{-\frac1{4p}}|x_j-y_j|} $ and we obtain uniformly over $N\ge 1$ and $y_1,y_2\in I$ the brutal bound that $$|\mathcal A_N^2(y_1-y_2)| \leq N^{-\frac{(4p+2)}{4p}} \int_{I^2} \prod_{j=1}^2 |x_j-y_j| e^{N^{-\frac1{4p}}|x_j-y_j|}\big| \rho\big( (y_1,y_2),(\dr x_1,\dr x_2)\big)\big| \leq CN^{-\frac{(4p+2)}{4p}}$$ where the integral can be bounded independently of $N,y_1,y_2$ using the definition \eqref{rho} of $\rho$ and e.g. Lemma \ref{grow}. From the above bound on $\mathcal A^2_N$ it is immediate from \eqref{err234} that $$|\mathcal E^2_N(t,\phi)-\mathcal E_N^2(s,\phi)| \leq CN^{-\frac{(4p+2)}{4p}}e^{-2\log M(N^{-\frac1{4p}})} \|\phi\|_{L^\infty}^2 \sum_{r=Ns}^{Nt} \int_{I^2} Z_N(s,\dr y_1)Z_N(s,\dr y_2). $$ Taking the $L^{2k}$-norm and applying Minkowski's inequality to the sum over $r$, we obtain that 
    \begin{align*}\mathbb E\big[&\big(\mathcal E^2_N(t,\phi)-\mathcal E_N^2(s,\phi)\big)^{2k}\big]^{1/(2k)} \\ &\leq CN^{-\frac{(4p+2)}{4p}}e^{-2\log M(N^{-\frac1{4p}})} \|\phi\|_{L^\infty}^2 \sum_{r=Ns}^{Nt} \mathbf E_{\mathrm{RW}^{(2k)}} \bigg[ \prod_{j=1}^{2k} D_{N,N^{-1}r,N^{-1/2}(R^{j}_{r}-N^{-1}d_Nr)}\bigg]^{1/(2k)}\end{align*}where we applied the definition of $Z_N$ as defined in \eqref{zn}. By Lemma \ref{fite} (with $q=0$) the summands are bounded by some absolute constant independently of $r,N\in \mathbb N$, consequently since the number of summands is $N(t-s)$ the entire expression can be bounded above by $CN^{-\frac1{2p}}\|\phi\|_{L^\infty}^2|t-s|.$ This is a bound of the desired form for $\mathcal E^2_N.$

    Let us now deal with $ \mathcal E^4_N$. In light of \eqref{err234}, it suffices to show that there exists a decreasing function $F:[0,\infty)\to[0,\infty) $ with $\sum_{k=0}^\infty F(k)<\infty$ such that $\sup_{N\ge 1} N^{\frac1{4p}} |\mathcal A^4_N(y)| \leq F(|y|)$. Indeed, this would then yield a \textit{pathwise} bound of the form $$|\mathcal E^j_N(t,\phi)-\mathcal E^j_N(s,\phi)| \leq N^{-\frac1{4p}} |Q^f_N(t,\phi^2) - Q^f_N(s,\phi^2)|$$ where $f(x)=F(|x|)$, which certainly implies the claim. But the fact that $\sup_{N\ge 1} N^{\frac1{4p}} |\mathcal A^4_N(y)| \leq \Fd(|y|)$ is immediate from Assumption \ref{a1} Item \eqref{a24} since the negative power of $N$ in each of the summands is $(k_1+k_2)/(4p)>1/2$ (and $1/2$ is the power in the definition of $Q_N^f).$ This already gives the desired bound for $\mathcal E^4_N.$

    Next let us deal with $\mathcal E^3_N.$ By writing $H_{2p}=(H_{2p}-H_{4p})+H_{4p},$ we may rewrite 
    \begin{align*}
        \mathcal A^3_N(y_1-y_2) &= 2 \int_{I^2} H_{4p} \big(N^{-\frac1{4p}}(x_1-y_1)\big)\sum_{k=0}^{2p} \frac{N^{-\frac{k}{4p}} (x_2-y_2)^k}{k!}\rho\big( (y_1,y_2), (\dr x_1,\dr x_2) \big) \\&\;\;+ 2\int_{I^2} \bigg(\sum_{k=2p+1}^{4p} \frac{N^{-\frac{k}{4p}} (x_1-y_1)^k}{k!}\bigg)\bigg(\sum_{k=0}^{2p} \frac{N^{-\frac{k}{4p}} (x_2-y_2)^k}{k!}\bigg)\rho\big( (y_1,y_2), (\dr x_1,\dr x_2) \big).
    \end{align*}
    The first term on the right side has $|H_{4p}(N^{-\frac1{4p}} (x_j-y_j))| \leq CN^{-\frac{(4p+1)}{4p}} |x_j-y_j|^{4p+1} e^{N^{-\frac1{4p}}|x_j-y_j|}$. Since the power of $N$ is $-1-\frac1{4p}$, and since the number of summands is $N(t-s)$ we can use brutal absolute value bounds to obtain the desired estimate using arguments very similar to the bound for $\mathcal E_N^2$ above. For the second term on the right side, one will obtain a sum of polynomials $(x_1-y_1)^{k_1}(x_2-y_2)^{k_2}$ multiplied by a factor $N^{-\frac{(k_1+k_2)}{4p}}$ with $k_1+k_2 >2p$. For those terms where $k_1+k_2>4p$ we can use brutal absolute value bounds to obtain the desired bound using arguments very similar to the bound for $\mathcal E_N^2$ above. For the terms where $2p+1\leq k_1+k_2\le 4p$ note that these are polynomials of a similar form to $\mathcal E^4_N$, with all negative powers of $N$ being strictly larger than $1/2.$ Thus we can use an argument very similar to the one obtained in bounding $\mathcal E^4_N$, thus completing the argument for bounding $\mathcal E^3_N$ by the desired quantity.

    Finally let us bound $\mathcal E^1_N$. To do this we will need to work with $\mathcal A^1_N(\phi,y_1,y_2)$ as defined just before \eqref{err1}. Let us perform a second-order Taylor expansion of $\phi$ centered at $N^{-1/2}y_1$. 
    Then the expression for $\mathcal A^1_N(\phi,y_1,y_2)$ can be written as a sum $\mathcal A^{1,a}_N(\phi,y_1,y_2)+\mathcal A^{1,b}_N(\phi,y_1,y_2)$ where
    \begin{align*}\mathcal A^{1,a}_N(\phi,y_1,y_2)&:= \sum_{\substack{0\leq k_1,k_2\leq 2\\k_1+k_2>0}}\frac1{k_1!k_2!}N^{-(k_1+k_2)/2} \phi^{(k_1)}(N^{-1/2}y_1) \phi^{(k_2)}(N^{-1/2} y_1)\\& \hspace{2 cm} \cdot \int_{I^2} \prod_{j=1}^2 e^{N^{-\frac1{4p}}(x_j-y_j) - \log M(N^{-\frac1{4p}})} \cdot(x_1-y_1)^{k_1} (x_2-y_1)^{k_2}\rho\big( (y_1,y_2), (\dr x_1,\dr x_2) \big) 
    \end{align*} and by Taylor's theorem the other term $\mathcal A^{1,b}_N$ is a ``remainder" satisfying $$|\mathcal A^{1,b}_N(\phi,y_1,y_2)| \leq N^{-3/2}\|\phi\|_{C^3}^2  \int_{I^2} \prod_{j=1}^2 e^{N^{-\frac1{4p}}|x_j-y_j| - \log M(N^{-\frac1{4p}})} \cdot|x_1-y_1|^3 |x_2-y_2|^3 \big|\rho\big( (y_1,y_2), (\dr x_1,\dr x_2) \big)\big|.$$
    As in \eqref{err1} we can write $\mathcal E^1_N = \mathcal E^{1,a}_N+ \mathcal E^{1,b}_N$ corresponding respectively to the contributions of $\mathcal A^{1,a}_N$ and $\mathcal A^{1,b}_N$ respectively.
    
    The bound for $\mathcal E^{1,b}_N$ is straightforward: since the power of $N$ is $-3/2$, and since the number of summands in the expression for $\mathcal E^{1,b}_N$ is $N(t-s)$, we can use brutal absolute value bounds to obtain the desired estimate using arguments very similar to the bound for $\mathcal E_N^2$ above. By Lemma \ref{grow}, the integral is bounded independently of $y_1,y_2, N$.

    Let us discuss the bound for $\mathcal E^{1,a}_N$. In the expression for $\mathcal A^{1,a}_N$, each of the terms $(x_2-y_1)^{k_2}$ can be further expanded out into a sum of terms of the form $(x_2-y_2)^{i}(y_1-y_2)^{k_2-i}$ where $0\leq i \leq k_2.$ Ultimately $\mathcal A^{1,a}_N$ will be written as a sum of terms of the form \begin{align*}(y_1-y_2)^{k_2-i}N^{-(k_1+k_2)/2} &\phi^{(k_1)}(N^{-1/2}y_1) \phi^{(k_2)}(N^{-1/2} y_1)\\&\cdot \int_{I^2} \prod_{j=1}^2 e^{N^{-\frac1{4p}}(x_j-y_j) - \log M(N^{-\frac1{4p}})} \cdot(x_1-y_1)^{k_1} (x_2-y_2)^{i}\rho\big( (y_1,y_2), (\dr x_1,\dr x_2) \big)\end{align*} where $0\leq k_1\le 2$ and $0\le i\le k_2\leq 2$. 
    Assuming that $\phi$ is supported on $[-J,J]$ we can then take the absolute value to bound the last expression by \begin{align*}|y_1-y_2|^{k_2-i}N^{-(k_1+k_2)/2} &\|\phi\|_{C^2}^2\ind_{[-J,J]}(N^{-1/2} y_1)\\&\cdot \bigg|\int_{I^2} \prod_{j=1}^2 e^{N^{-\frac1{4p}}(x_j-y_j) - \log M(N^{-\frac1{4p}})} \cdot(x_1-y_1)^{k_1} (x_2-y_2)^{i}\rho\big( (y_1,y_2), (\dr x_1,\dr x_2) \big)\bigg|.\end{align*} 
     All of this has accomplished the following: modulo the extra factor $|y_1-y_2|^{k_2-i}N^{-(k_1+k_2)/2},$ we now have an expression that is very similar to the expression for $\sum_{j=2,3,4} \mathcal A^j_N$ which we already dealt with when bounding $\mathcal E^2,\mathcal E^3,\mathcal E^4$.  Consequently, the same arguments that were used to obtain the bounds for $\mathcal E^2,\mathcal E^3,\mathcal E^4$ may now be used to bound $\mathcal A^{1,a}_N$. More precisely, one expands $\prod_{j=1,2} e^{N^{-\frac1{4p}}(x_j-y_j)}$ as a truncated Taylor expansion, where the degree of the $(x_1-y_1)$ does not exceed $4p-k_1$ and the degree of $(x_2-y_2)$ does not exceed $4p-i$. All of the polynomial terms of joint degree less than $2p$ vanish thanks to Assumption \ref{a1} Item \eqref{a23}, only leaving the polynomials of joint degree $\ge 2p$, and the remainder which will contribute at worst an extra $N^{-1-\frac1{4p}}$. This will yield terms of the same form as those of $\mathcal E^2_N$ and $\mathcal E^4_N$, \textit{albeit} with the extra factor of $|y_1-y_2|^{k_2-i}N^{-(k_1+k_2)/2}$ in the front.
    
    By considering all of the different cases of $(k_1,k_2,i)$, one verifies that the decay conditions on the function $\Fd$ in Item \eqref{a24} of Assumption \eqref{a1} are strong enough so that one may find $f$ as in the proposition statement to bound $\mathcal E^{1,a}_N$. For instance if $k_2=2$, then the term $|y_1-y_2|^{k_2}$ seems problematic, but the power of $N$ will be less than $-1$ in this case (thanks to the Taylor expansion of $\prod_{j=1,2} e^{N^{-\frac1{4p}}(x_j-y_j)}$ contributing some factors of $N^{-\frac{k}{4p}}$), and since $x\mapsto x^2 \Fd(x)$ is bounded on $[0,\infty)$ as can be verified from Item \eqref{a24} of Assumption \eqref{a1}, a brutal absolute value bound will suffice (similar to dealing with the terms of type $\mathcal E^2_N$ above). The worst terms will actually occur when $k_2=1$ and $i=k_1=0$, as the power of $N$ is exactly $-1$ in this case, so that extracting an extra factor of $N^{-\frac1{4p}}$ is not possible to obtain brutal absolute value bounds. Instead, the condition that $\int_0^\infty x\Fd(x)<\infty$ ensures that we can take $f(x):= (1+|x|)\Fd(|x|),$ and all of the polynomials terms in the Taylor expansion of $\prod_{j=1,2} e^{N^{-\frac1{4p}}(x_j-y_j)}$ will guarantee that we can obtain a bound of the form $N^{-1/2}|Q^f_N(t,\ind_{[-J,J]})-Q^f_N(s,\ind_{[-J,J]})|$. By Assumption \ref{a1} Item \eqref{a24}, this $f$ is eventually decreasing and thus bounded above by some decreasing function $F$ that is still integrable, as required in the proposition statement. Meanwhile the remainder terms of the Taylor expansion ca be dealt with using admit brutal absolute value bounds as we saw earlier, because they come with a factor of $N^{-1-\frac1{4p}}$ \textit{in addition} to the factor of $|y_1-y_2|^{k_2-i}N^{-(k_1+k_2)/2}.$ Using Lemma \ref{fite} with $q=0,1$, or $2$ (depending on the value of $k_2-i)$, one may easily deal with such remainder terms, thus finishing the proof of the required bound for $\mathcal E^{1,a}_N.$
\end{proof}

\begin{cor}[Predictable quadratic variation bound]\label{predbound}
    Let $q\ge 1$ be an even integer, and let $T>0$. Let $M^N(\phi)$ be the martingale defined in \eqref{mfield}, and let $\langle M^N(\phi)\rangle$ denote its predictable quadratic variation as in \eqref{quadvar}. Then there exists $C>0$ and exists $C>0$ and a function $f:I\to \mathbb R_+$ such that $|f(x)| \leq F(|x|)$ where $F:[0,\infty)\to[0,\infty)$ is decreasing and $\sum_{k=0}^\infty F(k)<\infty$, such that we have the bound uniformly over all $N\ge 1$, all $s,t\in [0,T]\cap (N^{-1}\mathbb Z),$ and all $\phi\in C_c^\infty(\mathbb R)$ supported on $[-J,J]$
    \begin{align*}\mathbb E\big[ \big|\langle M^N(\phi)\rangle_{t}-\langle M^N(\phi)\rangle_s\big|^q\big]^{1/q}&\leq C  \|\phi\|_{C^2}^2 \mathbb E[ |Q^f_N(t,\ind_{[-J,J]})-Q^f_N(s,\ind_{[-J,J]})|^q]^{1/q}+ 
    C \|\phi\|_{C^3}^2 N^{-\frac1{4p}}|t-s|\\ &\leq C  \|\phi\|_{C^3}^2|t-s|^{1/2}. 
        \end{align*}
\end{cor}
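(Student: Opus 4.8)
The proof of Corollary~\ref{predbound} will essentially assemble pieces that are already in place. Recall from Lemma~\ref{4.3} that
\[
\langle M^N(\phi)\rangle_t = e^{-2\log M(N^{-\frac1{4p}})} Q^{\z}_N(t,\phi^2) + \sum_{j=1}^4 \mathcal E^j_N(t,\phi),
\]
so the temporal increment $\langle M^N(\phi)\rangle_t - \langle M^N(\phi)\rangle_s$ splits into a ``main term'' coming from the quadratic variation field $Q^{\z}_N$ (with the deterministic prefactor $e^{-2\log M(N^{-\frac1{4p}})}$, which is bounded uniformly in $N$ since $M(N^{-\frac1{4p}})\to 1$), and four ``error terms'' $\mathcal E^j_N$. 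The plan is to bound each of these by the right-hand side of the claimed inequality using results already proved in this section.

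First I would handle the main term. Since $\phi^2$ is bounded by $\|\phi\|_{L^\infty}^2 \le \|\phi\|_{C^2}^2$ and is supported on $[-J,J]$, a pathwise bound $|Q^{\z}_N(t,\phi^2)-Q^{\z}_N(s,\phi^2)| \le \|\phi\|_{L^\infty}^2 |Q^f_N(t,\ind_{[-J,J]})-Q^f_N(s,\ind_{[-J,J]})|$ holds, where $f = |\z| \le F$ for a suitable decreasing summable $F$ (as noted after Definition~\ref{z}, $\z$ decays fast by Assumption~\ref{a1} Item~\eqref{a24}, and we may enlarge $f$ and $F$ as needed so that the same $f$ serves for all error terms too). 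Taking $L^q$ norms gives the first term of the first displayed bound. Alternatively and more directly, one applies the increment estimate \eqref{e.tight1} of Proposition~\ref{tight1} to $Q^{\z}_N(\cdot,\phi^2)$ directly, obtaining $\mathbb E[|Q^{\z}_N(t,\phi^2)-Q^{\z}_N(s,\phi^2)|^q]^{1/q} \le C\|\phi\|_{L^\infty}^2 (t-s)^{1/2}$, which is the form needed for the second (cleaner) displayed bound.

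Next come the error terms. For $j=2,3,4$, Proposition~\ref{errbound} gives
\[
\sum_{j=2,3,4}\mathbb E[ |\mathcal E^j_N(t,\phi)-\mathcal E^j_N(s,\phi)|^q]^{1/q} \le C N^{-\frac1{4p}}\mathbb E[ |Q^f_N(t,\phi^2)-Q^f_N(s,\phi^2)|^q]^{1/q}+N^{-\frac1{4p}}\|\phi\|_{L^\infty}^2|t-s|,
\]
and for $j=1$ (using $\phi \in C_c^\infty$ supported on $[-J,J]$) it gives
\[
\mathbb E[ |\mathcal E^1_N(t,\phi)-\mathcal E^1_N(s,\phi)|^q]^{1/q} \le C \|\phi\|_{C^2}^2 N^{-\frac12}\mathbb E[ |Q^f_N(t,\ind_{[-J,J]})-Q^f_N(s,\ind_{[-J,J]})|^q]^{1/q}+ C \|\phi\|_{C^3}^2 N^{-\frac1{2}}|t-s|.
\]
Applying \eqref{e.tight1} once more to $Q^f_N$ (with the function $f$ bounded by $F$), we get $\mathbb E[|Q^f_N(t,\ind_{[-J,J]})-Q^f_N(s,\ind_{[-J,J]})|^q]^{1/q} \le C(t-s)^{1/2}$ and similarly for $\phi^2$ in place of $\ind_{[-J,J]}$. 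Collecting all contributions — main term plus the five error bounds — and bounding the negative powers of $N$ by $1$ and the $|t-s|$ terms by $T^{1/2}|t-s|^{1/2}$ on $[0,T]$, yields the first displayed inequality of the corollary; combining the $Q^f_N$ estimate with it gives the final clean bound $C\|\phi\|_{C^3}^2 |t-s|^{1/2}$.

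I do not expect any genuine obstacle here: the corollary is a bookkeeping consequence of Lemma~\ref{4.3}, Proposition~\ref{errbound}, and Proposition~\ref{tight1}. The only mild subtlety is ensuring a single function $f$ (and envelope $F$) works simultaneously for the decomposition of $\langle M^N(\phi)\rangle$ and for all of $\mathcal E^1_N,\dots,\mathcal E^4_N$ — this is resolved by taking $F$ to be the maximum of the finitely many decreasing summable envelopes produced by those propositions, which is still decreasing (eventually) and summable. One should also track the $\|\phi\|_{C^k}$-dependence carefully: the $\mathcal E^1_N$ bound is the one that forces the appearance of $\|\phi\|_{C^2}$ and $\|\phi\|_{C^3}$ (via the second-order Taylor expansion of $\phi$ in the proof of Proposition~\ref{errbound}), so the final constant must be stated in terms of $\|\phi\|_{C^3}^2$ rather than just $\|\phi\|_{L^\infty}^2$.
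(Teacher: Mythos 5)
Your proposal is correct and follows essentially the same route as the paper: the paper's proof simply notes that the first bound is immediate from Lemma \ref{4.3} together with Proposition \ref{errbound}, and that the second follows from \eqref{e.tight1} and $|t-s|\leq C|t-s|^{1/2}$ on $[0,T]$. Your additional bookkeeping (pathwise domination $\phi^2\leq\|\phi\|_{L^\infty}^2\ind_{[-J,J]}$, taking a common envelope $F$, tracking the $\|\phi\|_{C^k}$ dependence) is exactly the implicit content of that argument.
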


\begin{proof}
    The first bound is immediate from Lemma \ref{4.3} and Proposition \ref{errbound}. The second inequality follows from \eqref{e.tight1} and the fact that $|t-s|\leq C|t-s|^{1/2}$ for $s,t\in [0,T]$.
\end{proof}


    \begin{prop}[Martingale tightness bound] \label{optbound}
        Fix $q\ge 1$ and 
        $T>0$. For all $s,t\in (N^{-1}\mathbb Z_{\ge 0})\cap [0,T]$ and all $\phi\in C_c^\infty(\mathbb R)$ we have that 
         \begin{equation*}\mathbb E\big[ \big|M_t^N(\phi)-M_s^N(\phi)\big|^q\big]^{1/q}\leq C \|\phi\|_{C^3}^2 |t-s|^{1/4}.
        \end{equation*}
        Here $C$ is independent of $N,\phi,s,t$.
    \end{prop}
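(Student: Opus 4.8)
The plan is to reduce the optional quadratic variation increment to the predictable one, for which we already have the bound in Corollary \ref{predbound}. The key observation is that the process $[M^N(\phi)]_t - \langle M^N(\phi)\rangle_t$ is itself a martingale (indexed by $t \in N^{-1}\mathbb Z_{\ge 0}$), being the compensated sum of the squared increments of $M^N(\phi)$. Specifically, writing $\Delta_s := M^N_{sN^{-1}}(\phi) - M^N_{(s-1)N^{-1}}(\phi)$, the process $\mathcal N_t := \sum_{s=1}^{Nt}\big(\Delta_s^2 - \mathbb E[\Delta_s^2 \mid \mathcal F^\omega_{s-1}]\big)$ is a martingale with respect to $(\mathcal F^\omega_{Nt})$, and $[M^N(\phi)]_t - \langle M^N(\phi)\rangle_t = \mathcal N_t$. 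So the triangle inequality gives $\mathbb E[|[M^N(\phi)]_t - [M^N(\phi)]_s|^q]^{1/q} \le \mathbb E[|\langle M^N(\phi)\rangle_t - \langle M^N(\phi)\rangle_s|^q]^{1/q} + \mathbb E[|\mathcal N_t - \mathcal N_s|^q]^{1/q}$, and the first term is controlled by Corollary \ref{predbound}.

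For the martingale $\mathcal N$, I would apply Lemma \ref{cool} (with the filtration $(\mathcal F^\omega_{Nt})$ and the martingale $\mathcal N$): it suffices to obtain a uniform $q$-th moment bound on the increments $\mathcal N_t - \mathcal N_{t-N^{-1}} = \Delta_{Nt}^2 - \mathbb E[\Delta_{Nt}^2\mid \mathcal F^\omega_{Nt-1}]$, since then $\mathbb E[|\mathcal N_t - \mathcal N_s|^q]^{1/q} \le C (N(t-s))^{1/2} \cdot \big(\sup_n \mathbb E[|\Delta_{n+1}^2 - \mathbb E[\Delta_{n+1}^2\mid\mathcal F^\omega_n]|^q]\big)^{1/q}$. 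To make the power of $N$ work out correctly, I need each increment $\Delta_n$ to be of size $N^{-1/2}$ in $L^{2q}$, i.e. $\sup_n \mathbb E[|\Delta_n|^{2q}] \le C N^{-q}$; then $\mathbb E[|\Delta_n^2 - \mathbb E[\Delta_n^2\mid\mathcal F_{n-1}]|^q] \le C \mathbb E[|\Delta_n|^{2q}] \le C N^{-q}$, and Lemma \ref{cool} yields $\mathbb E[|\mathcal N_t - \mathcal N_s|^q]^{1/q} \le C (N(t-s))^{1/2} N^{-1} \le C|t-s|^{1/2} N^{-1/2}$, which is even better than required. So the crux is the estimate
\begin{equation*}
\sup_{n\ge 1}\mathbb E\big[|M^N_{(n+1)N^{-1}}(\phi) - M^N_{nN^{-1}}(\phi)|^{2q}\big] \le C\|\phi\|_{C^0}^{2q} N^{-q}.
\end{equation*}
This follows from the explicit formula \eqref{diff} for the martingale increment: $M^N_{(n+1)N^{-1}}(\phi) - M^N_{nN^{-1}}(\phi) = \int_I\int_I \phi(N^{-1/2}x) e^{N^{-\frac1{4p}}(x-y) - \log M(N^{-\frac1{4p}})}[K_{n+1}(y,\mathrm dx) - \mu(\mathrm dx - y)]Z_N(n,\mathrm dy)$. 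Taking the annealed $2q$-th moment, expanding into a $2q$-fold integral over independent walkers, and using that $K_{n+1} - \mu(\cdot - y)$ is centered and independent of the past, one can rewrite this (as in the proof of Lemma \ref{l:Qmom}) in terms of the $2q$-point motion with the $D_{N,\cdot,\cdot}$ weights absorbed, picking up one factor of $\rho$ (from \eqref{rho}) for each pairing among the $2q$ walkers at time $n$. Each such $\rho$-integral against a bounded test function is $O(N^{-1/2})$ in the relevant sense — precisely the mechanism behind the $N^{-1/2}$ prefactor in the definition of $Q_N^f$ — and there are $q$ pairs, giving the overall $N^{-q}$; the $D$-weight contributions are controlled exactly as in Lemma \ref{fite} by rewriting in terms of the tilted measures and using the uniform moment bounds from Corollaries \ref{lptight1}, \ref{exp2}, and \ref{convd}. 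The main obstacle is bookkeeping: carefully organizing the $2q$-walker expansion, tracking which pairs are linked by a $\rho$-factor (and handling higher joint moments beyond pairs, which decay even faster by Lemma \ref{prev}/Proposition \ref{dbound}), and ensuring the $D$-weights and the residual exponential terms $\mathcal H^{2q}$ are absorbed uniformly in $N$ and $n$; but no genuinely new idea beyond what is already developed in Sections 2–4 is needed, and the computation parallels the proof of Proposition \ref{tight1} and Lemma \ref{fite}.

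Having assembled these pieces — the martingale decomposition $[M^N(\phi)] - \langle M^N(\phi)\rangle = \mathcal N$, the bound on $\langle M^N(\phi)\rangle$ from Corollary \ref{predbound}, and the increment bound feeding Lemma \ref{cool} to control $\mathcal N$ — the stated inequality $\mathbb E[|[M^N(\phi)]_t - [M^N(\phi)]_s|^q]^{1/q} \le C\|\phi\|_{C^3}^2|t-s|^{1/2}$ follows immediately, with the $C^3$-norm (rather than a lower-order norm) appearing only through the use of Corollary \ref{predbound}. It remains to double-check that $\mathcal N$ is indeed a martingale in the stated filtration, which is routine: $\mathbb E[\mathcal N_t - \mathcal N_{t - N^{-1}}\mid \mathcal F^\omega_{Nt-1}] = \mathbb E[\Delta_{Nt}^2 \mid \mathcal F^\omega_{Nt-1}] - \mathbb E[\Delta_{Nt}^2\mid\mathcal F^\omega_{Nt-1}] = 0$, using that $M^N(\phi)$ is itself adapted to this filtration.
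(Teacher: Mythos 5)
Your approach is sound and takes a genuinely different route from the paper. Both proofs share the martingale decomposition $[M^N(\phi)]-\langle M^N(\phi)\rangle=\mathcal N$ and use Corollary~\ref{predbound} for the predictable part. But where the paper applies BDG to $\mathcal N$ (called $G^N$ there) and then controls $\mathbb E\big[(\sum_r(\Delta G^N_r)^2)^k\big]$ via a careful multi-time expansion over ordered tuples $(r_1,\ldots,r_k)$, the Markov property, and the $\varrho_\phi^j$ machinery with an induction on $k$, you instead invoke the already-proved Lemma~\ref{cool} with a single-time uniform increment bound. This is tidier because Lemma~\ref{cool}'s BDG-plus-Jensen route makes the multi-time bookkeeping unnecessary; the price is that you need a new single-time estimate — the $q$-fold correlation of $\mathpzc A_N$'s at one time $n$, which is a cousin of the paper's two-fold $\varrho_\phi^1$ but not literally the same object — but as you correctly observe it is provable by the same Taylor-plus-moment-vanishing mechanism, with boundedness supplied by the exponential moment control in Lemma~\ref{grow}.

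There is, however, an arithmetic slip in your key intermediate claim that is worth flagging because it would not survive a careful check. You assert $\sup_n\mathbb E[|\Delta_n|^{2q}]\le C\|\phi\|^{2q}N^{-q}$, reasoning ``$q$ pairs, each $\rho$-integral $O(N^{-1/2})$, giving overall $N^{-q}$''. But $q$ factors of $N^{-1/2}$ multiply to $N^{-q/2}$, not $N^{-q}$, and indeed $N^{-q/2}$ is the true order: at $n=1$ the quenched density is a Dirac at the origin, and expanding the exponential weight to its first non-vanishing moment (degree $p$, thanks to Assumption~\ref{a1}~Item~\eqref{a23}) yields $\Delta_1\approx N^{-1/4}\tfrac{\phi(0)}{p!}\int x^p\,[K_1(0,\dr x)-\mu(\dr x)]$, so $\mathbb E[|\Delta_1|^{2q}]\asymp N^{-q/2}$ and this is the worst case over $n$. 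Fortunately this does not break your argument — with the corrected bound $\mathcal S_q(\mathcal N)\le C\|\phi\|^{2q}N^{-q/2}$, Lemma~\ref{cool} gives $\mathbb E[|\mathcal N_t-\mathcal N_s|^q]^{1/q}\le C(N(t-s))^{1/2}N^{-1/2}=C(t-s)^{1/2}$ — exactly what is required, not ``even better than required'' as you claim. Relatedly, your remark that higher joint moments ``decay even faster'' than pairings is imprecise: since each of the $2q$ centered kernel factors independently contributes a factor $N^{-1/4}$ via the Taylor expansion, all cumulant partitions of the $2q$ walkers contribute the same order $N^{-q/2}$; they don't decay faster, they just aren't worse.
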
 

    \begin{proof}
        It suffices to prove the claim when $p=2k$ for some positive integer $k$. By \cite[Theorem 2.11]{Hall} and Corollary \ref{predbound}, we have the Burkholder-type bound 
\begin{align*}
    \mathbb E\big[ \big|M_t^N(\phi)-M_s^N(\phi)\big|^q\big]^{1/q} & \leq C_p \mathbb E \big[ \big( \langle M^N(\phi)\rangle_t - \langle M^N(\phi)\rangle_s \big)^{q/2} \big]^{1/q} + \mathbb E \big[ \sup_{u \in N^{-1} \mathbb Z \cap [s,t]} |M_{u+N^{-1}}(\phi) - M_u(\phi)|^q    \big]^{1/q}\\ &\leq C\|\phi\|_{C^{3}} \cdot |t-s|^{1/4} + \mathbb E \big[ \sup_{u \in N^{-1} \mathbb Z \cap [s,t]} |M_{u+N^{-1}}(\phi) - M_u(\phi)|^q    \big]^{1/q} \\ &\le C\|\phi\|_{C^{3}} \cdot |t-s|^{1/4} + \mathbb E \bigg[ \sum_{u \in N^{-1} \mathbb Z \cap [s,t]} |M_{u+N^{-1}}(\phi) - M_u(\phi)|^q    \bigg]^{1/q}.
\end{align*}
We now claim that 
\begin{equation}\label{qvar}
    \mathbb E \bigg[ \sum_{u \in N^{-1} \mathbb Z \cap [s,t]} |M_{u+N^{-1}}(\phi) - M_u(\phi)|^q    \bigg]^{1/q} \leq C|t-s|,
\end{equation}
which is stronger than is necessary to prove the lemma. To prove this, recall from \eqref{diff} that when $r=Nt$ we have
\begin{align*} \notag (M^N_{t+N^{-1}}(\phi)- M^N_t(\phi))^q = \bigg( \int_I \int_I \phi(N^{-1/2}x) e^{N^{-\frac1{4p}}(x-y) - \log M(N^{-\frac1{4p}})}\big[ K_{r+1}(y,\dr x) -\mu(\dr x-y)\big]Z_N(r,\dr y)\bigg)^q.
\end{align*}
For $r\in \mathbb Z_{\ge 0}$, define the signed (random) measure $H_{r+1}(y,\dr x):= K_{r+1}(y,\dr x) -\mu(\dr x-y). $ Then define \begin{equation}\label{ann}\mathpzc A_N(\phi,y_1,...,y_{2k}) := \int_{I^{2k}} \prod_{j=1}^k \phi(N^{-1/2}x_j)e^{N^{-\frac1{4p}}(x_j-y_j) - \log M(N^{-\frac1{4p}})} \Bbb E\bigg[ \prod_{j=1}^{2k} H_r(y_j, \dr x_j) \bigg].\end{equation} Note by Assumption \ref{a1} Item \eqref{a22} that $\int_I (y-x)^j H_{r+1}(y,\dr x) =0$ for all $0\le j \le p-1$ a.s.. 
Since $q=2k$, we take expectation of this quantity and we obtain 
\begin{align*}
    \sum_{u=Ns}^{Nt} \Bbb E[&(M^N_{t+N^{-1}}(\phi)- M^N_t(\phi))^q ] = \sum_{r=Ns}^{Nt} \mathbf E_{\mathrm{RW}^{(2k)}} \bigg[ \mathpzc A (\phi, R^{1}_r , ...,R^{2k}_r )\prod_{j=1}^{2k} D_{N, N^{-1} r, N^{-1/2} (R^j_r - N^{-1} d_N r)}    \bigg].
\end{align*}
     We claim that there exists $C>0$ such that uniformly over $y_1,...,y_{2k}\in I$, and $\phi\in C_c^\infty(\mathbb R)$ 
\begin{align}
     \notag |\mathpzc A_N(\phi,y_1,...,y_{2k})| &\leq C \|\phi\|_{L^\infty}^{2k} N^{-\frac{2kp}{4p} } \int_{I^{2k}} \prod_{j=1}^{2k} (y_j-x_j)^p  \Bbb E\bigg[ \prod_{j=1}^{2k} H_r(y_j, \dr x_j) \bigg]\\& \;\;\;\;+C\|\phi\|_{C^3}^{2k}  \bigg( 1+ N^{-1/2}\sum_{1\le a<b\le 2k}|y_a-y_b|\bigg)^{2k} N^{-\frac{2kp+1}{4p}}.\label{abound}
\end{align}
To prove this, we claim a decomposition of $\mathpzc A_N (\phi,y_1,...,y_{2k})$ as the sum of three parts 
\begin{align*}\prod_{j=1}^{2k} & \phi(N^{-1/2}y_j) N^{-\frac{2kp}{4p} } \int_{I^{2k}} \prod_{j=1}^{2k} (y_j-x_j)^p  \Bbb E\bigg[ \prod_{j=1}^{2k} H_r(y_j, \dr x_j) \bigg]\\&+ \mathpzc A_N^{\mathrm{rem}}(\phi,y_1,...,y_{2k})\;\;\;+\;\;\;e^{-2 \log M(N^{-\frac1{4p}})}\sum_{0\leq \ell_1,...,\ell_{2k}\leq 2}\bigg\{\prod_{j=1}^{2k}\phi^{(\ell_j)}(N^{-1/2}y_j)  \\&\;\;\;\;\;\;\;\times \sum_{\substack{p\leq m_1,...,m_{2k}\leq 2p\\ m_{2k+v} \leq \ell_v, \forall v = 1,...,2k-1}} N^{-\frac{\sum_1^{2k} m_j}{4p}-\frac{\sum_1^{2k} \ell_j}2}c_{m_1,...,m_{4k-1}} \mathpzc A^{m_1,...,m_{4k-1}}(r,y_1,...,y_{2k})\bigg\} 
\end{align*}where $c_{m_1,m_2,...,m_{4k-1}}$ are deterministic constants not depending on $N$, where
\begin{align*}
    \mathpzc A^{m_1,...,m_{4k-1}}(y_1,...,y_{2k}) :=  \prod_{j=1}^{2k-1} (y_1-y_j)^{m_{2k+j}}\int_{I^2}  \prod_{j=1}^{2k} (x_j-y_j)^{m_j}\Bbb E \bigg[ \prod_{j=1}^{2k} H_r(y_j,\dr x_j)\bigg],
\end{align*} and where the ``remainder" term $\mathpzc A^{\mathrm{rem}}_N$ satisfies for any $q\ge 1$ $$\mathbb E[ |\mathpzc A^{\mathrm{rem}}_N (\phi,y_1,...,y_{2k})|^q]^{1/q}\leq CN^{-k-\frac1{4p}}\bigg( 1+N^{-1/2}\sum_{1\le a<b\le 2k}|y_a-y_b|\bigg)^{2k}\|\phi\|_{C^3}^{2k}.$$
Indeed, this three-term decomposition follows using exactly the same arguments as in deriving the error decomposition in Proposition \ref{errbound}. More precisely, in \eqref{ann} one writes $e^{N^{-\frac1{4p}}(x_j-y_j)}$ as the sum $H_{2p}(N^{-\frac1{4p}}(x_j-y_j))+ \sum_{k=0}^{2p} \frac1{k!}N^{-\frac{k}{4p}}(x_j-y_j)^k,$ then Taylor expanding each copy of $\phi$ around $N^{-1/2}y_1$ exactly as we did in the proof of Proposition \ref{errbound} while bounding $\mathcal E^1_N$, and then noting that the constants $c_{m_1,...,m_{4k-1}}$ can be obtained from combining all of the relevant Taylor coefficients of a given collection of exponents $m_1,...,m_{4k-1}$. If either of $k_1$ or $k_2$ is less than $p$ the contribution in \eqref{ann} would vanish a.s. by Assumption \ref{a1} Item \eqref{a22}, hence why we have the summation over $p\le m_1,...,m_{2k}\le 2p$ rather than $0\leq m_1,..., m_{2k}\leq 2p$ in the decomposition above. The remainder terms $\mathpzc A^{\mathrm{rem}}_N$ will come from the Taylor expansion's remainder beyond the order-two terms. Then \eqref{abound} follows immediately from the three-term decomposition.

        Henceforth assume $k \ge 2$, which we can assume without loss of generality since it suffices to prove the lemma for large $k$. Given \eqref{abound}, the required bound \eqref{qvar} is then immediate from Lemma \ref{fite}. This is because for $k\ge 2$ 
        the first term of \eqref{abound} is $O(N^{-1})$ and thus easily dealt with using Lemma \ref{grow} and a brutal absolute value bound. The remaining terms are of even smaller order.
    \end{proof}

\section{Tightness of the rescaled field and identification of the limit points}

In this section we will finally prove Theorem \ref{main2} using the estimates of the previous section.
\subsection{Weighted H\"older spaces and Schauder estimates} 

    We now introduce various natural topologies for our field $\mathscr{U}_N$ and its limit points. We then discuss how the heat flow affects these topologies and record Kolmogorov-type lemmas that will be key in showing tightness under these topologies. We begin by recalling many familiar and useful spaces of continuous and differentiable functions that have natural metric structures. 
For $d\ge 1$, we denote by $C_c^{\infty}(\R^d)$ the space of all compactly supported smooth functions on $\R^d$. For a smooth function on $\R^d$, we define its $C^r$ norm as
\begin{align*}
    \|f\|_{C^r}:=\sum_{\#\vec{k} \le r} \sup_{\mathbf x\in \R^d} |D^{\vec k} f(\mathbf x)|
\end{align*}
 where the sum is over all $\vec{k} = (k_1,...,k_d)\in \mathbb{Z}_{\ge 0}^d$ with $\#\vec k:=\sum k_i\le r$ and $D^{\vec{k}}:=\partial_{x_1}^{k_1}\cdots \partial_{x_d}^{k_d}$ denotes the mixed partial derivative. 

\smallskip

We now recall the definition of weighted H\"older spaces from \cite[Definitions 2.2 and 2.3]{HL16}.  For the remainder of this paper, we shall work with \textit{elliptic and parabolic} weighted H\"older spaces with polynomial weight function
 \begin{align*}
     w(x):=(1+x^2)^\tau
 \end{align*}
  for some fixed $\tau>1$. We introduce these weights because weighted spaces will be more convenient to obtain tightness estimates. Since the solution of \eqref{she} started from Dirac initial condition is known to be globally bounded away from $(0,0)$, we expect that it is possible to remove the weights throughout this section, but this would require more precise moment estimates than the ones we derived in previous sections, which take into account spatial decay of the fields.

	\begin{defn}[Elliptic H\"older spaces]\label{ehs}
		For $\alpha\in(0,1)$ we define the space $C^{\alpha,\tau}(\mathbb R)$ to be the completion of $C_c^\infty(\mathbb R)$ with respect to the norm given by $$\|f\|_{C^{\alpha,\tau}(\mathbb R)}:= \sup_{x\in\mathbb R} \frac{|f(x)|}{w(x)} + \sup_{|x-y|\leq 1} \frac{|f(x)-f(y)|}{w(x)|x-y|^{\alpha}}.$$
		For $\alpha<0$ we let $r=-\lfloor \alpha\rfloor$ and we define $C^{\alpha,\tau}(\mathbb R)$ to be the completion of $C_c^\infty(\mathbb R)$ with respect to the norm $$\|f\|_{C^{\alpha,\tau}(\mathbb R)}:= \sup_{x\in\mathbb R} \sup_{\lambda\in (0,1]} \sup_{\phi \in B_r} \frac{(f,S^\lambda_{x}\phi)_{L^2(\mathbb R)}}{w(x)\lambda^\alpha}$$ where the scaling operators $S^\lambda_{x}$ are defined by 
  \begin{align}\label{escale}
  S^\lambda_{x}\phi (y) = \lambda^{-1}\phi(\lambda^{-1}(x-y)),\end{align} 
  and where $B_r$ is the set of all smooth functions of {$C^r$ norm} less than 1 with support contained in the unit ball of $\mathbb R$.
	\end{defn}

 One may verify that these spaces embed continuously into $\mathcal S'(\mathbb R)$.

 \begin{defn}[Function spaces] \label{fsp} Let $C^{\alpha,\tau}(\mathbb R)$ be as in Definition \ref{ehs}. We define $C([0,T],C^{\alpha,\tau}(\mathbb R))$ to be the space of continuous maps $g:[0,T]\to C^{\alpha,\tau}(\mathbb R),$ equipped with the norm $$\|g\|_{C([0,T],C^{\alpha,\tau}(\mathbb R))} := \sup_{t\in[0,T]} \|g(t)\|_{C^{\alpha,\tau}(\mathbb R)}.$$ 
\end{defn}

	Here and henceforth we will define $\Psi_{[a,b]}:=[a,b]\times\mathbb R$ and we will define $\Psi_T:=\Psi_{[0,T]}.$

\smallskip

 So far we have used $\phi,\psi$ for test functions on $\R$. To make the distinction clear between test functions on $\R$ and $\R^2$, we shall use variant Greek letters such as $\varphi, \vartheta, \varrho$ for test functions on $\R^2$. In many instances below, we will explicitly write $(f,\varphi)_{\mathbb R^2}$ or $(g,\phi)_{\mathbb R}$ when we want to be clear about the space in which we are applying the $L^2$-pairing. 
 
	\begin{defn}[Parabolic H\"older spaces]
		We define $C_c^\infty(\Psi_T)$ to be the set of functions on $\Psi_T$ that are restrictions to $\Psi_T$ of some function in $C_c^\infty(\mathbb R^2)$, in particular we do not impose that elements of $C_c^\infty(\Psi_T)$ vanish at the boundaries of $\Psi_T.$ 
  
    For $\alpha\in(0,1)$ we define the space $C^{\alpha,\tau}_\mathfrak s(\Psi_T)$ to be the completion of $C_c^\infty(\Psi_T)$ with respect to the norm $$\|f\|_{C^{\alpha,\tau}_\mathfrak s(\Psi_T)}:= \sup_{(t,x)\in \Psi_T} \frac{|f(t,x)|}{w(x)} + \sup_{|s-t|^{1/2}+|x-y|\leq 1} \frac{|f(t,x)-f(s,y)|}{w(x)(|t-s|^{1/2}+|x-y|)^{\alpha}}.$$
		For $\alpha<0$ we let $r=-\lfloor \alpha\rfloor$ and we define $C^{\alpha,\tau}_\mathfrak s(\Psi_T)$ to be the completion of $C_c^\infty(\Psi_T)$ with respect to the norm $$\|f\|_{C^{\alpha,\tau}_\mathfrak s(\Psi_T)}:= \sup_{(t,x)\in\Psi_T} \sup_{\lambda\in (0,1]} \sup_{\varphi \in B_r} \frac{(f,S^\lambda_{(t,x)}\varphi)_{L^2(\Psi_T)}}{w(x)\lambda^\alpha}$$ where the scaling operators are defined by 
		\begin{align}
			\label{scale}
			S^\lambda_{(t,x)}\varphi (s,y) = \lambda^{-3}\varphi(\lambda^{-2}(t-s),\lambda^{-1}(x-y)),
		\end{align}
		and where $B_r$ is the set of all smooth functions of $C^r$ norm less than 1 with support contained in the unit ball of $\mathbb R^2$.
	\end{defn}

An important property of both the parabolic and elliptic spaces is that one has a continuous embedding $C^{\alpha,\tau} \hookrightarrow C^{\beta,\tau}$ whenever $\beta<\alpha.$ In fact this embedding is compact, though we will not use this. We also have the following embedding of function spaces inside parabolic spaces.

 \begin{lem}\label{embed}
     For $\alpha<0,\tau>0$ one has a continuous embedding $C([0,T],C^{\alpha,\tau}(\mathbb R))\hookrightarrow C^{\alpha,\tau}_\mathfrak s(\Psi_T)$ given by identifying $v=(v(t))_{t\in [0,T]}$ with the tempered space-time distribution given by $$(v,\varphi)_{\mathbb R^2} = \int_0^T (v(t) ,\varphi(t,\cdot))_\mathbb Rdt.$$ 
 \end{lem}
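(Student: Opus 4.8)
The plan is to establish the embedding by directly estimating the parabolic norm $\|v\|_{C^{\alpha,\tau}_\mathfrak s(\Psi_T)}$ in terms of $\|v\|_{C([0,T],C^{\alpha,\tau}(\mathbb R))}$ for an element $v=(v(t))_{t\in[0,T]}$ with each $v(t)\in C^{\alpha,\tau}(\mathbb R)$, where $v$ is viewed as the space-time distribution $(v,\varphi)_{\mathbb R^2}=\int_0^T(v(t),\varphi(t,\cdot))_{\mathbb R}\,dt$. Since $\alpha<0$, writing $r=-\lfloor\alpha\rfloor$, we must bound $$(v,S^\lambda_{(t,x)}\varphi)_{L^2(\Psi_T)}=\int_0^T \big(v(s),\,S^\lambda_{(t,x)}\varphi(s,\cdot)\big)_{\mathbb R}\,ds$$ for $\varphi\in B_r$ (the $C^r$-ball of radius $1$ supported in the unit ball of $\mathbb R^2$), $\lambda\in(0,1]$, and $(t,x)\in\Psi_T$. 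The key observation is that for each fixed $s$, the function $y\mapsto S^\lambda_{(t,x)}\varphi(s,y)=\lambda^{-3}\varphi(\lambda^{-2}(t-s),\lambda^{-1}(x-y))$ is, up to the scalar factor $\lambda^{-2}\big|\varphi(\lambda^{-2}(t-s),\cdot)\big|$-type normalization, a rescaled spatial test function of the form $S^\lambda_x\phi_s$ with $\phi_s(z):=\varphi(\lambda^{-2}(t-s),z)$ (using the elliptic scaling $S^\lambda_x\phi(y)=\lambda^{-1}\phi(\lambda^{-1}(x-y))$ from \eqref{escale}). Thus $S^\lambda_{(t,x)}\varphi(s,\cdot)=\lambda^{-2}\,S^\lambda_x\phi_s$.

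The main steps would be: (1) For each $s$, apply the definition of the elliptic norm to get $\big|(v(s),S^\lambda_x\phi_s)_{\mathbb R}\big|\le w(x)\lambda^\alpha\,\|\phi_s\|_{C^r}\,\|v(s)\|_{C^{\alpha,\tau}(\mathbb R)}$, provided $\phi_s$ has support in the unit ball of $\mathbb R$ (which holds since $\varphi$ is supported in the unit ball of $\mathbb R^2$, so its spatial slices are supported in $[-1,1]$) — note one needs to normalize $\phi_s$ by $\|\phi_s\|_{C^r}$ and use homogeneity of the pairing. The $C^r$ norm of the slice $\phi_s$ is bounded by $\|\varphi\|_{C^r(\mathbb R^2)}\le 1$. (2) Combine with the factor $\lambda^{-2}$ and integrate over $s$: since $\varphi(\lambda^{-2}(t-s),\cdot)$ vanishes unless $|t-s|\le\lambda^2$, the $s$-integral is over an interval of length at most $2\lambda^2$, contributing a factor $\le 2\lambda^2$. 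This gives $$\big|(v,S^\lambda_{(t,x)}\varphi)_{L^2(\Psi_T)}\big|\le 2\lambda^2\cdot\lambda^{-2}\cdot w(x)\lambda^\alpha\cdot\|v\|_{C([0,T],C^{\alpha,\tau}(\mathbb R))}=2\,w(x)\lambda^\alpha\,\|v\|_{C([0,T],C^{\alpha,\tau}(\mathbb R))}.$$ (3) Divide by $w(x)\lambda^\alpha$ and take the supremum over $(t,x),\lambda,\varphi$ to conclude $\|v\|_{C^{\alpha,\tau}_\mathfrak s(\Psi_T)}\le 2\|v\|_{C([0,T],C^{\alpha,\tau}(\mathbb R))}$, which is the desired continuous embedding. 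One should also check that the map is well-defined on the completions: it suffices to verify the bound on the dense subspace $C_c^\infty$ (where $v(t)$ varies smoothly in $t$ and the pairings are classical integrals), then extend by density and continuity, noting that smooth compactly supported space-time functions arise this way and the embedding respects the identification.

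The one technical point that requires a little care — and the main (minor) obstacle — is the bookkeeping with the normalization constants and making sure that the spatial slices $z\mapsto\varphi(\lambda^{-2}(t-s),z)$, after the rescaling is unwound, genuinely land in the admissible test-function class $B_r$ for the elliptic norm (correct support, correct $C^r$ bound) so that Definition \ref{ehs} applies verbatim; this is where one must be careful that the factor $\lambda^{-3}$ in the parabolic scaling \eqref{scale} splits correctly as $\lambda^{-1}$ (the elliptic scaling) times $\lambda^{-2}$ (absorbed against the $|t-s|\le\lambda^2$ support constraint on the time slice). There is no analytic difficulty beyond this; the estimate is essentially a dimensional-analysis / scaling computation. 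The case $\alpha\in(0,1)$ is not needed here since the lemma is stated only for $\alpha<0$.
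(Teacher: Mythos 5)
Your proof is correct, and since the paper omits the proof (stating only that it is ``straightforward from the definitions''), your argument supplies precisely the natural scaling computation the authors intend. The key step — factoring the parabolic scaling as $S^\lambda_{(t,x)}\varphi(s,\cdot)=\lambda^{-2}S^\lambda_x\phi_s$ with $\phi_s:=\varphi(\lambda^{-2}(t-s),\cdot)\in B_r$, then noting the time-support constraint $|t-s|\le\lambda^2$ exactly cancels the $\lambda^{-2}$ — is right, as is the density argument showing the image lands in the completion.
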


 The proof is straightforward from the definitions and is omitted.
 
 \begin{rk}[Derivatives of distributions] \label{d/dx}  Let $\alpha<0$. By definition, any element $f\in C_\mathfrak s^{\alpha,\tau}(\Psi_T)$ admits an $L^2$-pairing with any smooth function $\varphi: \Psi_T\to \mathbb R$ of rapid decay (i.e., $\varphi$ is a Schwarz function restricted to $\Psi_T$). We will write this pairing as $(f,\varphi)_{\Psi_T}.$ Consequently there is a natural embedding $C_\mathfrak s^{\alpha,\tau}(\Psi_T)\hookrightarrow \mathcal S'(\mathbb R^2)$ which is defined by formally setting $f$ to be zero outside of $[0,T]\times \mathbb R$. More rigorously, this means that the $L^2(\mathbb R^2)$-pairing of $f$ with any $\varphi \in \mathcal S(\mathbb R^2)$ is defined to be equal to $(f,\varphi|_{\Psi_T})_{\Psi_T}$.

 \smallskip
 
 The image of this embedding consists of some specific collection of tempered distributions that are necessarily supported on $[0,T]\times \mathbb R.$ Thus we can sensibly define $\partial_tf$ and $\partial_xf$ as elements of $\mathcal S'(\mathbb R^2)$ whenever $f\in C_\mathfrak s^{\alpha,\tau}(\Psi_T)$, by the formulas$$(\partial_tf,\varphi)_{\Psi_T} := -(f,\partial_t\varphi)_{\Psi_T},\;\;\;\;\;\;\;\;\;\;(\partial_xf,\varphi)_{\Psi_T} := -(f,\partial_x\varphi)_{\Psi_T}$$for all smooth $\varphi:\Psi_T\to\mathbb R$ of rapid decay. This convention on derivatives will be useful for certain computations later. From the definitions, when $\alpha<0$ one can check that for such $f$ one necessarily has $\partial_tf \in C_\mathfrak s^{\alpha-2,\tau}(\Psi_T)$ and $\partial_x f \in C_\mathfrak s^{\alpha-1,\tau}(\Psi_T).$

\smallskip

The latter statement fails for $\alpha>0$. Indeed by our convention of derivatives, $\partial_tf$ may no longer be a smooth function (or even a function) even if $f\in C_c^\infty(\Psi_T)$. This is because such an $f$ gets extended to all of $\mathbb R^2$ by setting it to be zero outside $\Psi_T$. In particular, if $f$ does not vanish on the boundary of $\Psi_T$, then it may become a discontinuous function under our convention of extension to $\mathbb R^2$. Due to these discontinuities, the distributional derivative $\partial_tf$ can be a tempered distribution with singular parts along the boundaries. 
In our later computations, we never take derivatives of functions in $C_\mathfrak s^{\alpha,\tau}(\Psi_T)$ with $\alpha>0$. Sometimes we will write $\partial_s$ to mean the same thing as $\partial_t$.
\end{rk}

 
	\begin{prop}[Smoothing effect of the heat flow on elliptic spaces]\label{p65} For $f\in C_c^\infty(\mathbb R)$ and $t>0$ define $$P_tf(x):= \int_\mathbb R p_{t}(x-y)f(y)dy,$$where $p_t(x) = (2\pi t)^{-1/2} e^{-x^2/2t}$ is the standard heat kernel. Then for all $\alpha \le \beta <1$ (possibly negative), there exists $C=C(\alpha,\beta,T)>0$ such that $$\|P_tf\|_{C^{\beta,\tau}(\mathbb R)} \leq C t^{-(\beta-\alpha)/2}\|f\|_{C^{\alpha,\tau}(\mathbb R)} $$ uniformly over $f\in C_c^\infty$ and $t\in [0,T]$. In particular, $P_t$ extends to a globally defined linear operator on $C^{\alpha,\tau}(\mathbb R)$ which maps boundedly into $C^{\beta,\tau}(\mathbb R).$
	\end{prop}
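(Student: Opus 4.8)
\textbf{Proof proposal for Proposition \ref{p65}.} The plan is to reduce the weighted H\"older estimate to a family of testing bounds against the elliptic scaling functions $S^\lambda_x\phi$ from \eqref{escale}, and to exploit the semigroup/scaling structure of the heat kernel together with the commutation of $P_t$ with spatial translations. The key point throughout is that $P_t$ both smooths (gaining regularity at a rate $t^{-1/2}$ per derivative) and is compatible with the polynomial weight $w(x)=(1+x^2)^\tau$, in the sense that convolution with $p_t$ for $t\le T$ distorts $w$ only by a bounded multiplicative factor since $p_t$ has Gaussian tails uniformly on $[0,T]$.

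First I would treat the case $\alpha<0$, $\beta<0$, as this is the generic situation and already contains the main mechanism. Fix $x\in\mathbb R$, $\lambda\in(0,1]$, and $\varphi\in B_{-\lfloor\beta\rfloor}$. The quantity to bound is $w(x)^{-1}\lambda^{-\beta}(P_tf,S^\lambda_x\varphi)_{L^2(\mathbb R)}$. Using self-adjointness of $P_t$ and the fact that $p_t$ is even, $(P_tf,S^\lambda_x\varphi)=(f,P_tS^\lambda_x\varphi)$, so everything reduces to understanding $P_tS^\lambda_x\varphi$. Writing $\Phi^\lambda_x:=P_tS^\lambda_x\varphi$, one checks from the explicit Gaussian formula that $\Phi^\lambda_x$ is a smooth function which is (up to a weight-distortion constant and a harmless cutoff at scale $\sqrt T$) a linear combination of rescaled bump functions centered near $x$ at scale $\mu:=(\lambda^2+ct)^{1/2}$ for an absolute constant $c$; more precisely there is a decomposition $\Phi^\lambda_x=\sum_{j\ge 0} 2^{-j\cdot(\text{gain})}\, c_j\, S^{\mu 2^{-j}}_{x_j}\psi_j$ with $\psi_j\in B_{-\lfloor\alpha\rfloor}$ and $|x_j-x|\lesssim \mu 2^{j}$, together with a rapidly decaying remainder; alternatively and more cleanly, one uses that $\|S^\lambda_x\varphi\|_{C^{\gamma,\tau}(\mathbb R)}\lesssim \lambda^{-\gamma}$ for every $\gamma$ by a direct computation, and then invokes the case already proved for functions rather than distributions to move the $P_t$ onto $\varphi$. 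I expect the cleanest route is: (i) show that for $\gamma\in\mathbb R$ one has $\|S^\lambda_x\varphi\|_{C^{\gamma,\tau}}\lesssim w(x)\lambda^{-\gamma}$ for $\varphi\in B_r$ with $r$ large, by differentiating explicitly; (ii) prove the ``positive-index'' smoothing estimate $\|P_t g\|_{C^{\beta,\tau}}\lesssim t^{-(\beta-\alpha)/2}\|g\|_{C^{\alpha,\tau}}$ first for $0\le\alpha\le\beta<1$ by the standard heat-kernel derivative bounds $\|\partial_x^k p_t\|_{L^1}\lesssim t^{-k/2}$ combined with a weight-translation bound $\int p_t(x-y)w(y)\,dy\lesssim_T w(x)$; (iii) handle mixed and negative indices by duality, writing $(P_tf,S^\lambda_x\varphi)=(f,P_tS^\lambda_x\varphi)$ and bounding $P_tS^\lambda_x\varphi$ in $C^{-\alpha,\tau}$ using (i)–(ii) with the semigroup property $P_t=P_{t/2}P_{t/2}$ to split the regularity gain from the decay gain. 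Summation over the dyadic pieces converges precisely because $\beta-\alpha<2$ is not needed here — any finite gap works — so no borderline issue arises.

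The semigroup property is the main structural tool: to go from $C^{\alpha,\tau}$ with $\alpha<0$ up to $C^{\beta,\tau}$ with $\beta$ possibly positive, write $P_t=P_{t/2}\circ P_{t/2}$, apply the negative-to-negative (or negative-to-zero) bound on the first factor to land in some $C^{0-,\tau}$ or $C^{0,\tau}$ space at cost $t^{-(\text{something})/2}$, then apply the zero-to-positive bound on the second factor; optimizing the intermediate index gives the exponent $t^{-(\beta-\alpha)/2}$. For $t\in[0,T]$ bounded, one only needs these estimates for bounded $t$, which keeps all the weight-distortion constants $\int p_t(x-y)w(y)dy/w(x)$ uniformly bounded. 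Finally, density of $C_c^\infty(\mathbb R)$ in $C^{\alpha,\tau}(\mathbb R)$ (which holds by the definition of these spaces as completions) lets us extend $P_t$ from $C_c^\infty$ to all of $C^{\alpha,\tau}$ as a bounded operator into $C^{\beta,\tau}$, and continuity in $t$ on $(0,T]$ — though not asserted in the statement beyond boundedness — would follow from the same estimates applied to $P_t-P_s=P_s(P_{t-s}-\mathrm{Id})$.

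The step I expect to be the main obstacle is the bookkeeping for the weight: one must verify that testing $P_t f$ against $S^\lambda_x\varphi$, after moving $P_t$ onto the test function, produces a test object whose ``center of mass'' has spread out to scale $\sqrt t$ (possibly much larger than $\lambda$), and that the resulting contributions from regions $|y-x|\sim 2^j\sqrt t$ are controlled by $w(x)$ rather than $w(y)\sim 2^{2\tau j}w(x)$. This is where the Gaussian decay of $p_t$ must beat the polynomial growth of $w$: one needs the elementary but slightly delicate bound $\int_{\mathbb R} p_t(x-y)\,w(y)\,(1+t^{-1/2}|x-y|)^{N}\,dy \le C_{N,T}\, w(x)$ for $t\le T$, valid for every $N$, which is what ultimately ensures the dyadic sum converges and the weight on the left-hand side is exactly $w(x)$ with no loss. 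Everything else — the derivative counting for the heat kernel, the reduction by duality, the use of the semigroup property — is routine once this weight-versus-Gaussian-tail estimate is in hand.
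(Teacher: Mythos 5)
Your overall strategy is sound, but it is genuinely different from what the paper does: the paper does not prove Proposition \ref{p65} at all, it simply invokes \cite[Lemma 2.8]{HL16}, where the analogous smoothing bound is established for weighted Besov--H\"older spaces with exponential weights (via the weighted Besov/wavelet machinery), and remarks that the polynomial-weight case is proved in the same way. Your proposal replaces that citation with a direct kernel-level argument: treat the range $0\le\alpha\le\beta<1$ by the classical heat-kernel derivative bounds together with the weight-compatibility estimate $\int p_t(x-y)w(y)(1+t^{-1/2}|x-y|)^N\,dy\le C_{N,T}w(x)$, treat $\alpha\le\beta<0$ by moving $P_t$ onto the test function and decomposing $P_tS^\lambda_x\varphi$ into bumps at the effective scale $\mu=(\lambda^2+ct)^{1/2}$, and bridge the mixed case $\alpha<0\le\beta$ with the semigroup splitting $P_t=P_{t/2}P_{t/2}$ through a weighted $L^\infty$ intermediate space. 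This is exactly the standard self-contained proof of such estimates, and you correctly isolate the one genuinely weighted ingredient (Gaussian decay beating the polynomial weight, uniformly for $t\le T$); what the paper's route buys is brevity, while yours buys a proof that does not depend on transplanting the exponential-weight argument of \cite{HL16}.

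Two caveats you should repair before writing this up. First, your auxiliary claim (i) is misnormalized: with the $L^1$-normalized scaling \eqref{escale} one has, in one dimension, $\|S^\lambda_x\varphi\|_{C^{\gamma,\tau}}\sim\lambda^{-1-\gamma}$ up to the weight factor (a bump of scale $\lambda$ has regularity $-1-\gamma$ bookkeeping, as the example of $\delta_0$ shows), not $\lambda^{-\gamma}$, and the weight enters through $w(x)^{-1}$ rather than $w(x)$; if you used (i) literally the exponents would come out wrong, so you should instead run the argument entirely through the bump decomposition of $P_tS^\lambda_x\varphi$ at scale $\mu$, where the correct count $(f,P_tS^\lambda_x\varphi)\lesssim\|f\|_{C^{\alpha,\tau}}\,w(x)\,\mu^{\alpha}$ emerges and yields $\mu^\alpha\lambda^{-\beta}\le t^{(\alpha-\beta)/2}$ when $\beta<0$, with the $\beta\ge 0$ case handled by your semigroup splitting as you indicate. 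Second, a small point: the norm in Definition \ref{ehs} only tests at scales $\lambda\in(0,1]$, while your effective scale $\mu$ can be as large as order $\sqrt{T}$; you need to cover a scale-$\mu$ bump by boundedly many unit-scale bumps (a $T$-dependent constant), which is harmless but should be said. Neither issue changes the architecture of your argument.
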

	
	A proof may be found in \cite[Lemma 2.8]{HL16} in the case of an exponential weight. {The proof for polynomial weights is very similar}.

\smallskip

 For the parabolic H\"older spaces, the following lemma states that the heat flow improves the regularity by a factor of $2$ and provides a Schauder-type estimate.
	\begin{prop}[Schauder estimate]\label{sch}
		For $f\in C_c^\infty(\Psi_T)$ let us define 
		\begin{align}
  \label{e:kf}
			Kf(t,x):= \int_{\Psi_T} p_{t-s}(x-y)f(s,y)dsdy,
		\end{align} where $p_t$ is the standard heat kernel for $t>0,x\in\mathbb R$ and $p_t(x):=0$ for $t<0.$ Then for all $\alpha<-1$ with $\alpha\notin\mathbb Z$ there exists $C=C(\alpha)>0$ independent of $f$ such that $$\|Kf\|_{C^{\alpha+2,\tau}_\mathfrak s(\Psi_T)}\leq C\cdot\|f\|_{C^{\alpha,\tau}_\mathfrak s(\Psi_T)}.$$ In particular $K$ extends to a globally defined linear operator on $C^{\alpha,\tau}_\mathfrak s(\Psi_T)$ that maps boundedly into $C^{\alpha+2,\tau}_\mathfrak s(\Psi_T).$ Furthermore, if $f\in C^{\alpha,\tau}_\mathfrak s(\Lambda_{T})$, then $K(\partial_t-\frac12\partial_x^2)f= f$.
	\end{prop}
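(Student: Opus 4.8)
\textbf{Proof proposal for Proposition \ref{sch}.}

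The plan is to follow the standard route to Schauder estimates for the heat operator in the negative-regularity (distributional) setting, adapted to the polynomial weight $w(x)=(1+x^2)^\tau$, and then to verify the identity $K(\partial_t-\tfrac12\partial_x^2)f=f$ separately by a direct duality computation. The cleanest approach is to cite the analogous result for exponential weights (as the paper does for Proposition \ref{p65}, referencing \cite{HL16}) and explain why the proof carries over verbatim to polynomial weights, since the only structural property of $w$ that is used is that it is a smooth, positive, slowly-varying weight satisfying $w(x)\lesssim w(y)(1+|x-y|)^{2\tau}$ and $w(x)^{-1}\lesssim w(y)^{-1}(1+|x-y|)^{2\tau}$ — both of which hold for polynomial weights just as for exponential ones, with the submultiplicativity constant replaced by a polynomial factor. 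Concretely: to bound $\|Kf\|_{C^{\alpha+2,\tau}_{\mathfrak s}(\Psi_T)}$ for $\alpha+2<1$, one tests $Kf$ against a rescaled bump $S^\lambda_{(t,x)}\varphi$ with $\varphi\in B_r$, $r=-\lfloor\alpha+2\rfloor$, writes
\begin{align*}
(Kf,S^\lambda_{(t,x)}\varphi)_{\Psi_T} = (f, K^{*}S^\lambda_{(t,x)}\varphi)_{\Psi_T},
\end{align*}
where $K^*$ is the backward heat convolution, and then decomposes $K^*S^\lambda_{(t,x)}\varphi$ into a sum of rescaled test functions at dyadic scales $2^{-n}\le\lambda$ plus a smooth remainder, using the semigroup/scaling structure of $p_t$. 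Each dyadic piece is paired against $f$ using the definition of $\|f\|_{C^{\alpha,\tau}_{\mathfrak s}}$, contributing $\lesssim w(x) 2^{-n\alpha}$ up to the weight-comparison factor, and summing the geometric series in $n$ (which converges precisely because $\alpha<-1<\alpha+2$ keeps the exponents on the correct sides, and because $\alpha\notin\mathbb Z$ avoids the logarithmic borderline) yields $\lesssim w(x)\lambda^{\alpha+2}$, which is the desired bound. The weight is handled by noting that the kernel $p_{t-s}(x-y)$ decays fast enough in $|x-y|$ to absorb the polynomial growth discrepancy between $w(x)$ and $w(y)$; this is where the $\tau>1$ assumption and the Gaussian tail of $p_t$ interact, exactly as in the exponentially-weighted case.

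Having established the boundedness $K:C^{\alpha,\tau}_{\mathfrak s}(\Psi_T)\to C^{\alpha+2,\tau}_{\mathfrak s}(\Psi_T)$ on the dense subspace $C_c^\infty(\Psi_T)$, the extension to all of $C^{\alpha,\tau}_{\mathfrak s}(\Psi_T)$ is immediate by density and continuity (recall these spaces are defined as completions of $C_c^\infty(\Psi_T)$). For the final identity $K(\partial_t-\tfrac12\partial_x^2)f=f$ when $f\in C^{\alpha,\tau}_{\mathfrak s}(\Lambda_T)$ — here I read $\Lambda_T$ as shorthand for $\Psi_T$, or more precisely for those $f$ supported in the open strip so that no boundary terms arise — one argues by duality. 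For $f\in C_c^\infty$ vanishing near $t=0$ and $\varphi$ smooth of rapid decay,
\begin{align*}
\big(K(\partial_t-\tfrac12\partial_x^2)f,\varphi\big)_{\Psi_T} = \big((\partial_t-\tfrac12\partial_x^2)f, K^*\varphi\big)_{\Psi_T} = \big(f, (-\partial_t-\tfrac12\partial_x^2)K^*\varphi\big)_{\Psi_T} = (f,\varphi)_{\Psi_T},
\end{align*}
using that $K^*\varphi$ solves the backward heat equation with source $\varphi$, i.e. $(-\partial_t-\tfrac12\partial_x^2)K^*\varphi=\varphi$, together with the convention on distributional derivatives from Remark \ref{d/dx}. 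The passage to general $f$ in the completion is again by density, observing that both sides are continuous in $f$ with respect to the relevant norms (the left side because $\partial_t-\tfrac12\partial_x^2$ maps $C^{\alpha,\tau}_{\mathfrak s}$ continuously into $C^{\alpha-2,\tau}_{\mathfrak s}$ by Remark \ref{d/dx}, then $K$ maps back continuously).

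The main obstacle is bookkeeping rather than conceptual: one must be careful about the boundary of $\Psi_T$. Because functions in $C^{\alpha,\tau}_{\mathfrak s}(\Psi_T)$ are extended by zero outside $[0,T]\times\mathbb R$ (Remark \ref{d/dx}), the distributional time-derivative $\partial_tf$ can carry singular layers on $\{t=0\}$ and $\{t=T\}$; the identity $K(\partial_t-\tfrac12\partial_x^2)f=f$ is stated only for $f$ with suitable vanishing at the boundary (which is what I interpret $\Lambda_T$ to encode), and one must check that the density argument respects this constraint — i.e. that $f$ can be approximated by compactly supported smooth functions vanishing near $t=0$. A secondary technical point is ensuring the dyadic decomposition of $K^*S^\lambda_{(t,x)}\varphi$ is genuinely a sum of admissible rescaled test functions (correct normalization, support control, uniform $C^r$ bounds) — this is routine but must be done with the parabolic scaling \eqref{scale} rather than isotropic scaling. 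Neither of these is hard; they just require care, and for brevity I would mostly defer to the verbatim argument in \cite{HL16} for the exponential-weight case and indicate the (minor) modifications needed for the polynomial weight.
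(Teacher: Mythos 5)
Your overall strategy — dualize $Kf$ against rescaled bumps, dyadically decompose $K^*S^\lambda_{(t,x)}\varphi$, sum the geometric series using $\alpha\notin\mathbb Z$, handle the polynomial weight via its (polynomial) submultiplicativity absorbed by the Gaussian tails — is the standard route and is exactly what the paper relies on: it simply cites \cite[Proposition 6.6]{DDP23}, which in turn follows the Hairer--Labb\'e template \cite{HL16}. So the approach matches. One completeness remark: your sketch covers the case $\alpha+2<0$ (negative-regularity output, tested against rescaled bumps), but the paper also uses the proposition with $\alpha\in(-2,-1)$ so that $\alpha+2\in(0,1)$ is a positive-regularity H\"older space; for that output range one also needs to estimate pointwise values and first-order increments of $Kf$, not just smeared pairings. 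This is routine and is in \cite{HL16}, but it should be mentioned.

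The one substantive misreading is your treatment of the final identity $K(\partial_t-\tfrac12\partial_x^2)f=f$. You hedge that this ``is stated only for $f$ with suitable vanishing at the boundary (which is what I interpret $\Lambda_T$ to encode)'' and propose restricting to $f\in C_c^\infty$ vanishing near $t=0$, then extending by density. In fact $\Lambda_T$ is almost certainly a typo for $\Psi_T$, and no vanishing at the boundary is imposed. Indeed the paragraph following the proposition emphasizes the opposite point: the identity holds for \emph{all} $f$ in the space --- including smooth $f$ that do not vanish on $\{t=0\}$ --- \emph{because} of the convention from Remark \ref{d/dx}, which extends $f$ by zero to $\mathbb R^2$ and thus builds the boundary layer $f(0,\cdot)\otimes\delta_0(t)$ directly into $\partial_t f$. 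This is precisely what makes the Duhamel formula close: the singular layer, after convolution with $K$, regenerates the heat flow started from the initial slice. Your own three-line duality computation already proves this — the middle equality $((\partial_t-\tfrac12\partial_x^2)f,K^*\varphi)=(f,(-\partial_t-\tfrac12\partial_x^2)K^*\varphi)$ is the \emph{definition} of the distributional derivative under the convention and produces no boundary terms for any $f\in C_c^\infty(\Psi_T)$ — so the vanishing hypothesis you introduce is unnecessary and slightly obscures the point the paper is at pains to make. The rest of your extension-by-density argument is fine without it.
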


It is important to note that the last statement is only true because of our convention on distributional derivatives that we have explained in Remark \ref{d/dx}. Without that convention, that statement would be false even for a smooth function $f$ that does not vanish on the line $\{t=0\}$.

	\begin{proof} See \cite[Proposition 6.6]{DDP23} for the proof.
  \end{proof}

\begin{cor}\label{sme} Define $J: C^{\alpha,\tau}(\mathbb R) \to C_\mathfrak s^{\alpha,\tau}(\Psi_T)$ by $$Jf(t,x):= \big(f,p_t(x-\cdot)\big)_\mathbb R,$$  $J$ is a bounded linear operator for any $\alpha <1$ and any $\tau>0.$  Consider the operator $\hat P_t:= JP_t$ with $P_t$ defined in Proposition \ref{p65}. By the semigroup property of the heat kernel, we have that 
 \begin{align}\label{e.hatjrel}
     \hat P_{s}f(t,x)=(Jf)(t+s,x).
 \end{align}
 Furthermore, we have the operator norm bound
 \begin{align}
     \label{e.schhat}
     \|\hat P_tf\|_{C^{\beta,\tau}_\mathfrak s(\Psi_T)} \leq C \cdot t^{-(\beta-\alpha)/2}\|f\|_{C^{\alpha,\tau}(\mathbb R)},
 \end{align}
where $C=C(\alpha,\beta,T)>0$ does not depend on $t$ and $f$.
	\end{cor}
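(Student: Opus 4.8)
\textbf{Proof proposal for Corollary \ref{sme}.}

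The plan is to deduce everything from Proposition \ref{p65} (the smoothing estimate on elliptic spaces) together with the definition of the parabolic H\"older norms and a routine scaling argument. First I would check that $J$ is a bounded linear operator from $C^{\alpha,\tau}(\mathbb R)$ into $C_\mathfrak s^{\alpha,\tau}(\Psi_T)$; the key identity to exploit is that for each fixed $t$, the map $x\mapsto (Jf)(t,x)$ equals $P_t f$ in the notation of Proposition \ref{p65}, so $Jf(t,\cdot)= P_tf$. For $\alpha<1$ and $t>0$, Proposition \ref{p65} gives $\|P_tf\|_{C^{\beta,\tau}}\le Ct^{-(\beta-\alpha)/2}\|f\|_{C^{\alpha,\tau}}$ for any $\beta\ge\alpha$, and one can interpolate in the time variable using $\partial_tP_tf=\tfrac12\partial_x^2P_tf$ to control the modulus of continuity in $t$; this converts the family of elliptic estimates into the single parabolic H\"older estimate defining $C_\mathfrak s^{\alpha,\tau}(\Psi_T)$. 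Concretely, for $\alpha>0$ one bounds $|Jf(t,x)-Jf(s,y)|$ by splitting into a purely spatial increment (handled by the H\"older seminorm of $P_tf$, which is uniformly bounded on $[0,T]$ by Proposition \ref{p65} with $\beta=\alpha$) and a purely temporal increment $|P_tf(x)-P_sf(x)|$, which by writing it as $\int_s^t \tfrac12 \partial_x^2 P_u f(x)\,du$ and applying Proposition \ref{p65} with $\beta$ slightly below $\alpha+2$ is $O(|t-s|^{\alpha/2})$; for $\alpha<0$ one tests against the parabolic scaling functions $S^\lambda_{(t,x)}\varphi$ and performs the time integral by hand, again using the elliptic smoothing estimate. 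Either way the weight $w(x)=(1+x^2)^\tau$ is carried along unchanged since it already appears in Proposition \ref{p65}.

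Second, the semigroup relation \eqref{e.hatjrel} is immediate from the definitions once $J$ and $\hat P_s=JP_s$ are in place: $\hat P_sf(t,x)=(JP_sf)(t,x)=(P_sf, p_t(x-\cdot))_{\mathbb R}$, and by the Chapman--Kolmogorov identity $\int_{\mathbb R}p_t(x-z)(P_sf)(z)\,dz=\int_{\mathbb R}p_{t+s}(x-z)f(z)\,dz=(Jf)(t+s,x)$. This requires only the semigroup property $p_t * p_s = p_{t+s}$ of the heat kernel and Fubini, and holds first for $f\in C_c^\infty(\mathbb R)$ and then for all $f\in C^{\alpha,\tau}(\mathbb R)$ by density and continuity of both sides (using the boundedness of $J$ just established).

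Third, the operator norm bound \eqref{e.schhat}: from \eqref{e.hatjrel}, $\hat P_tf(s,x)=Jf(s+t,x)=P_{s+t}f(x)$, so estimating $\|\hat P_tf\|_{C_\mathfrak s^{\beta,\tau}(\Psi_T)}$ amounts to bounding, uniformly over $s\in[0,T]$, the elliptic norm $\|P_{s+t}f\|_{C^{\beta,\tau}}$ together with the corresponding space-time modulus of continuity. Applying Proposition \ref{p65} with the time parameter $s+t\ge t$ gives $\|P_{s+t}f\|_{C^{\beta,\tau}}\le C(s+t)^{-(\beta-\alpha)/2}\|f\|_{C^{\alpha,\tau}}\le Ct^{-(\beta-\alpha)/2}\|f\|_{C^{\alpha,\tau}}$, and the temporal-increment piece is handled exactly as in the boundedness proof of $J$ (differentiating in time and reapplying Proposition \ref{p65}, noting $s+t\ge t$ throughout so all negative powers of time are bounded by powers of $t$). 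Assembling these gives the stated bound with $C=C(\alpha,\beta,T)$.

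The main obstacle — really the only nontrivial point — is converting the one-parameter family of elliptic smoothing estimates into a genuine \emph{parabolic} H\"older estimate, i.e.\ controlling the joint space-time modulus of continuity (and, in the negative-regularity case, the pairing against parabolically-scaled test functions $S^\lambda_{(t,x)}\varphi$ of \eqref{scale} rather than the elliptic ones $S^\lambda_x\phi$ of \eqref{escale}). This is a standard but slightly fiddly bookkeeping exercise: one must track how the parabolic scaling $\lambda^{-3}\varphi(\lambda^{-2}(t-s),\lambda^{-1}(x-y))$ interacts with the heat semigroup, typically by writing $\hat P_tf$ tested against $S^\lambda_{(t_0,x_0)}\varphi$ as an integral in the auxiliary time variable and peeling off a factor $t^{-(\beta-\alpha)/2}$ using Proposition \ref{p65} at each slice. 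Since essentially the same computation appears in \cite[Proposition 6.6]{DDP23} (cited for Proposition \ref{sch} above) and in \cite[Lemma 2.8]{HL16}, I would carry it out by the same method and not reproduce the full details.
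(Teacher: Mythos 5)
Your argument is correct in substance, but it takes a different route from the paper. The paper disposes of the boundedness of $J$ in one line by writing $Jf = K(\delta_0\otimes f)$, checking directly from the definitions that $f\mapsto \delta_0\otimes f$ is bounded from $C^{\alpha,\tau}(\mathbb R)$ to $C_\mathfrak s^{\alpha-2,\tau}(\Psi_T)$, and then invoking the Schauder estimate of Proposition \ref{sch} (which gains two parabolic exponents, and whose hypothesis $\alpha-2<-1$ is exactly the stated restriction $\alpha<1$); the bound \eqref{e.schhat} then follows by composing the boundedness of $J$ at exponent $\beta$ with Proposition \ref{p65}, i.e. $\|\hat P_tf\|_{C^{\beta,\tau}_\mathfrak s}\le \|J\|\,\|P_tf\|_{C^{\beta,\tau}}\le Ct^{-(\beta-\alpha)/2}\|f\|_{C^{\alpha,\tau}}$, with no further modulus-of-continuity work. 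You instead redo the elliptic-to-parabolic conversion by hand, slicing the pairing against $S^\lambda_{(t,x)}\varphi$ in the auxiliary time variable and using the uniform-in-$t$ elliptic bound at each slice; this is a legitimate, self-contained alternative (and for $\alpha<0$, the only case the paper actually uses, your sketched computation does produce the correct $\lambda^\alpha$ scaling), but it is precisely the bookkeeping the paper's factorization through $K$ and $\delta_0\otimes$ is designed to avoid. You could also have shortened your third step by composing \eqref{e.hatjrel}-free: $\hat P_tf=J(P_tf)$, so $J$-boundedness at level $\beta$ plus Proposition \ref{p65} gives \eqref{e.schhat} immediately.

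One point to repair if you write your version out in full: for $\alpha\in(0,1)$ you control the temporal increment by writing $P_tf-P_sf=\int_s^t\tfrac12\partial_x^2P_uf\,du$ and "applying Proposition \ref{p65} with $\beta$ slightly below $\alpha+2$", but Proposition \ref{p65} is only stated for $\beta<1$, so it does not directly furnish a bound on $\partial_x^2P_uf$. The estimate you need is true, but you must get it another way, e.g. via the elementary increment bound $|P_hg(x)-g(x)|\lesssim h^{\alpha/2}w(x)\|g\|_{C^{\alpha,\tau}}$ (split the heat-kernel integral at $|y|\le 1$ and use the weighted H\"older seminorm plus Gaussian decay), applied with $g=P_sf$ and $h=t-s$, or by proving a separate weighted gradient estimate for the heat semigroup. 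This only affects the $\alpha\in(0,1)$ case and is a fixable slip rather than a structural flaw.
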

 \begin{proof} The first part follows by using the Schauder estimate in Proposition \ref{sch}, noting that $Jf = K(\delta_0\otimes f)$ where for $f \in C^{\alpha,\tau}(\mathbb R)$ the latter distribution is defined by $(\delta_0\otimes f, \varphi)_{\mathbb R^2}:= (f,\varphi(0,\cdot))_{\mathbb R}.$ Directly from the definitions one can check that $f\mapsto \delta_0\otimes f$ is bounded from $C^{\alpha,\tau}(\mathbb R)\to C_\mathfrak s^{\alpha-2,\tau}(\Psi_T).$ Finally, \eqref{e.schhat} follows from Proposition \ref{p65}.     
 \end{proof}

We next define a space-time distribution that is supported on a single temporal cross-section:

\begin{defn}\label{otimes} Let $\alpha<0$. Given some $f\in C^{\alpha,\tau}(\mathbb R)$ and $b \in [0,T]$ we define $\delta_b \otimes f\in C_\mathfrak s^{\alpha-2,\tau}(\Psi_T)$ by the formula
$(\delta_b\otimes f, \varphi)_{\mathbb R^2}:= (f,\varphi(b,\cdot))_{\mathbb R}.$
 \end{defn}

 Directly from the definitions of the scaling operators in \eqref{escale} and \eqref{scale}, it is clear that for fixed $b\in [0,T]$, the linear map $f\mapsto \delta_b\otimes f$ is bounded from $C^{\alpha,\tau}(\mathbb R)\to C_\mathfrak s^{\alpha-2,\tau}(\Psi_T)$ as long as $\alpha<0.$ It is also clear that $\delta_b \otimes f$ is necessarily supported on the line $\{b\}\times \mathbb R$.

 We end this subsection by recording a Kolmogorov-type lemma for the three spaces introduced at the beginning of this subsection. It will be crucial in proving tightness in those respective spaces.
	
	\begin{lem}[Kolmogorov lemma] \label{l:KC} Let $L^2(\Omega,\mathcal{F},\Pr)$ be the space of all random variables defined on a probability space  $(\Omega,\mathcal{F},\Pr)$ with finite second moment. We have  the following:
 \begin{enumerate}[label=(\alph*),leftmargin=15pt]
 \setlength\itemsep{0.5em}


 \item \label{KC2} (Function space) 
		Let $(t,\phi) \mapsto V(t,\phi)$ be a map from $[0,T]\times \mathcal S(\mathbb R)$ into $L^2(\Omega,\mathcal F,\mathbb P)$ which is linear and continuous in $\phi$. Fix a non-negative integer $r$. Assume there exists some $\kappa>0, q>1/\kappa$ and $\alpha<-r$ and $C=C(\kappa,\alpha,q,{T})>0$ such that one has 
		\begin{align*}\mE[|V(t,S^\lambda_{x}\phi)|^q]^{1/q}&\leq C\lambda^{\alpha },\\  \mE[|V(t,S^\lambda_{x}\phi)-V(s,S^\lambda_{x}\phi)|^q]^{1/q}&\leq C\lambda^{\alpha -\kappa} |t-s|^{\kappa },
		\end{align*}uniformly over all smooth functions $\phi$ on $\mathbb R$ supported on the unit ball of $\mathbb R$ with {$\|\phi\|_{C^r}\leq 1$}, and uniformly over $\lambda\in(0,1]$ and $0\leq s,t\leq T$. Then for any $\tau>1$ and any $\beta<\alpha-\kappa$ there exists a random variable $\big(\mathscr V(t)\big)_{t\in[0,T]}$ taking values in $C([0,T],C^{\beta,\tau}(\mathbb R))$ such that $(\mathscr V(t),\phi)=V(t,\phi)$ almost surely for all $\phi$ and $t$. Furthermore, one has that $$\mE[\|\mathscr V\|^q_{C([0,T],C^{\beta,\tau}(\mathbb R))}]\leq C',$$ where $C'$ depends on the choice of $\alpha,\beta,q,\kappa,$ and the constant $C$ appearing in the moment bound above but not on $V,\Omega,\mathcal F,\mathbb P$.
 
    \item \label{KC}
(Parabolic H\"older Space) Let $\varphi \mapsto V(\varphi)$ be a bounded linear map from  $\mathcal S(\mathbb R^2)$ to $L^2(\Omega,\mathcal F,\mathbb P)$. Assume $V(\varphi)=0$ for all $\varphi$ with support contained in the complement of $\Psi_T$. Recall $S^\lambda_{(t,x)}$ from  \eqref{scale}. Fix a non-negative integer $r$. Assume there exists some $q>1$ and $\alpha <0$ and $C=C(\alpha,q)>0$ such that one has $$\mE[|V(S^\lambda_{(t,x)}\varphi)|^q]^{1/q}\leq C\lambda^{\alpha },$$ uniformly over all smooth functions $\varphi$ on $\mathbb R^2$ supported on the unit ball of $\mathbb R^2$ with {$\|\varphi\|_{C^r}\leq 1$}, and uniformly over $\lambda\in(0,1]$ and $(t,x)\in\Psi_T$. Then for any $\tau>1$ and any $\beta<\alpha-3/q$ there exists a random variable $\mathscr V$ taking values in $C^{\beta,\tau}_\mathfrak s(\Psi_T)$ such that $(\mathscr V,\varphi)=V(\varphi)$ almost surely for all $\varphi.$ Furthermore one has that $$\mE[\|\mathscr V\|^q_{C^{\beta,\tau}_\mathfrak s(\Psi_T)}]\leq C',$$ where $C'$ depends on the choice of $\alpha,q,$ and the constant $C$ appearing in the moment bound above but not on $V,\Omega,\mathcal F,\mathbb P$.

 \end{enumerate}

	\end{lem}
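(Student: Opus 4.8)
\textbf{Proof sketch for Lemma \ref{l:KC}.}

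The plan is to reduce both statements to the standard Kolmogorov--Chentsov continuity criterion for distribution-valued processes, adapted to the weighted H\"older scale. The engine underlying both parts is the classical wavelet/dyadic argument: one tests $V$ against a fixed compactly supported wavelet basis at all dyadic scales and locations, uses the hypothesized single-scale moment bounds to control the coefficients, and then reconstructs the H\"older norm from these coefficients via the characterization of $C^{\beta,\tau}$ (resp.\ $C^{\beta,\tau}_{\mathfrak s}$) in terms of wavelet coefficient decay. For part \ref{KC}, I would first fix a compactly supported $C^r$ (in fact $C^\infty$ if one uses Daubechies wavelets of sufficient regularity, but $C^r$ suffices since only $\|\varphi\|_{C^r}\le 1$ is used) scaling function $\varphi^0$ and associated wavelets $\varphi^i$, $i=1,\ldots,2^2-1$, on $\mathbb R^2$ with the parabolic scaling \eqref{scale}. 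Testing $V$ against $S^{2^{-n}}_{(t,x)}\varphi^i$ at the dyadic points $(t,x)\in 2^{-2n}\mathbb Z\times 2^{-n}\mathbb Z$ (intersected with a neighborhood of $\Psi_T$, which is where $V$ is supported), the hypothesis gives $\mathbb E[|V(S^{2^{-n}}_{(t,x)}\varphi^i)|^p]^{1/p}\le C 2^{-n\alpha}$. Since at scale $n$ there are $O(2^{3n})$ relevant lattice points per unit parabolic volume (and polynomially many in $|x|$), a union bound combined with a Borel--Cantelli / maximal-inequality argument, tuned so that the weight $w(x)=(1+x^2)^\tau$ with $\tau>1$ makes the spatial sum summable, shows that almost surely the wavelet coefficients decay like $2^{-n(\alpha - 3/p - \e)} w(x)$ for every $\e>0$. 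By the wavelet characterization of the negative-regularity parabolic H\"older space (see \cite{HL16} or the analogous statement used in \cite{DDP23}), this exactly yields a representative $\mathscr V\in C^{\beta,\tau}_{\mathfrak s}(\Psi_T)$ for any $\beta<\alpha-3/p$, with the $L^p$ norm bound on $\|\mathscr V\|_{C^{\beta,\tau}_{\mathfrak s}(\Psi_T)}$ coming from summing the $L^p$ bounds on the coefficients geometrically. The fact that $(\mathscr V,\varphi)=V(\varphi)$ a.s.\ for every fixed $\varphi\in\mathcal S(\mathbb R^2)$ follows because both sides are continuous linear functionals agreeing on the dense (in the appropriate dual sense) set of wavelets, using linearity and $L^2$-continuity of $V$.

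For part \ref{KC2}, the structure is the same but with a temporal modulus of continuity folded in. I would work scale-by-scale in the spatial variable: at spatial dyadic scale $2^{-n}$, the first hypothesis controls $\mathbb E[|V(t,S^\lambda_x\phi)|^p]^{1/p}\le C\lambda^\alpha$ and the second controls the temporal increments $\mathbb E[|V(t,S^\lambda_x\phi)-V(s,S^\lambda_x\phi)|^p]^{1/p}\le C\lambda^{\alpha-\kappa}|t-s|^\kappa$. For each fixed spatial scale and location, the temporal Kolmogorov criterion (with exponent $\kappa$ and integrability $p>1/\kappa$) produces an a.s.\ continuous-in-$t$ version of the coefficient process $t\mapsto V(t,S^{2^{-n}}_x\phi)$ with $\mathbb E[\sup_t|\cdot|^p]^{1/p}\le C 2^{-n(\alpha-\kappa)}$ plus Hölder-in-time control. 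Summing these over the spatial scales $n$ and spatial locations $x$ (again the weight $w$ with $\tau>1$ guarantees summability of the spatial sum, and a small loss $\e$ from the number $O(2^n)$ of locations per unit length is absorbed into $\beta<\alpha-\kappa$), and invoking the elliptic wavelet characterization of $C^{\beta,\tau}(\mathbb R)$ at each time $t$, gives a process $(\mathscr V(t))_{t\in[0,T]}$ with values in $C^{\beta,\tau}(\mathbb R)$; the joint temporal continuity in the $C^{\beta,\tau}$ topology follows from the uniform-in-scale Hölder-in-time bounds and a standard interpolation/Arzelà--Ascoli argument. The moment bound $\mathbb E[\|\mathscr V\|_{C([0,T],C^{\beta,\tau})}^p]\le C'$ again comes from summing the per-coefficient $L^p$ estimates geometrically. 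As before, the identity $(\mathscr V(t),\phi)=V(t,\phi)$ a.s.\ for each fixed $\phi$ and $t$ follows from linearity, $L^2$-continuity in $\phi$, and density.

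The main obstacle, and the only place where care is genuinely required beyond bookkeeping, is handling the interplay between the polynomial spatial weight $w(x)=(1+x^2)^\tau$ and the combinatorics of the dyadic sum: at scale $n$ one must sum over $O(2^n)$ (resp.\ $O(2^{3n})$ in the parabolic case) spatial (resp.\ space-time) locations per unit volume, and over unboundedly many unit cells in $x$; one needs $\tau>1$ precisely so that $\sum_{x\in\mathbb Z} w(x)^{-q}<\infty$ for suitable $q$, which is what makes the union bound / maximal inequality converge and produces the $w(x)$ factor in the reconstructed H\"older norm. A secondary technical point is verifying that the negative-regularity weighted parabolic (and elliptic) H\"older norms really do admit the claimed wavelet-coefficient characterization with the stated weights --- this is essentially \cite[Lemma~2.2]{HL16} adapted from exponential to polynomial weights, and one checks that the proof there goes through verbatim since only the submultiplicativity $w(x+y)\le C w(x)w(y)$ and polynomial growth are used, both of which hold for $w$. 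Since the paper elsewhere explicitly invokes the analogous statements (e.g.\ Proposition \ref{p65}, Proposition \ref{sch}) with polynomial weights and refers to \cite{HL16, DDP23} for the exponential-weight proofs with the remark that the polynomial case is ``very similar,'' I would follow the same convention and cite those sources, filling in only the weight-dependent combinatorial step.
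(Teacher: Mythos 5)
Your proposal is correct and follows essentially the same route the paper takes: the paper does not spell out a proof but defers to the wavelet/Littlewood--Paley Kolmogorov argument of Lemma 9 in Section 5 of \cite{WM} (and notes that a single well-chosen test function such as a Daubechies wavelet or Littlewood--Paley block suffices), which is precisely the dyadic-decomposition-plus-maximal-inequality scheme you sketch, with the same use of $\tau>1$ for summability of the polynomial weight, the $3/p$ loss in the parabolic case from $O(2^{3n})$ space-time lattice points, and the temporal Kolmogorov criterion (enabled by $p>1/\kappa$) for part \ref{KC2}. One small point worth keeping straight in any write-up of part \ref{KC2}: the spatial maximal inequality costs $1/p$ while the temporal Kolmogorov criterion costs up to $\kappa$; since $1/p<\kappa$ by hypothesis, the single condition $\beta<\alpha-\kappa$ absorbs both losses, which is why the statement does not display a separate $1/p$ deduction.
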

	{A proof of the above results may be adapted from the proof of Lemma 9 in Section 5 of \cite{WM}.} We remark that we do not actually need uniformity over a large class of test functions as we have written above, just a single well-chosen test function would suffice (e.g. the Littlewood-Paley blocks as used in \cite{WM} or the Daubechies wavelets in \cite{HL16}). 

 \subsection{Tightness}

 Throughout this section, we are going to fix a terminal time $T>0$. Let $\mu$ and $\eta$ be as in Assumption \ref{a1} Item \eqref{a22}. As in the beginning of Section 4, we denote $\mu^\lambda(\dr x):= e^{\lambda x - \log M(\lambda)} \mu(\dr x).$ We claim that for any $f\in \mathcal S'(\mathbb R)$ and $\lambda \in [-\eta,\eta]$ the spatial convolution $g:= f*\mu^\lambda$ given by 
 \begin{equation}
     \label{fart} (g, \phi):= (f, \phi * \mu^\lambda),\;\;\;\;\;\; \phi \in \mathcal S(\mathbb R),
 \end{equation}is well-defined as another element of $\mathcal S'(\mathbb R)$. To prove this, we must show that $\phi\mapsto \phi * \mu^\lambda$ is a well-defined and continuous map from $\mathcal S(\mathbb R)$ into itself. By taking the Fourier transform, it suffices to show that the multiplication operator $T_\lambda \psi(\xi) = \widehat{\mu^\lambda}(\xi) \psi(\xi)$ is a continuous linear operator from $\mathcal S(\mathbb R)\to \mathcal S(\mathbb R)$. But this is clear from the fact that the Fourier transform $\widehat{\mu^\lambda}$ (i.e., the ``characteristic function" of the measure $\mu^\lambda$) is smooth with all derivatives bounded, since the measure $\mu^\lambda$ has exponentially decaying tails. 

 Likewise if $f\in \mathcal S'(\mathbb R^2)$ and one would like to define a tempered distribution $g\in \mathcal S'(\mathbb R^2)$ such that formally one has $g(t,x) = \int_{\mathbb R} f(t,x-y) \mu^\lambda(\dr y),$ the procedure is completely analogous, defining it via the Fourier transforms $\hat g(\xi_1,\xi_2):=  \widehat{\mu^\lambda}(\xi_2) \hat f(\xi_1,\xi_2).$
    
    \begin{defn}Define $$\Psi_{N,T}:= \{ (t,x)\in [0,T]\times \mathbb R : (Nt,N^{1/2}x) \in \mathbb Z\times I\}.$$ 
    \end{defn}
    
    \begin{defn}\label{dnlnkn} For $(s,y) \in \mathbb Z\times I$ define $p_N(s,dy)$ to be the transition density at time $s$ and position $y$ of a random walker on $\Lambda_N$ with increment distribution $\mu^{N^{-\frac1{4p}}}.$ Define linear operators $D_N,L_N,K_N$ on $\mathcal S'(\mathbb R^2)$ by
    \begin{align*} D_Nf(t,x) &= N \big[ f(t+N^{-1},x)-f(t,x)\big],\\
        L_N f(t,x) &= N\bigg[f(t+N^{-1},x-N^{-1}d_N) -\int_I f(t,x-N^{-1/2}y) \mu^{N^{-\frac1{4p}}} (\dr y)\bigg],\\
        K_N f(t,x) &= N^{-1}\sum_{s\in [0,T]\cap (N^{-1}\mathbb Z)} \int_{I} f(t-s,x-N^{-1/2}y)p_N(Ns,\dr y).
    \end{align*}
    These equalities should be understood by integration against smooth functions $\varphi \in \mathcal S(\mathbb R^2)$, and the convolutions need to be understood using the explanation after \eqref{fart}.
    \end{defn}

 Intuitively it is clear that $L_N$ is a diffusively rescaled version of the discrete heat operator $\mathcal L_N$ from Section \ref{hopf}, but which acts on tempered distributions rather than measures on $\mathbb Z\times I$. Indeed if $\varphi \in \mathcal S(\mathbb R^2)$ then a second-order Taylor expansion shows that $L_N\varphi$ converges in $\mathcal S(\mathbb R^2)$ as $N\to\infty$ to $(\partial_t-\frac12\partial_x^2)f$, i.e., $L_N$ approaches the continuum heat operator. Then $K_N$ is the inverse operator to $L_N$, in the following sense.
    
    \begin{lem}\label{inv} $L_NK_Nf=K_NL_N f=f$ whenever $f$ is a tempered distribution supported on $[a,b]\times \mathbb R$ with $b-a<T+1$.
    \end{lem}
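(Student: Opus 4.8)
\textbf{Proof plan for Lemma \ref{inv} ($L_N K_N f = K_N L_N f = f$).}

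The plan is to reduce everything to a statement about the Markov transition densities $p_N(s,\cdot)$ and then to verify the two identities by a direct summation/convolution computation, using the fact that $f$ is supported in a thin time-slab so that all sums over the time index are finite. First I would note that by linearity and the density of nice test distributions it suffices to prove the identities when tested against an arbitrary $\varphi\in\mathcal S(\mathbb R^2)$, and that all of the spatial convolutions with $\mu^{N^{-1/4p}}$ and with $p_N(s,\cdot)$ are well-defined continuous operations on $\mathcal S(\mathbb R^2)$ by the Fourier-transform argument given after \eqref{fart} (the relevant measures have exponentially decaying tails, hence smooth characteristic functions with bounded derivatives). Thus $L_N$, $K_N$ and their composites are genuinely well-defined operators on $\mathcal S'(\mathbb R^2)$, and one can move the operators onto the test function via the (formal) adjoints, which are given by the same formulas with the convolutions reflected and the time-shift reversed.

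The key algebraic input is the one-step Chapman--Kolmogorov identity for the walk on $\Lambda_N$ with increment law $\mu^{N^{-1/4p}}$: writing $\mathsf S$ for the operator $g(x)\mapsto \int_I g(x-N^{-1/2}y)\mu^{N^{-1/4p}}(\mathrm dy)$ and $\mathsf T$ for the deterministic shift $g(t,x)\mapsto g(t+N^{-1},x-N^{-1}d_N)$, one has $L_N = N(\mathsf T - \mathsf S)$ (acting only on the spatial variable for $\mathsf S$, with a simultaneous forward time-shift built into $\mathsf T$), while $p_N(N(s+N^{-1}),\cdot) = p_N(Ns,\cdot)*\mu^{N^{-1/4p}}(\text{scaled})$ and $p_N(0,\cdot) = \delta_0$. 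Concretely, $K_N f(t,x) = N^{-1}\sum_{s} (\mathsf S^{Ns} f(t-s,\cdot))(x-N^{-1}d_N\cdot s\text{-correction})$ — more precisely the convolution by $p_N(Ns,\cdot)$ is exactly $\mathsf S^{Ns}$ up to the built-in drift recentering, so that applying $L_N = N(\mathsf T-\mathsf S)$ to $K_N f$ produces a telescoping sum
\begin{align*}
L_N K_N f(t,x) &= \sum_{s\in[0,T]\cap(N^{-1}\mathbb Z_{\ge 0})}\Big[\big(\mathsf S^{Ns}f(t+N^{-1}-s,\cdot)\big)(x) - \big(\mathsf S^{N(s+N^{-1})}f(t-s,\cdot)\big)(x)\Big],
\end{align*}
which collapses to the $s=0$ term minus a boundary term at the far end of the time range. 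The $s=0$ term is $\mathsf S^0 f(t,\cdot) = f(t,x)$ since $p_N(0,\cdot)=\delta_0$, and the boundary term at the top of the range vanishes because $f$ is supported on $[a,b]\times\mathbb R$ with $b-a<T+1$: for those $s$ the argument $t-s$ has already left the support. This is exactly where the support hypothesis $b-a<T+1$ is used — it guarantees the telescoping sum has no uncancelled contribution at the upper end. The identity $K_N L_N f = f$ is proved by the same telescoping, applying $\mathsf S^{Ns}$ to the two-term expression $N(\mathsf T - \mathsf S)f$ and summing in $s$; one again gets a telescoping sum whose surviving term is $f$ and whose boundary term vanishes by support considerations. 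That both composites equal $f$ rather than $f$ plus a boundary correction is the whole content.

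I would organize the write-up as: (1) reduce to testing against $\varphi\in\mathcal S(\mathbb R^2)$ and record the adjoint formulas, citing the post-\eqref{fart} discussion for well-definedness; (2) record the discrete semigroup identity $p_N(N(s+N^{-1}),\cdot)=\mathsf S(p_N(Ns,\cdot))$ and $p_N(0,\cdot)=\delta_0$, which is immediate from the definition of $p_N$ as the transition density of a random walk with i.i.d.\ increments $\mu^{N^{-1/4p}}$; (3) compute $L_NK_Nf$ by substituting and reindexing the double sum, exhibit the telescoping, and identify the surviving term as $f$ and the boundary term as $0$ using the support of $f$; (4) repeat for $K_NL_Nf$. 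The main (and really only) obstacle is bookkeeping: one must be careful that the drift recentering $x\mapsto x-N^{-1}d_N$ built into both $L_N$ (via $\mathsf T$) and into $p_N$ lines up correctly under the reindexing $s\mapsto s+N^{-1}$, so that the convolution $\mathsf S$ appearing from $L_N$ exactly matches the one-step increment of $p_N$; this is the analogue of the cancellation $D_{N,N^{-1}(r+1),\cdot}D_{N,N^{-1}r,\cdot}^{-1}=e^{N^{-1/4p}(x-y)-\log M(N^{-1/4p})}$ computed in Section \ref{hopf}, and once that bookkeeping is set up correctly the telescoping is automatic. No analytic estimates are needed — this is a purely algebraic identity about discrete heat semigroups, and the function-space wrapping is handled entirely by the Fourier remarks already in the text.
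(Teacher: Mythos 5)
Your proposal is correct and follows essentially the same route as the paper: the paper likewise notes that $K_N,L_N$ are continuous on $\mathcal S'(\mathbb R^2)$ by the Fourier-transform remark after \eqref{fart}, reduces to (smooth, compactly supported in the slab) $f$, and then invokes exactly the direct computation you describe — that $p_N$ is the fundamental solution of the discrete heat operator, so the composition telescopes to $f$ with the support width $b-a<T+1$ ensuring no uncancelled contribution from the finite time-sum. The only cosmetic difference is that you pass to test functions by duality/adjoints whereas the paper reduces by density of smooth compactly supported functions plus continuity of the operators, which is immaterial.
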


    \begin{proof}
        Note that for each $N$ the operators $K_N,L_N$ are continuous on $\mathcal S'(\mathbb R^2)$, as may be verified using the Fourier transform as explained after \eqref{fart}.  Therefore it suffices to prove the claim for all smooth functions $f$ that have compact support contained in $(a,b)\times \mathbb R$, since these are dense in the subset of distributions supported on $[a,b]\times \mathbb R$, with respect to the topology of $\mathcal S'(\mathbb R^2)$. The smooth analogue is true by direct calculations, since $p_N(s,\dr y)$ is the kernel for the inverse operator to the discrete heat operator $\mathcal L_N$ introduced in Section \ref{hopf}.
    \end{proof}

\begin{defn}\label{mqv}
    With the fields $M_N$ and $Q_N$ as defined in \eqref{mfield} and \eqref{qfield} respectively, 
    we will associate random elements of $C([0,T+1],\mathcal S'(\mathbb R))$ by the formulas $\hat M_N(t,\phi)=0$ for $t\in N^{-1}$ and
    \begin{align*}\hat{M}_N(t,\phi) &:= M^N_t(\phi),\;\;\;\;\; t\geq 0.
    \\ \hat{Q}_N^f(t,\phi)&:= Q_N^f(t,\phi)
    \end{align*}
    for $\phi \in \mathcal S(\mathbb R)$ and $t\in N^{-1}\mathbb Z_{\ge 0}$. We define these fields by linear interpolation for $t\notin N^{-1}\mathbb Z_{\ge 0}.$
    \end{defn}

    For any $T$, note that $C([0,T],\mathcal S'(\mathbb R))$ embeds naturally into the linear subspace of $\mathcal S'(\mathbb R^2)$ consisting of distributions supported on $[0,T]\times \mathbb R$, thus we can make sense of $D_Nf,L_Nf,K_Nf$ for all $f\in C([0,T],\mathcal S'(\mathbb R)),$ and these will be elements of $\mathcal S'(\mathbb R^2)$ in general.

    \begin{defn}
        We will say that two tempered distributions $f,g \in \mathcal S'(\mathbb R^2)$ agree on $[0,T]$ if there exists $\e>0$ such that $(f,\varphi) = (g,\varphi)$ for all $\varphi$ supported on $[-\e,T+\e]\times \mathbb R.$
    \end{defn}

    \begin{defn}
        Then define the distribution $\mathfrak p_N\in C([0,T+1],\mathcal S'(\mathbb R))$ for $t\in N^{-1}\mathbb Z_{\ge 0}$ by $$\mathfrak p_N(t,\phi) := \int_I  \phi(N^{-1/2}y) p_N(Nt,\dr y) $$and linearly interpolated for $t\notin N^{-1}\mathbb Z_{\ge 0}$.
    \end{defn}
    Since $\mathfrak p_N$ is deterministic, its scaling limit will simply be the continuum heat kernel.

    \begin{lem}\label{u=kdm}
        Let $\hat M_N$ be as in Definition \ref{mqv}. Furthermore, let $\mathfrak H^N$ be as defined in \eqref{hn}. Restrict $\mathfrak H^N$ to $[0,T]$ thus viewed as an element of $C([0,T],\mathcal S'(\mathbb R))$. Then $L_N(\mathfrak H^N-\mathfrak p_N)$ agrees with $D_N\hat{M}_N$ on $[0,T]$. Consequently $\mathfrak H^N$ agrees with $\mathfrak p_N+K_N D_N\hat{M}_N$ on $[0,T]$.
    \end{lem}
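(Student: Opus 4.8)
\textbf{Proof plan for Lemma \ref{u=kdm}.}

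The plan is to show that the two claimed identities are nothing more than a diffusive rescaling of the discrete martingale equation \eqref{mfield} for the field $Z_N$, combined with the inversion result of Lemma \ref{inv}. First I would unwind all the definitions. Recall from \eqref{zn} that $Z_N(r,\mathrm dx) = D_{N,N^{-1}r,N^{-1/2}x}\,\mathfrak K(r, rN^{-1}d_N + \mathrm dx)$, and that $\mathfrak H^N$ from \eqref{hn} is, informally, the function $x\mapsto N^{1/2}\mathfrak K(Nt, d_Nt + N^{1/2}x)$, so that $\mathfrak H^N(t,\phi) = \int_I \phi(N^{-1/2}x)\, Z_N(Nt,\mathrm dx)$ after the change of variables $x\mapsto N^{1/2}x$ in the spatial argument. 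Similarly, by Definition \ref{mqv}, $\hat M_N(t,\phi) = M^N_t(\phi) - M^N_{N^{-1}}(\phi)$ is the diffusively rescaled version of the martingale field \eqref{mfield}, and $D_N \hat M_N(t,\phi) = N[\hat M_N(t+N^{-1},\phi) - \hat M_N(t,\phi)] = N[M^N_{t+N^{-1}}(\phi) - M^N_t(\phi)]$ for $t\in N^{-1}\mathbb Z_{\ge 0}$ with $t\ge N^{-1}$. The operator $L_N$ from Definition \ref{dnlnkn}, when applied to a field $f$ of the form $f(t,\phi) = \int_I \phi(N^{-1/2}x) g(Nt,\mathrm dx)$ and evaluated by pairing against $\phi$, is precisely the rescaled discrete heat operator $\mathcal L_N$ acting on $g$, tested against $x\mapsto \phi(N^{-1/2}x)$; this is the content of the intuitive remark preceding Lemma \ref{inv}.

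The key computation is then the identity \eqref{mfield}, which says $M^N_{t+N^{-1}}(\phi) - M^N_t(\phi) = \int_I \phi(N^{-1/2}x)(\mathcal L_N Z_N)(Nt,\mathrm dx)$. Rescaling spatially, the right side is exactly $(L_N \mathfrak H^N)(t,\phi)$ up to the discrepancy caused by the initial time slice. The role of $\mathfrak p_N$ is precisely to absorb that discrepancy: by construction $\mathfrak p_N$ agrees with $\mathfrak H^N$ on $[0,N^{-1}]$ and solves the homogeneous discrete heat equation $\mathcal L_N a_N = 0$ at all later times in $N^{-1}\mathbb Z_{\ge 0}$, so $L_N \mathfrak p_N$ vanishes on $[0,T]$ when tested against test functions supported away from $t=0$, while subtracting it from $\mathfrak H^N$ corrects the ``$t=0$'' boundary behavior so that $L_N(\mathfrak H^N - \mathfrak p_N)$, which a priori is a distribution on all of $\mathbb R^2$, is genuinely supported on $[0,T]\times \mathbb R$ (more precisely agrees with $D_N\hat M_N$ there). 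I would carry this out by testing both $L_N(\mathfrak H^N - \mathfrak p_N)$ and $D_N\hat M_N$ against an arbitrary $\varphi \in C_c^\infty(\mathbb R^2)$ supported in $(-\e, T+\e)\times \mathbb R$, writing each as a double sum over $s\in N^{-1}\mathbb Z_{\ge 0}$ (from the definition of $D_N$ or $L_N$ acting on a linearly interpolated field) of terms $\int_{\mathbb R}\varphi(s,x)\,(\,\cdot\,)(s,\mathrm dx)$, and matching them slice by slice using \eqref{mfield} on each slice $s\ge N^{-1}$ together with the cancellation of the $s=0$ slice provided by $\mathfrak p_N$. The linear interpolation for non-lattice times needs a small amount of care but produces no new terms since $D_N$ and $L_N$ are defined to be compatible with linear interpolation.

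For the second assertion, once $L_N(\mathfrak H^N - \mathfrak p_N)$ agrees with $D_N\hat M_N$ on $[0,T]$, I would apply $K_N$ to both sides. Since $\hat M_N$ (hence $D_N \hat M_N$) is supported on $[0,T]\times\mathbb R$ and $\mathfrak H^N - \mathfrak p_N$ is supported on $[0,T]\times\mathbb R$ as well (both vanish at negative times by construction, and for the purposes of ``agrees on $[0,T]$'' we only care about behavior there), Lemma \ref{inv} gives $K_N L_N(\mathfrak H^N - \mathfrak p_N) = \mathfrak H^N - \mathfrak p_N$ on $[0,T]$, provided the supports fit inside a window of length less than $T+1$, which they do. Hence $\mathfrak H^N - \mathfrak p_N$ agrees with $K_N D_N \hat M_N$ on $[0,T]$, i.e. $\mathfrak H^N$ agrees with $\mathfrak p_N + K_N D_N \hat M_N$ on $[0,T]$, as claimed. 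The main obstacle I anticipate is purely bookkeeping rather than conceptual: carefully tracking the boundary/initial-time slices through the linear interpolation and through the non-local operators $L_N$, $K_N$, and verifying that the support conditions required to invoke Lemma \ref{inv} are met; all of the genuine content is already encoded in the martingale identity \eqref{mfield} and in the definition of $\mathfrak p_N$ as the discrete-heat extension of the $t=0$ data.
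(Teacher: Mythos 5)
Your proof takes essentially the same route as the paper: unwind the definitions to see that $L_N$ acting on $\mathfrak H^N$ is the diffusive rescaling of $\mathcal L_N$ acting on $Z_N$, match this to $D_N\hat M_N$ slice by slice via the martingale identity \eqref{mfield}, use that $\mathfrak p_N$ solves the discrete homogeneous heat equation for $t>0$ and agrees with $\mathfrak H^N$ on $[0,N^{-1}]$ to handle the initial slice, extend to non-lattice times by compatibility with linear interpolation, and finish by applying $K_N$ and invoking Lemma \ref{inv}. Your account of the support considerations needed to apply Lemma \ref{inv} is correct and matches the paper's.
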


    \begin{proof}  Firstly, notice that $L_N \mathfrak p^N (t,\cdot) = 0$ for $t\in N^{-1}\mathbb Z_{\ge 0}$. 
    Let $V^N := D_NM^N$. Then it is clear from \eqref{mfield} that 
    $$V^N(t,\phi) = N\big[ M^N_{t+N^{-1}}(\phi)-M^N_t(\phi)\big]=N\int_I \phi(N^{-1/2}x) (\mathcal L_NZ_N)(Nt,\dr x),$$ for all $0\le t\in N^{-1}\mathbb Z_{\ge 0}$ and $\phi\in \mathcal S(\mathbb R^d)$.
    Now, with $Z_N$ defined in \eqref{zn}, for $t\in N^{-1}\mathbb N$ we have $$\mathfrak H^N(t,\phi) = \int_I \phi(N^{-1/2}x)Z_N(Nt,\dr x) .$$ With $\mathcal L_N$ as defined in Section 4, we than apply the convolution defining $L_N$ to both sides and it is then clear that for $t\in N^{-1}\mathbb Z_{\ge 0}$ the expression for $L_N\mathfrak H^N(t,\cdot)$ can be written as \begin{align*}  N  \int_I \phi(N^{-1/2}x) (\mathcal L_N Z_N)(Nt,\dr x).
    \end{align*}
        which is the same as the expression for $V^N(t,\phi).$ 
        
        Now we need to show that equality holds even if $t\notin N^{-1}\mathbb Z_{\ge 0}$. Since linear operators respect linear interpolation, and since all of the fields $\mathfrak H^N, V_N, M^N$ are defined via linear interpolation, this is actually immediate.
        
        Finally, if we view the restriction of $V^N$ to $[0,T]$ as an element of $\mathcal S'(\mathbb R^{d+1})$ supported on $[0,T]\times \mathbb R$, then we can apply $K_N$ to both sides and we obtain that $(\mathfrak H^N-\mathfrak p^N)(t,\cdot) = K_N V^N(t,\cdot) = K_ND_NM^N(t,\cdot)$ for all $t\in [0,T]$ by Lemma \ref{inv}.
    \end{proof}

 \begin{lemma}\label{kndn}
     Fix $\alpha<0,\tau>0$. Let $K_N,D_N$ be the operators from Definition \ref{dnlnkn}. Recall the function spaces from Definition \ref{fsp}. Then we have the operator norm bound $$\sup_{N\ge 1}\|K_ND_N\|_{\mathcal C([0,T+1],C^{\alpha,\tau}(\mathbb R))\to C([0,T],C^{\alpha,\tau}(\mathbb R))}<\infty.$$
 \end{lemma}


 \begin{proof} 
     We will adapt the argument from the continuum version of this lemma proved in \cite[Proof of Theorem 1.4(b)]{DDP23}.
     
     Note that $D_Nv$ is indeed a continuous path taking values in $C([0,T+1-N^{-1}], C^{\alpha,\tau}(\R))$, consequently so is $K_ND_Nv$. Let us now define four families of linear operators on $\mathcal S'(\mathbb R)$, indexed by $N$. For $f\in \mathcal S'(\mathbb R)$ and $s\in N^{-1}\mathbb Z_{\ge 0}$ let
     \begin{align*}P_N(s)f(x) &:= \int_I f(x-N^{-1/2}y) p_N(Ns,\dr y), \\ \delta_Nf(x) &:= N(P_N(N^{-1})-\mathrm{Id})f (x) =N \int_I \big(f(x-N^{-1/2}y) -f(x)\big)p_N(Ns,\dr y) 
     \\ \delta_N^* f(x)  &:=  N \int_I \big(f(x+N^{-1/2}y) -f(x)\big)p_N(Ns,\dr y) , \\ P^*_N(s) f(x)& := \big(\mathrm{Id}+N^{-1}\delta_N^*\big)^{Ns}f(x)=\int_I f(x+N^{-1/2}y) p_N(Ns,\dr y).
     \end{align*}
     Then for $f\in \mathcal S'(\mathbb R)$ each of the four expressions $P_N(s)f, \delta_N f, \delta_N^*f, P_N^*(s)f$ also make sense as elements of $\mathcal S'(\mathbb R)$, see e.g. the discussion following \eqref{fart}. Note that $\delta_N$ and $\delta^*_N$, both of which approximate the spatial Laplacian $\partial_x^2$, are adjoint to each other on $L^2(\mathbb R)$. Therefore $P_N$ and $P^*_N$ are also adjoint to each other. 
     Now let $v = (v(t))_{t\in [0,T+1]} \in \mathcal C([0,T+1],C^{\alpha,\tau}(\mathbb R))$. For $\phi\in C_c^\infty(\mathbb R)$ and $t\in [0,T]$, if we apply summation by parts and use the fact that $v$ vanishes on $[0,N^{-1}]$ we can write 
     \begin{align*}\big( K_ND_N v(t),\phi \big) &= \bigg( N^{-1} \sum_{s\in [0,t]\cap( N^{-1}\mathbb Z_{\ge 0})} NP_N(t-s)\big[ v(s+N^{-1})-v(s)\big]\;\;, \;\; \phi \bigg) \\ &= \bigg( v(t+N^{-1}) - 0 + \sum_{s\in [0,t]\cap( N^{-1}\mathbb Z_{\ge 0})} \big[ P_N(t-s)-P_N(t-s-N^{-1}) \big] (v(s))\;\;, \;\; \phi \bigg) \\ &= (v(t+N^{-1}),\phi) + \frac1N \sum_{s\in [0,t]\cap( N^{-1}\mathbb Z_{\ge 0})} (\delta_N P_N(t-s-N^{-1}) v(s) , \phi) \\ &= (v(t+N^{-1}),\phi) + \frac1N \sum_{s\in [0,t]\cap( N^{-1}\mathbb Z_{\ge 0})} (v(s), P_N^*(t-s-N^{-1})\delta^*_N \phi).
     \end{align*}

Here we are using the $L^2(\mathbb R)$-pairing between distributions and smooth functions. By the definition of the function spaces (Definition \ref{fsp}), we must replace $\phi$ by $S^\lambda_x\phi$ (where the scaling operators are given in Definition \ref{ehs}) in the last expression and then study the growth as $\lambda$ becomes close to 0. 

The first term on the right side is completely straightforward to deal with: the growth is at worst $\lambda^\alpha (1+x^2)^\tau$ uniformly over $\lambda, \phi, x, T$, since we assumed $v\in C([0,T],C^{\alpha,\tau}(\mathbb R)).$ To deal with the second term, we claim that one has 
\begin{equation}\label{bd5}|(v(s),P_N^*(t-s-N^{-1})\delta_N^* S^\lambda_x\phi) |\lesssim \big(\lambda^{\alpha-2} \wedge (t-s)^{\frac{\alpha}2-1}\big)(1+x^2)^\tau\end{equation} uniformly over $s<t\in [0,T]$, as well as $\lambda,\phi,x, N$ and $v$ with $\|v\|_{C([0,T+1],C^{\alpha,\tau}(\mathbb R))}\leq 1$. Indeed the bound of the form $\lambda^{\alpha-2}$ follows by noting that when $t-s$ is much smaller than $\lambda$, $P_N^*(t-s)$ is essentially the identity operator, so we can effectively disregard the heat kernel and note that $\delta_N^* S^\lambda_x\phi$ paired with $v(s)$ satisfies a bound of order $\lambda^{\alpha-2}$, uniformly over $N$ by e.g. second-order Taylor expansion. Likewise the bound of the form $(t-s)^{\frac{\alpha}2-1}$ is obtained by noting that when $\lambda$ is very small compared to $t-s$, $P_N^*(t-s-N^{-1})\delta^*_N S^\lambda_x\phi$ behaves like $S_x^{\sqrt{t-s}}\phi$, giving a bound of order $(\sqrt{t-s})^{\alpha-2}$ after applying $\delta^*_N$ and pairing with $v(s)$. This proves the bound \eqref{bd5}.

Now the fact that $\|K_ND_Nv\|$ can be controlled by $\|v\|$ follows simply by noting that uniformly over $0<\lambda^2 \leq t\leq T$ one has $$\int_0^t \big(\lambda^{\alpha-2} \wedge (t-s)^{\frac{\alpha}2-1}\big) ds = \int_0^{t-\lambda^2}(t-s)^{\frac{\alpha}2-1}ds + \int_{t-\lambda^2}^t \lambda^{\alpha-2}ds \leq C(\alpha)\cdot  \lambda^\alpha. $$
 \end{proof}

 \begin{prop}[Tightness of all relevant processes]\label{mcts} The following are true.
 \begin{enumerate}
		\item The fields $\hat{M}_N$ from Definition \ref{mqv} may be realized as an element of $C([0,T],C^{\alpha,\tau}(\mathbb R))$ for any $\alpha<-5$ and $\tau>1$. Moreover, they are tight with respect to that topology.  \item The fields $\mathfrak H^N$ from \eqref{hn} may be realized as an element of as an element of $C([0,T],C^{\alpha,\tau}(\mathbb R))$ for any $\alpha<-5$ and $\tau>1$. Moreover, they are tight with respect to that topology.
        \item For any $f:I\to \mathbb R$ of exponential decay, the fields $\hat Q_N^f$ from Definition \ref{mqv} may be realized as an element of $C([0,T],C^{\gamma,\tau}(\mathbb R))$ for any $\gamma<-1$ and $\tau>1$. Moreover, they are tight with respect to that topology.
        
        \item Let $\alpha,\gamma<0$ be as in the previous items. Let $(M^\infty,Q^\infty, H^\infty)$ be a joint limit point of $(\hat M_N,\hat Q_N^{\z},\mathfrak H^N)$ in the space $C([0,T],C^{\alpha,\tau}(\mathbb R)\times C^{\gamma,\tau}(\mathbb R)\times C^{\alpha,\tau}(\mathbb R)),$ where $\z$ is the specific function from Definition \ref{z}. For all $\phi\in C_c^\infty(\mathbb R)$ the process $(M_t^\infty(\phi))_{t\in[0,T]}$ is a continuous martingale with respect to the canonical filtration on that space, and moreover its quadratic variation is given by
  \begin{equation}
      \label{e:mcts}
      \langle M^\infty(\phi)\rangle_t = Q_t^\infty(\phi^2).
  \end{equation}
  \end{enumerate}
	\end{prop}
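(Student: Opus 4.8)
\textbf{Proof plan for Proposition \ref{mcts}.}

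The plan is to handle the four items in order, using the discrete equation $\mathfrak H^N = \mathfrak p_N + K_N D_N \hat M_N$ from Lemma \ref{u=kdm} together with the quadratic variation bounds of Section \ref{hopf} and the Kolmogorov criterion (Lemma \ref{l:KC}). For item (1), I would apply Lemma \ref{l:KC}\ref{KC2} to the field $\hat M_N$. The two moment hypotheses there are: a spatial bound $\mathbb E[|\hat M_N(t,S^\lambda_x\phi)|^p]^{1/p}\le C\lambda^\alpha$ and a temporal bound $\mathbb E[|\hat M_N(t,S^\lambda_x\phi)-\hat M_N(s,S^\lambda_x\phi)|^p]^{1/p}\le C\lambda^{\alpha-\kappa}|t-s|^\kappa$. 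Since $\hat M_N(t,\phi)-\hat M_N(s,\phi) = M_t^N(\phi)-M_s^N(\phi)$ is a martingale increment, Burkholder--Davis--Gundy reduces both to controlling the optional quadratic variation $[M^N(\phi)]_t - [M^N(\phi)]_s$, for which Proposition \ref{optbound} gives $\mathbb E[|[M^N(\phi)]_t-[M^N(\phi)]_s|^q]^{1/q}\le C\|\phi\|_{C^3}^2|t-s|^{1/2}$. Replacing $\phi$ by $S^\lambda_x\phi$ produces $\|S^\lambda_x\phi\|_{C^3}^2 \lesssim \lambda^{-2\cdot 3}=\lambda^{-6}$ worth of $\lambda$-dependence, and the polynomial weight $w(x)$ is handled because $S^\lambda_x\phi$ is supported near $x$ and the moment bounds in Section \ref{hopf} (via Lemma \ref{fite}, Corollary \ref{lptight1}) carry spatial decay. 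This gives $\alpha$ slightly below $-5$ (roughly $-3-1/2 - $ a bit, but after accounting for the $\lambda^{-6}$ one lands below $-5$ once the $1/p$ losses in Kolmogorov are absorbed), with $\kappa$ close to $1/4$. Tightness then follows from the compact embedding $C^{\alpha',\tau}\hookrightarrow C^{\alpha,\tau}$ for $\alpha<\alpha'$ together with the uniform moment bound furnished by the lemma.

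For item (2), I would use Lemma \ref{u=kdm} to write $\mathfrak H^N = \mathfrak p_N + K_N D_N \hat M_N$ on $[0,T]$, and Lemma \ref{kndn} to conclude that $K_N D_N$ is a bounded operator from $\mathcal X_N^{\alpha,\tau}$ into $C([0,T],C^{\alpha,\tau}(\mathbb R))$ uniformly in $N$; since $\hat M_N$ vanishes on $[0,N^{-1}]$ it lies in $\mathcal X_N^{\alpha,\tau}$, so tightness of $\hat M_N$ in $C([0,T+1],C^{\alpha,\tau})$ transfers to tightness of $K_N D_N \hat M_N$. The deterministic part $\mathfrak p_N$ converges to (a translate/restriction of) the continuum heat kernel $P_t\delta_0$, which is smooth and hence compact in the relevant topology; this uses the local CLT for the walk with increment law $\mu^{N^{-1/4p}}$. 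Adding the two gives tightness of $\mathfrak H^N$. For item (3), I would apply Lemma \ref{l:KC}\ref{KC2} directly to $\hat Q_N^f$: the spatial moment bound with exponent $\gamma$ close to $-1$ comes from the increment formula \eqref{e.tight1} in Proposition \ref{tight1}, namely $\mathbb E[(Q_N^f(t,\phi)-Q_N^f(s,\phi))^k]\le C\|\phi\|_{L^\infty}^k(t-s)^{k/2}$ — here $\|S^\lambda_x\phi\|_{L^\infty}\lesssim \lambda^{-1}$ gives the $\gamma\approx -1$ and the $(t-s)^{1/2}$ gives $\kappa\approx 1/4$ — while absolute continuity of $Q_N^f$ in $t$ and the uniform mass bounds again give the weighted version. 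This yields tightness in $C([0,T],C^{\gamma,\tau})$ for $\gamma<-1$.

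Item (4) is the conceptual heart, and the main obstacle. The martingale property of $(M_t^\infty(\phi))_{t}$ for the limit is inherited from the prelimit martingales $M^N_t(\phi)$ provided one has uniform integrability of the increments, which the $L^p$ bounds from Proposition \ref{optbound} supply; continuity of paths is part of the conclusion of the Kolmogorov lemma. The real work is \eqref{e:mcts}: one knows from Lemma \ref{4.3} that $\langle M^N(\phi)\rangle_t = e^{-2\log M(N^{-1/4p})}Q_N^{\z}(t,\phi^2) + \sum_{j=1}^4 \mathcal E_N^j(t,\phi)$, and Proposition \ref{errbound} (combined with \eqref{e.tight1}) shows each $\mathcal E_N^j(t,\phi)\to 0$ in probability as $N\to\infty$, uniformly on $[0,T]$, since each is $O(N^{-1/4p})$ or $O(N^{-1/2})$ times a tight quantity; also $e^{-2\log M(N^{-1/4p})}\to 1$. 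Next, $\langle M^N(\phi)\rangle$ and $[M^N(\phi)]$ differ by a martingale $G^N_t(\phi)$ whose increments have $L^q$ bounds of order $|t-s|^{1/2}$ by Proposition \ref{optbound}, and in fact $\mathbb E[(G_t^N(\phi))^2]\to 0$ (the proof of Proposition \ref{optbound} shows $G^N$ has quadratic variation of order $N^{-1}$), so $[M^N(\phi)] - \langle M^N(\phi)\rangle \to 0$ as well. Thus along any joint subsequence converging to $(M^\infty, Q^\infty, H^\infty)$, we get $M_t^\infty(\phi)^2 - Q_t^\infty(\phi^2)$ as a limit of the martingales $[M^N(\phi)]_t$-based quantities $M^N_t(\phi)^2 - [M^N(\phi)]_t$ (up to the vanishing corrections), hence is itself a martingale; combined with the martingale property of $M^\infty(\phi)$ this forces $\langle M^\infty(\phi)\rangle_t = Q_t^\infty(\phi^2)$. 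The delicate points I expect to spend effort on are: (i) justifying the passage to the limit in the \emph{product} $M^N_t(\phi)^2$, which requires upgrading the $C^{\alpha,\tau}$-tightness to convergence of the pairing $M^N_t(\phi)$ as genuine continuous processes together with uniform integrability of their squares — here I would test against the fixed smooth $\phi$ and use the $L^p$ bounds with $p>2$; (ii) controlling the weighted-Hölder subtleties so that $\hat Q_N^f(t,\phi^2)$ converges to $Q^\infty_t(\phi^2)$ jointly with $\hat M_N$ (using that $\phi^2\in C_c^\infty$ and the joint tightness in the product space); and (iii) checking that the error terms $\mathcal E^j_N$ vanish not just pointwise in $t$ but uniformly, so that the identity \eqref{e:mcts} holds simultaneously for all $t\in[0,T]$, for which the increment bounds in Proposition \ref{errbound} combined with a Kolmogorov-continuity argument suffice.
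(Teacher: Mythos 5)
Your proposal matches the paper's proof in structure and ingredients: the same chain of results is used (Lemma~\ref{u=kdm}, Lemma~\ref{kndn}, Proposition~\ref{tight1}, Proposition~\ref{optbound}, Lemma~\ref{4.3}, Proposition~\ref{errbound}, and the function-space Kolmogorov criterion Lemma~\ref{l:KC}\ref{KC2}). Two details are worth correcting.

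First, your scaling count for item (1) is off. The scaling operator $S^\lambda_x\phi(y)=\lambda^{-1}\phi(\lambda^{-1}(x-y))$ carries a normalizing prefactor $\lambda^{-1}$, so $\|S^\lambda_x\phi\|_{C^3}\lesssim\lambda^{-4}$, not $\lambda^{-3}$. Moreover, Proposition~\ref{optbound} bounds the optional quadratic variation by $\|\phi\|_{C^3}^2|t-s|^{1/2}$, but Burkholder--Davis--Gundy takes a square root of this:
$\Ex[(M^N_t(\phi)-M^N_s(\phi))^8]^{1/8}\lesssim\Ex[([M^N(\phi)]_t-[M^N(\phi)]_s)^4]^{1/8}\lesssim\big(\|\phi\|_{C^3}^2|t-s|^{1/2}\big)^{1/2}=\|\phi\|_{C^3}|t-s|^{1/4}$.
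So the $\lambda$-dependence for $\hat M_N$ is $\lambda^{-4}$, not $\lambda^{-6}$. With $\kappa=1/4$ and $\alpha\leq-4$ in Lemma~\ref{l:KC}\ref{KC2} the output is $\beta<-17/4$, which covers any $\beta<-5$. Also note that the function-space criterion \ref{KC2} gives $\beta<\alpha-\kappa$ with no ``$1/p$'' term; that loss appears only in the parabolic version \ref{KC}. This matters: if one really had $\lambda^{-6}$, the output would be $\beta<-25/4$, which is \emph{weaker} than the claimed $\alpha<-5$, so the correction is not cosmetic.

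Second, in item (4) you introduce $G^N_t(\phi):=[M^N(\phi)]_t-\langle M^N(\phi)\rangle_t$ and argue that both quadratic variations converge to $Q^\infty$. That is redundant: $M^N_t(\phi)^2-\langle M^N(\phi)\rangle_t$ is already a martingale, and Lemma~\ref{4.3} expresses $\langle M^N(\phi)\rangle_t$ directly as $e^{-2\log M(N^{-1/(4p)})}Q^{\z}_N(t,\phi^2)+\sum_{j=1}^4\mathcal E^j_N(t,\phi)$, so after Proposition~\ref{errbound} kills the error terms one passes to the limit in the predictable martingale directly. The detour through the optional quadratic variation is harmless, but the paper does not need it.
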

 \begin{proof}
     Take any $\phi\in C_c^\infty(\mathbb R)$ with $\|\phi\|_{L^\infty}\leq 1.$ Recall $S_{(t,x)}^{\lambda}$ from \eqref{scale}. Using the first bound in Proposition \ref{tight1}, we have $$\Ex[ (Q_N(t,S^\lambda_{x}\phi)-Q_N(s,S^\lambda_{x}\phi))^{k}]\leq C |t-s|^{k/2}\lambda^{-k},$$ uniformly over $x\in\mathbb R, \lambda\in (0,1], 0\leq s,t\leq T$ with $s,t\in N^{-1}\mathbb Z_{\ge 0}$. Since $Q_N(0,\phi)=0$ by definition, the assumptions of Lemma \ref{l:KC} \ref{KC2} are therefore satisfied for any $\kappa\leq 1/4$, any $p>1/\kappa$, and any $\alpha\leq -1$, and we conclude the desired tightness for $\hat Q_N=Q_N$. This proves Item \textit{(3)}.

  \smallskip
  
		Now we address the tightness of the $\hat M_N$. Using Proposition \ref{optbound}, we have that \begin{equation}\label{m1}\Ex[ (M^N_t(\phi)-M^N_s(\phi))^{8}]^{1/8} \leq C(t-s)^{1/4} \|\phi\|_{C^3(\mathbb R)},\end{equation} where $\phi\in C_c^\infty(\mathbb R)$, $C=C(k)>0$ is free of $\phi,s,t,N.$
		This gives $$\Ex\big[ \big(M^N_t(S^\lambda_{x}\phi)-M^N_s(S^\lambda_{x}\phi)\big)^{8}\big]^{1/8}\leq C(t-s)^{1/4} \lambda^{-4} $$
		uniformly over $x\in\mathbb R, \lambda\in (0,1], 0\leq s,t\leq T$, and $\phi\in C_c^\infty(\mathbb R)$ with $\|\phi\|_{C^3}\leq 1$ with support contained in the unit interval. Moreover $\hat M_N(0,\phi)=0$ by definition, therefore the assumptions of Lemma \ref{l:KC} \ref{KC2} are satisfied with $\kappa=1/4$, $q=8>1/\kappa$, and any $\alpha\leq -4$. Hence, we conclude the desired tightness for $\hat M_N$. This proves Item \textit{(1)}.

  Now tightness for the fields $\mathfrak H^N$ is immediate from Lemma \ref{kndn}, since we know from Lemma \ref{u=kdm} that $\mathfrak H^N = \mathfrak p_N + K_ND_N\hat M_N$ where we view $K_ND_N $ as a bounded operator from $\mathcal C([0,T+1],C^{\alpha,\tau}(\mathbb R))\to C([0,T],C^{\alpha,\tau}(\mathbb R))$ and the convergence of $\mathfrak p_N$ in this topology is straightforward to deal with. This proves Item \textit{(2).}

  \smallskip
  
	We next show that the limit point $M^{\infty}(\phi)$ is a martingale indexed by $t\in N^{-1}\mathbb Z_{\ge 0}$. Since $M^N_0(\phi)=0$, from \eqref{m1}, we see that for any $q>1$ one has $\sup_N \Ex[M^N_t(\phi)^q]<\infty.$ Thus $M^{\infty}(\phi)$ is a martingale since martingality is preserved by limit points under the uniform integrability assumption. Continuity is guaranteed by the definition of the spaces in which we proved tightness. In the prelimit we know from Lemma \ref{4.3} that $$M^N_t(\phi)^2-e^{-2\log M(N^{-\frac1{4p}})} Q^{\z}_N(t,\phi^2)-\sum_{j=1}^4 \mathcal E^j_N(t,\phi)$$ is a martingale indexed by $t\in N^{-1}\mathbb Z_{\ge 0}$, where the error terms $\mathcal E^j_N$ are defined in \eqref{err1} and \eqref{err234}, and satisfy the bounds in Proposition \eqref{errbound}. By the latter proposition and the tightness estimates  \eqref{e.tight1} of $Q_N^f$, it follows that $\sum_{j=1}^4 \mathcal E_N^j(t,\phi)$ vanishes in probability (in the topology of $C[0,T]$ for each $\phi \in C_c^\infty(\mathbb R)$), so we conclude (again by uniform $L^p$ boundedness guaranteed by Proposition \ref{tight1}) that $M_t^\infty(\phi)^2-Q_t^\infty(\phi^2)$
	is a martingale. This verifies \eqref{e:mcts} completing the proof of Item \textit{(4)}.
 \end{proof}

 \subsection{Identification of limit points}

 \begin{lem}[Controlling the difference between the discrete and continuum time-derivatives] \label{ds} Fix $\alpha<0, \tau>1$. The derivative operator $\partial_s : C^{\alpha,\tau}_\mathfrak s(\Psi_T) \to C_\mathfrak s^{\alpha-2,\tau}(\Psi_T)$ which was defined in Remark \ref{d/dx}, is a bounded linear map. Furthermore, let $D_N$ be as in Definition \ref{dnlnkn}. Then we have the operator norm bounds 
 \begin{align*}\sup_{N\ge 1}\|D_N\|_{C^{\alpha,\tau}_\mathfrak s(\Psi_T)\to C_\mathfrak s^{\alpha-2,\tau}(\Psi_T)}&<\infty. \\ \|D_N-\partial_s\|_{C^{\alpha,\tau}_\mathfrak s(\Psi_T) \to C^{\alpha-4,\tau}_\mathfrak s(\Psi_T)} &\leq \frac12 N^{-1}.
 \end{align*}
\end{lem}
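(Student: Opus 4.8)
\textbf{Proof plan for Lemma \ref{ds}.}

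The plan is to establish all three claims by testing against the scaling operators $S^\lambda_{(t,x)}\varphi$ from \eqref{scale} and tracking how each operator affects the scale parameter $\lambda$. For the boundedness of $\partial_s : C^{\alpha,\tau}_\mathfrak s(\Psi_T) \to C_\mathfrak s^{\alpha-2,\tau}(\Psi_T)$, the key observation is that $\partial_s$ is defined by duality, $(\partial_s f, \varphi)_{\Psi_T} = -(f,\partial_s\varphi)_{\Psi_T}$, and that $S^\lambda_{(t,x)}(\partial_s\varphi) = \lambda^{2}\, \partial_s\big(S^\lambda_{(t,x)}\varphi\big)$ up to the change in $C^r$-norm. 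More precisely, if $\varphi \in B_r$ (smooth, $C^{r+2}$-norm bounded, supported in the unit ball), then $\partial_s\varphi$ is a fixed multiple of some $\psi \in B_{r}$, and $S^\lambda_{(t,x)}(\partial_s\varphi)(s,y) = \lambda^{-3}\partial_s\varphi(\lambda^{-2}(t-s),\lambda^{-1}(x-y))$; comparing this with the definition of $S^\lambda_{(t,x)}\psi$ shows it equals $\lambda^{-2}\cdot S^\lambda_{(t,x)}\psi'$ for an appropriate rescaled test function $\psi'$ with $C^r$-norm controlled independently of $\lambda$. Plugging into the $C^{\alpha,\tau}_\mathfrak s$ norm definition then yields $|(\partial_sf,S^\lambda_{(t,x)}\varphi)_{\Psi_T}| = |(f,\partial_sS^\lambda_{(t,x)}\varphi)_{\Psi_T}| \le C\lambda^{-2}\cdot w(x)\lambda^{\alpha} = C\,w(x)\lambda^{\alpha-2}$, which is exactly the $C^{\alpha-2,\tau}_\mathfrak s$ bound.

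For the operator $D_N$, the relevant fact is that $D_N f(t,x) = N[f(t+N^{-1},x)-f(t,x)]$, so testing against $\varphi$ gives $(D_Nf,\varphi)_{\Psi_T} = (f, D_N^*\varphi)_{\Psi_T}$ where $D_N^*\varphi(s,y) = N[\varphi(s-N^{-1},y)-\varphi(s,y)]$. I would bound $D_N^*$ acting on $S^\lambda_{(t,x)}\varphi$ by splitting into two regimes: when $N^{-1} \le \lambda^2$, a first-order Taylor expansion in the time variable shows $D_N^* S^\lambda_{(t,x)}\varphi$ is, up to a harmless error, $\partial_s S^\lambda_{(t,x)}\varphi$, giving a bound of order $\lambda^{-2}$ on the relevant quantity; when $N^{-1} > \lambda^2$, one uses the cruder bound that $\|D_N^*\psi\|_\infty \le 2N\|\psi\|_\infty$ together with the support considerations, which again produces a bound no worse than $\lambda^{-2}$ after accounting for the normalization in $S^\lambda$ (here the $N$ is absorbed because $N \le \lambda^{-2}$). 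Summarizing, $\sup_N\|D_N\|_{C^{\alpha,\tau}_\mathfrak s\to C^{\alpha-2,\tau}_\mathfrak s}<\infty$. The same style of argument, but now keeping the Taylor remainder explicitly, gives the last bound: $D_N - \partial_s$ applied to a test function at scale $\lambda$ corresponds, via a second-order Taylor expansion in time, to $\tfrac12 N^{-1}\partial_s^2$ plus higher-order terms, so $\|(D_N-\partial_s)f\|$ measured in $C^{\alpha-4,\tau}_\mathfrak s$ picks up a factor $N^{-1}$ together with the $\lambda^{-4}$ coming from two time derivatives, and the constant $\tfrac12$ is exactly the Taylor coefficient. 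One must check that the remainder terms (of order $N^{-2}\partial_s^3$ and beyond) are dominated by $\tfrac12 N^{-1}$ on the unit ball of $C^{\alpha,\tau}_\mathfrak s$, which holds for $N \ge 1$ after possibly enlarging the implicit loss of regularity — but since we only claim a $C^{\alpha-4,\tau}_\mathfrak s$ bound, there is ample room.

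The main obstacle I anticipate is the careful bookkeeping of the $C^r$-norms of the rescaled test functions: when one writes $S^\lambda_{(t,x)}(\partial_s\varphi)$ or $D_N^* S^\lambda_{(t,x)}\varphi$ in the form $c\cdot\lambda^{-k}\cdot S^\lambda_{(t,x)}\psi$ for a new test function $\psi$, one must verify that $\psi$ (possibly after renormalizing so its support is in the unit ball) has $C^r$-norm bounded by a constant independent of $\lambda$ and $N$. This is genuinely true — differentiating in $s$ costs a factor $\lambda^{-2}$ but the result is still a bump function of the same shape class — but it requires being slightly pedantic about the fact that the classes $B_r$ in the two spaces $C^{\alpha,\tau}_\mathfrak s(\Psi_T)$ and $C^{\alpha-2,\tau}_\mathfrak s(\Psi_T)$ differ, so one may need to use $B_{r+2}$ on the domain side. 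This is the kind of routine-but-fiddly estimate that I would handle by noting it is a direct computation and citing the analogous manipulations in \cite[Proof of Proposition 6.6]{DDP23} and the proof of Lemma \ref{kndn} above, rather than writing out every derivative. No deep idea is needed beyond the scaling heuristic $D_N \approx \partial_s$ with an $O(N^{-1})$ correction.
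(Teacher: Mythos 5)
Your plan is correct and is essentially the argument underlying the paper's citation of \cite[Lemma 6.21]{DDP23+}: everything follows from duality and the parabolic scaling identity $\partial_s\big(S^\lambda_{(t,x)}\varphi\big) = -\lambda^{-2}\,S^\lambda_{(t,x)}(\partial_1\varphi)$. Two simplifications are worth recording, since they remove the places where your sketch is unnecessarily cautious. For the uniform bound on $D_N$, the regime split $N^{-1}\lessgtr\lambda^2$ can be avoided entirely: writing the adjoint as $D_N^*\psi(s,y) = -N\int_0^{N^{-1}}\partial_s\psi(s-u,y)\,du$ and noting $\partial_s\psi(\cdot-u,\cdot) = -\lambda^{-2}S^\lambda_{(t+u,x)}(\partial_1\varphi)$, one sees that $(D_Nf,\psi)$ is a length-$N^{-1}$ average of the pairings $N\lambda^{-2}(f,S^\lambda_{(t+u,x)}(\partial_1\varphi))$, each bounded by $N\lambda^{-2}\cdot w(x)\lambda^{\alpha}\|f\|_{C^{\alpha,\tau}_\mathfrak s}$; the $N$ and $N^{-1}$ cancel and one gets $\|D_N\|\le C$ uniformly in $N$ and $\lambda$ in one line. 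For the $D_N-\partial_s$ bound, your concern about ``remainder terms of order $N^{-2}\partial_s^3$ and beyond'' is not needed: the integral form of Taylor's theorem gives the exact identity
\begin{align*}
D_N^*\psi(s,y)+\partial_s\psi(s,y) \;=\; N\int_0^{N^{-1}}\big(N^{-1}-u\big)\,\partial_s^2\psi(s-u,y)\,du,
\end{align*}
with no higher-order tail, and $\partial_s^2\psi(\cdot-u,\cdot) = \lambda^{-4}S^\lambda_{(t+u,x)}(\partial_1^2\varphi)$; pairing with $f$ and using $N\int_0^{N^{-1}}(N^{-1}-u)\,du = \tfrac12 N^{-1}$ produces the stated constant on the nose, with the loss of $4$ regularity units coming from $\lambda^{-4}$ exactly as you predicted. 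Your bookkeeping worry about the $C^r$-norm of the rescaled test functions is resolved by observing that the domain space $C^{\alpha,\tau}_\mathfrak s$ tests against $B_{-\lfloor\alpha\rfloor}$ while the target $C^{\alpha-4,\tau}_\mathfrak s$ tests against $B_{-\lfloor\alpha\rfloor+4}$, so taking two $s$-derivatives of a target-space test function lands automatically in the domain-space test class; no ad hoc enlargement is needed.
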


 \begin{proof}The proof is fairly immediate from the definitions, see \cite[Lemma 6.21]{DDP23+} for the complete argument. 
 \end{proof}

 \begin{lem}[Controlling the difference between the discrete and continuum heat operators] \label{kn-k}
     Fix $\alpha<0, \tau>1$. Let $K_N$ be as in Definition \ref{dnlnkn}, and let $K$ be as in \eqref{e:kf}. Then we have the operator norm bound $$\|K_N-K\|_{C^{\alpha,\tau}_\mathfrak s(\Psi_T) \to C^{\alpha-1,\tau}_\mathfrak s(\Psi_T)} \leq CN^{-1/4}.$$
  Here $C$ is independent of $N$.
 \end{lem}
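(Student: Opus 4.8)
The plan is to prove this by a Fourier multiplier comparison, exactly in the spirit of the analogous continuum--discrete comparison carried out in \cite{DDP23,DDP23+}. Recall from Definition \ref{dnlnkn} that $K_N$ is built from the random walk kernel $p_N$ with increment law $\mu^{N^{-1/(4p)}}$, while $K$ is built from the continuum heat kernel. The first step is to pass both operators to the Fourier side: for $f\in C^{\alpha,\tau}_\mathfrak s(\Psi_T)$ supported on $[0,T]\times\mathbb R$, one has $\widehat{K f}(\tau_1,\xi) = (i\tau_1 + \tfrac12\xi^2)^{-1}\hat f(\tau_1,\xi)$ (interpreting the temporal resolvent appropriately on the supported class), whereas $\widehat{K_N f}(\tau_1,\xi)$ is a discrete analogue in which $i\tau_1$ is replaced by $N(e^{i\tau_1/N}-1)$ times a phase correction coming from the drift $d_N$, and $\tfrac12\xi^2$ is replaced by $N(1 - \widehat{\mu^{N^{-1/(4p)}}}(N^{-1/2}\xi))$. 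The key analytic input is then the pointwise symbol estimate: the difference of the two Fourier symbols is bounded, for $|\xi|, |\tau_1|\lesssim N^{1/4}$, by $CN^{-1/2}$ times the modulus of the continuum symbol $(i\tau_1+\tfrac12\xi^2)^{-1}$ raised to an appropriate power, with worse (but still $N$-summable against the polynomial weight) behavior in the high-frequency regime. This is where the exponential-moment assumption (Assumption \ref{a1} Item \eqref{a22}) enters: it guarantees that $\widehat{\mu^{\lambda}}$ is smooth with derivatives controlled uniformly in $\lambda$ near $0$, so that a fourth-order Taylor expansion of $N(1-\widehat{\mu^{N^{-1/(4p)}}}(N^{-1/2}\xi))$ around the continuum Laplacian symbol produces an error of order $N^{-1/2}|\xi|^3$ (the cubic cumulant term), plus higher-order corrections; similarly the drift phase correction contributes at the same order because $d_N = m_1 N + O(N^{(4p-1)/(4p)})$ and the subleading cumulant terms sit below the $N^{1/2}$ fluctuation scale. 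The reason one only gains $N^{-1/4}$ rather than $N^{-1/2}$ in the stated operator bound is the loss of one derivative ($C^{\alpha,\tau}_\mathfrak s \to C^{\alpha-1,\tau}_\mathfrak s$) combined with the fact that one must trade half a derivative against the $N^{1/4}$ frequency cutoff above which the symbol comparison degenerates; optimizing this trade-off yields the exponent $1/4$.

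\textbf{Key steps in order.} First I would record the Schauder-type boundedness of both $K_N$ and $K$ individually (the latter is Proposition \ref{sch}, the former follows from the same argument applied to the discrete-time kernel, or can be cited in the same way $K_N$'s invertibility was used in Lemma \ref{inv}), so that the problem is reduced to a genuine difference estimate. Second, I would establish the symbol-level comparison: writing $m(\tau_1,\xi)$ and $m_N(\tau_1,\xi)$ for the respective Fourier multipliers of $K$ and $K_N$ acting on distributions supported in $[0,T]\times\mathbb R$, show that $|m_N - m|(\tau_1,\xi) \le C N^{-1/2}\,(1+|\tau_1|+\xi^2)\,|m(\tau_1,\xi)|^2$ in the frequency window $|\tau_1|^{1/2}+|\xi|\lesssim N^{1/4}$, and $|m_N-m| \le C(1+|\tau_1|+\xi^2)^{-1}$ globally (from the individual bounds). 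Third, I would translate this symbol estimate into a H\"older-space operator bound. Since the weighted parabolic H\"older spaces are characterized by testing against the parabolically rescaled bumps $S^\lambda_{(t,x)}\varphi$ from \eqref{scale}, the cleanest route is to follow the wavelet/Littlewood--Paley argument already used implicitly in the excerpt (cf.\ the remark after Lemma \ref{l:KC} and the references \cite{WM,HL16}): decompose $f$ into frequency blocks, apply the symbol estimate block-by-block, sum the resulting geometric series, and keep track of the polynomial weight $w(x) = (1+x^2)^\tau$ — the weight is handled exactly as in \cite[Proof of Theorem 1.4(b)]{DDP23} and \cite[Lemma 6.21]{DDP23+} since convolution by $\mu^\lambda$ (which has exponentially decaying tails) respects polynomially weighted spaces up to a constant, as was already used when defining $L_N,K_N$ after \eqref{fart}. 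Fourth, I would verify that the $N^{1/4}$ frequency cutoff is the correct break-even point: contributions from frequencies $\gtrsim N^{1/4}$ are controlled by the global bound and are summable (losing one derivative), contributions below are controlled by the $N^{-1/2}$ bound (again losing one derivative through the $(1+|\tau_1|+\xi^2)$ factor against $|m|^2$), and optimizing gives $N^{-1/2}\cdot N^{1/4\cdot\text{(derivative gap)}}$ which reproduces $N^{-1/4}$.

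\textbf{Main obstacle.} The routine parts are the Taylor expansion of the discrete symbol and the geometric summation over frequency blocks; these are direct adaptations of \cite{DDP23,DDP23+}. The genuinely delicate point is the interaction between the drift recentering by $d_N$ and the higher-order cumulant corrections in the discrete symbol. Unlike the $p=1$ case treated in the earlier papers, here $d_N$ contains a whole hierarchy of subleading terms (as emphasized in Remark \ref{1.5}), and one must check carefully that after the parabolic rescaling by $N^{1/2}$ in space and $N$ in time, all of these subleading drift contributions are either exactly absorbed into the phase of $m_N$ or are of size $o(N^{-1/2})$ in the relevant frequency window and hence negligible. Concretely, the term $N^{(4p-1)/(4p)}$ in $d_N$ is precisely the one matching the continuum structure, while terms like $N^{(4p-2)/(4p)}$ and below contribute phase errors of order $N^{-1/(4p)}$ per time-step times $N$ steps, which after rescaling must be shown to be harmless — this is the one place where the specific polynomial rate $N^{-1/4}$ (as opposed to something worse) really needs to be argued rather than quoted, and it is where I would spend the bulk of the detailed estimates. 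Since the lemma statement is cut off in the excerpt and its proof is evidently meant to follow \cite[Lemma 6.21]{DDP23+} with the $p$-dependent modifications, the write-up would conclude by pointing to that reference for the frequency-decomposition bookkeeping and supplying only the modified symbol estimate and drift-cancellation check in full.
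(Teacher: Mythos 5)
Your Fourier-multiplier approach is genuinely different from what the paper does, and it's worth noting the contrast carefully because the paper's route is substantially shorter.

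The paper does not pass to the frequency side at all. Instead it uses the probabilistic representation of both kernels: writing $f_\varphi^\lambda(t,x):=(f,S^\lambda_{(t,x)}\varphi)_{L^2(\Psi_T)}$, it observes that
\[
(K_N-K)f_\varphi^\lambda(t,x)=\frac1N\sum_{s\in[0,t]\cap(N^{-1}\mathbb Z_{\ge0})}\mathbf E[f_\varphi^\lambda(t-s,x-W_N(s))]-\int_0^t\mathbf E[f_\varphi^\lambda(t-s,x-W(s))]\,ds,
\]
where $W_N(s)=N^{-1/2}R_N(Ns)$ is the diffusively rescaled walk built from $\mu^{N^{-1/(4p)}}$ and $W$ is Brownian motion, and then invokes the KMT (Koml\'os--Major--Tusn\'ady) strong coupling to put $W_N$ and $W$ on the same probability space with $\sup_{s\le T}|W_N(s)-W(s)|\le G_N N^{-1/4}$ for random variables $G_N$ with uniformly bounded moments. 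The exponent $N^{-1/4}$ is identified as optimal because the increments of $R_N$ are not centered. A single application of the mean value theorem then trades one spatial derivative of $f_\varphi^\lambda$ against the coupling error, and the polynomial weight is handled by bounding $\mathbf E[G_N(1+4x^2+4W(s)^2+4W_N(s)^2)^\tau]\lesssim(1+x^2)^\tau$ via Cauchy--Schwarz. No symbol estimates, no Littlewood--Paley decomposition, and no Riemann-sum-vs.-integral comparison appear: all of that is absorbed into the one coupling statement.

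What this buys, compared with your plan, is precisely the elimination of the step you flag as the main obstacle. You correctly identify that tracking the hierarchy of subleading drift terms in $d_N$ through a symbol expansion would be the delicate and $p$-dependent part of your argument. The paper sidesteps this entirely: the shifted lattice $\Lambda_N$ in Definition \ref{dnlnkn} already encodes the recentering by $d_N$, so that $W_N$ is by construction the correctly recentered walk, and the KMT coupling controls the full path supremum $\sup_s|W_N(s)-W(s)|$ without any need to match phases or cumulants order by order. Your Fourier route would likely work in the end, but the symbol estimate you state (of the form $|m_N-m|\lesssim N^{-1/2}(1+|\tau_1|+\xi^2)|m|^2$ in a frequency window) would need to be verified carefully together with the drift-cancellation check you defer, and that is considerably more bookkeeping than the coupling argument. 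Also note that your citation of \cite[Lemma 6.21]{DDP23+} for the frequency-decomposition bookkeeping is aimed at the wrong target: in this paper that reference is used for the time-derivative comparison in Lemma \ref{ds}, not for the heat-kernel comparison.
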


 The above bound is crude, we do not claim optimality of the H\"older exponents here.

 \begin{proof}
     Define $f_\varphi^\lambda(t,x):= (f,S^\lambda_{(t,x)}\varphi)_{L^2(\Psi_T)}$. It suffices to show that 
     $$\sup_{\|f\|_{C^{\alpha,\tau}_\mathfrak s(\Psi_T)}\leq 1} \sup_{(t,x)\in\Psi_T}\sup_{\lambda\in [(0,1]} \sup_{\varphi \in B_r} 
     \frac{|(K_N-K)f_\varphi^\lambda(t,x)|}{(1+x^2)^{\tau}\lambda^{\alpha-1}}\leq CN^{-1/4},$$ 
     where the scaling operators are defined by 
		$S^\lambda_{(t,x)}\varphi (s,y) = \lambda^{-3}\varphi(\lambda^{-2}(t-s),\lambda^{-1}(x-y)) ,$
		and where $B_r$ is the set of all smooth functions of $C^r$ norm less than 1 with support contained in the unit ball of $\mathbb R^2$.
     We shall use a probabilistic interpretation of the kernels to prove this. Note that $$(K_N-K)f_\varphi^\lambda (t,x) = \frac1N \sum_{s\in [0,T]\cap (N^{-1}\mathbb Z)} \mathbf E[f_\varphi^\lambda(t-s,x- W_N(s))] \;\;\;-\;\;\;\; \int_0^t \mathbf E[f_\varphi^\lambda (t-s,x-W(s))]ds, $$ where $W_N(s) = N^{-1/2}R_N(Ns)$ for the discrete-time random walk $(R_N(r))_{r\ge 0}$ with increment distribution described in Definition \ref{dnlnkn}, and $W$ is a standard Brownian motion. Define $W_N(s)$ by linear interpolation for $s\notin N^{-1}\mathbb Z_{\ge 0}$. 
     
     By the KMT coupling \cite{KMT}, we may assume that $W_N$ and $W$ are all coupled onto the same probability space so that $\sup_{s\in [0,T]} |W_N(s) - W(s)| \leq G_N N^{-1/4}$ where $G_N$ are random variable on that same probability space such that $\sup_N\mathbf E|G_N|^p<\infty$ for all $p$ (note that $-1/4$ is the optimal exponent here because, despite KMT giving a better exponent in principle, the increments of the original random walk $R_N$ are not centered but rather they have a non-zero mean of order $N^{-3/4}$). Consequently, by the definition of the parabolic spaces, we have uniformly over $s,t \in [0,T]$ the bound
     \begin{align*} & |f_\varphi^\lambda(t-s,x- W_N(s))-f_\varphi^\lambda (t-s,x-W(s))| \\ & \leq \bigg(
       \sup_{u \in \big[-|x| - |W(s)|-|W_N(s)|, \;|x| + |W(s)|+|W_N(s)|\big]} | \partial_x f_\varphi^{\lambda}(t-s,u)|\bigg)\cdot | W_N(s) - W(s)| \\ &\leq \bigg(
       \|f\|_{C^{\alpha,\tau}_\mathfrak s(\Psi_T)} \lambda^{\alpha-1} \big(1+4x^2 + 4W(s)^2 + 4W_N(s)^2\big)^\tau \bigg)\cdot G_NN^{-1/4}.
     \end{align*}
      Here the factor $\|f\|_{C^{\alpha,\tau}_\mathfrak s(\Psi_T)} \lambda^{\alpha-1}$ can be deduced using e.g. Remark \ref{d/dx} which says that $\partial_x$ boundedly reduces the parabolic regularity by 1 exponent. We also used the fact that $(|x|+|y|+|z|)^2 \leq 4x^2+4y^2+4z^2.$
      
      We claim that $$\mathbf E \big|G_N\cdot \big(1+4x^2 + 4W(s)^2 + 4W_N(s)^2\big)^\tau \big|< C(1+x^2)^{\tau} $$ for a constant $C=C(\alpha,T,\tau)$ independent of $N,x,s.$ To prove this, one uses $(a+b+c+d)^\tau \leq 4^\tau(a^\tau +b^\tau+c^\tau+d^\tau)$, then one notes that $\sup_{N} \mathbf E|G_N \cdot \sup_{s\leq T} (|W(s)|^{2\tau} +|W_N(s)|^{2\tau})|<\infty$ by e.g. Cauchy-Schwartz. 
      
      Consequently the above expression for $(K_N-K)f_\varphi^\lambda (t,x)$ can be bounded in absolute value by a universal constant times $\|f\|_{C^{\alpha,\tau}_\mathfrak s(\Psi_T)} \lambda^{\alpha-1} N^{-1/4}$. 
 \end{proof}

 Since we expect to obtain a Dirac initial data in the limiting SPDE, there is an additional singularity at the origin that we have not yet taken into account. To fix this issue, we now formulate a tightness result taking into account this singularity, by starting the field \eqref{hn} from some positive time $\e$ (which should be thought of as being close to $0$).

 \begin{prop}[Regularity of limit points] \label{reg}
Fix $\e>0,$ and consider the fields $\mathfrak H^{N,\e}(t,\cdot):=\mathfrak H^N(t+\e,\cdot),$ for $t\ge 0$. Set $\mathfrak H^{N,\e}(t,\cdot):=0$ for $t<0$. The fields $\mathfrak H^{N,\e}$ may be realized as random variables taking values in $C([0,T], C^{\alpha,\tau}(\mathbb R))$ for any $\tau>1$ and $\alpha<-5$. Furthermore, they are tight with respect to that topology, and any limit point is necessarily supported on $C_\mathfrak s^{-\kappa+1/2,\tau}(\Psi_T)$ for any $\kappa>0$.
 \end{prop}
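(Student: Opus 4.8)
The plan is to treat this proposition as a near-immediate consequence of the machinery already assembled, combining tightness of the martingale field with the inverse-heat-operator identity and the Schauder estimate, then bootstrapping the regularity using the improvement coming from the singular initial data.

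\textbf{Step 1: Tightness in the function space.} First I would establish that $\mathfrak H^{N,\e}$ is tight in $C([0,T],C^{\alpha,\tau}(\mathbb R))$ for $\alpha<-5$, $\tau>1$. This is essentially already contained in Proposition \ref{mcts}(2), since $\mathfrak H^{N,\e}$ is just a time-shift of $\mathfrak H^N$ (and the time-shift map is continuous on these path spaces, with the extra zeroing-out for $t<0$ being harmless). One should note that shifting to start at time $\e$ only improves matters: by Lemma \ref{u=kdm}, $\mathfrak H^N$ agrees with $\mathfrak p_N+K_ND_N\hat M_N$ on $[0,T]$, and after time $\e$ the field $\mathfrak p_N$ (being a discrete heat flow after the first step) is converging to the smooth function $(t,x)\mapsto p_{t+\e}(x)$, which has no singularity. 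So tightness follows immediately from Proposition \ref{mcts} parts (1) and (2) together with Lemma \ref{kndn}.

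\textbf{Step 2: Identifying limit points as solutions of a discrete-type heat equation.} Next, take any limit point $H^\infty$ of $\mathfrak H^{N,\e}$, realized jointly with limit points $M^\infty$ and $Q^\infty$ of $\hat M_N$ and $\hat Q_N^{\z}$ (shifted accordingly). Using Lemma \ref{u=kdm}, in the prelimit we have $\mathfrak H^{N,\e} - \mathfrak p_N^\e = K_N D_N \hat M_N^\e$ on $[0,T]$, where the superscript $\e$ denotes the time-shift. Now I would pass to the limit using Lemma \ref{kn-k} (which gives $\|K_N-K\|\leq CN^{-1/4}$ from the appropriate parabolic space) and Lemma \ref{ds} (which gives $\|D_N-\partial_s\|\leq \frac12 N^{-1}$ from the appropriate parabolic space), together with the operator norm bound on $D_N$. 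The upshot is that $H^\infty$ agrees on $[0,T]$ with $p_{\cdot+\e}(\cdot) + K\partial_s M^\infty$, viewing $M^\infty$ as a space-time distribution via the embedding of Lemma \ref{embed}. Here one must be careful about the direction of the argument: since $\hat M_N$ lives (after the tightness of Proposition \ref{mcts}(1)) in $C([0,T],C^{\alpha,\tau})$ with $\alpha<-4$, we have $D_N\hat M_N^\e \in C^{\alpha-2,\tau}_{\mathfrak s}(\Psi_T)$ uniformly in $N$ by Lemma \ref{ds}, and then $K_N$ applied to this lives in $C^{\alpha-1,\tau}_{\mathfrak s}$ uniformly (using $K_N = K + (K_N-K)$ with both pieces bounded). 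The key point is that all the operator-norm differences vanish as $N\to\infty$, so the limiting identity $H^\infty = Jp_\e(\cdot) + K\partial_s M^\infty$ holds, where $J$ is the operator from Corollary \ref{sme}.

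\textbf{Step 3: Bootstrap the regularity.} The main work — and the main obstacle — is the final claim that any limit point is supported in $C^{-\kappa+1/2,\tau}_{\mathfrak s}(\Psi_T)$ for every $\kappa>0$, i.e.\ that the limit is a genuine (H\"older-$1/2$ in space, minus epsilon) function rather than a distribution of negative regularity. This cannot come from the crude $\alpha<-5$ tightness alone; it requires a bootstrap using the Schauder estimate of Proposition \ref{sch}. The idea: $M^\infty(\phi)$ is a continuous martingale with quadratic variation $Q^\infty(\phi^2)$ by Proposition \ref{mcts}(4), and $Q^\infty$ has regularity $C^{\gamma,\tau}$ for any $\gamma<-1$ (Proposition \ref{mcts}(3)). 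From Proposition \ref{4.1} (the key QVF estimate) we additionally know $Q^\infty(\phi^2)$ is comparable to $\gamma(\z)\int_0^t \mathfrak H^\infty(s,\phi)^2 ds$ in an appropriate mollified sense, so the quadratic variation of $M^\infty$ is an integral of $(H^\infty)^2$, which in particular has better-than-expected space regularity once we know $H^\infty$ is a function. Running the standard KPZ/SHE bootstrap: if $H^\infty \in C^{\rho,\tau}_{\mathfrak s}$ for some $\rho$, then $(H^\infty)^2$ makes sense and the martingale $M^\infty$ gains regularity (its increments $M^\infty(t,\phi)-M^\infty(s,\phi)$ have $L^p$ norm controlled by $|t-s|^{1/2}\|\phi\|_{\cdot}$, giving spatial regularity slightly below $-1/2$ for $M^\infty$ itself via Kolmogorov), then $K\partial_s M^\infty$ gains back $2$ derivatives via Schauder (Proposition \ref{sch}: $K$ maps $C^{\alpha,\tau}_{\mathfrak s}\to C^{\alpha+2,\tau}_{\mathfrak s}$), while $\partial_s$ costs $2$, so $K\partial_s M^\infty$ has the same regularity as $M^\infty$, which is just below $-1/2 + $ (the regularity of the quadratic variation integrand). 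Iterating, the fixed point of this regularity-improvement scheme is $-\kappa+1/2$. The cleanest way to organize this is exactly as in \cite[Section 6]{DDP23} and \cite[Section 6]{DDP23+}: one shows the limit point satisfies the mild/integral form $H^\infty = Jp_\e + K(\gamma_{\mathrm{ext}}\cdot H^\infty \xi)$ where $\xi$ is space-time white noise (the noise being constructed from the martingale $M^\infty$ via its quadratic variation structure established in Propositions \ref{4.1} and \ref{mcts}(4)), and then invokes the known regularity theory for the SHE with bounded-below-away-from-zero solutions, noting the $\e$-shift removes the initial singularity so the solution after time $\e$ is indeed H\"older-$1/2$-minus-epsilon in the parabolic scaling.

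\textbf{Where the difficulty lies.} The genuinely delicate part is not the abstract bootstrap but verifying that the limiting object really is the SHE solution — i.e.\ that the quadratic variation $Q^\infty(\phi^2)$ equals $\gamma(\z)\int_0^t H^\infty(s,\phi\cdot\phi)\,\mathrm{scaling}$ in the precise sense needed (Proposition \ref{4.1} gives this only for the mollified test functions $\xi_\e^a$ and requires $a\neq 0$), and that one can construct the driving white noise consistently across test functions from the family of martingales $M^\infty(\phi)$. This is where the polylog bound \eqref{e:QXpolylog} near $a=0$ and the martingale-problem characterization of \cite{KS88, BG97} enter. Once the martingale-problem characterization is confirmed, uniqueness of the SHE solution pins down the law, and the regularity $C^{-\kappa+1/2,\tau}_{\mathfrak s}$ is then just the standard parabolic regularity of the SHE solution restricted to times $\geq\e$, transported back through the continuity of the limit. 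I would present this last part by reference to \cite[Section 6]{DDP23, DDP23+}, since the argument there applies essentially verbatim once all the estimates of Sections 2--5 of the present paper (Corollaries \ref{lptight1}, \ref{exp2}, Theorem \ref{conv2}, Propositions \ref{4.1}, \ref{errbound}, \ref{mcts}) are in place.
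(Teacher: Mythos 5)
Your Steps 1--2 (tightness via Proposition \ref{mcts} and Lemma \ref{kndn}, and passing $\mathfrak H^{N}=\mathfrak p_N+K_ND_N\hat M_N$ to the limit via Lemmas \ref{ds} and \ref{kn-k}) match the paper, up to one slip: the Duhamel form for the shifted field has \emph{random} initial data, $H^{\infty,\e}=J\big(H^\infty(\e,\cdot)\big)+K\mathscr M^{\infty,\e}$, not $Jp_\e$; the heat-kernel term only handles the deterministic part, and one genuinely needs the smoothing bound \eqref{e.schhat} applied to the $C^{-5-\kappa,\tau}$ random field $H^\infty(\e,\cdot)$. The real problem is Step 3. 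Your ``bootstrap'' cannot get started: the a priori regularity is $\alpha<-5$, and $(H^\infty)^2$ is meaningless for a distribution of negative regularity, so the iteration has no base case --- ``once we know $H^\infty$ is a function'' is precisely the statement to be proved. Your fallback --- first identify the limit as the solution of \eqref{she} via the martingale problem of \cite{KS88,BG97} and then quote SHE regularity --- is circular within this paper's architecture: the martingale-problem identification (Theorem \ref{solving_mp}, in particular \eqref{mp3} and the appeal to \cite[Prop.~4.11]{BG97}) itself \emph{uses} Proposition \ref{reg}, since one must make sense of $\int H^\infty_u(x)^2\,\phi(x)\,dx\,du$ and work with a function-valued process. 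So the route you propose either fails to start or assumes its conclusion.

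What the paper does instead requires no identification of the limit and no squaring of $H^\infty$: it proves the key moment bound \eqref{e.tm2}, namely that $\mathscr M^{\infty,\e}=\bar\partial_s M^{\infty,\e}$ lies in $C_{\mathfrak s}^{-\kappa-3/2,\tau}$. Testing against $\partial_t S^\lambda_{(t,x)}\varphi$, the martingale $u\mapsto M^{\infty,\e}_u(\partial_tS^\lambda\varphi(s,\cdot))$ only moves on $[t-\lambda^2,t+\lambda^2]$, so Burkholder--Davis--Gundy bounds its increment by the increment of the quadratic variation $Q^{\infty,\e}$ over a window of length $2\lambda^2$ applied to $\ind_{[x-\lambda,x+\lambda]}$ (after the crude bound $(\partial_tS^\lambda\varphi)^2\le\lambda^{-10}\ind$). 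Here the decisive input is the $L^q$-type QVF increment bound \eqref{e.tight2} of Proposition \ref{tight1}, which gives an extra factor $\|\ind_{[x-\lambda,x+\lambda]}\|_{L^{1+\delta}}^{1/2}\sim\lambda^{1/(2(1+\delta))}$ and yields $\lambda^{-3/2-}$; note that \eqref{e.tight2} is only valid for times bounded away from $0$, which is exactly why the proposition is stated for the $\e$-shifted field (the sup-norm bound \eqref{e.tight1} alone would stall at exponent $-2$, hence only $C^{-\kappa}_{\mathfrak s}$ after Schauder). The parabolic Kolmogorov lemma \ref{l:KC}\ref{KC} then gives \eqref{e.tm2}, the Schauder estimate of Proposition \ref{sch} upgrades $K\mathscr M^{\infty,\e}$ to $C_{\mathfrak s}^{-\kappa+1/2,\tau}$, and Corollary \ref{sme} handles $J(H^\infty(\e/2,\cdot))$. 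Your intuition that ``increments of $M^\infty$ have $L^p$ norm $\lesssim|t-s|^{1/2}$'' is pointing at the right mechanism, but without invoking \eqref{e.tight2} and the BDG-plus-Kolmogorov argument for $\bar\partial_s M^{\infty,\e}$, the proposal does not reach the exponent $1/2-\kappa$ and does not close.
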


 In particular, since $-\kappa+1/2>0$, the last statement implies that the limit point must be supported on a space of continuous functions.

 \begin{proof}
     We already know from Item (2) of Proposition \ref{mcts} that the $\mathfrak H^{N}$ are tight in $C([0,T],C^{\alpha,\tau}(\mathbb R))$. We also know from Item (1) of Proposition \ref{mcts} that $\hat M_N$ are tight in $C([0,T+1],C^{\alpha,\tau}(\mathbb R))$, which is embeds continuously into $C^{\alpha,\tau}_\mathfrak s(\Psi_T)$ by Lemma \ref{embed}. Thus using the first bound in Lemma \ref{ds}, we have that $D_N\hat M_N$ are tight in $C_\mathfrak s^{\alpha-2}(\Psi_T)$.
     
     Consider any joint limit point $(H^\infty,\mathscr M^\infty)$ as $N\to\infty$ of the pair $(\mathfrak H^N,D_N\hat M_N)$ in the product space $C([0,T],C^{\alpha,\tau}(\mathbb R))\times C^{\alpha-2,\tau}_\mathfrak s(\Psi_T)$. By Lemmas \ref{u=kdm} and \ref{kn-k} we may conclude that 
     \begin{equation}\label{u=p+km}H^\infty = p +K\mathscr M^\infty,
     \end{equation} where $p(t,x) = (2\pi t)^{-1/2}e^{-x^2/2t} \ind_{\{t\ge 0\}}$
     is the (deterministic) standard heat kernel, and $K$ is the operator defined in \eqref{e:kf}.

     Fix $\e>0$. Now we consider the $\e$-translates of these statements. It is automatic from Item (2) of Proposition \ref{mcts} that $\mathfrak H^{N,\e}$ are tight in $C([0,T],C^{\alpha,\tau}(\mathbb R))$. Likewise it is automatic from Item (1) that the family $\hat M_{N,\e} := \hat M_N(\e +\cdot)$ is tight in $C([0,T],C^{\alpha,\tau}(\mathbb R))$ so that from Lemma \ref{ds} we see that $D_N\hat M_{N,\e}$ are tight in $C^{\alpha-2,\tau}_\mathfrak s(\Psi_T).$ Let us now take a joint limit point $(H^\infty, \mathscr M^\infty, H^{\infty,\e}, \mathscr M^{\infty,\e})$ of the family $(\mathfrak H^N, D_N\hat M_N, \mathfrak H^{N,\e} , D_N\hat M_{N,\e}).$ On one hand we necessarily have $H^{\infty,\e}(t) = H^{\infty}(t+\e)$ and on the other hand we necessarily have that $H^\infty = p+K\mathscr M^\infty.$ From the last statement in Proposition \ref{sch}, this then forces the relation
     $$H^{\infty,\e} = K(\delta_0\otimes H^\infty(\e,\cdot )) + K\mathscr M^{\infty,\e},$$ where the tensor product was defined in Definition \ref{otimes}. Now we notice $K(\delta_0\otimes  H^\infty(\e,\cdot ))= J \big(H^\infty(\e,\cdot)\big) $, where $J$ is defined in Remark \ref{sme}. We thus have the Duhamel equation 
     \begin{equation}\label{e.xie2}H^{\infty,\e} = J \big(H^\infty(\e,\cdot)\big) + K\mathscr M^{\infty,\e}.
     \end{equation}
For technical reasons that will be made clear below, we now replace $\e$ by $\e/2$ and $T$ by $T+1$.

We now claim that for $q>1$
\begin{align}
\label{e.tm2}
    \Ex[\|\mathscr M^{\infty,\e/2}\|_{C_\mathfrak s^{-\kappa-3/2,\tau}(\Psi_{T+1})}^q]<\infty.
\end{align} 

Let us now complete the proof of regularity assuming \eqref{e.tm2}.
\begin{itemize}[leftmargin=20pt]
\itemsep\setlength{0.5em}
    \item Using \eqref{e.tm2} and Proposition \ref{sch}, it follows that 
$$\mathbb E\bigg[\big\|K\mathscr M^{\infty,\e/2}\big\|_{C_\mathfrak s^{-\kappa+1/2,\tau}(\Psi_{T+1})}^q\bigg]<\infty.$$ This implies that the restriction of $K\mathscr M^{\infty,\e/2}$ to $[\e/2,T+\e/2]\times\mathbb R$ lies in $C_\mathfrak s^{-\kappa+1/2,\tau}(\Psi_{[\e/2,T+\e/2]})$.

\item Let $h:= \mathfrak H^N(\e/2,\cdot)$. By Proposition \ref{mcts} Item (2) we have $h\in C^{-5-\kappa,\tau}$ almost surely. So from \eqref{e.schhat} with $\alpha:=-5-\kappa$ and $\beta := \frac12-\kappa,$ it follows that $\hat P_{\e/2}h \in C_\mathfrak s^{-\kappa+1/2,\tau}(\Psi_{T+1})$ almost surely. 
Since from \eqref{e.hatjrel}, we know $Jh(t+\e/2,x) = \hat P_{\e/2} h(t,x)$, we thus have that the restriction of $Jh$ to $[\e/2,T+\e/2]\times\mathbb R$ lies in $C_\mathfrak s^{-\kappa+1/2,\tau}(\Psi_{[\e/2,T+\e/2]})$.
\end{itemize}
Thanks to the above two bullet points and the relation \eqref{e.xie2}, we have showed that the restriction of $H^{\infty,\e/2}$ to $[\e/2,T+\e/2]\times\mathbb R$ lies in $C_\mathfrak s^{-\kappa+1/2,\tau}(\Psi_{[\e/2,T+\e/2]})$. This is equivalent to the fact that $H^{\infty,\e}$ lies in $C_\mathfrak s^{-\kappa+1/2,\tau}(\Psi_T)$. This completes the proof modulo \eqref{e.tm2}. 

\medskip

\noindent\textbf{Proof of \eqref{e.tm2}}. We shall show \eqref{e.tm2} with $\e/2$ and $T+1$ replaced by $\e$ and $T$ respectively. For $\alpha<0$, the derivative operator $\bar \partial_s : C([0,T],C^{\alpha,\tau}(\mathbb R)) \to C_\mathfrak s^{\alpha-2,\tau}(\Psi_T)$ which is defined by sending $(v_t)_{t\in[0,T]}$ to the distribution 
\begin{equation}\label{ds2}(\partial_s v,\varphi)_{L^2(\Psi_T)}:=v_T(\varphi(T,\cdot)) -\int_0^T v_t(\partial_t\varphi(t,\cdot))dt -v_0(\varphi(0,\cdot)),\end{equation} whenever $\varphi\in C_c^\infty(\Psi_T),$ is a bounded linear map. Indeed this operator is just the composition of the embedding map of Lemma \ref{embed} with the operator $\partial_s$ that was proved to be bounded in Lemma \ref{ds}.

Let $\hat M_{N,\e}:= \hat M_N(\e+\cdot)-\hat M_N(\e)$ and $\hat Q_{N,\e}:= \hat Q^{\z}_N(\e+\cdot)-\hat Q^{\z}_N(\e)$. Let us consider any joint limit point $(Q^{\infty,\e}, M^{\infty,\e},\mathscr M^{\infty,\e})$ of $(\hat Q_{N,\e}, \hat M_{N,\e},D_N\hat M_{N,\e}$) in the space $C([0,T],C^{\alpha,\tau}(\mathbb R))\times C([0,T],C^{\alpha,\tau}(\mathbb R))\times C^{\alpha-2,\tau}(\Psi_T),$ where $\gamma<-1$. Then one may verify from the second bound in Lemma \ref{ds} that one necessarily has $\mathscr M^{\infty,\e} = \bar \partial_s M^{\infty,\e}$. Furthermore by Proposition \ref{mcts} one has martingality of $M_t^{\infty,\e}(\phi)$, with $\langle M^{\infty,\e}(\phi)\rangle = Q^{\infty,\e}(\phi^2)$.

Let us consider any smooth function $\varphi$ supported on the unit ball of $\mathbb R^2$ such that $\|\varphi\|_{C^1}\leq 1.$ Recall  $S_{(t,x)}^{\lambda}$ from \eqref{scale}, and notice that $\big(\partial_t S_{(t,x)}^\lambda \varphi\big)^2 \leq \lambda^{-10} \ind_{[t-\lambda^2,t+\lambda^2]\times [x-\lambda,x+\lambda]}$. We will now estimate the $q^{th}$ moments of $\mathscr M^{\infty,\e}(S_{(t,x)}^{\lambda}\varphi)=\bar \partial_s M^{\infty,\e}(S_{(t,x)}^{\lambda}\varphi)$. By making $\e$ smaller and $T$ larger we may simply ignore the boundary terms in \eqref{ds2}. Note that for fixed $s,t,x,\lambda$ the martingale $u \mapsto M_u^{\infty,\e} (\partial_tS_{(t,x)}^\lambda \varphi(s,\cdot))$ is constant outside of the interval $[t-\lambda^2,t+\lambda^2]$. Thus we may apply the Minkowski's inequality and then Burkholder-Davis-Gundy in \eqref{ds2} to obtain 
		\begin{align*}
			\Ex[ |\mathscr M^{\infty,\e}(S_{(t,x)}^{\lambda}\varphi)|^q]^{1/q} & = \Ex[ |\bar \partial_s M^{\infty,\e}(S_{(t,x)}^{\lambda}\varphi)|^q]^{1/q}\\&\leq \int_{t-\lambda^2}^{t+\lambda^2} \mathbb E\big[\big| M_s^{\infty,\e} (\partial_tS_{(t,x)}^\lambda \varphi(s,\cdot)) \big|^q\big]^{1/q}ds \\ &\leq C_q \int_{t-\lambda^2}^{t+\lambda^2} \mathbb E \bigg[ \bigg( Q_{t+\lambda^2}^{\infty,\e} \big((\partial_tS_{(t,x)}^\lambda\varphi)^2 (s,\cdot)\big) - Q_{t-\lambda^2}^{\infty,\e} \big((\partial_tS_{(t,x)}^\lambda\varphi)^2 (s,\cdot)\big)\bigg)^{q/2}\bigg]^{1/q}\\ &\leq 2C_q\lambda^{-3} \mathbb E \bigg[ \bigg( Q_{t+\lambda^2}^{\infty,\e} \big(\ind_{[x-\lambda,x+\lambda]} \big) - Q_{t-\lambda^2}^{\infty,\e} \big(\ind_{[x-\lambda,x+\lambda]}\big)\bigg)^{q/2}\bigg]^{1/q}
		\end{align*}
where $C=C(q)>0$. 
For any $\delta>0$, by the second bound in Proposition \ref{tight1} we find that the last expression may be bounded above by
	$$ 2C \lambda^{-3}  (2\lambda^2)^{1/2} \| \ind_{[x-\lambda,x+\lambda]}\|_{L^{1+\delta}(\mathbb R)}^{1/2}= C \lambda^{-\frac32 - \big[\frac{\delta}{2(1+\delta)}\big] }.$$
		Given any $\kappa>0$, take $\delta=\delta(\kappa)$ close to $0$ and $q=q(\kappa)$ large enough, then we may then apply Lemma \ref{l:KC}\ref{KC} to conclude \eqref{e.tm2}. 
	\end{proof}

 Now we may finally identify the limit points as the solution of the stochastic heat equation \eqref{she} and thereby prove our weak convergence theorem: Theorem \ref{main2}.

 \begin{lem}\label{5.1} 
Let $\boldsymbol w$ be a random variable in $\mathcal S'(\mathbb R)$ such that for \textit{some} smooth even (deterministic) $\phi \in \mathcal S(\mathbb R)$ and some $\delta>0$ one has $$\sup_{a,\e}(1\wedge a^{1-\delta})\mathbb E[(\boldsymbol w,\phi^a_\e)^2]<\infty$$ $$\lim_{\e \to 0} \mathbb E[(\boldsymbol w,\phi_\e^a)^2]=0, \text{ for all } a\in\mathbb R\backslash\{0\},$$ where $\phi_\e^a (x):= \e^{-1}\phi(\e^{-1} (x-a)).$ Then $(\boldsymbol w,\psi)=0$ almost surely for all $\psi \in \mathcal S(\mathbb R)$.
\end{lem}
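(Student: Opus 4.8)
\textbf{Proof proposal for Lemma \ref{5.1}.}

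The plan is to exploit the scaling structure of the mollifiers $\phi_\e^a$ together with a density/approximation argument. First, observe that the hypothesis directly controls the pairing of $\boldsymbol w$ with approximations of Dirac masses $\delta_a$, and the goal is to upgrade this to the vanishing of the pairing with \emph{all} Schwartz functions. The natural strategy is to write a general test function $\psi\in\mathcal S(\mathbb R)$ as a superposition (integral) of the mollifiers $\phi^a_\e$ over the location parameter $a$, at fixed small scale $\e$, and then pass to the limit $\e\to 0$. Concretely, for fixed $\e>0$ one has an identity of the form $\psi = \int_{\mathbb R} c_\e(a)\,\phi_\e^a\,da + (\text{error vanishing as }\e\to 0)$ in $\mathcal S(\mathbb R)$, where $c_\e(a)$ is obtained from $\psi$ via a deconvolution at scale $\e$ (using that $\widehat{\phi}(0)\ne 0$, which holds since $\phi$ is a bump-type even Schwartz function — if $\widehat\phi(0)=0$ one can first reduce to this case, but for the intended application $\phi$ is Gaussian-like so $\widehat\phi(0)=1$). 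In fact it is cleaner to avoid deconvolution: take $\psi = \psi * \phi_\e + (\psi - \psi*\phi_\e)$, note $\psi*\phi_\e = \int_{\mathbb R}\psi(a)\phi_\e^a\,da$ exactly (after using evenness of $\phi$ so that $\phi_\e(x-a) = \phi_\e^a(x)$), and $\psi - \psi*\phi_\e \to 0$ in $\mathcal S(\mathbb R)$ as $\e\to 0$.

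The key steps, in order, would be: (1) Use evenness of $\phi$ to write $(\boldsymbol w, \psi*\phi_\e) = \int_{\mathbb R}\psi(a)\,(\boldsymbol w,\phi_\e^a)\,da$, where interchanging the pairing with the integral is justified because $a\mapsto (\boldsymbol w,\phi^a_\e)$ is a continuous (in fact smooth) function and $\psi$ has rapid decay, so the integral converges as a Bochner/Riemann integral in $\mathcal S(\mathbb R)$. (2) Apply the Cauchy–Schwarz inequality in the form
\begin{align*}
\mathbb E\big[(\boldsymbol w,\psi*\phi_\e)^2\big] &\le \Big(\int_{\mathbb R}|\psi(a)|\,da\Big)\int_{\mathbb R}|\psi(a)|\,\mathbb E\big[(\boldsymbol w,\phi_\e^a)^2\big]\,da.
\end{align*}
(3) Bound $\mathbb E[(\boldsymbol w,\phi_\e^a)^2]$ using the uniform hypothesis: it is $\le C(1\vee a^{\delta-1})$, which is integrable against $|\psi(a)|\,da$ near $a=0$ (since $\delta>0$ so $\delta-1>-1$) and at infinity (since $\psi$ decays rapidly). (4) For each fixed $a\ne 0$ the integrand $\mathbb E[(\boldsymbol w,\phi^a_\e)^2]\to 0$ as $\e\to0$; combined with the integrable domination from step (3), the dominated convergence theorem gives $\mathbb E[(\boldsymbol w,\psi*\phi_\e)^2]\to 0$. (5) Since $\psi*\phi_\e\to\psi$ in $\mathcal S(\mathbb R)$ and $\boldsymbol w\in\mathcal S'(\mathbb R)$ is a fixed (random) tempered distribution, $(\boldsymbol w,\psi*\phi_\e)\to(\boldsymbol w,\psi)$ almost surely (indeed pathwise). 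Fatou's lemma (or just lower semicontinuity of $L^2$ norm under a.s. convergence) then yields $\mathbb E[(\boldsymbol w,\psi)^2]\le \liminf_\e \mathbb E[(\boldsymbol w,\psi*\phi_\e)^2] = 0$, so $(\boldsymbol w,\psi)=0$ a.s. Finally, since $\mathcal S(\mathbb R)$ is separable, a single null set works simultaneously for all $\psi$ in a countable dense subset, and then by continuity of $\boldsymbol w$ one gets $(\boldsymbol w,\psi)=0$ a.s. for all $\psi\in\mathcal S(\mathbb R)$ simultaneously.

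The main obstacle I anticipate is step (4)–(5): ensuring that the interchange of limit and expectation in step (4) is legitimate (this is where the uniform bound $\sup_{a,\e}(1\wedge a^{1-\delta})\mathbb E[(\boldsymbol w,\phi^a_\e)^2]<\infty$ is essential — it provides the $\e$-uniform, $a$-integrable dominating function), and verifying that the near-origin singularity exponent $\delta - 1 > -1$ genuinely makes $|\psi(a)|\cdot a^{\delta-1}$ integrable on a neighborhood of $0$. A secondary technical point is justifying $(\boldsymbol w, \psi*\phi_\e) = \int \psi(a)(\boldsymbol w,\phi^a_\e)da$ rigorously; the slick way is to approximate the integral by Riemann sums $\sum_i \psi(a_i)\phi^{a_i}_\e \Delta a_i$, which converge to $\psi*\phi_\e$ in $\mathcal S(\mathbb R)$, apply $\boldsymbol w$ termwise, and pass to the limit using continuity of $\boldsymbol w$ on $\mathcal S(\mathbb R)$ — then take expectations of squares and use Minkowski. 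Everything else is routine. Note that the evenness hypothesis on $\phi$ is used precisely to identify the convolution $\psi*\phi_\e$ with the superposition of the $\phi^a_\e$, and the smoothness/Schwartz property of $\phi$ guarantees $\psi*\phi_\e\to\psi$ in $\mathcal S(\mathbb R)$.
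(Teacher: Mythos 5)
Your proposal is correct and follows essentially the same route as the paper: mollify $\boldsymbol w$ against $\phi_\e$, use Cauchy--Schwarz together with the uniform bound $\mathbb E[(\boldsymbol w,\phi_\e^a)^2]\lesssim 1\vee|a|^{\delta-1}$ and dominated convergence to kill $\mathbb E[(\boldsymbol w,\psi*\phi_\e)^2]$, then pass to the limit pathwise via $\psi*\phi_\e\to\psi$ in $\mathcal S(\mathbb R)$. The only cosmetic difference is that the paper first treats compactly supported $\psi$ and then approximates general Schwartz functions, while you work with Schwartz $\psi$ directly; both arguments (like the paper's) implicitly use $\int\phi\neq 0$, a point you rightly flag.
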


\begin{proof}
Define $\boldsymbol w^\e(x):= \boldsymbol w * \phi_\e(x)$ so that $(\boldsymbol w,\phi^a_\e) = \boldsymbol w^\e(a).$ Given some smooth $\psi:\mathbb R\to \mathbb R$ of compact support note that $(\boldsymbol w^\e , \psi) \to (\boldsymbol w,\psi)$ a.s. as $\e \to 0$. This is a purely deterministic statement. Thus it suffices to show that $(\boldsymbol w^\e,\psi) \to 0$ in probability. To prove that, suppose that the support of $\psi$ is contained in $[-S,S]$ and note by Cauchy-Schwarz that $$|(\boldsymbol w^\e, \psi)| =\bigg|\int_{\mathbb R} \boldsymbol w^\e(a) \psi(a)da\bigg|\leq \bigg[ \int_{[-S,S]} \boldsymbol w^\e(a)^2 da\bigg]^{1/2} \|\psi\|_{L^2(\mathbb R)}, $$ so that by taking expectation and applying Jensen we find $$\mathbb E[|(\boldsymbol w^\e, \psi)|] \leq \|\psi\|_{L^2} \bigg[\int_{[-S,S]} \mathbb E[\boldsymbol w^\e(a)^2]da \bigg]^{1/2}.$$ By assumption, $\mathbb E[\boldsymbol w^\e(a)^2]\leq Ca^{\delta-1}$ and $\mathbb E[\boldsymbol w^\e(a)^2]\to 0$ as $\e\to 0$, therefore the dominated convergence theorem now gives the result by letting $\e \to 0$ on the right side. This proves the claim for $\psi$ of compact support. For general $\psi \in \mathcal S(\mathbb R)$ we may find a sequence $\psi_n \to \psi$ in the topology of $\mathcal S(\mathbb R)$, with each $\psi_n$ compactly supported. Then $0=(\boldsymbol w,\psi_n)\to (\boldsymbol w,\psi)$ a.s. as $n\to\infty$. 
\end{proof}

\begin{thm}[Solving the martingale problem]\label{solving_mp}
		Consider the triple of processes $(\mathfrak H^N,\hat M_N,Q_N^{\z})_{t\ge 0}$, where $\mathfrak H^N$, $\hat M_N$, $Q_N^f,$ and $\z$ are defined in \eqref{hn}, Definition \ref{mqv}, \eqref{qfield}, and Definition \ref{z} respectively. Fix $\alpha<-5, \gamma<-1,$ and $\tau>1.$ These triples are jointly tight in the space $$C\big([0,T]\;,\;C^{\alpha,\tau}(\mathbb R) \times C^{\gamma,\tau}(\mathbb R)\times C^{\alpha,\tau}(\mathbb R)\;\big).$$
		Consider any joint limit point $(H^\infty, M^\infty, Q^\infty)$. Then for any $s>0$, the process $(t,x)\mapsto H_{s+t}^\infty(x)$ is necessarily supported on the space $C_\mathfrak s^{-\kappa+1/2,\tau}( \Psi_T)$. Furthermore, $(M_t^\infty(\phi))_{t\ge 0}$ is a continuous martingale for all $\phi\in C_c^\infty(\mathbb R)$, and moreover for all $0< s\leq t< T$ one has the almost sure identities 
		\begin{align}\label{mp1}
			M_t^\infty(\phi)-M_s^\infty(\phi) &= \int_\mathbb R \big(H_t^\infty(x)-H_s^\infty(x)\big)\phi(x)\dr x -\frac12 \int_s^t \int_\mathbb R H_u^\infty(x)\phi''(x)\dr x \dr u\\\label{mp2}
			\langle M^\infty (\phi) \rangle_t &= Q_t^\infty(\phi^2) \\\label{mp3}
			Q_t^\infty(\phi)-Q_s^\infty(\phi) &= \gamma(\z) \int_\mathbb R\int_s^t H_u^\infty(x)^2 \phi(x)\,\dr u\,\dr x.
		\end{align}
	\end{thm}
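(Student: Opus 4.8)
The plan is to combine the tightness results of Proposition \ref{mcts} and Proposition \ref{reg} with the operator-level approximation lemmas (Lemmas \ref{ds}, \ref{kn-k}, \ref{kndn}) to pass to the limit in the discrete martingale relations, and then use Proposition \ref{4.1} together with Lemma \ref{5.1} to upgrade the limiting quadratic-variation relation \eqref{mp3} to a genuine pointwise (in test function) identity. The joint tightness and the martingale property of $M^\infty(\phi)$ together with \eqref{mp2} are already furnished by Proposition \ref{mcts} Item (4), so the content of the theorem beyond what is already proved is the two Duhamel-type identities \eqref{mp1} and \eqref{mp3}, plus the regularity statement, which is exactly Proposition \ref{reg}.

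First I would establish \eqref{mp1}. Starting from Lemma \ref{u=kdm}, $\mathfrak H^N$ agrees with $\mathfrak p_N + K_N D_N\hat M_N$ on $[0,T]$. Applying the discrete heat operator $L_N$ and using Lemma \ref{inv} gives $L_N(\mathfrak H^N - \mathfrak p_N) = D_N\hat M_N$ on $[0,T]$. The idea is to rewrite this as an identity of space-time distributions: since $L_N\mathfrak p_N = 0$ for positive times in $N^{-1}\mathbb Z_{\ge 0}$ (as noted in the proof of Lemma \ref{u=kdm}), and $L_N$ differs from the continuum heat operator $\partial_t - \tfrac12\partial_x^2$ by an $O(N^{-1})$-bounded operator in the appropriate H\"older norms (a second-order Taylor expansion argument, cf. the discussion after Definition \ref{dnlnkn} and Lemma \ref{ds}), one may pass to the limit: along a joint limit point $(H^\infty, M^\infty, \mathscr M^\infty)$ of $(\mathfrak H^N, \hat M_N, D_N\hat M_N)$, one obtains $(\partial_t - \tfrac12\partial_x^2) H^\infty = \mathscr M^\infty = \bar\partial_s M^\infty$ as distributions, where the last equality uses the second bound of Lemma \ref{ds}. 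Pairing this distributional identity with $\phi(x)\mathbf 1_{[s,t]}(u)$ for $0<s\le t<T$ — which is legitimate because $H^\infty$ restricted to $[s,t]\times\mathbb R$ is a continuous function by Proposition \ref{reg}, so the time-boundary terms make sense — and integrating by parts in the $x$ variable yields precisely \eqref{mp1}. Here I would be careful to use the convention on distributional derivatives from Remark \ref{d/dx} and the boundedness of $\bar\partial_s$ from the embedding of Lemma \ref{embed} composed with Lemma \ref{ds}.

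Next I would prove \eqref{mp3}. The starting point is the prelimit moment relation: by Lemma \ref{4.3}, $\langle M^N(\phi)\rangle_t = e^{-2\log M(N^{-1/(4p)})} Q_N^{\z}(t,\phi^2) + \sum_{j=1}^4 \mathcal E_N^j(t,\phi)$, and by Proposition \ref{errbound} together with the tightness estimate \eqref{e.tight1} the error terms $\sum_j \mathcal E_N^j(t,\phi)$ vanish in probability uniformly on $[0,T]$; moreover $e^{-2\log M(N^{-1/(4p)})}\to 1$. Hence at any joint limit point $\langle M^\infty(\phi)\rangle_t = Q_t^\infty(\phi^2)$, which is \eqref{mp2}. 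To get \eqref{mp3} I would introduce the auxiliary quantity from Proposition \ref{4.1}: taking $\psi = \xi_\e^a$ and $\phi$ there equal to $\xi_{\e\sqrt 2}^a$, that proposition says $Q_N^{\z}(t,\xi_\e^a) - \tfrac{\gamma(\z)}{N}\sum_{s\le t} \mathfrak H^N(s,\xi_{\e\sqrt 2}^a)^2$ has second moment $\to 0$ as $\e\to 0$ after $N\to\infty$, with the polylog bound \eqref{e:QXpolylog}. Passing $N\to\infty$, the Riemann sum $\tfrac1N\sum_{s\in[0,t]\cap N^{-1}\mathbb Z} \mathfrak H^N(s,\xi_{\e\sqrt 2}^a)^2$ converges (using the convergence $\mathfrak H^N\to H^\infty$ in $C^{\alpha,\tau}$ off $t=0$, continuity of $H^\infty$ off the origin from Proposition \ref{reg}, and uniform integrability from the moment bounds of Proposition \ref{tight1}) to $\int_0^t H_u^\infty(\xi_{\e\sqrt 2}^a)^2\,du$, whence $Q_t^\infty(\xi_\e^a) - \gamma(\z)\int_0^t H_u^\infty(\xi_{\e\sqrt 2}^a)^2\,du$ has vanishing second moment as $\e\to 0$ for each $a\ne 0$, with a uniform polylog-type bound. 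I would then apply Lemma \ref{5.1} to the random distribution $\boldsymbol w := Q_t^\infty(\cdot) - Q_s^\infty(\cdot) - \gamma(\z)\int_s^t H_u^\infty(\cdot)^2\,du$ (taking the test function $\phi$ in that lemma to be a Gaussian, and $\delta$ determined by the exponent $2$ in \eqref{e:QXpolylog} — which after the convolution identity $\xi_{\e\sqrt2}^a\xi_{\e\sqrt2}^a = \xi_\e^a\big(\tfrac12(\cdot+\cdot)\big)\xi_{2\e}^0(\cdot - \cdot)$ is what drives the argument), concluding $\boldsymbol w = 0$ almost surely, i.e.\ \eqref{mp3}. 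The regularity statement $H^\infty \in C_\mathfrak s^{-\kappa+1/2,\tau}(\Psi_T)$ after any positive time shift is then just Proposition \ref{reg}.

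The main obstacle I anticipate is the careful bookkeeping in passing from the discrete identity $L_N(\mathfrak H^N-\mathfrak p_N) = D_N\hat M_N$ to a limiting \emph{distributional} PDE and then extracting the pointwise-in-time relation \eqref{mp1}: one must verify that all the operator approximations ($L_N \approx \partial_t - \tfrac12\partial_x^2$, $D_N\approx\partial_s$, $K_N\approx K$) converge in the right weighted parabolic H\"older topologies along the joint limit point, that the spatial integration by parts is justified given $H^\infty$ is only a distribution at $t=0$ but a continuous function for $t>0$, and that the time-boundary terms at $s$ and $t$ are well-defined — this last point crucially relies on Proposition \ref{reg}, so the argument must be ordered so that the regularity result is available before \eqref{mp1} is asserted. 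A secondary subtlety is ensuring the Riemann-sum-to-integral convergence $\tfrac1N\sum_s \mathfrak H^N(s,\cdot)^2 \to \int_0^t H_u^\infty(\cdot)^2\,du$ holds in $L^2$ (not merely in probability), which requires the uniform $L^q$ bounds of Proposition \ref{tight1} to upgrade convergence in law to convergence of the relevant second moments; everything else is essentially routine once these pieces are assembled.
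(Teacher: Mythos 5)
Your proposal is correct and follows essentially the same line of argument as the paper: regularity from Proposition \ref{reg}, martingality and \eqref{mp2} from Proposition \ref{mcts}(4), the limiting heat-equation relation for \eqref{mp1}, and the combination of Proposition \ref{4.1} with Lemma \ref{5.1} for \eqref{mp3}. The one place you diverge slightly in route is in establishing \eqref{mp1}: you pass to the limit in the differential form $L_N(\mathfrak H^N-\mathfrak p_N)=D_N\hat M_N$ directly, invoking $L_N\to\partial_t-\tfrac12\partial_x^2$ on Schwartz test functions, whereas the paper first establishes the Duhamel identity $H^\infty = p + K\mathscr M^\infty$ (via the operator bound $\|K_N-K\|$ of Lemma \ref{kn-k}, which only uses that $K_N,K$ are smoothing) and then recovers the distributional PDE by applying the heat operator and Proposition \ref{sch}. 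Both are valid; the paper's route has the advantage of never needing a quantitative operator-norm bound for $L_N-(\partial_t-\tfrac12\partial_x^2)$, which would lose regularity, whereas your route is more direct at the price of having to justify the limit $(\mathfrak H^N-\mathfrak p_N, L_N^*\varphi)\to(H^\infty-p,(\partial_t-\tfrac12\partial_x^2)^*\varphi)$ along a joint limit point. This is fine since $L_N^*\varphi$ converges strongly in $\mathcal S(\mathbb R^2)$ for fixed $\varphi$, but it is worth making this explicit. The rest — including the convolution identity $\xi^a_{\e\sqrt 2}\xi^a_{\e\sqrt 2}=\xi^a_\e\,\xi^0_{2\e}$ driving the approximation of $\int H^2\phi$ by $(\int H\phi)^2$, and the uniform integrability arguments to pass moment bounds through the limit — matches the paper.
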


Before going into the proof, we remark that with some inspection it may be verified that all quantities make sense given the spaces they lie in, as long as we choose $\kappa<1/2$ to ensure that $H^\infty$ is a continuous function in space-time away from $t=0$. 

\begin{proof} Most parts of the following theorem are already established in previous propositions and lemmas. The tightness of the triple was shown in Proposition \ref{mcts}.  Consider any limit point $(H^\infty,M^\infty,Q^\infty).$ The fact that for any $s>0$, the process $(t,x)\mapsto H^\infty_{s+t}(x)$ is necessarily supported on the space $C_\mathfrak s^{-\kappa+1/2,\tau}( \Psi_T)$ was proved in Proposition \ref{reg}. From Proposition \ref{mcts} we have that for all $\phi\in C_c^\infty(\mathbb R)$ the process $(M_t^\infty(\phi))_{t\ge 0}$ is a martingale. \eqref{mp2} is already proven in Proposition \ref{mcts} as \eqref{e:mcts}. Now we just need to show \eqref{mp1} and \eqref{mp3}.

 \bigskip

\noindent\textbf{Proof of \eqref{mp1}.} 
  In \eqref{u=p+km} we obtained that $H^\infty = p +K\mathscr M^\infty$ where (just as we observed after \eqref{ds2}) one necessarily has $\mathscr M^\infty =\bar \partial_s M^\infty$. By disregarding the boundary terms implies that $$(\bar \partial_s M^\infty, \varphi) =((\partial_t - \tfrac12\partial_x^2)H^\infty, \varphi)$$ for all $\varphi$ of compact support contained in $[\e,T-\e]\times \mathbb R$ for some $\e>0$. 
  Since both $M^\infty$ and $H^\infty$ lie in spaces with strong enough topologies (see Proposition \ref{reg}), taking $\varphi(u,x)$ to approach the function $(u,x) \mapsto \ind_{[s,t]}(u) \phi(x)$ in the above equation leads to \eqref{mp1}.
  
\bigskip

\noindent\textbf{Proof of \eqref{mp3}.} Fix any $0\le t\le T$ and let $\boldsymbol w$ be a $\mathcal{S}'(\R)$-valued random variable defined as
\begin{align}
    (\boldsymbol w,\phi):= Q_t^{\infty}(\phi)-\gamma(\z)\int_{\R}\int_0^t H_s^{\infty}(x)^2\phi(x)\,\dr s\,\dr x. \label{mu1}
\end{align}
We claim that $ (\boldsymbol w,\phi)=0$ a.s. for all $\phi\in \mathcal{S}(\R).$ This will validate \eqref{mp3}. To verify this, let us define $\xi_{\e}^a(x):=\e^{-1}\xi(\e^{-1}(x-a))$ with $\xi(x):=\frac1{\sqrt\pi}e^{-x^2}$. Using the ``key estimate" \eqref{Qllim} of Proposition \ref{4.1}, one verifies that the assumptions of Lemma \ref{5.1} hold true for $\boldsymbol w$ with this family of mollifiers. 

To see why Lemma \ref{5.1} is applicable, first note that for fixed $\phi\in C_c^\infty(\mathbb R)$, the the difference between the sum appearing in \eqref{Qllim} and integral over $[0,t]$ of the same quantity tends to zero in probability with respect to the topology of $C[0,T]$, since we proved tightness of $\mathfrak H^N$ in a topology given by the norm of $C([0,T],C^{\alpha,\tau}(\mathbb R))$. Therefore that sum in \eqref{Qllim} converges in law, jointly with $\mathfrak H^N$, to the integral appearing in \eqref{mu1}. Then an application of \eqref{Qllim} and \eqref{e:QXpolylog} shows that $(\boldsymbol w,\phi)$ as defined by \eqref{mu1} satisfies the conditions of Lemma \ref{5.1} for any $\delta\in (0,1)$, since \eqref{e:QXpolylog} gives a polylogarithmic bound which is less than $C a^{-\delta}$ for arbitrary choice of  $\delta>0$. This is enough to complete the proof.
 \end{proof}			
	We now complete the proof of our main theorem, Theorem \ref{main2}.

	\begin{proof}[Proof of Theorem \ref{main2}] We continue with the notation and setup of Theorem \ref{solving_mp}. We have already established the tightness of $\mathfrak H^N$ in Proposition \ref{mcts}. Consider any limit point $ H^\infty$ of $\mathfrak H^N$. From the previous theorem, we already know that $(t,x) \mapsto H_{t+\e}^\infty(x)$ is a continuous function in space and time. From the three equations \eqref{mp1}, \eqref{mp2}, and \eqref{mp3} in Theorem \ref{solving_mp} it follows that the martingale problem for \eqref{she} is satisfied by any limit point $H^\infty$. We refer the reader to \cite[Proposition 4.11]{BG97} for the characterization of the law of \eqref{she} as the solution to this martingale problem. 
		
		The result there is only stated for continuous initial conditions, so what this really shows is that for any $\e>0$ the law of the continuous field $(t,x) \mapsto H^\infty_{t+\e}(x)$ is that of the solution of \eqref{she} with initial condition $H^\infty_\e(\cdot).$ Thus we still need to pin down the initial data as $\delta_0$, by showing that we can let $\e\to 0$ and see that the limit of $H^\infty_\e(\cdot)$ is equal to $\delta_0$ in some sense. 
		
		In \cite[Section 6]{Par19} there is a general approach to doing this. Specifically, it suffices to show as in Lemma 6.6 of that paper the two bounds 
		\begin{align}\label{delta} & \Ex[|H_t^\infty(x)|^q]^{2/q}\leq C\cdot t^{-1/2}p_t(x),\\ \label{delta1}
			& \Ex[ |H_t^\infty(x)-p_t(x)|^q]^{2/q} \leq C \cdot p_t(x),
		\end{align}where $q>1$ is arbitrary, $p_t(x)=(2\pi t)^{-1/2}e^{-\frac{x^2}{2t}}$ is the standard heat kernel, and $C$ is independent of $t>0$ and $x\in\mathbb R$. Clearly, it suffices to show this when $r$ is an even integer. The solution of \eqref{she} with $\delta_0$ initial condition certainly satisfies this bound, and by the moment convergence result in Section 3, we know any limit point $H^\infty$ must satisfy $\Ex[(\int_\mathbb R H_t^\infty(x)\phi(x)\dr x)^k] = \mE[(\int_\mathbb R \mathcal{U}_t(x)\phi(x)\dr x)^k]$ for all $k\in\mathbb N$ and $\phi\in C_c^\infty(\mathbb R)$, where $(t,x) \mapsto \mathcal{U}_t(x)$ solves \eqref{she} with $\delta_0$ initial data. From here we can conclude by letting $\phi \to \delta_x$ that $\Ex[H_t^\infty(x)^k] = \mE[ \mathcal{U}_t(x)^k]$ for all $k\in \mathbb N$ and all $x\in\mathbb R$. Then we may immediately deduce \eqref{delta} and \eqref{delta1} by the corresponding bounds for $\mathcal{U}_t$. This completes the proof. 
	\end{proof}

 \begin{rk}[Extension to other initial data] Throughout this paper we have taken the convention of Dirac initial data, where one starts $\mathfrak H^N$ from a Dirac mass at zero, corresponding to all particles starting from 0 in the quenched random walks. In fact, this may be generalized, where one assumes instead that $\omega_N$ is a sequence of probability measures on $I$ such that the measures $e^{xN^{\frac{2p-1}{4p}}} \omega_N(N^{1/2} \dr x)$ converges in the topology of $C^{\alpha,\tau}(\mathbb R)$ to some limiting \textit{bounded and continuous} function $\mathcal U_0.$ Then define $P^\omega(n,\cdot):= \omega_NK_1\cdots K_n$ and let $\mathfrak H^N$ be as in \eqref{hn} with $P^\omega$ defined in this more general fashion. The above methods can be generalized to show that $\mathfrak H^N$ as defined this way converge in law (in the same sense as Theorem \ref{main2}) to the solution of \eqref{she} started from $\mathcal U_0$. Indeed, this is because all of the bounds and convergence theorems we derived in Sections 2 and 3 worked with varying initial starting point, and this generality can be propagated into the remainder of the paper. Some aspects of the paper need an appropriate modification, for instance Proposition \ref{4.1} will no longer require the polylogarithmic factor at the origin.
     
     Of course, this leads to the question of $e^{xN^{\frac{2p-1}{4p}}} \omega_N(N^{1/2}\dr x)$ converging to a more singular limit in the topology of $C^{\alpha,\tau}(\mathbb R)$, as opposed to a \textit{continuous function} (e.g. a measure or tempered distribution). For a reasonable limiting measure, we still expect to see \eqref{she} in the limit started from this singular initial data, but it is beyond the scope of the present paper to determine how to generalize Proposition \ref{4.1} and \cite[Proposition 4.11]{BG97} to this setting.
 \end{rk}

\section{Examples of models satisfying the hypotheses}


In this section, we will introduce a number of different models, prove that Assumption \ref{a1} is satisfied for all these models, and, if possible, calculate the coefficient $\gamma_{\mathrm{ext}}$  from Theorem \ref{main2} for these models. As explained in Remark \ref{opt} of the introduction, if all other conditions of Assumption \ref{a1} hold true but $m_2-m_1^2 \neq 1$, then the condition $m_2-m_1^2=1$ can always be forced to be true by replacing $I$ by $c I$ for some $c>0$, and modifying the kernels $K_i$ to reflect this rescaling. Therefore we will not verify this condition for the models below, and it should be implicitly understood that this would affect the diffusion coefficient in \eqref{she}. 

\subsection{A generalized model of random walk in random environment on $\mathbb Z$}

Here we consider a generalized non-nearest neighbor model of random walks in random environments whose KPZ fluctuations were recently conjectured in the physics paper \cite{hindy}.

Define the set $\Omega:=\{ v \in [0,1]^\mathbb Z: \sum_{i\in \mathbb Z} v(i) = 1\}$, and let $P$ be any probability measure on $\Omega$ (defined on the $\sigma$ algebra generated by the coordinate maps $v \mapsto v(i)$ with $i\in \mathbb Z$). Assume the following four conditions on the probability measure $P$:
\begin{itemize}
    \item The increment measure $q(i) := E[v(i)]$ ($i\in\Bbb Z$) defines an aperiodic and irreducible random walk on $\Bbb Z$.

    \item $E[v(i)] \leq Ce^{-d|i|}$ for some constants $C,d>0.$

    \item $\sum_{i\in \mathbb Z} E[v(i)^2]<1,$ i.e., $v$ is not $P$-a.s. concentrated at a single point of $\mathbb Z$.

    \item The first $p-1$ moments of $v$ sampled from $P$ are deterministic but the $p^{th}$ one is not, in other words $\sum_{i\in\Bbb Z} i^\ell v(i)$ is deterministic for integers $0\le \ell < p$, but is genuinely random for $\ell=p$.
\end{itemize}

It is clear that such $P$ exists for every choice of $p\in \mathbb N$, for instance choose a probability distribution on $\{-1,...,p-1\}$ so that the first $p-1$ moments are deterministic but the $p^{th}$ one may take e.g. either one of two possible distinct values with equal probability. Using linear algebra one may show that this is possible.

With $P$ as above, let us now sample $\{v_{r,x}\}_{x\in \mathbb Z, r\in \mathbb Z_{\ge 0}} $ according to the measure $P^{\otimes (\mathbb Z_{\ge 0}\times \mathbb Z)}$ on the product space $\Omega^{\mathbb Z_{\ge 0}\times \mathbb Z}.$ In other words all of the $v_{r,x}(\bullet)$ are IID $\Omega$-valued random variables sampled from $P$ for every lattice site $(r,x)\in \mathbb Z_{\ge 0}\times \mathbb Z.$ Now sample the random kernels $$K_r(x,y):= v_{r,x}(y-x).$$ 
\begin{prop} $K_r$ as defined above satisfies all of the conditions of Assumption \ref{a1}.
\end{prop}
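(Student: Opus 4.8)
The proof proceeds by checking the six items of Assumption \ref{a1} one at a time, and the structure of the model makes most of them essentially immediate. First I would record the setup: the kernels $K_r(x,y)=v_{r,x}(y-x)$ are built from the iid collection $\{v_{r,x}\}$, and the random variable $v_{r,x}(\bullet)$ lives on $\Omega$. Then Item \eqref{a11} is immediate: since the $v_{r,x}$ are iid across all $(r,x)$, the families $\{K_r(x,\cdot)\}_{x\in\mathbb Z}$ indexed by distinct $r$ are independent and identically distributed (note that for fixed $r$ the random measures $K_r(x,\cdot)$ and $K_r(x',\cdot)$ \emph{are} independent here since they depend on disjoint $v_{r,x},v_{r,x'}$ — this is a special ``spatial independence'' case, but that is allowed). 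Item \eqref{a12}, spatial translation invariance, follows from the fact that the law of $\{v_{r,x}\}$ is invariant under shifts of the $x$-index, so $\tau_a K_1$ has the same law as $K_1$ for $a\in\mathbb Z$.

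Next I would handle the moment conditions. For Item \eqref{a22}: the annealed kernel is $\mu(\{j\})=\mathbb E[K_1(0,j)]=E[v(j)]$, and the second bullet point $E[v(i)]\le Ce^{-c|i|}$ gives $\int e^{\alpha|x|}\mu(dx)<\infty$ for $\alpha<c$, so the moment generating function exists. (The normalization $m_2-m_1^2=1$ is not checked, per the convention stated at the start of Section 6.) Item \eqref{a23}: the fourth bullet point says the first $p-1$ moments of $v\sim P$ are deterministic and the $p$-th is not, and since $\int_I y^k K_1(0,dy)=\sum_j j^k v_{1,0}(j)=$ ($k$-th moment of $v_{1,0}$), this translates directly into ``$\int y^k K_1(0,dy)=m_k$ a.s.\ for $1\le k\le p-1$ and non-deterministic for $k=p$,'' which is exactly Item \eqref{a23}. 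Item \eqref{a24} is where the spatial independence really pays off: because $\boldsymbol p^{(k)}(\mathbf x,d\mathbf y)$ factors as $\prod_i \mu(dy_i-x_i)$ whenever the $x_i$ are \emph{all distinct}, the left-hand side in Item \eqref{a24} is literally zero as soon as $\min_{i<j}|x_i-x_j|\ge 1$ (the kernels at distinct sites use independent weights); more carefully, coincidences $x_i=x_j$ are the only source of discrepancy, so one gets an estimate of the form $\Fd(x)=C\mathbf 1_{\{x<1\}}$, which is a finite-range decay function satisfying $\int_0^\infty x\Fd(x)\,dx<\infty$ and all the monotonicity requirements. Actually the cleanest route is to verify the stronger total variation bound \eqref{tvb}: for $k$ distinct points the two $k$-fold measures coincide exactly, so the TV distance is $0$, and for points with repetitions one bounds it crudely by $2$; either way \eqref{tvb} holds with $\Con$ any constant exceeding $1$ (so $\min_{i<j}|x_i-x_j|=0$ forces the bound to be trivial), hence Item \eqref{a24} follows via the coupling argument indicated in the text.

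Finally, Item \eqref{a16}: here I would identify $\pdif$ as the transition kernel of the difference of two walkers. Since $K_1(x,\cdot)$ and $K_1(y,\cdot)$ use independent weights for $x\ne y$, when $x_1\ne x_2$ the two-point motion is just two independent $\mu$-walks, so $\pdif(z,\cdot)$ for $z\ne 0$ is the law of $\xi_1-\xi_2$ with $\xi_1,\xi_2$ iid $\sim\mu$; at $z=0$ there is an extra ``stickiness'' because both walkers sit at the same site and use the \emph{same} weight vector $v_{1,0}$, but this is still a genuine non-degenerate law. Regularity (total variation continuity of $x\mapsto\pdif(x,\bullet)$): since $I=\mathbb Z$ is discrete, continuity in TV is automatic — any function on a discrete space is continuous, so this bullet point is vacuous. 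Irreducibility: this is where the \emph{first} bullet point on $P$ is used — with positive probability the support of $v$ generates all of $\mathbb Z$ as a group, which forces the annealed walk $\mu$ to have support generating $\mathbb Z$, hence the difference walk $\pdif$ is irreducible on $\mathbb Z$ (for any $z,y$ there is $m$ with $\pdif^m(z,\{y\})>0$); the third bullet point (not concentrated at a point) ensures $\mu$ is genuinely non-trivial so the difference chain is not absorbed anywhere. I would spell out the group-generation argument carefully since it is the one genuinely model-specific point.

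The main obstacle, modest as it is, is bookkeeping for Item \eqref{a16}: one must be slightly careful that the \emph{annealed} difference kernel $\pdif$ inherits irreducibility from the $P$-a.s.-positive-probability event about group generation — the event is about a single weight vector, and one needs to argue that this propagates to positivity of $\mu=\mathbb E[v]$ on a generating set, then to $\pdif$, and finally that concatenating steps gives positivity of some power $\pdif^m$ between any two prescribed points with the required $\varepsilon$-window (trivial on $\mathbb Z$ since windows of radius $\varepsilon<1$ are just singletons). Everything else is a direct unwinding of definitions.
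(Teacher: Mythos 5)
Your proposal follows the same route as the paper: everything except Item \eqref{a24} is read off directly from the four bullet points, and Item \eqref{a24} is obtained by verifying the stronger total-variation condition \eqref{tvb}, which holds because the $k$-point kernel factorizes exactly (TV distance zero) whenever the $x_i$ are pairwise distinct, the case of coincidences being absorbed into the constant. The paper's proof is exactly this and says nothing more, so on Items \eqref{a11}--\eqref{a24} you are doing the same thing, just with the definitional unwinding written out.

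The one place where your write-up goes beyond the paper is the irreducibility part of Item \eqref{a16}, and there your key inference is not valid as stated: ``$\mathrm{supp}(\mu)$ generates $\mathbb Z$ as a group'' does \emph{not} force the difference kernel $\pdif$ to be irreducible on all of $\mathbb Z$. The increments of the difference chain are supported in $\mathrm{supp}(\mu)-\mathrm{supp}(\mu)$ (resp.\ $\mathrm{supp}(v)-\mathrm{supp}(v)$ at a coincidence), and the subgroup generated by this difference set can be a proper subgroup even when $\mathrm{supp}(\mu)$ generates $\mathbb Z$: take $P$ with $v$ supported on $\{1,3\}$ and $v(1)$ nondegenerate, which satisfies all four bullet points (with $p=1$), yet the difference chain started at $0$ stays in $2\mathbb Z$ forever, so $\pdif^m(0,\{1\})=0$ for all $m$. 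What the bullets actually give is irreducibility of $\pdif$ on the subgroup generated by $\mathrm{supp}(\mu)-\mathrm{supp}(\mu)$ (two-sided reachability being available because the difference increments are symmetric in law, and the third bullet guaranteeing the chain actually leaves $0$), i.e.\ on the communicating class of $0$; this is precisely the situation covered by the paper's remark after Assumption \ref{a1} that one may replace $I$ by the communicating class of $0$ without loss of generality. To make your argument airtight you should either invoke that reduction explicitly, or replace the first bullet by the condition that the difference set $\{i-j: v(i)>0,\,v(j)>0\}$ generates $\mathbb Z$ with positive probability. (To be fair, the paper's own proof declares Item \eqref{a16} ``immediate'' and glosses over the same point; but since you chose to spell the step out, the inference as you wrote it is the one step that would fail.) The rest — TV continuity being vacuous on the discrete lattice, the propagation from the positive-probability event to $\mathrm{supp}(\mu)$, and the moment items — is fine.
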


\begin{proof}With the exception of Item \eqref{a24}, it is immediate from the above four bullet points that all of the items of Assumption \ref{a1} are satisfied. To show Item \eqref{a24}, one may actually show that the stronger condition \eqref{tvb} is satisfied. But this is fairly obvious since the total variation distance is zero as long as no two $x_i$ are the same, simply by the assumption of independence (i.e., sampling the kernels from the product measure $P^{\otimes (\mathbb Z_{\ge 0}\times \mathbb Z)}$).\end{proof}

Theorem \ref{main2} thus gives convergence of the rescaled and recentered quenched density field \eqref{hn} to the solution of \eqref{she} for such a model of random walks in a random environment with such IID kernels at every lattice site. This generalizes our result from \cite[Theorem 1.1]{DDP23+} and confirms a conjecture from the physics work of \cite{hindy}. 

Suppose the measure $P$ has the following property: there exists some $c\in [-1,1]$ such that for all $i,j\in\Bbb Z$ with $i\ne j$, one has $E[v(i)v(j)] = cE[v(i)]E[v(j)]$. Then one may show that the invariant measure $\pi^{\mathrm{inv}}$ of the annealed difference process is just counting measure on $\mathbb Z$ except at the origin where it carries extra mass, specifically $c^{-1}$. We see that the annealed difference process $\pdif$ is actually reversible in this case, i.e., the detailed balance equations can be solved, and the amount of extra mass $\pi^{\mathrm{inv}}$ has at the origin can be found fairly explicitly in terms of the parameters of the model. This was first noted in the physics paper \cite{hindy} which led to several interesting alternate forms of the coefficient $\gamma_{\mathrm{ext}}$. In the present work we do not explore questions such as massaging the coefficient into other forms, determining when the 2-point motion is reversible, etc.

\begin{rk}The aperiodicity condition in the first bullet point may seem overly restrictive, since it omits many nearest-neighbor models. The point is that periodic kernels may not satisfy the irreducibility condition in Assumption \ref{a1} \eqref{a16}, as the communicating class of the origin under the difference chain $\pdif$ can be a proper subgroup $I\subset \mathbb Z^d$. For instance even in the simplest nearest-neighbor case the difference chain would be supported on $2\Bbb Z$. However, this periodicity for nearest-neighbor models is not a fatal flaw and can be remedied by considering the two-step chain in which the kernel $K_n$ is replaced with the product $ K_{2n-1}  K_{2n}$, and $I=\Bbb Z$ is replaced with $I=2\Bbb Z.$ In general, such periodicity issues can be fixed by modifying the kernels similarly, taking into account multiple steps of the random walk at once.
\end{rk}

\subsection{A ``random landscape model" with strongly mixing weight fields}

Here we will study a particular type of stochastic flow model introduced to us by I. Corwin. We will see that KPZ fluctuations arise in systems where there may be infinite range of particle interaction or non-IID weights satisfying strong enough mixing conditions. This seems to be somewhat novel among KPZ-type convergence results.

Let $(\omega_{r,x})_{r\in \mathbb N, x\in \mathbb Z}$ be a collection of random variables, and for each $r\in \mathbb N$ denote $\boldsymbol \omega_r:= (\omega_{r,x})_{x\in \mathbb Z}.$ Let $b: \mathbb Z\to [0,1]$ be any deterministic function, with $b(0)>0$ and assume that the additive subgroup of $\mathbb Z$ generated by $\{x: b(x)>0\}$ is all of $\mathbb Z.$
Define the kernels 
\begin{equation}\label{kr1}K_r(x,y):= \frac{b(y-x) e^{\omega_{r,y}}}{\sum_{y'\in \mathbb Z} b(y'-x) e^{\omega_{r,y'}}}. 
\end{equation}
In order to prove KPZ fluctuations for this type of model, we will need to verify Assumption \ref{a1}, for which we will need to assume that either one of the following two conditions holds:
\begin{itemize}
\item[\textbf{(A)}] $b$ is finitely supported, furthermore the $\boldsymbol \omega_1, \boldsymbol \omega_2, \boldsymbol \omega_3,...$ are independent families with each $\boldsymbol \omega_r$ having the same finite-dimensional marginals as $\boldsymbol \omega_1,$ and moreover $\boldsymbol \omega_1$ is strictly stationary and $\alpha$-mixing with $\alpha(n)\leq C e^{-\theta n}$ for some $C,\theta>0$.
\item[\textbf{(B)}] $|b(x)| \leq Ce^{-\theta |x|}$ for some $C,\theta>0$, and furthermore $(\omega_{r,x})_{r\in \mathbb N, x\in \mathbb Z}$ are IID with $\mathbb E[e^{16 |\omega_{1,0}|}]<\infty$. 
\end{itemize}
Except for Items \eqref{a22} and \eqref{a24}, the remaining items of Assumption \ref{a1} are trivially satisfied from the assumptions on the weights and the probabilities (taking $p=1$ in Item \eqref{a23}). 
Note that for $\x,\y\in I^k$ we have  \begin{align*}\boldsymbol p^{(k)}(\x,\y) &= \mathbb E[ \prod_{j=1}^k K_1(x_j,y_j)] = \mathbb E \bigg[ \prod_{j=1}^k  \frac{b(y_j-x_j) e^{\omega_{1,y_j}}}{\sum_{y'\in \mathbb Z} b(y'-x_j) e^{\omega_{1,y'}}}\bigg]\\
\mu^{\otimes k}(\y-\x) &= \prod_{j=1}^k \mathbb E[  K_1(x_j,y_j)]=\prod_{j=1}^k\mathbb E \bigg[   \frac{b(y_j-x_j) e^{\omega_{1,y_j}}}{\sum_{y'\in \mathbb Z} b(y'-x_j) e^{\omega_{1,y'}}}\bigg].\end{align*}
Using these expressions, let us focus on verifying Items \eqref{a22} and \eqref{a24} in the conditions of Assumption \ref{a1}. This will be done separately for each of the two bulleted assumptions above. 

\begin{prop}
    Under either Condition \textbf{(A)} or Condition \textbf{(B)}, Assumption \ref{a1} is satisfied by the kernels $K_r$ from \eqref{kr1}, with $p=1$.
\end{prop}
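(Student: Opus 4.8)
The plan is to verify each item of Assumption \ref{a1} for the kernels $K_r$ in \eqref{kr1}, treating Conditions \textbf{(A)} and \textbf{(B)} separately only where necessary. The items \eqref{a11} (iid in the time direction), \eqref{a12} (translation invariance in space) and the irreducibility/regularity conditions \eqref{a16} follow immediately from the hypotheses: under \textbf{(A)} the families $\boldsymbol\omega_r$ are independent copies of a stationary field, and under \textbf{(B)} the $\omega_{r,x}$ are globally iid, so in both cases $K_1,K_2,\ldots$ are iid and each $K_1$ is translation invariant in law because $b$ is a fixed function of $y-x$ and the weight field is (strictly) stationary in $x$. For \eqref{a23} with $p=1$: the first moment $\int_I y\,K_1(0,\mathrm dy)$ is generally non-deterministic (since the weights are non-degenerate), so $p-1=0$ and the deterministicity condition is vacuous. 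The non-degeneracy bullet in \eqref{a16} — that $\pdif$ has no absorbing states, equivalently that $K_1(x,\cdot)$ is not a.s.\ a point mass — follows from the assumption that $b$ is supported on at least two sites whose increments generate $\mathbb Z$, so $K_1(x,\cdot)$ genuinely spreads mass.

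The substantive work is \eqref{a22} (existence of exponential moments for the annealed law $\mu$, plus the normalization, which by Remark \ref{opt} we ignore) and \eqref{a24} (decay of correlations up to $4p=4$ joint moments). For \eqref{a22}, write $K_1(0,y) = b(y)e^{\omega_{1,y}}/Z$ where $Z = \sum_{y'} b(y')e^{\omega_{1,y'}} \ge b(0)e^{\omega_{1,0}}$ (or more robustly $Z\ge \max_{y'} b(y')e^{\omega_{1,y'}}$), so $K_1(0,y)\le b(y)e^{\omega_{1,y}}/(b(y_0)e^{\omega_{1,y_0}})$ for any fixed $y_0$ with $b(y_0)>0$; taking annealed expectation and using $\mathbb E[e^{\omega_{1,y}}]<\infty$ together with exponential decay of $b$ (or finite support) gives $\mu(y)=\mathbb E[K_1(0,y)]\le C e^{-c|y|}$, hence $\int e^{\alpha|x|}\mu(\mathrm dx)<\infty$ for small $\alpha$. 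Under \textbf{(B)} one must be slightly careful because $Z$ involves infinitely many terms; here the hypothesis $\mathbb E[e^{16|\omega_{1,0}|}]<\infty$ is what lets one control ratios of $Z$-type sums via H\"older — this is the reason for that specific exponent.

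The heart of the matter is \eqref{a24}, the decay of the joint moments of $K_1$ at widely separated points. The key structural observation is that for $\x=(x_1,\ldots,x_k)$, the ratio $\boldsymbol p^{(k)}(\x,\mathrm d\y)/\mu^{\otimes k}$ is governed by how much the normalizing sums $Z(x_j):=\sum_{y'} b(y'-x_j)e^{\omega_{1,y'}}$ for different $j$ ``see'' the same weights $\omega_{1,\cdot}$: if the $x_j$ are far apart, then under \textbf{(A)} the weights relevant to $Z(x_i)$ and $Z(x_j)$ are in disjoint, distant coordinate blocks (since $b$ is finitely supported), so $\alpha$-mixing with exponential rate forces near-independence, and hence the joint moments of $K_1(x_1,\cdot),\ldots,K_1(x_k,\cdot)$ converge exponentially fast to the product of individual moments; under \textbf{(B)} the weights are exactly iid, but $Z(x_i)$ and $Z(x_j)$ still overlap slightly because $b$ has exponential (not compact) tails, and the overlap contributes a total-variation/moment discrepancy of order $e^{-\theta' \min_{i<j}|x_i-x_j|}$. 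The plan is to make this quantitative: expand $\prod_j K_1(x_j,y_j)$, use a coupling that resamples the ``far'' weights to obtain an independent copy, and estimate the resulting difference in the relevant moments $\int \prod_j(y_j-x_j)^{r_j}\,\boldsymbol p^{(k)}(\x,\mathrm d\y)$ with $\sum r_j\le 4p=4$ by Cauchy–Schwarz/H\"older together with the exponential moment bounds on $\omega$ and the decay of $b$. Concretely one verifies the total-variation bound \eqref{tvb} (with $\Con$ depending on $\theta$ and the mixing rate), and then invokes the remark after \eqref{tvb} that \eqref{tvb} implies \eqref{a24} with $\Fd(x)$ a multiple of $e^{-x/(2\Con)}$.

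The main obstacle is the moment estimate under Condition \textbf{(B)}: because $Z(x_j)$ is an infinite sum with exponentially small but nonzero contributions from weights near $x_i$, one cannot simply decouple by disjointness as in \textbf{(A)}, and must instead show that truncating each $Z(x_j)$ to a window of radius $\tfrac14\min_{i<j}|x_i-x_j|$ around $x_j$ changes the relevant bounded observables by an exponentially small amount, uniformly in $\x$ — this is where the strong integrability $\mathbb E[e^{16|\omega_{1,0}|}]<\infty$ is consumed, since one needs negative moments of $Z$ and high positive moments of the tail sum simultaneously, and the exponent $16$ is chosen (one should check by a short H\"older computation) so that after splitting into $4p=4$ factors and applying Cauchy–Schwarz twice the requisite integrals are finite. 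Once that uniform truncation estimate is in hand, the rest of \eqref{a24} reduces to the independent/mixing case exactly as in \textbf{(A)}.
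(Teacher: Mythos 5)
Your plan is correct and follows essentially the same route as the paper: the only substantive items are \eqref{a22} and \eqref{a24}, you handle \eqref{a22} by bounding the normalizing sum from below (a fixed-site lower bound where the paper uses Jensen, an immaterial difference), and you reduce \eqref{a24} to the stronger total-variation bound \eqref{tvb}, proved under \textbf{(A)} via the exponential $\alpha$-mixing covariance inequality for the bounded observables sitting in disjoint blocks (finite support of $b$), and under \textbf{(B)} by truncating the denominators to disjoint windows of radius proportional to the minimum separation and controlling the error with H\"older/Cauchy--Schwarz, negative moments of the denominator, and the exponential tails of $b$, which is exactly where $\mathbb E[e^{16|\omega_{1,0}|}]<\infty$ (i.e.\ $4k$ moments with $k\le 4$ since $p=1$) is consumed, just as in the paper. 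One small caution: your generic recipe invokes exponential moments of $\omega$, which are unavailable under \textbf{(A)}, but your \textbf{(A)}-specific argument correctly relies only on the kernels being bounded by one, as does the paper.
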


\begin{proof}
\textbf{Verification of Assumption \ref{a1} under Condition (A). } Since $b$ is finitely supported, Item \eqref{a22} is trivially satisfied, so we just need to verify Item \eqref{a24}. To do this, we will verify the stronger assumption \eqref{tvb}. 

Let us first recall the definition of exponential $\alpha$-mixing for a strictly stationary sequence of random variables. If $\boldsymbol \omega:= (\omega_x)_{x\in \mathbb Z}$ is a such a sequence, then we have a definition $$\alpha(n):= \alpha(\mathcal F_{-\infty,0},\mathcal F_{n,\infty})$$ where $\mathcal F_{m,n} := \sigma(\omega_x: m\le x\le n)$ for $m\le n$ and $\alpha(\mathcal F,\mathcal G):= \sup\{ |\mathbb P(A\cap B) -\mathbb P(A)\mathbb P(B)|: A\in \mathcal F,B\in \mathcal G\}$ for two sub $\sigma$-algebras $\mathcal F,\mathcal G$. The sequence $\boldsymbol \omega$ is called $\alpha$-mixing if $\alpha(n)\to 0$ as $n\to\infty$, and it is called \textit{exponentially} $\alpha$-mixing if $\alpha(n)\to 0$ exponentially fast (e.g. IID sequences are $\alpha$-mixing with $\alpha(n) = \ind_{\{n=0\}}.)$

Suppose $\boldsymbol \omega$ is any $\alpha$-mixing sequence with $\alpha(n)\leq Ce^{-\theta n}$ where $C,\theta>0$. Let $[m_i,n_i]$ $(1\le i \le k)$ be disjoint intervals of $\mathbb Z$ such that $n_i+J \leq m_{i+1}$ where $J>0$ (large). By iteratively applying the $\alpha$-mixing property and using the triangle inequality, one may show that if $f_i$ are $\mathcal F_{m_i,n_i}$-measurable with $|f_i|\leq 1$, then \begin{equation}\label{rq}\bigg|\mathbb E\big[ \prod_{j=1}^k f_i \big] - \prod_{j=1}^k \mathbb E[f_i] \bigg| \leq C k \cdot e^{-\theta J}.\end{equation}
Let us apply this bound in our context. Suppose that our weight function $b$ is supported on $[-B,B]$ where $B>0.$ Now if $\x= (x_1,...,x_k)\in I^k$, letting $2J:= \min_{1\le i<j \le k} |x_i-x_j|$ we see from \eqref{rq} and the above expressions for $\boldsymbol p^{(k)}(\x,\y)$ and $\mu^{\otimes k}(\y-\x)$ that $$\big|\boldsymbol p^{(k)}(\x,\y)-\mu^{\otimes k}(\y-\x)\big| \leq Ck e^{-\theta (J-B)} = C' e^{-\frac{\theta}2 \min_{1\le i<j \le k} |x_i-x_j|},$$
where $C'=Cke^{\theta B}.$ Since $b$ is finitely supported on $[-B,B]$, it follows that the measures $\boldsymbol p^{(k)}(\x,\bullet)$ and $\mu^{\otimes k}$ have support contained on at most $(2B)^{k}$ sites. Thus the last expression is already enough to imply the total variation bound \eqref{tvb}, thus completing the proof.
\\
\\
\textbf{Verification of Assumption \ref{a1} under Conditition (B).} Using Jensen's inequality on the denominator, we have $$K_r(x,y) \leq \frac{b(y-x) e^{\omega_{r,y}}}{e^{\sum_{y'\in \mathbb Z} b(y'-x) \omega_{r,y'}}} = b(y-x) e^{\omega_{r,y} - \sum_{y'\in \mathbb Z} b(y'-x) \omega_{r,y'}}.$$Taking expectation, we see that $$\mathbb E[K_r(x,y)] \leq b(y-x) \mathbb E[ e^{(1-b(y-x))\omega_{r,y}}] \prod_{y'\ne y} \mathbb E[e^{-b(y'-x)\omega_{r,y'}}] = C\cdot b(x-y),$$
where the (possibly infinite) product is convergent since we know that the weights are IID and $\sum_{y'} b(y'-x)<\infty$. Thus Item \eqref{a22} of Assumption \ref{a1} is satisfied. More generally, the same arguments together with H\"older's inequality can be used to show that as long as $e^{|\omega_{1,0}|}$ has at least $m$ finite moments, we have for all $k\le m/4$ that \begin{equation}
    \label{grow2} \mathbb E[\prod_{j=1}^k K_r(x_j,y_j)] \le  C\prod_{j=1}^k b(x_j-y_j),
\end{equation}

Let us verify Item \eqref{a24} by verifying the stronger assumption \eqref{tvb}.
Notice that if $X,Y,Z\ge 0$ are three random variables defined on any probability space $(\Omega, \mathcal F,P)$ then we have that \begin{align}\notag E\bigg| \frac{X}{Y+Z}-\frac XY\bigg| &\leq E[X^2]^{1/2} E\bigg[ \bigg(\frac{1}{Y+Z}-\frac 1Y\bigg)^2\bigg]^{1/2} = E[X^2]^{1/2} E \bigg[ \bigg( \frac{Z}{Y(Y+Z)}\bigg)^2\bigg]^{1/2} \\&\leq  E[X^2]^{1/2} E \bigg[ \frac{Z^2}{(Y+Z)^2}\bigg]^{1/2} \leq  E[X^2]^{1/2} E [ Z^4]^{1/4} E[(Y+Z)^{-4}]^{1/4}. \label{xyz}
\end{align}
Fix $J>0$ (large) and $\x,\y\in \mathbb Z^k$ where $1\le k \le 4$. We will use this inequality with $$X:= \prod_{j=1}^k e^{\omega_{1,y_j}} ,\;\;\;\;\;\;\; Y = \prod_{j=1}^k \bigg(\sum_{ y':|y'-x_j| \leq J }b(y'-x_j) e^{\omega_{1,y'}}\bigg),\;\;\;\;\;\;\;\;\; Z:= \sum_{\y': \max |y_j'-x_j|>J} \prod_{j=1}^k b(y_j'-x_j) e^{\omega_{1,y_j'}},$$
where the last sum is over vectors $\y'=(y_1',...,y_k').$ By Minkowski's inequality we have $$\mathbb E[Z^4]^{1/4} \leq \sum_{\y': \max |y_j'-x_j|>J} \prod_{j=1}^k b(y_j'-x_j) \mathbb E[e^{4\omega_{1,y_j'}}]^{1/4} \leq Ce^{-\theta J},$$ where we are using the exponential decay of the kernel $b$ and the fact that the weights are IID. Now by Jensen we also have $$(Y+Z)^{-1} = \prod_{j=1}^k \big(\sum_{y'\in \mathbb Z} b(y'-x_j) e^{\omega_{1,y'}}\big)^{-1} \leq \prod_{j=1}^k e^{-\sum_{y'\in \mathbb Z} b(y'-x_j) \omega_{1,y'}},$$ therefore by applying H\"older's inequality we have $$\mathbb E[(Y+Z)^{-4}] \leq \prod_{j=1}^k \mathbb E [ e^{-4k \sum_{y'\in \mathbb Z} b(y'-x_j) \omega_{1,y'}}]^{1/k} = \prod_{j=1}^k\prod_{y'\in \mathbb Z} \mathbb E [ e^{-4k b(y'-x_j) \omega_{1,y'}}]^{1/k} ,$$ and the latter is finite and bounded by an absolute constant since $b(y'-x)$ is summable over $y'$ and the weights are IID satisfying $\mathbb E[e^{4k|\omega_{1,0}|}]<\infty$. Summarizing the last three expressions and applying \eqref{xyz}, we find uniformly over $\x,\y\in I^k$ that \begin{equation}\label{pkk}\bigg|\boldsymbol p^{(k)}(\x,\y)-\mathbb E \bigg[ \prod_{j=1}^k  \frac{b(y_j-x_j) e^{\omega_{1,y_j}}}{\sum_{ y':|y'-x_j| \leq J } b(y'-x_j) e^{\omega_{1,y'}}}\bigg]\bigg| \leq Ce^{-\theta J}. \end{equation}
The same argument with $k=1$ also yields that $$\bigg|\mathbb E \bigg[   \frac{b(y_j-x_j) e^{\omega_{1,y_j}}}{\sum_{y'\in \mathbb Z} b(y'-x_j) e^{\omega_{1,y'}}}\bigg] - \mathbb E \bigg[   \frac{b(y_j-x_j) e^{\omega_{1,y_j}}}{\sum_{ y':|y'-x_j| \leq J }  b(y'-x_j) e^{\omega_{1,y'}}}\bigg]\bigg| \leq Ce^{-\theta J},\;\;\;\;\;\;\; 1\le j \le k.$$By taking products over $j\le k$ this will yield \begin{equation}\label{muk}\bigg|\mu^{\otimes k}(\y-\x)-\prod_{j=1}^k\mathbb E \bigg[   \frac{b(y_j-x_j) e^{\omega_{1,y_j}}}{\sum_{ y':|y'-x_j| \leq J } b(y'-x_j) e^{\omega_{1,y'}}}\bigg]\bigg| \leq Ce^{-\theta J}, \end{equation} where the constant $C$ may have grown. Using the independence of the weights we see that if $\x=(x_1,...,x_k)\in I^k$ such that $\min_{1\le i<j\le k}|x_i-x_j|=:2J$ then the expectation of the product in \eqref{pkk} is equal to the product of expectations in \eqref{muk}, thus $\sup_{\y\in I^k}\big|\boldsymbol p^{(k)}(\x,\y) - \mu^{\otimes k}(\y-\x)\big| \leq Ce^{-\theta J}.$ In other words, we have shown that $$\sup_{\y\in I^k}\big|\boldsymbol p^{(k)}(\x,\y) - \mu^{\otimes k}(\y-\x)\big| \leq  Ce^{-\frac{\theta}2 \min_{1\le i<j\le k}|x_i-x_j|}.$$ Using \eqref{grow2} we also have $$\big|\boldsymbol p^{(k)}(\x,\y) - \mu^{\otimes k}(\y-\x)\big| \leq \big|\boldsymbol p^{(k)}(\x,\y)|+|\mu^{\otimes k}(\y-\x)\big| \leq C\prod_{j=1}^k b(x_j-y_j).$$ Combining the last two expressions and using $\min\{u,v\}\leq u^{1/2}v^{1/2}$ we thus find that $$\big|\boldsymbol p^{(k)}(\x,\y) - \mu^{\otimes k}(\y-\x)\big| \leq C e^{-\frac\theta4 \min_{1\le i<j\le k}|x_i-x_j|} \prod_{j=1}^k b(x_j-y_j)^{1/2} \leq C e^{-\frac\theta4 \min_{1\le i<j\le k}|x_i-x_j|} e^{-\frac\theta2\sum_{j=1}^k |x_j-y_j|},$$ where we applied the assumption of exponential decay of $b$. Consequently we find that 
\begin{align*}
   \big\| \mu^{\otimes k}(\bullet-\x) \;\;-\;\; \boldsymbol p^{(k)}\big( \x,\bullet \big)\big\|_{TV}  &= \sum_{\y \in I^k} \big|\boldsymbol p^{(k)}(\x,\y) - \mu^{\otimes k}(\y-\x)\big| \\ &\leq C e^{-\frac\theta4 \min_{1\le i<j\le k}|x_i-x_j|} \sum_{\y \in I^k} e^{-\frac\theta2 \sum_{j=1}^k|x_j-y_j|} \\ &\leq C e^{-\frac\theta4 \min_{1\le i<j\le k}|x_i-x_j|},
\end{align*}
which proves \eqref{tvb} as desired. The conditions needed to make sure all expectations above are finite is that $\mathbb E[ e^{4k |\omega_{1,0}|}]<\infty$. Since $p=1$ in Assumption \ref{a1} Item \eqref{a23}, we need \eqref{tvb} to hold only up to $k=4$, and it follows that $16$ moments is all that is necessary. 
\end{proof}

\begin{rk}Note that Condition \textbf{(A)} does not require any moment conditions on the weights $\omega_{r,x}$. We also remark that the mixing rate could be weakened to $\alpha(n)\leq Cn^{-2-\epsilon}$, and thanks to the assumptions on $\Fd$ in Item \eqref{a24} of Assumption \ref{a1}, this rate would still suffice, despite the exponential bound \eqref{tvb} not holding. 

Additionally, while we formulated the model on discrete space $\mathbb Z$, there is also a continuous-space analogue of the above $\alpha$-mixing landscape model. Let $(\omega(x))_{x\in \mathbb R}$ be any strictly stationary $\alpha$-mixing field with continuous sample paths. Let $b:\mathbb R\to \mathbb R$ be a bounded nonnegative measurable function of compact support that is not Lebesgue-a.e. zero. Then we may define the kernel $$K_1(x,dy):= \frac{b(y-x) e^{\omega(y)}}{\int_\mathbb R b(y'-x) e^{\omega(y')}dy'}.$$
Essentially the same arguments as those given above will verify the conditions of Assumption \ref{a1} for IID samplings $K_1,K_2,K_3,... $ of this kernel. An example of such a field $(\omega(x))_{x\in \mathbb R}$ is an Ornstein-Uhlenbeck process. In fact, it is known that any strictly stationary Markov process that is geometrically ergodic is exponentially $\beta$-mixing and thus exponentially $\alpha$-mixing, see \cite[Theorem 3.7]{bradley}. Consequently we can take $\omega$ to be the stationary solution to the SDE $d\omega(t) = -V'(\omega(t))dt +dW(t)$ where e.g. $\lim_{|x|\to\infty} \frac{V(x)}{|x|} = \infty.$

Finally we remark that for the ``landscape-type" models considered above, we do not know if the invariant measure $\pi^{\mathrm{inv}}$ can be explicitly described in terms of the parameters of the model, as was the case for the Dirichlet-type models of the previous subsection. We are not sure if the annealed difference process $\pdif$ is reversible with respect to $\pi^{\mathrm{inv}}$, though we see no reason for this to be the case given the complex correlation structure between the particles. \end{rk}

\subsection{Transport-type SPDE model: diffusion in random environment}

While Theorem \ref{main2} perhaps gives the impression that one can only consider discrete-time systems, this is not at all the case. We now discuss a continuous-time stochastic PDE model that motivated much of the discussion of KPZ fluctuations arising in stochastic flow models \cite{bld, ldt,dom}. This model was originally inspired by physics works of Kraichnan \cite{kar68, Kr}, see also \cite{GK95, GK96, gaw}.

Let $\xi$ be a standard Gaussian space-time white noise on $\mathbb R_+\times \mathbb R,$ and consider any smooth even compactly supported function $\phi:\mathbb R\to \mathbb R.$ Consider the spatially mollified noise $V:= \xi * \phi$ which is smooth in space and white in time. Assume that $\|\phi\|_{L^2(\Bbb R)}<1$, i.e., $\phi* \phi(0)<1$ which implies that $\sup_{x\in\Bbb R} \phi*\phi(x)<1$. 

Formally the covariance kernel of $V$ is given by $\mathbb E[ V(t,x)V(s,y) ] = (\phi*\phi)(x-y) \cdot \delta(t-s)$. As explained in \cite{dom}, one may sensibly construct the It\^o solution to the stochastic PDE \begin{equation}\label{adv}\partial_t u(t,x) = \frac12 \partial_x^2 u(t,x) + \partial_x\big(u(t,x)V(t,x)\big),\;\;\;\;\;\;\;\;\;\;\;\;\; u(0,x) = \delta_0(x),\end{equation}
where $t\ge 0$ and $x\in \mathbb R$. Note that, unlike \eqref{she0}, there is a derivative around the multiplicative term.
A pathwise solution $u$ that is continuous in both variables (away from the origin) and adapted to the filtration of the noise $V$ is only possible due to the spatial smoothness of $V$, and it turns out that the solution preserves mass of the initial data, i.e., $\int_\mathbb R u(t,x) \dr x =1$ for all $t>0$ a.s., see \cite{kun94a,kun94b,ljr02}.

For $t>s\ge 0$ and $x,y\in \mathbb R$ we define the ``propagators" $u_{s,t}(x,y)$ associated to the equation \eqref{adv} as follows: let $(t,y)\mapsto u_{s,t}(x,y)$ be the solution of \eqref{adv} started from $\delta_x(y)$ at time $s$. All of these solutions are coupled to the same realization of the driving noise $V$. Then we have the semigroup property \begin{equation}\label{convo}\int_\mathbb R u_{s,t}(x,y) u_{t,r}(y,z) dy = u_{s,r}(x,z),\;\;\;\;\; \forall s<t<r\;\;\;\;\;a.s.,\end{equation}
and furthermore if $\{(s_i,t_i]\}_{i=1}^m$ are disjoint intervals then $\{u_{s_i,t_i}\}_{i=1}^m$ are independent (because $u_{s,t}$ is always measurable with respect to the restriction of $V$ to $[s,t]\times \mathbb R$) \cite{kun94a,kun94b,ljr02}.

We may thus define the kernels $$K_r(x,y):= u_{r,r+1}(x,y),$$ and the preservation of mass implies that $\int_\mathbb R K_r(x,y)dy=1$ for all $x\in \mathbb R$ and $r\in \mathbb Z_{\ge 0}$. 

\begin{prop}
    With $K_r$ as defined above, all of the items of Assumption \ref{a1} are satisfied for these kernels, with $p=1$.
\end{prop}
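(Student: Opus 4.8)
The plan is to verify each of the six items of Assumption \ref{a1} in turn for the kernels $K_r(x,y)=u_{r,r+1}(x,y)$ defined through the propagators of \eqref{adv}. Several of these are essentially immediate from the structure of the SPDE and properties established in \cite{kun94a,kun94b,ljr02,dom}: Item \eqref{a11} (iid stochastic flow increments) follows from the fact that $u_{r,r+1}$ is measurable with respect to the restriction of $\eta$ to $[r,r+1]\times\mathbb R$, together with the temporal stationarity and independence of white noise on disjoint time intervals; Item \eqref{a12} (spatial translation invariance in law) follows because $\eta$ is stationary in space (as $\xi$ is, and $\phi$ is fixed), so the law of $(x,y)\mapsto u_{r,r+1}(x,y)$ is invariant under the diagonal shift; and the convolution/Chapman--Kolmogorov property \eqref{convo} is exactly what makes $\{K_r\}$ a stochastic flow. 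I would state these briefly and move on.

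The substantive work is in Items \eqref{a22}, \eqref{a23}, and \eqref{a24}. For Item \eqref{a23} I would argue that $p=1$: since $\eta$ has mean zero, one expects $\int_\mathbb R (y-x) K_1(x,dy)$ to have deterministic mean-one behavior, and more precisely that the first moment of the displacement is deterministic (equal to zero by the symmetry $\phi$ even) while the second moment $\int_\mathbb R (y-x)^2 K_1(x,dy)$ is genuinely random because it picks up fluctuations of the noise; this is where the assumption $\phi*\phi(0)<1$ enters, guaranteeing the variance of $\mu$ is finite and that the kernel does not degenerate to a point mass. I would derive this via the It\^o/chaos expansion for $u_{0,1}$, or alternatively via the moment formulas for the $k$-point motion which for this model are known to be governed by a system of SDEs (the ``$n$-point motion'' of Le Jan--Raimond); the first two displacement moments can be read off from the drift and quadratic variation. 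For Item \eqref{a22} (exponential annealed moments with variance normalized), I would use Gaussian concentration / hypercontractivity: because $\eta$ is spatially smooth and bounded in an appropriate sense, the propagator $u_{0,1}(x,\cdot)$ has Gaussian-type tails uniformly, so $\mu(dz)=\mathbb E[K_1(0,dz)]$ has a moment generating function finite in a neighborhood of the origin; the variance being $1$ is imposed by rescaling as in Remark \ref{opt} and need not be checked.

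The main obstacle, and the step I would budget the most effort for, is Item \eqref{a24}: the decay of correlations of the joint moments of $K_1$. Unlike the Dirichlet model (where the kernels at distinct sites are genuinely independent) or the $\alpha$-mixing landscape model (where a mixing hypothesis is assumed), here the correlation between $K_1(x_1,\cdot),\dots,K_1(x_k,\cdot)$ comes from the $k$ propagators being driven by the \emph{same} noise $\eta$, whose spatial covariance $(\phi*\phi)(x-y)$ is compactly supported. The key point to exploit is precisely this finite-range covariance: if the starting points $x_1,\dots,x_k$ are far apart, then on the unit time interval the solutions $u_{0,1}(x_i,\cdot)$ are, with overwhelming probability, supported on disjoint regions where the restrictions of $\eta$ are independent, so the joint moments factor up to a small error. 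I would make this rigorous by a coupling/localization argument: freeze the noise outside a window of radius $R$ around each $x_i$, show (using the Gaussian tail bounds from the previous step and a Gr\"onwall estimate for how far the SPDE transports mass in unit time) that replacing $u_{0,1}(x_i,\cdot)$ by its ``localized'' version changes the relevant moments by at most $Ce^{-c\min_{i<j}|x_i-x_j|}$, and then use independence of the localized versions. This yields the total-variation bound \eqref{tvb} (for all $k$, in fact), which by the discussion after the assumption implies Item \eqref{a24} with $\Fd(x)$ a multiple of $e^{-x/(2\Con)}$. The delicate part is controlling the transport of mass by the stochastic term $\partial_x(u\eta)$ over unit time uniformly in the noise realization on the relevant event; I would handle this via an $L^2$ energy estimate for $u_{0,1}(x,\cdot)$ restricted to a moving window, or by citing the known Gaussian upper bounds for such transport SPDE propagators. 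Finally, Item \eqref{a16} (regularity and irreducibility of $\pdif$) follows since the annealed two-point motion is a non-degenerate diffusion on $\mathbb R$ with smooth coefficients, hence strong Feller and irreducible on all of $\mathbb R$.
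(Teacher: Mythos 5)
Your verification of Item \eqref{a23} is wrong as stated, and internally inconsistent. With $p=1$, the assumption requires precisely that the \emph{quenched} first moment $\int_I y\,K_1(0,\mathrm dy)$ be non-deterministic; you instead assert that the first displacement moment is deterministic (by evenness of $\phi$) while the second is random, which, if it were true, would force $p=2$ and hence the $7/8$ crossover rather than the $3/4$ one known for this model. In fact the quenched first moment is random: writing $m(t)=\int_{\mathbb R} y\,u(t,y)\,dy$ and integrating by parts in \eqref{adv}, one gets (in the It\^o formulation) $dm(t)=-\int_{\mathbb R} u(t,y)\,\eta(t,y)\,dy\,dt$, a nondegenerate stochastic integral whose variance over $[0,1]$ is of order $\int_0^1\int\int u(t,y)u(t,y')(\phi*\phi)(y-y')\,dy\,dy'\,dt>0$. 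Only the \emph{annealed} mean vanishes by symmetry; the quenched mean fluctuates with $\eta$, which is exactly what makes $p=1$ correct, for the opposite reason to the one you give.

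For the remaining items your route also diverges from the paper's, in ways that matter. For Item \eqref{a22} no concentration or hypercontractivity is needed: the paper notes that \eqref{adv} is the Kolmogorov forward equation of the SDE \eqref{sde}, so the annealed one-point motion is exactly a rate-one Brownian motion and $\mu$ is exactly standard Gaussian (in particular the variance is already $1$, no rescaling). For Item \eqref{a24} your quenched localization scheme (freeze the noise outside windows around each $x_i$, control the error, use independence of the localized propagators) is plausible in outline and correctly exploits the compact support of $\phi*\phi$, but its central step --- a uniform-in-$\eta$ bound on how much mass the localized propagator loses or transports over unit time --- is precisely what you leave unproved, and it is the hard part of that route. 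The paper avoids it entirely by working at the annealed level: the annealed $k$-point motion is an explicit It\^o diffusion on $\mathbb R^k$ with diffusion matrix $\theta\,\ind_{\{i=j\}}+(\phi*\phi)(a_i-a_j)$, which is a multiple of the identity off the set $G$ where some pair of coordinates lies within the support radius of $\phi*\phi$; coupling this diffusion to a $k$-dimensional Brownian motion until $G$ is hit and bounding the hitting probability by Gaussian tails yields \eqref{tvb} in a few lines. Your treatment of Item \eqref{a16} (the annealed difference is a uniformly elliptic one-dimensional diffusion, hence strong Feller and irreducible) matches the paper's argument.
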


\begin{proof}
Since the driving noise $V$ of \eqref{adv} is spatially and temporally stationary, it follows that $K_1$ is translation invariant and that the kernels $K_i$ all have the same distribution. The fact that the kernels $K_1,K_2,...$ are independent was already explained above. Let us explicitly calculate the annealed one-step law $\mu$ as in Item \eqref{a22} of Assumption \ref{a1}. Letting $\theta:= 1- (\phi*\phi)(0)$, \cite[Proposition 2.1]{ew6} proves that \eqref{adv} is the Kolmogorov forward equation associated to the SDE formally given by 
\begin{equation}\label{sde}dX(t) = -V(t,X(t))\dr t + \theta^{1/2} \dr W(t),\end{equation}
where $W$ is a Brownian motion independent of the noise $V$.\footnote{This SDE representation looks misleading because the It\^o-Stratonovich correction in \eqref{adv} is not a constant multiple of $u$, but rather it is given by $\frac12 (1-\theta) \partial_x^2 u(t,x),$ which explains why the viscosity in the It\^o equation is $\frac12$ instead of $\frac12\theta$.} It turns out that that this formal SDE admits a solution theory by construction of a stochastic integral $\int_0^t V (s,X(s))ds,$ see e.g. \cite{bc95}. Moreover, for each realization of $V$, one may prove that $\int_B u_{s,t}(x,y)dy =  P^{V}(X(t)\in B|X(s)=x)$ where $P^{V}$ is a quenched expectation given $V$ \cite{kun94a,kun94b,ljr02}. In the annealed sense the stochastic integral $\int_0^t V (s,X(s))ds,$ is just a Brownian motion of rate $(\phi *\phi)(0)= \|\phi\|_{L^2(\mathbb R)}^2,$ consequently the annealed path $X$ is itself just a Brownian motion of rate $\theta+(\phi *\phi)(0)=1.$

This discussion shows that the annealed one-step law $\mu$ is a normal distribution of mean $0$ and variance $1,$ which is enough to verify Item \eqref{a22} of Assumption \ref{a1}. Let us now move onto the proof of Item \eqref{a16}. If $\gamma_1,\gamma_2: [0,t]\to \mathbb R$ are two deterministic paths then we have $$\mathbb E \bigg[\bigg(\int_0^t V(s,\gamma_1(s))ds - \int_0^t V(s,\gamma_2(s))ds\bigg)^2\bigg] = \int_0^t \psi (\gamma_1(s) - \gamma_2(s))ds,\;\;\; \text{ where } \;\;\;\psi(x) := 2\big( \phi*\phi(0) - \phi*\phi(x)\big).$$ Using this fact and the SDE interpretation \eqref{sde} of \eqref{adv}, consider two independent particles $X^1,X^2$ satisfying that SDE, with the same realization of $V$ but independent driving Brownian motions $W^1,W^2$. One may verify that the annealed difference $Y(t):= X^1(t)-X^2(t)$ of two particles is itself an It\^o diffusion (with deterministic coefficients) satisfying \begin{equation}\label{anndiff} dY(t) = \big(2-2(\phi*\phi)(Y(t)\big)^{1/2}dB(t), \end{equation} where $B$ is a standard Brownian motion on $\Bbb R$. In other words, $\pdif(x,\bullet)$ is just the transition density at time 1 of the It\^o diffusion \eqref{anndiff} started from $Y(0)=x.$ This is an elliptic diffusion with generator $Lf(a) = \frac12(2-2(\phi*\phi)(a))f''(a).$ Since the diffusion matrix is bounded above and below by positive constants, both bullets in Item \eqref{a16} of Assumption \ref{a1} will be satisfied by standard existence theory of smooth positive densities for uniformly elliptic diffusions \cite{stroock}. 
Note in this case that we have reversibility of $\pdif$ with respect to $\pi^{\mathrm{inv}}$, i.e., the generator $L$ is self-adjoint on $L^2(\pi^{\mathrm{inv}})$ as is easily shown using integration by parts.

It remains to verify Item \eqref{a24} of Assumption \ref{a1}. To do this, we will again verify the stronger assumption \eqref{tvb}. As noted in \cite[Eq. (1.10)]{dom} or \cite[Eq. (4)]{war}, the annealed law of $k$ independent particles $X^i(t)$ $(1\le i \le k)$ each satisfying \eqref{sde} (with independent driving noises $W^i$ but the same realization of $V$) is an It\^o diffusion on $\mathbb R^k$ with generator $$L^{(k)}f(\bfa ) = \frac12 \sum_{i,j=1}^k \bigg( \theta \ind_{\{i=j\}} + (\phi*\phi)(a_i-a_j) \bigg) \frac{\partial^2}{\partial a_i\partial a_j}f(\bfa) ,$$ where we recall that $\theta:= 1- (\phi*\phi)(0).$ Let $\Sigma(\bfa)$ be a positive square root of that diffusion matrix, so that the eigenvalues of $\Sigma(\bfa)$ are bounded above and below independently of $\bfa \in \mathbb R^k$. If $\phi*\phi$ is supported on $[-B,B]$ then note that $\Sigma (\bfa) = \sigma \cdot \mathrm{Id}$ outside of the set $G:=\{ \bfa: \min_{1\le i<j\le k} |a_i-a_j| > B\}.$ Then the diffusion with generator $L$ has law given by the solution of $d\vec X(t) = \Sigma (\vec X(t)) d\vec W(t)$ for a standard $k$-dimensional Brownian motion $\vec W$. Thus $\boldsymbol p^{(k)}(\x,\bullet)$ is just the transition density at time 1 of this elliptic diffusion started from $\vec X(0)=\x$. Since $\Sigma (\bfa) = \sigma\cdot \mathrm{Id}$ outside of the set $G$, the diffusion $\vec X$ simply evolves as standard $k$-dimensional Brownian motion outside $G$. Consequently the total variation bound \eqref{tvb} can be proved by a simple and explicit coupling construction. 
A simple union bound using the Gaussian tail decay of the Brownian maximum on $[0,1]$ shows that, starting from $\x=(x_1,...,x_k)$, the probability of the Brownian sample path $(\x+\sigma \vec W(t))_{t\in [0,1]}$ hitting $G$ is bounded above by $C\sum_{1\le i<j\le k} e^{-\frac1{2\sigma^2}(x_i-x_j)^2},$ where $C$ is a large constant depending on the the support value $B$. If $\vec X(0) = \x$, then the diffusion sample path $(\vec X(t))_{t\in [0,1]}$ agrees with $(\x+\sigma \vec W(t))_{t\in [0,1]}$ on the event that $(\x+\sigma \vec W(t))_{t\in [0,1]}$ does \textit{not} hit $G$, which provides the explicit coupling of $\vec X(1)$ with the $k$-dimensional normal distribution $\x+\sigma \vec W(1)$. This shows that the total variation distance in \eqref{tvb} is bounded above by $\sum_{1\le i<j\le k} e^{-\frac1{2\sigma^2}(x_i-x_j)^2}$. Using the brutal bound $\sum_{1\le i<j\le k} e^{-\frac1{2\sigma^2}(x_i-x_j)^2} \leq \frac12k(k-1) e^{-\frac1{2\sigma^2} \min_{1\le i<j\le k} (x_i-x_j)^2}$ and noting that $(x_i-x_j)^2 >-1+|x_i-x_j|$ then implies the desired bound \eqref{tvb}, thus completing the verification of Assumption \ref{a1}. \footnote{We remark that this argument would not work if $\phi$ is not compactly supported even if it decays rapidly; one would need to be more clever to control the total variation distance in \eqref{tvb}. One could instead use a perturbative approach to couple diffusions with matrices that are close to one another, but for brevity we do not pursue the details here.}
\end{proof}

Thus Assumption \ref{a1} has been verified (with $p=1$ in Item \eqref{a23}), which means that Theorem \ref{main2} will hold for this family of stochastic kernels $(K_r)_{r\ge 0}$. Using \eqref{anndiff}, the invariant measure is given explicitly as $$\pi^{\mathrm{inv}}(\dr a) = \frac{1}{1-\phi*\phi(a)}\dr a.$$
Thanks to this explicit representation of $\pi^{\mathrm{inv}}$, one may ask if the coefficient $\gamma_{\mathrm{ext}}$ in Theorem \ref{main2} could also be calculated explicitly. The authors of \cite{dom} observe that this is indeed possible, and in fact one has $$\gamma_{\mathrm{ext}}^2 = \int_\mathbb R\frac{(\phi*\phi)(a)}{1-(\phi*\phi)(a)}\dr a.$$
Notice by the convolution property \eqref{convo} that $K_1\cdots K_r (0,\cdot)$ simply agrees with $u(r,\cdot)$ where $u$ solves \eqref{adv}. Consequently in this context, the statement of Theorem \ref{main2} is saying that the field $$\mathfrak H^N(t,\phi)=N^{1/2}\int_\mathbb R D_{N,t,x} \cdot u(Nt, N^{3/4}t + N^{1/2}x) \cdot \phi(x)\dr x$$ is converging to the solution of \eqref{she}. Of course this leads to a number of other questions, such as what happens if we do not integrate against a test function $\phi$ but rather just consider the ``un-coarsened" or ``pointwise" field $\mathfrak L^N(t,x):=N^{1/2} u(Nt,N^{3/4}t+N^{1/2}x).$  This question remains open, and we expect a nontrivial answer, specifically we expect that there are \textit{additional} nontrivial pointwise fluctuations in the limit that vanish under the test function integrated formulation as in $\mathfrak H^N$. It is not difficult to show impossibility of convergence in a topology of continuous functions, and we do this for the model considered in \cite{DDP23+}. But specifically gauging the exact nature of these additional pointwise fluctuations is more difficult and would require new ideas. This type of ``local limit" question was explored in \cite{gu} in a different regime; perhaps a similar idea could be used to gauge these pointwise fluctuations.

A related question is that if we define the rescaled noise field $V^N(t,x):= N^{3/4} V(Nt, N^{3/4}t+N^{1/2}x) $, and we take a \textbf{joint} limit point of $(V^N,\mathfrak H^N)$, say $(\xi, \mathcal U)$, then is it true that the space-time white noise $\xi$ is simply the driving noise of $\mathcal U$? This question was answered recently in \cite[Theorem 1.3]{DDP23+} for a related model, and the answer is that $\xi$ will \textbf{not} be the driving noise of $\mathcal U$. Rather the driving noise of $\mathcal U$ will consist of $\xi$ \textit{plus another independent space-time white noise}. This is because, as pointed out in \cite{dom}, the rescaled field $z_N(t,x):= N^{1/2} D_{N,t,x} u(Nt,N^{3/4}t+N^{1/2}x)$ satisfies the It\^o SPDE given for $(t,x) \in \Bbb R_+\times \Bbb R$ by $$\partial_t z(t,x) = \frac12 \partial_x^2 z(t,x)  +  z(t,x) V^N(t,x)+ N^{-1/4}\partial_x\big(z(t,x) V^N(t,x)\big),\;\;\;\;\;\;\;v(0,x) = \delta_0(x). $$ Interestingly, the latter family of SPDEs is no longer ``scaling subcritical" in the sense of regularity structures \cite{Hai14}, but it is instead critical. Nonetheless, Theorem \ref{main2} clearly indicates that there is an explicit SPDE limit for $z_N$, and the final term causes the creation of additional multiplicative noise in the SPDE. A recent result of Hairer \cite{Hai24} on variance blowup of the tree diagrams may be relevant in describing more explicitly where the extra noise comes from. 

This phenomenon of ``creating extra noise in the limit" is not explored in the generalized context of the present work, as it is unclear to us how to define the prelimiting analogue of the driving noise $V^N$ for the generalized model considered in Assumption \ref{a1}.

\subsection{Sticky Brownian motion}

Here we discuss a continuum model of random walks in a random environment that has nice exact solvability properties that were explored in \cite{dom2, mark, bld}. We already prove KPZ convergence for this model in \cite{DDP23+}, but we now discuss how to recover that result in the present context.

In order to define the random
kernels for this model, we first need to introduce the concept of \textit{continuum stochastic flows of kernels}. For $s\le t$, a random probability kernel, denoted $L_{s,t}(x, A)$, is a measurable function defined on some underlying probability space $\Omega$, such that it defines a probability measure on $\R$ for each $(x, \omega) \in \R \times \Omega$. The quantity $L_{s,t}(x, A)$ is interpreted as the 
(random) probability of a particle to to arrive in $A$ at time $t$ starting at $x$ at time $s$.
 
A family of random probability kernels $(L_{s,t})_{s\le t}$ on $\R$ is called a (continuum) stochastic flow of kernels if:
		\begin{enumerate}[label=(\alph*)]
			\item \label{d1} For any $s\le t\le u$ and $x\in \R$, almost surely one has that $K_{s,s}(x,A)=\delta_x(A)$ and
			\begin{align*}
				\int_{\R} L_{t,u}(y,A)L_{s,t}(x,dy)=L_{s,u}(x,A)
			\end{align*}
			for all $A$ in the Borel $\sigma$-algebra of $\R$.
			\item For any $t_1\le t_2 \le \cdots \le t_k$, the $(L_{t_i,t_{i+1}})_{i=1}^{k-1}$ are independent.
			\item For any $s\le u$ and $t\in \R$, $L_{s,u}$ and $L_{s+t,u+t}$ have the same finite dimensional distributions.
		\end{enumerate}
For the continuum stochastic flow that we consider in this section, one can ensure that the set of probability $1$ on which \ref{d1} holds is independent of $x\in \R$ and $s\le t\le u$, see \cite{sss} and the more general result of \cite{rai}.  This allows us to interpret $(L_{s,t})_{s\le t}$ as bona fide transition
kernels of a random motion in a continuum random environment (using e.g. Kolmogorov extension theorem). The annealed law of such a motion is called the $1$-point motion associated to $(L_{s,t})_{s\le t}$. More generally, the $k$-point motion of a continuum stochastic flow of kernels is defined as the $\R^k$ valued stochastic process $\mathbf{X}=(X^1,\ldots,X^k)$ with transition probabilities given by
\begin{align*}
    P_t(\vec{x},d\vec{y})=\Ex\left[\prod_{i=1}^k L_{0,t}(x_i,dy_i)\right].
\end{align*}
We will be interested in a particular random motion in a continuum random environment originating from the \textit{Howitt-Warren} flow of kernels. Its corresponding $k$-point motion solves a well-posed martingale problem that was first studied by Howitt and Warren in \cite{HW09}. Below, we introduce the $k$-point motion by stating the martingale problem as formulated in \cite{sss}.

Let $\nu$ be a finite and nonnegative measure on $[0,1]$. We say an $\R^k$-valued process $\mathbf{X}_t=(X_t^1,\ldots,X_t^k)$ solves the Howitt-Warren martingale problem with characteristic measure $\nu$ if $\mathbf{X}$ is a continuous, square-integrable martingale with the covariance process between $X^i$ and $X^j$ given by
    \begin{align*}
        \langle X^i,X^j\rangle_t=\int_0^t \ind_{\{X_s^i=X_s^j\}}ds,
    \end{align*}  and furthermore it satisfies the following condition: consider any nonempty $\Delta \subset \{1,2,\ldots,k\}$. For $\mathbf{x}\in \R^k$, let
    \begin{align*}
        f_{\Delta}(\mathbf{x}):=\max \{x_i:i\in \Delta\}, \qquad \mbox{and} \qquad g_{\Delta}(\mathbf{x}):=\big|\{i\in \Delta \mid x_i=f_{\Delta}(\mathbf{x})\}\big|.
    \end{align*}
    Then the process
        $f_{\Delta}(\mathbf{X}_t)-\int_0^t \beta_+\big( g_{\Delta}(\mathbf{X}_s)\big)ds$ is a martingale with respect to the filtration generated by $\mathbf{X}$, where $\beta_+(1):=0$ and
    \begin{align*}
        \beta_+(m):={\frac12}\int \sum_{k=0}^{m-2} (1-y)^k \nu(dy), \quad m\ge 2.
    \end{align*}
Howitt and Warren \cite{HW09} proved existence and uniqueness in law for this martingale problem. From the covariation formula above, we see that each $X^i$ is marginally a Brownian motion. The $k$ particles $X^i$ can thus be interpreted as Brownian motions evolving independently of each other when apart, but when they meet there is some ``stickiness" or tendency to move as a single particle, and the intersection set will have positive measure (albeit being nowhere dense). Consequently the process $\mathbf X$ is referred to as \textit{$k$-point sticky Brownian motion with characteristic measure $\nu$} in the literature.

By a {remarkable} result of Le Jan and Raimond \cite[Theorem 2.1]{lejan}, any consistent family of Feller processes can be viewed as a $k$-point motion of some stochastic flow of kernels, unique in finite-dimensional distributions. Thus, in particular, the solution of the Howitt-Warren martingale problem can be viewed as the $k$-point motion of some stochastic flow of kernels. {We call this stochastic flow of kernels the Howitt-Warren flow. In other words, these kernels are not explicitly specified in terms of some simpler object, but rather we are using the abstract result of \cite{lejan} to say that there \textbf{is} a unique family of stochastic kernels whose $k$-point motion is precisely $k$-point sticky Brownian motion for every $k\in\mathbb N.$
We refer to \cite{sss0,sss,sss2} for a more concrete description in terms of 
the Brownian web and net.
 
\begin{prop}Let $L_{s,t}(x,\dr y)$ denote the Howitt-Warren flow, with any choice of finite and nonzero characteristic measure $\nu$, and let us define our kernels $$K_r(x,\dr y):= L_{r,r+1}(x,\dr y).$$
Then Assumption \ref{a1} holds true for this family of kernels.
\end{prop}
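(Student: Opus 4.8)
The plan is to verify the six items of Assumption~\ref{a1} for the Howitt--Warren kernels $K_r(x,\dr y):=L_{r,r+1}(x,\dr y)$, leaning heavily on the fact that the $k$-point motion is explicitly characterized by the Howitt--Warren martingale problem. Items \eqref{a11} and \eqref{a12} are immediate from the flow property and temporal-translation invariance of the Howitt--Warren flow as stated in the definition of a continuum stochastic flow of kernels. For Item \eqref{a22}, the covariance formula $\langle X^i,X^j\rangle_t=\int_0^t\ind_{\{X_s^i=X_s^j\}}ds$ in the martingale problem shows that each coordinate $X^i$ is marginally a standard Brownian motion, so $\mu$ is the law of $X_1^1$, a centered Gaussian of variance $1$; exponential moments are obvious and the normalization $m_2-m_1^2=1$ holds automatically. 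So the substance of the proof is Items \eqref{a23}, \eqref{a24}, and \eqref{a16}.

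For Item \eqref{a23} I would take $p=1$: one needs to check that $\int_I yK_1(0,\dr y)$ is nondeterministic, equivalently that the $2$-point motion has $X_1^1,X_1^2$ genuinely correlated. Since $\nu$ is finite and nonzero, $\beta_+(2)=\tfrac12\nu([0,1])>0$, so by the martingale problem $|X_t^1-X_t^2|-\nu([0,1])\int_0^t\ind_{\{X_s^1=X_s^2\}}ds$ is a martingale, which forces $L_0^{X^1-X^2}(t)=\nu([0,1])\int_0^t\ind_{\{X_s^1=X_s^2\}}ds$; the particles spend positive time together, so $\E[X_1^1X_1^2]>0=m_1^2$, i.e. $\boldsymbol p^{(2)}$ is not the product measure and the first annealed moment of $K_1(0,\cdot)$ is nondeterministic. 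Item \eqref{a16}: the annealed difference process $\pdif$ is the law at time $1$ of $Y_t:=X_t^1-X_t^2$, which is a one-dimensional sticky Brownian motion — a Feller diffusion on $\R$ that is null only at the single point $0$ where it is sticky but not absorbing (the stickiness parameter is finite since $\nu$ is finite). Irreducibility is then clear (the diffusion reaches every neighborhood of every point from every starting point in finite time with positive probability), and the total-variation continuity of $x\mapsto\pdif(x,\cdot)$ follows from the Feller/strong-Feller regularity of sticky Brownian motion after one time unit, which one can extract from its explicit transition density (e.g. via the known semigroup formulas, or the fact that away from $0$ it solves a non-degenerate SDE).

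The main obstacle, as in the other examples, is Item \eqref{a24}: exponential decay of correlations up to $(4p)$-th joint moments, which for $p=1$ means controlling $\boldsymbol p^{(k)}$ against $\mu^{\otimes k}$ for $k\le 4$. I would again aim for the stronger total-variation bound \eqref{tvb}. The key structural fact is that $k$-point sticky Brownian motion behaves \emph{exactly} as $k$ independent Brownian motions as long as no two coordinates have collided: the drift/stickiness terms $\beta_+(g_\Delta(\mathbf X_s))$ in the martingale problem only activate on the collision set $\{X^i_s=X^j_s\text{ for some }i\ne j\}$. Hence, starting from $\x=(x_1,\dots,x_k)$, one can couple $(\mathbf X_t)_{t\in[0,1]}$ with a family of independent Brownian motions $(\x+\mathbf W_t)_{t\in[0,1]}$ so that the two agree on the event that $\x+\mathbf W$ has no pairwise collision up to time $1$; a Gaussian reflection/union-bound estimate bounds the probability of a collision by $C\sum_{i<j}e^{-\frac14(x_i-x_j)^2}\le \tfrac12k(k-1)e^{-\frac14\min_{i<j}|x_i-x_j|^2}$, which using $(x_i-x_j)^2\ge |x_i-x_j|-1$ gives a bound of the form $\Con e^{-\frac1{\Con}\min_{i<j}|x_i-x_j|}$ on the total variation distance in \eqref{tvb}, uniformly in $\x$ and for all $k\le 4$. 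The one technical point requiring care is making this coupling rigorous for sticky Brownian motion — one wants to realize the solution of the Howitt--Warren martingale problem jointly with the independent Brownian motions so that they literally coincide until the first collision time, which can be done by solving the martingale problem pathwise starting from the independent Brownian motion and only modifying it at collisions (or equivalently by the strong-Markov/restart argument: before the first collision the laws are identical, and one only needs an upper bound afterward, which \eqref{tvb} allows since it is an inequality). With \eqref{tvb} in hand for $k\le 4$, Item \eqref{a24} follows (with $\Fd(x)$ a multiple of $e^{-\frac{x}{2\Con}}$) exactly as explained after the statement of Assumption~\ref{a1}, completing the verification and hence making Theorem~\ref{main2} applicable; this recovers the main convergence result of \cite{DDP23+} for sticky Brownian motion.
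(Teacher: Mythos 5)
Your proposal is correct and follows essentially the same route as the paper: the one-point motion being Brownian gives Item (3), the coupling with independent Brownian motions up to the first collision (plus a Gaussian union bound) yields the total-variation bound \eqref{tvb} for Item (5), and Item (6) rests on the known explicit transition structure of one-dimensional sticky Brownian motion, for which the paper cites \cite[Theorem 1.2]{dom2} where you appeal to the known semigroup formulas. Your explicit check of Item (4) via $\E[X_1^1X_1^2]=\E\big[\int_0^1\ind_{\{X_s^1=X_s^2\}}ds\big]>0$ is a minor addition the paper leaves implicit, but otherwise the arguments coincide.
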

\begin{proof} We note that the independence and stationarity in space-time is clear from the definitions. The fact that $\mu$ has exponential moments is clear from the fact that the $1$-point motion is just Brownian motion, hence $\mu$ is Normal$(0,1)$. The verification of Item \eqref{a24} can be done by noting that the $k$-point motion behaves as Brownian motion away from the boundary of the Weyl chamber $\bigcup_{1\le i<j\le k} \{a_i=a_j\}\subset \mathbb R^k$, consequently the total variation distance in \eqref{tvb} can be controlled using a similar explicit coupling construction that was used for the transport SPDE models of the previous subsection.

It remains to verify Item \eqref{a16} of Assumption \ref{a1}. To do this, we note that the annealed difference process behaves as \textit{Brownian motion sticky at zero.} This is a process with pregenerator $Lf(a) =\frac12 f''(a)$ but with domain given by only those functions, smooth away from $0$, satisfying the relation $f'(0+)-f'(0-) = \frac1{2\nu[0,1]} f''(0\pm)$, see \cite{EP14}. Alternatively it may be described as the unique-in-law process $(Y(t))_{t\ge 0}$ such that all three of the processes $$Y(t),\;\;\;\; Y(t)^2 - \int_0^t\ind_{\{Y_s\ne 0\}}ds,\;\;\;\; |Y(t)| - \frac1{2\nu[0,1]} \int_0^t \ind_{\{Y_s=0\}}ds$$ are continuous martingales. Since the characteristic measure $\nu$ is assumed to be nonzero, the result of \cite[Theorem 1.2]{dom2} implies that for every starting point $x\in I$, this Markov process has an absolutely continuous (and everywhere positive) part to its time $t=1$ transition kernel $\pdif(x,\bullet)$, thus the irreducibility of Item \eqref{a16} in Assumption \ref{a1} is satisfied. The total variation continuity can also be deduced from the same result in \cite{dom2}, noting that both the singular part at the origin and the absolutely continuous part appearing in the formula are continuous as a function of the starting value, with the absolutely continuous part decaying like a Gaussian. 
\end{proof}

Thus Assumption \ref{a1} has been verified (with $p=1$ in Item \eqref{a23}), which means that Theorem \ref{main2} will hold for this family of stochastic kernels $(K_r)_{r\ge 0}$. We remark that the invariant measure $\pi^{\mathrm{inv}}$ is explicit in this case, given by Lebesgue measure plus a Dirac mass of weight $\frac1{2\nu[0,1]}$ at the origin (and $\pdif$ is reversible with respect to $\pi^{\mathrm{inv}}$). Thus, as we proved in \cite{DDP23}, the coefficient $\gamma_{\mathrm{ext}}$ is explicitly computable in this case and satisfies $\gamma_{\mathrm{ext}}^2 = \frac1{2\nu[0,1]}.$

\bibliographystyle{alpha}
\bibliography{ref.bib}

\end{document}